\theoremstyle{theorem}
\newtheorem{proposition}{Proposition}[section]
\newtheorem{theorem}[proposition]{Theorem}
\newtheorem{lemma}[proposition]{Lemma}
\newtheorem{appendixlemma}[proposition]{Lemma}
\theoremstyle{definition}
\newtheorem{definition}[proposition]{Definition}
\newtheorem{notation}[proposition]{Notation}
\newtheorem*{notation*}{Notation}
\newtheorem{example}[proposition]{Example}
\newtheorem*{motivation*}{Motivation}
\newtheorem{remark}[proposition]{Remark}
\newtheorem*{theorem*}{Theorem}
\newtheorem*{lemma*}{Lemma}
\newtheorem*{proposition*}{Proposition}
\theoremstyle{plain} 
\newcommand{\thistheoremname}{}
\newtheorem{genericthm}[definition]{\thistheoremname}
\DeclareSymbolFont{extraup}{U}{zavm}{m}{n}
\DeclareMathSymbol{\vardiamond}{\mathalpha}{extraup}{87}
\newcommand\tsup[2][2]{%
 \def\useanchorwidth{T}%
  \ifnum#1>1%
    \stackon[-.5pt]{\tsup[\numexpr#1-1\relax]{#2}}{\scriptscriptstyle\sim}%
  \else%
    \stackon[.5pt]{#2}{\scriptscriptstyle\sim}%
  \fi%
}
\providecommand*{\sha}{%
  \mathbin{\mathpalette\sha@{}}%
}
\newcommand*{\sha@}[2]{%
  \sbox0{$#1\vcenter{}$}%
  \kern .15\ht0 
  \rlap{\vrule height .25\ht0 depth 0pt width 2.5\ht0}%
  \raise.1\ht0\hbox to 2.5\ht0{%
    \vrule height 1.75\ht0 depth -.1\ht0 width .17\ht0 %
    \hfill
    \vrule height 1.75\ht0 depth -.1\ht0 width .17\ht0 %
    \hfill
    \vrule height 1.75\ht0 depth -.1\ht0 width .17\ht0 %
  }%
  \kern .15\ht0 
}
\newcommand\independent{\protect\mathpalette{\protect\independenT}{\perp}}
\def\independenT#1#2{\mathrel{\rlap{$#1#2$}\mkern2mu{#1#2}}}
\DeclareRobustCommand{\cev}[1]{%
  {\mathpalette\do@cev{#1}}%
}
\newcommand{\do@cev}[2]{%
  \vbox{\offinterlineskip
    \sbox\z@{$\m@th#1 x$}%
    \ialign{##\cr
      \hidewidth\reflectbox{$\m@th#1\vec{}\mkern4mu$}\hidewidth\cr
      \noalign{\kern-\ht\z@}
      $\m@th#1#2$\cr
    }%
  }%
}
\newcommand{\oset}[3][-1.75ex]{%
  \mathrel{\mathop{#3}\limits^{
    \vbox to#1{\kern-2\ex@
    \hbox{$\scriptstyle#2$}\vss}}}}
\newcommand{\osettext}[3][-1.15ex]{%
  \mathrel{\mathop{#3}\limits^{
    \vbox to#1{\kern-2\ex@
    \hbox{$\scriptstyle#2$}\vss}}}}
\newcommand{\vertiii}[1]{{\left\vert\kern-0.25ex\left\vert\kern-0.25ex\left\vert #1 
    \right\vert\kern-0.25ex\right\vert\kern-0.25ex\right\vert}}
\newcommand{\bvertiii}[1]{{\big\vert\kern-0.25ex\big\vert\kern-0.25ex\big\vert #1 
    \big\vert\kern-0.25ex\big\vert\kern-0.25ex\big\vert}}
\newcommand{\onevar}[1]{{\left\vert\kern-0.25ex\left\vert #1
    \right\vert\kern-0.25ex\right\vert_{1\text{-}\mathrm{var}}}}
\newcommand{\coloneqq}{\mathrel{\mathop:}=}
\newcommand{\eqqcolon}{\mathrel{=\hspace{-2.75pt}\mathop:}}
\newcommand{\supp}{\mathrm{supp}}
\DeclareMathOperator*{\argmin}{\mathrm{arg\,min}}
\newcommand\restr[2]{{
  \left.\kern-\nulldelimiterspace 
  #1 
  \vphantom{\big|} 
  \right|_{#2} 
  }}
\newcommand{\R}{\mathbb{R}}
\newcommand{\C}{\mathbb{C}}
\newcommand{\I}{\mathbb{I}}
\newcommand{\N}{\mathbb{N}}
\newcommand{\p}{\mathfrak{p}}
\newcommand{\E}{\mathbb{E}}
\newcommand{\sI}{\mathscr{I}}
\newcommand{\fI}{\mathfrak{I}}
\newcommand{\fD}{\mathfrak{D}}
\newcommand{\fS}{\mathfrak{S}}
\newcommand{\GL}{\operatorname{GL}_d}
\newcommand{\M}{\operatorname{M}_d}
\newcommand{\ds}{\diamondsuit}
\newcommand{\sig}{\mathfrak{sig}}
\newcommand{\fp}{\mathfrak{p}}
\newcommand\niton{\mathrel{\m@th\mathpalette\canc@l\owns}}
\newcommand\canc@l[2]{{\ooalign{$\hfil#1/\mkern1mu\hfil$\crcr$#1#2$}}}
\providecommand*{\shuffle}{%
  \mathbin{\mathpalette\shuffle@{}}%
}
\newcommand*{\shuffle@}[2]{%
  \sbox0{$#1\vcenter{}$}%
  \kern .15\ht0 
  \rlap{\vrule height .25\ht0 depth 0pt width 2.5\ht0}%
  \raise.1\ht0\hbox to 2.5\ht0{%
    \vrule height 1.75\ht0 depth -.1\ht0 width .17\ht0 %
    \hfill
    \vrule height 1.75\ht0 depth -.1\ht0 width .17\ht0 %
    \hfill
    \vrule height 1.75\ht0 depth -.1\ht0 width .17\ht0 %
  }%
  \kern .15\ht0 
}
\newcommand{\bdiam}{\tikz[baseline={([yshift=-.9ex]current bounding box.center)}]{\node[fill=black,rotate=45,inner sep=.1ex, text height=0.75ex, text width=0.75ex] {};%
\node[ font=\color{white}] (wi) {};}}
\newcommand{\leqnomode}{\tagsleft@true\let\veqno\@@leqno}
\newcommand{\reqnomode}{\tagsleft@false\let\veqno\@@eqno}
\renewcommand{\tocsection}[3]{%
  \indentlabel{\@ifnotempty{#2}{\bfseries\ignorespaces#1 #2\quad}}\bfseries#3}
\renewcommand{\tocsubsection}[3]{%
  \indentlabel{\@ifnotempty{#2}{\ignorespaces#1 #2\quad}}#3}
\newcommand\@dotsep{4.5}
\def\@tocline#1#2#3#4#5#6#7{\relax
  \ifnum #1>\c@tocdepth 
  \else
    \par \addpenalty\@secpenalty\addvspace{#2}%
    \begingroup \hyphenpenalty\@M
    \@ifempty{#4}{%
      \@tempdima\csname r@tocindent\number#1\endcsname\relax
    }{%
      \@tempdima#4\relax
    }%
    \parindent\z@ \leftskip#3\relax \advance\leftskip\@tempdima\relax
    \rightskip\@pnumwidth plus1em \parfillskip-\@pnumwidth
    #5\leavevmode\hskip-\@tempdima{#6}\nobreak
    \leaders\hbox{$\m@th\mkern \@dotsep mu\hbox{.}\mkern \@dotsep mu$}\hfill
    \nobreak
    \hbox to\@pnumwidth{\@tocpagenum{\ifnum#1=1\bfseries\fi#7}}\par
    \nobreak
    \endgroup
  \fi}
\renewcommand\csname r@tocindent0\endcsname{0pt}
\def\l@subsection{\@tocline{2}{0pt}{2.5pc}{5pc}{}}
\begin{document}
\title[Robust ICA]{A Robustness Analysis of Blind Source Separation}  


\author{\vspace{-2em}\footnotesize Alexander Schell}
\address[A1]{\vspace{-0.75em}Department of Statistics, Columbia University} 

\begin{abstract}
Blind source separation (BSS) aims to recover an unobserved signal $S$ from its mixture $X=f(S)$ under the condition that the effecting transformation $f$ is invertible but unknown. As this is a basic problem with many practical applications, a fundamental issue is to understand how the solutions to this problem behave when their supporting statistical prior assumptions are violated. In the classical context of linear mixtures, we present a general framework for analysing such violations and quantifying their impact on the blind recovery of $S$ from $X$. Modelling $S$ as a multidimensional stochastic process, we introduce an informative topology on the space of possible causes underlying a mixture $X$, and show that the behaviour of a generic BSS-solution in response to general deviations from its defining structural assumptions can be profitably analysed in the form of explicit continuity guarantees with respect to this topology. This allows for a flexible and convenient quantification of general model uncertainty scenarios and amounts to the first comprehensive robustness framework for BSS. Our approach is entirely constructive, and we demonstrate its utility with novel theoretical guarantees for a number of statistical applications. 
\end{abstract}

\thanks{\href{https://mathscinet.ams.org/mathscinet/msc/msc2020.html}{\color{black}{\textit{MSC2020 subject classification:}}} Primary 62H25, 62G35; secondary 62H05, 60L10, 62M86.\newline 
\indent\indent\textit{Keywords:} Blind source separation, independent component analysis, statistical robustness, uncertainty quantification, unsupervised learning, inverse problem, statistical independence, latent variable model.}
\email[A1]{\href{mailto:alexander.schell@columbia.edu}{\texttt{alexander.schell@columbia.edu}}} 

\vspace*{-2em}
\maketitle

\vspace{-1em}

\section{Introduction}
\noindent
The Problem of Blind Source Separation concerns the (local) inversion of an unknown function given only the image under this function of some also unobserved argument: 
\begin{equation}\label{BSS-Robust-BlindInversion}
\text{Provided only \ $X=f(S)$ \ for \, $f$ and $S$ \, unknown, \ invert \ $X$ \, for \, $S$.}
\end{equation}This problem is central to a great variety of applications ranging from uses in medical imaging, neuroscience and biology \citep{aziz2016fuzzy,delorme2004eeglab,lee2003application,makeig2011erp} over finance and astronomy \citep{back1997first, nuzillard2000blind} to physics and engineering \citep{akutsu2020application,cvejic2007improving,takahata2012unsupervised} to name but a few, witness also the large number of practical and theoretical approaches that have been proposed to address it; see for instance \cite{HBS,HKO,MNT} for an overview. Since the generic formulation \eqref{BSS-Robust-BlindInversion} of the BSS-problem is highly underdetermined, all of these methods have to operate under some additional conditions on $f$ and $S$---so-called `identifia- bility \mbox{assumptions'---to guarantee that a meaningful blind inversion $X\mapsto S$ is possible at all.}\\[-0.5em] 

\noindent
The by far most successful ansatz to interpreting and solving \eqref{BSS-Robust-BlindInversion} to date assumes that the hidden relation $f$ is linear\footnote{\ While the linearity assumption on $f$ is essential for the vast majority of currently employed BSS-methods, cf.\ e.g.\ \citep{HBS,HKO,HYP}, this formerly ironclad constraint has recently been relaxed at the cost of stronger structural assumptions on $S$, see for instance \citep{halva2021disentangling,AUX,khemakhem20iVAE} and our recent paper \cite{sigNICA2021} and the references therein.} and the source $S$ is modelled as a multidimensional random variable with statistically independent components. This ansatz defines a framework commonly referred to as Independent Component Analysis (ICA), which is based on probabilistic insights independently obtained by Reiers{\o}l, Skitovich, and Darmois \citep{reiersol1950, SKI, DAR} that have been influentially formulated as an optimisation problem by Comon \citep{COM}. Since its surprisingly recent \cite{HKO} conception, ICA has undergone various adaptations and extensions, most of which involve modifications and analyses of its underlying optimisation procedures or variations of the source model in terms of different (identifiability-sufficient) `non-degeneracy' assumptions on its component distributions, cf.\ e.g.\ \citep{HBS,HKO,MNT} and the references therein.\\[-0.5em] 

\noindent
A comparatively under-explored yet both obvious and fundamental question, especially given how extensively ICA-methods are used in their considerable range of applications, is whether and to what extent the solutions to \eqref{BSS-Robust-BlindInversion} persist when their underlying identifiability assumptions on $f$ and $S$ are violated. The scope and relevance of this question and its implications are illustrated by the following classic example from signal processing:  

\begin{example}[A More Realistic Cocktail Party]\label{example:cocktailparty}
Suppose you are monitoring a crowded room in which there are $d\in\N$ people, say $(S^1, \ldots, S^d)\eqqcolon S$, each of whom is engaged in conversation. Suppose further that each of their utterances is potentially relevant and informative to you, and that your goal is therefore to obtain a (consensual) record of what each speaker $S^i$ has said. To this end, you have installed $d$ voice recorders at different places in the room. However, instead of speaking directly into your microphones, the speakers are so engaged that their speeches overlap acoustically, so that instead of a single clear voice per recorder, each microphone records a mixture $X^i = f_i(S^1,\ldots, S^d)$ of the $d$ different voices. Will you still be able to recover the individual contribution of each speaker from your recordings? 

In the first instance at least, it seems plausible (due to the linearity of sound propagation) to model each of the recordings as an unknown linear superposition of the speakers' voices, i.e.\ to assume that $X\equiv(X^1,\cdots,X^d) = f(S)$ for some $f=(f_1,\cdots,f_d) : \R^d\rightarrow \R^d$ linear. In addition, although perhaps a little harder to justify in this context, it is surely of considerable mathematical convenience to further assume that the voices sought are\footnote{\ $\ldots$ sampled from some sound-describing stochastic processes $S^i=(S^i_t)_{t\geq 0}$ in $\R$, which in turn are $\ldots$} statistically independent. From these angles, it is then compelling to try to recover the desired individual speech signals according to \eqref{BSS-Robust-BlindInversion} by performing an ICA on the recorded mixtures $X$. 

While intuitively this is certainly a plausible idea, some limiting considerations about the validity of this approach and its underlying structural assumptions quickly arise: 
\begin{itemize}
\item due to the interactive nature of human conversation, it is almost certain that some or all of the speech signals -- for example, those of speakers engaged in a joint discussion -- will not actually be statistically independent of each other, at least not entirely;
\item owing to the spatial distance between the microphones and various phenomena affecting the transmission and propagation of sound, such as acoustic dispersion or attenuation, the actual relation between the recorded mixtures and the speaker's individual voices is often effectively non-linear, at least mildly so; 
\item the voice signals themselves are typically not iid in time but naturally exhibit a great deal of intertemporal statistical complexity, and both the physical propagation of these signals and their recording are rarely perfectly unperturbed but typically subject to various forms of noise and technical corruption, such as ambient noise or asynchronous recording at inhomogeneous sampling rates between different microphones.  
\end{itemize}  
In view of these complications which make the true mixing constellation deviate from the simplifying idealisation of linearity and independence, will an ICA of the recorded mixtures still result in a meaningful reconstruction of the individual vocal expressions that, if probably not exact, is at least still `good enough' to identify the main message that each speaker has communicated? Or will the violations of the underlying model assumptions affect the ICA performance so severely that the returned signals are distorted beyond intelligibility, or the anticipated signal separation fails completely? Is the transition between these outcomes continuous, and can the accuracy of the returned signals be meaningfully quantified against the `amount of distortion' that causes the true scenario to deviate from its idealised identifiability assumptions?  \hfill $\bdiam$     
\end{example} 
\noindent
Formally speaking, each of the above items represents a potential violation of the structural prior assumptions about source and mixing that underlie the vast majority of ICA procedures, including all of the most popular such methods to date, cf.\ \citep{HBS,HKO,MNT}. Referring to the pair $(S,f)$ in \eqref{BSS-Robust-BlindInversion} as `the cause' of $X$, the central question then becomes how and to what extent such violations of the prior assumptions about the cause of an observation affect the algorithmic consistency of an ICA procedure, and whether this consistency can be meaningfully quantified against general distortions of its supporting identifiability assumptions.\\[-0.5em]  

\noindent
Although according investigations on the robustness of BSS and ICA solutions have been repeatedly called for \cite{HRS}, questions of this sort have received surprisingly little rigorous attention in the literature. To the best of our knowledge, there is currently no robustness theory for BSS or ICA that comprehensively analyses the above questions, and available results are limited to either proofs of statistical consistency\footnote{\ Consistency results can be seen as `asymptotic (partial) robustness wrt.\ empirical approximation' since they establish the convergence of solutions along certain sequences of sample-based estimators (but not necessarily along all convergent sequences, which would characterise robustness under a sequential topology).}, e.g.\ \citep{chen2005consistent,chen2006efficient,samarov2004nonparametric}, or partial sensitivity analyses of numerical subroutines \citep{afsari2008, cai2019perturbation, shi2015}, or are asymptotic \cite{chen2005consistent}, or confined to highly specific BSS-models \cite{behr2018multiscale,kervazo2021robust} or to only very particular or (semi-)parametric noise or dependence models \cite{belkin2013,kawanabe2005,pfister2019,SAM}, and even these few results are mostly restricted to the `time-independent' special case where $X$ is modelled as an iid sequence of random vectors.\\[-0.5em]

\noindent
In this paper, we aim to fill these gaps by providing a flexible and comprehensive robustness theory for the blind source separation of (iid and) time-dependent observables. Modelling the source $S$ in \eqref{BSS-Robust-BlindInversion} as a discrete- or continuous-time stochastic process in $\R^d$ and working under the general paradigm of ICA, our main contributions are as follows:\\[-0.5em]

\noindent
Building on a concise analysis of the abstract BSS problem \eqref{BSS-Robust-BlindInversion}, we propose a general and flexible topological notion of statistical robustness\footnote{\ Our use of the word `robust' is in the sense of Huber and Ronchetti \cite{huber2009}, as specified in Section \ref{appendix:sect:BSSformal}.} for Blind Source Separation and Independent Component Analysis that covers a wide range of relevant infringement scenarios (Section \ref{appendix:sect:BSSformal}).\newline
The idea is to describe robustness as continuity with respect to an informative ICA-tailored topology on the space of hidden causes underlying a BSS-observable. In Section \ref{chap:robustICA:sect:product_topology}, we construct such an ICA-topology from a natural and coarse premetric topology on the space of laws of stochastic processes (Section \ref{chap:robustICA:sect:premetric}), where our premetric is derived from a graded family of path-space functionals pertaining to the signature map from rough paths theory (Section \ref{sect:signalsconvergence}). Combined with our general formalisation of robustness, this ICA-topology provides a flexible and model-free way of capturing violations of idealised identifiability assumptions and relating them, in a finite---i.e.\ `non-asymptotic'---and conveniently quantifiable way, to their resulting impact on the accuracy of an ICA-inversion. This is demonstrated in Section \ref{chap:robustICA:sect:auxiliaries}, where we present a generic blind-inversion map that performs \eqref{BSS-Robust-BlindInversion} under ICA-typical identifiability assumptions while being the first to achieve this with unified stability guarantees for very general deviations from its underlying consistency assumptions. Conceptually, this inversion map has the property of being continuous with respect to the aforementioned ICA-topology, and its robustness guarantees are due to informative and entirely explicit moduli of continuity with respect to said topology (Theorems \ref{thm:robust_ica} and \ref{thm:robustness}). The significance of our approach for applications is illustrated in a series of statistical use cases in Section \ref{sect:applications}, including the demonstration of provable robustness for the practical infringement scenarios from Example \ref{example:cocktailparty}.     

\vfill
To keep our exposition compact, most of our proofs and some technical auxiliary results are delegated to the \hyperref[sect:preliminariesnotation]{appendix}. 
\vfill

\newpage
\noindent
We assume familiarity with Section \ref{sect:preliminariesnotation} and its notation, and adopt the following convention. 

\begin{restatable}[Identifying Signals With Their Laws]{convention}{convA}\label{convention:signals-distributions}
From Section \ref{sect:bss} onwards, we will tacitly identify a given signal $\bm{Y}$ in $\R^d$ (a random variable on $\mathcal{C}_d$) with its law $Y\equiv\mathbb{P}_{\bm{Y}} \coloneqq\mathbb{P}\circ\bm{Y}^{-1}$ (a prob.\ measure on $\mathcal{C}_d$). This justifies the prevalent abuse of notation $\bm{Y}\,[\cong Y]\in\mathcal{M}_1(\mathcal{C}_d)$.
\end{restatable} 
\noindent
(Equivalent to considering two [$\mathcal{C}_d$-valued] random variables `equal' if they are equal in distribution.) \mbox{Transformations of laws are understood as push-forwards, i.e.\ in the sense of \eqref{transformation}.} 

\section{The Problem of Blind Source Separation}
\label{appendix:sect:BSSformal} 
\noindent
A well-founded stability analysis of blind source separation requires a conceptionally precise formulation of the general problem statement \eqref{BSS-Robust-BlindInversion} that we gave at the beginning. Taking $\bm{S}=(\bm{S}_t)_{t\in\I}$ as a stochastic process in $\R^d$, $\I\subset\R$ compact, this section provides such a formulation: Following a rigorous definition of the inverse problem \eqref{BSS-Robust-BlindInversion} and its associated concept of solution (Section \ref{sect:bss}), we anticipate a general notion for the stability of blind inversion and turn this into a natural and exact definition of robustness in the context of ICA (Section \ref{sect:robustica-basic}).\\[-0.75em]  

The notions introduced in this section, which at first reading may seem somewhat abstract to the general reader, are illustrated in Figure \ref{fig:BSStriple} and further motivated in Section \ref{sect:bssformal:add_motivation}.   

\noindent 
\subsection{Blind Source Separation}\label{sect:bss} The problem of blind source separation is an inverse problem with the objective of recovering an unobserved signal $\bm{S}$ from its mixture $\bm{X}=f(\bm{S})$ under the condition that the effecting transform $f$ is invertible but unknown. The underlying relationship between $\bm{X}$ and $\bm{S}$ is expressed by the identity 
\begin{equation}\label{appendix:sect:BSSformal:eq1}
\bm{X}_t \, = \, f(\bm{S}_t) \quad \text{ for all } \ t\in\I,
\end{equation}
which is well-defined iff the domain of $f$ contains the spatial support $D_{\bm{S}}$ of $\bm{S}$, see \eqref{def:spatial_support:eq1}. Assuming that $f$ and its inverse are both continuous, we have $\overline{f(D_S)} = D_{X}$ (cf.\ \eqref{spatsup:law}) and call such
\begin{equation}\label{appendix:sect:BSSformal:eq1.1}
\text{$(X,S,f)$ \ \ a \emph{BSS-triple}} 
\end{equation}   
with \emph{observable $X$, source $S$}, and \emph{mixing transformation $f$}. In partial summary of the above, note that the `causal part' $(S,f)$ of any such triple is an element of the space 
\begin{equation}\label{causalspace}
\mathfrak{C}\coloneqq\left\{(\tilde{S}, \tilde{f})\in\mathcal{M}_1\times Z^Z \, \left.\right| \, \tilde{f}\in C^{0,0}(D_{\tilde{S}})\right\} 
\end{equation}
where we set $Z\coloneqq\R^d$ and employed Convention \ref{convention:signals-distributions}.\footnote{\ Recall that by this convention we identify a signal $\tilde{S}$ with its law $\mathbb{P}_{\tilde{S}}\in\mathcal{M}_1$. Note further that in \eqref{causalspace} we made the slight abuse of notation: $\tilde{f}\in C^{0,0}(D_{\tilde{S}}) \ :\Leftrightarrow \ \left.\tilde{f}\right|_{D_{\tilde{S}}}\!\!\in C^{0,0}(D_{\tilde{S}})$, to not clutter our exposition.} A few more definitions will be useful.

\subsubsection{The Problem of Blind Source Separation}Our blindness to the right-hand side of \eqref{appendix:sect:BSSformal:eq1} imposes an asymmetry between the observable $X$ and its cause $(S,f)$: If we are only given $X$, bar any information on $S$ or $f$, then we cannot distinguish the `original' cause $(S,f)$ underlying $X$ from any other pair in the set $\{(\tilde{S},\tilde{f}) \mid (X, \tilde{S},\tilde{f}) \text{ BSS-triple}\}\subseteq\mathfrak{C}$ of alternative causes. In this case, the original $S$ cannot be recovered from $X$ any better than finding an element of 
\begin{equation}
[S]_X \coloneqq \big\{\tilde{S} \in \mathcal{M}_1 \ \big| \ \exists\, \tilde{f}\in C^{0,0}(D_{\tilde{S}}) \, : \, \text{$(X,\tilde{S}, \tilde{f})$ is BSS-triple} \big\} = \operatorname{ev}_X\!\big(C^{0,0}(D_X)\big).
\end{equation} 

\noindent 
The BSS paradigm ameliorates this by upgrading our information from (nothing but) $X$ to:\vspace{-0.25em}
\begin{equation}\label{appendix:sect:BSSformal:intext:eq:identconds}
\text{$X$ \quad and \quad $\sI$ \quad and \quad the \emph{inclusion} \ $(S,f)\stackrel{!}{\bm{\in}} \mathscr{I}$}
\end{equation}
for some a-priori given non-empty subset $\mathscr{I}$ of $\mathfrak{C}$, called an \emph{identifiability assumption} (IA), which describes our prior knowledge on the original cause $(S,f)$ of $X$. The larger the set $\sI$, the less we know about $(S,f)$: if $\sI=\{(S,f)\}$ then we know the original cause exactly, and if $\sI=\mathfrak{C}$ then we are `completely blind' about the original cause.\\[-0.5em]

\noindent
The upgrade of the data from $X$ to $(X,\sI)$ reduces the asymmetry between $X$ and $(S,f)$: Under \eqref{appendix:sect:BSSformal:intext:eq:identconds}, the source of a triple $(X,S,f)$ can now be recovered up to within the smaller class   
\begin{equation}\label{appendix:sect:BSSformal:intext:eq:maxsol2}
\langle X\rangle_{\!\mathscr{I}}\coloneqq \big\{\tilde{S}\equiv\tilde{g}(X) \ \big| \ \tilde{g}\in C^{0,0}(D_X)\ \text{and} \ \big(\tilde{g}(X),\tilde{g}^{-1}\big)\in\mathscr{I}\big\}\ \subseteq\ [S]_X.
\end{equation}   
In other words: each element in $\langle X\rangle_{\!\mathscr{I}}$ is a best-approximation of $S$ given \eqref{appendix:sect:BSSformal:intext:eq:identconds}, and any two elements in $\langle X\rangle_{\!\mathscr{I}}$ are `indistinguishable'\footnote{\ More precisely, this is captured by the equivalence relation $\left[\tilde{S} \, \sim_{(X,\mathscr{I})} \check{S} \, : \Leftrightarrow \ \tilde{S}, \check{S} \, \in \, \langle X\rangle_{\!\mathscr{I}}\right]$ on $[S]_X$.} from $S$ given $(X,\sI)$. Let us agree on terminology. 

\begin{definition}[BSS-triple on $\sI$]\label{def:BSS-triple}
Let $\sI\subseteq\mathfrak{C}$. A BSS-triple $(X,S,f)$ such that $(S,f)\in\sI$ is called \emph{a BSS-triple on $\mathscr{I}$}, in symbols: $(X,S,f)_{\!\mathscr{I}}$. The associated class $\langle X\rangle_{\!\mathscr{I}}$ in \eqref{appendix:sect:BSSformal:intext:eq:maxsol2} is the \emph{maximal solution} from $X$ given $\mathscr{I}$, and its elements are the \emph{quasi sources} of $X$ given $\mathscr{I}$.
\end{definition}  

A schematic illustration of these and the following definitions is given in Figure \ref{fig:BSStriple}, while in Example \ref{example:cocktailparty_contd} the notions of this section are fleshed out in the cocktail party context of p.\ \pageref{example:cocktailparty}.
\begin{remark}\label{rem:triples-and-bounds} 
\begin{enumerate}[font=\upshape, label=(\roman*)]
\item The maximal solution \eqref{appendix:sect:BSSformal:intext:eq:maxsol2} of a BSS-triple $(X,S,f)_{\mathscr{I}}$ is maximal wrt.\ set inclusion and (hence) unique by construction. 
\item If the identifiability assumption $\mathscr{I}$ in \eqref{appendix:sect:BSSformal:intext:eq:identconds} is given as a product set in $\mathcal{M}_1\times C^{0,0}(\R^d)$, say $\mathscr{I} = \mathscr{S}\times\mathfrak{T}$, then the maximal solution $\langle X\rangle\equiv\langle X\rangle_{\mathscr{S}\times\mathfrak{T}}$ from $X$ given $\mathscr{I}$ reads
\begin{equation}\label{appendix:sect:BSSformal:intext:eq:maxsol_prodset}
\langle X\rangle = \operatorname{ev}_X\!\left(\mathfrak{T}\right)\cap\mathscr{S}
\end{equation} 
where $\operatorname{ev}_X(\mathfrak{T})$ is the image of the evaluation map $\operatorname{ev}_X : \mathfrak{T}\ni g\mapsto g(X)$, cf.\ \eqref{transformation}. 
\item\label{rem:triples-and-bounds:it3} The [size of the] set $\langle X\rangle_{\!\mathscr{I}}$ depends on $\mathscr{I}$. In particular: the weaker the IA $\sI$ (i.e.\ the larger the set $\mathscr{I}$), the larger is the class $\langle X\rangle_{\!\mathscr{I}}$ and hence the `further away' the set of all quasi sources will be from the original source $S \cong\{S\}$. Accordingly, any $S$-containing subset $[S]_\circ \, \subseteq \, \langle X\rangle_{\!\mathscr{I}}\,[\subseteq [S]^\circ, \text{ resp.}]$ defines an upper [resp.\ lower] bound on the `accuracy' to which $S$ can be recovered from $X$.  
\end{enumerate}  
\end{remark} 

\noindent 
Central to the blind inversion task $X \mapsto S$ is the \emph{accuracy} to which it is to be achieved --- that is, what deviations [of the attained quasi sources] from the original source $S$ are considered optimal or most tolerable? The answer is typically defined by the BSS-practitioner or their application context, and it translates into the constraint\footnote{\ So that each element $\tilde{S}\in\lceil S\rceil\cap\langle X\rangle_{\!\sI}$ is an \emph{optimal solution} to the such-constrained BSS problem.} $\langle X\rangle_{\!\sI}\subseteq\lceil S\rceil$ for some a-priori given `optimal' superset $\lceil S\rceil$ of $S$ (the `accuracy bound'). 
The main difficulty, then, is to identify an IA $\sI$ for which this inclusion holds, i.e.\ for which the set $\langle X\rangle_{\!\sI}$ is identical to the optimum superset $[S]_\circ\coloneqq\lceil S\rceil\cap\langle X\rangle_{\!\sI}$ of $S$. In case of non-optimality, the deviation between $\langle X\rangle_{\!\sI}$ and $[S]_\circ$ can be controlled by deriving a lower bound $[S]^\circ$ a posteriori to the choice of $\sI$.\\[-0.75em]

The above immediately suggests the following notions. 

\begin{definition}[Blind Inversion]\label{def:blindinversion}Given a map $\lceil\,\cdot\,\rceil : \mathcal{M}_1\rightarrow 2^{\mathcal{M}_1}$, an IA $\mathscr{I}\subseteq\mathfrak{C}$ is called
\begin{equation}\label{def:blindinversion:eq1}
\text{\emph{$\lceil\,\cdot\,\rceil$-sufficient} \ \ if \ \ $\langle X\rangle_{\!\mathscr{I}}\subseteq\lceil S\rceil$ \ for each BSS-triple $(X,S,f)_{\!\sI}$.} 
\end{equation}
Further, for any $\sI\subseteq\mathfrak{C}$ we call \emph{inversion map on $\sI$} any function $\Phi : \mathcal{M}_1\rightarrow 2^{\mathcal{M}_1}$ such that: 
\begin{equation}\label{def:blindinversion:eq2}
\emptyset\,\neq\,\Phi(X)\,\subseteq\,\langle X\rangle_{\!\sI}, \quad \text{for each BSS-triple } (X,S,f)_{\!\sI}.
\end{equation}
Finally, a map $\varphi : \mathcal{M}_1\rightarrow 2^{\mathcal{M}_1}$ shall be called \emph{faithful} if $S\in\varphi(S)$ for each $S\in\mathcal{M}_1$.
\end{definition} 
The task of blind inversion now seeks to achieve the following: For a given accuracy bound $\lceil\,\cdot\,\rceil$, can we find an identifiability assumption $\sI$ (the weaker the better) that is strong enough to guarantee that for any cause $(S,f)\in\sI$, the source $S$ can be recovered from its mixture \eqref{appendix:sect:BSSformal:eq1} up to an accuracy of $\lceil\,\cdot\,\rceil$, i.e.\ up to within an element of $\lceil S\rceil$? And if so, is there an explicitly computable algorithm that performs this inversion?\\[-0.5em]

\begin{figure*} 
\centering
\begin{tabular*}{\textwidth}{@{\extracolsep{\fill}}c c cc}
&\hspace{-2.5em}\includegraphics[width=.5\textwidth]{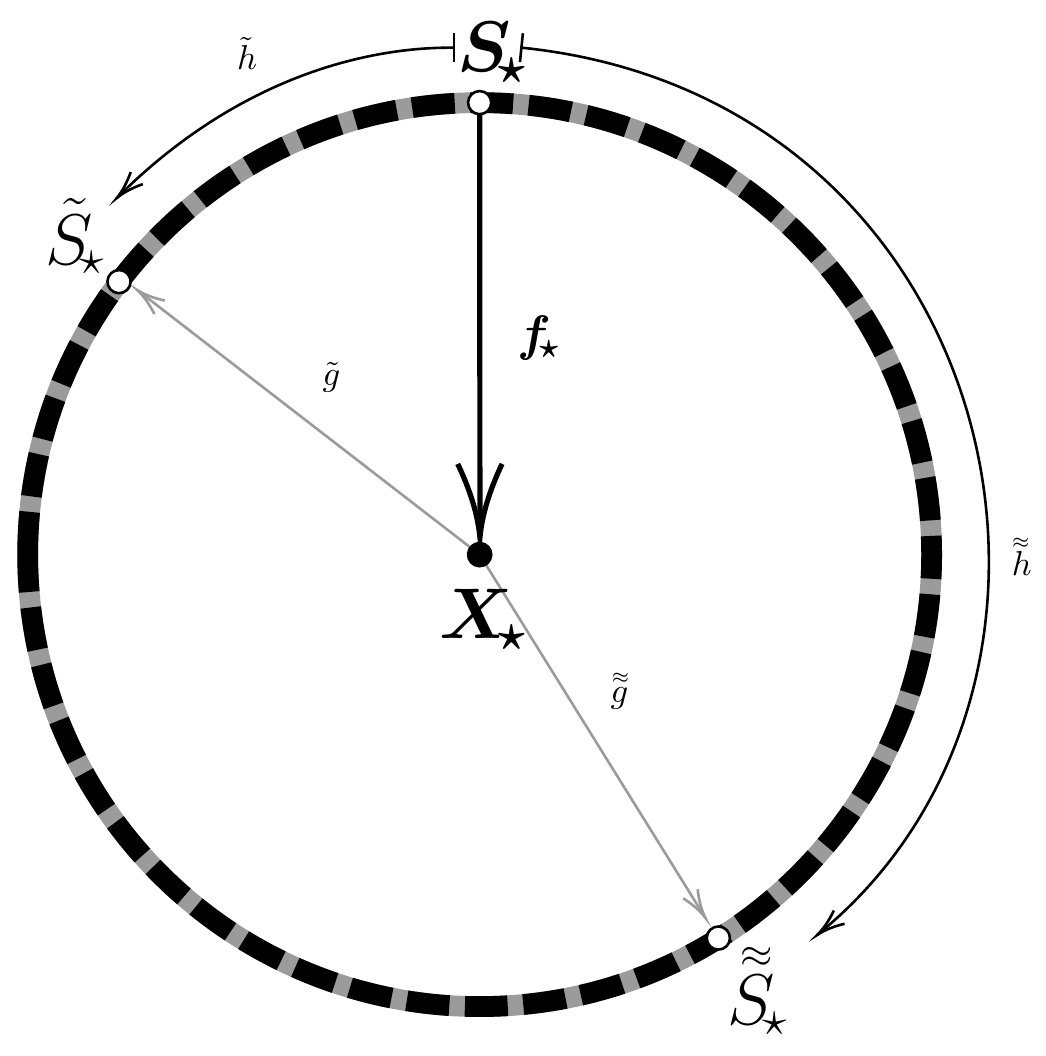} &
\hspace{2.5em}\includegraphics[width=.5\textwidth]{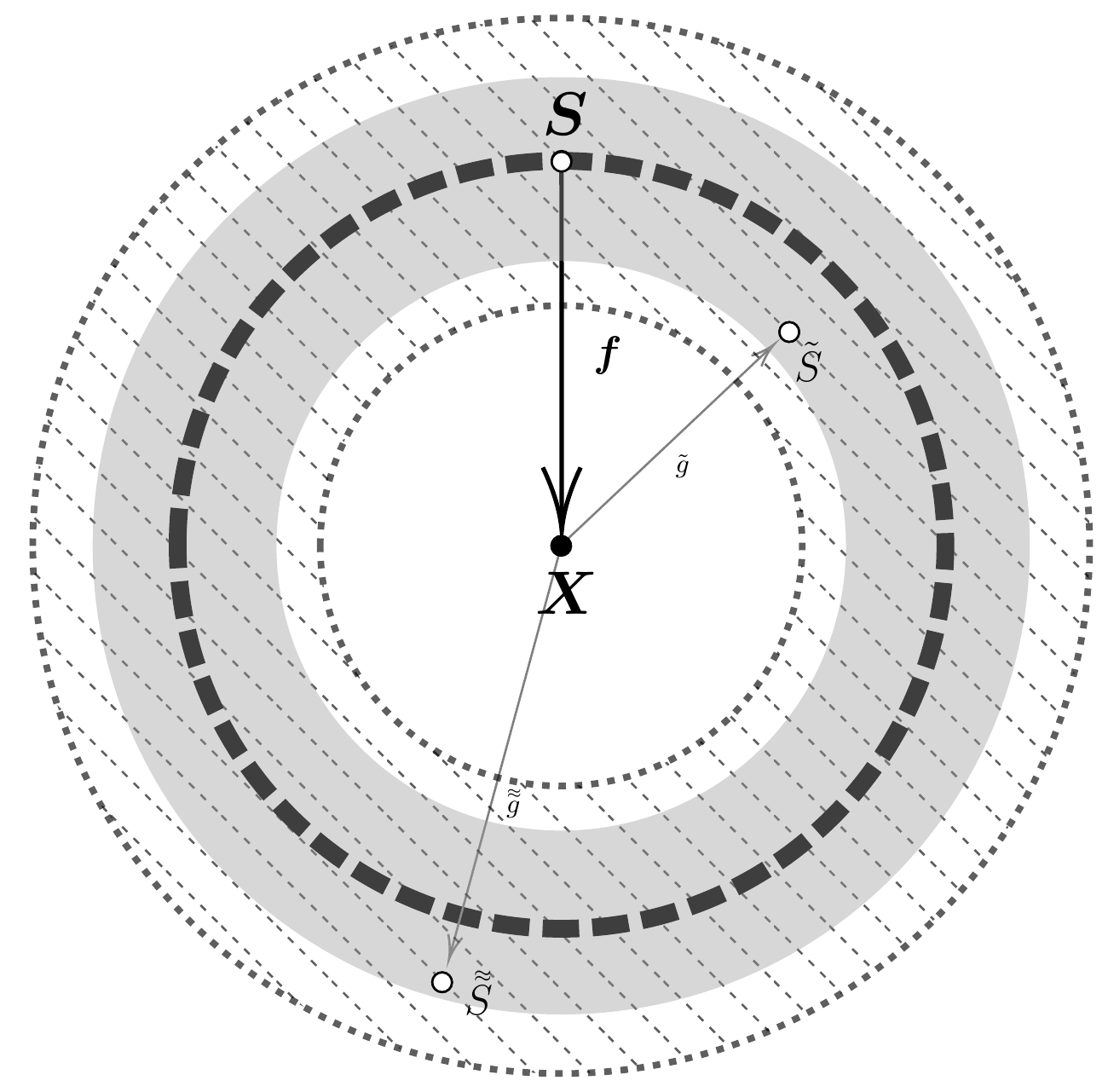} &\\[0.5em]
&a.\, $[S_\star]_\circ = \langle X_\star\rangle_{\!\mathscr{I}_\ast} \!\!= \Phi(X_\star)$ & b.\, $[S]_\circ\subsetneq \langle X\rangle_{\!\mathscr{I}}, \ \Phi(X) \neq [S]_\circ$&
\end{tabular*}  
\vspace{1em}
\caption{\emph{Schematic illustration of two BSS-triples $(X_\star,S_\star,f_\star)_{\!\sI_\ast}, (X, S, f)_{\!\sI}$ and their respective maximal solution. Each point in the plane represents a signal, i.e.\ an element of $\mathcal{M}_1$, and the coloured areas visualise subsets of $\mathcal{M}_1$ related to the observables $X_\star, X$ or their sources $S_\star, S$. Depicted in this way are, for some accuracy-bound $\lceil\,\cdot\,\rceil$: the sets of accurate quasi sources $[S_\star]_\circ\coloneqq\langle X_\star\rangle_{\!\sI_\ast}\cap\lceil S_\star\rceil$ and $[S]_\circ\coloneqq\langle X\rangle_{\!\sI}\cap\lceil S\rceil$ \emph{(}dashed black circles\emph{)}, the maximal solution $\langle X\rangle_{\!\sI}$ \emph{(}striped annulus with pointed boundary\emph{)}, as well as the outputs $\Phi(X_\star)$ and $\Phi(X)$ of some inversion map $\Phi\equiv\Phi_{\!\sI_\ast}$ on $\sI_\ast$ \emph{(}grey annuluses\emph{)}. The IA $\sI_\ast$ supporting $X_\star$ is $\lceil\,\cdot\,\rceil$-sufficient, which implies that the sets $\langle X_\star\rangle_{\!\sI_\ast}=\Phi(X_\star)$ (i.e., the set of all $\sI_\ast$-informed best guesses on $S_\star$ given $X_\star$) and $[S_\star]_\circ$ coincide, see panel a. In particular, each quasi source $\tilde{S}_\star\equiv\tilde{g}(X_\star)$ in $\langle X_\star\rangle_{\!\sI_\ast}$ is an `accurate' guess for $S_\star$ insofar as the residual $\tilde{h}=\tilde{g}\circ f_\star$ that relates $S_\star$ to $\tilde{S}_\star$ is in `minimum distance to identity' in a sense determined\protect\footnotemark by $\lceil\,\cdot\,\rceil$. In panel b., the IA $\sI$ underlying $X$ is too weak to be $\lceil\,\cdot\,\rceil$-sufficient and so the maximal solution $\langle X\rangle_{\!\sI}$ is a proper superset of the optimum $[S]_\circ$. The \emph{robustness} of $(\sI_\ast,\Phi)$ asserts that the inversion property of the map $\Phi$ is resilient under deviations from $\sI_\ast$ of the causes (in $\mathfrak{C}$) underlying its input signal. This amounts to a `continuous transition from a.\ to b.', as captured by \eqref{BSS:robust_prelim1}: if, in some general and appropriate sense, the causes $(S,f)\in\sI$ and $(S_\star,f_\star)\in\sI_\ast$ are `not too different', can we quantitatively guarantee that $\Phi(X)$ and $\lceil S\rceil$ are not too far apart either?}}   
\label{fig:BSStriple}    

\end{figure*}

In formal terms, the (general) \textbf{Problem of Blind Source Separation} reads as follows:\\[-0.25em]   

\noindent\makebox[\textwidth][c]{%
\fbox{
\noindent
\begin{minipage}{0.9\textwidth}
\leqnomode
\begin{equation}\label{appendix:sect:BSSformal:BSS_reformulation}\tag{$\star$}
\begin{gathered}
\text{Given a faithful map $\lceil\,\cdot\,\rceil : \mathcal{M}_1\rightarrow 2^{\mathcal{M}_1}$, determine:}\\
\begin{aligned}
&\text{(a) a [weak] IA $\mathscr{I}\subseteq\mathfrak{C}$ that is $\lceil\,\cdot\,\rceil$-sufficient;}\\
&\text{(b) a computable inversion map $\Phi$ on $\sI$.}
\end{aligned}\\[0.25em] 
\end{gathered}
\end{equation}
\end{minipage}}
}{\ }\\[0.5em]
     
\footnotetext{\ For instance, the popular choice \eqref{example:cocktailparty_contd:eq1} of $\lceil S\rceil$ as the monomial orbit $\M\cdot S$ mandates $\tilde{h}\in\M$.}Any pair $(\sI, \Phi)$ that satisfies (a)$\,\&\,$(b) is called a \emph{solution} to the BSS-problem for $\lceil\,\cdot\,\rceil$.

\begin{remark}As it stands, the above problem \eqref{appendix:sect:BSSformal:BSS_reformulation} in fact comprises a whole family of inverse problems, even for a fixed $\lceil\,\cdot\,\rceil$, since the $\sI$-qualifying adjective `weak' in $\eqref{appendix:sect:BSSformal:BSS_reformulation}_{(a)}$ is not explicitly\footnote{\ E.g., while the choice $(\lceil S\rceil, \sI) = \big(\{S\}, \{(S, f)\}\big)$ is trivial, the choice $(\lceil S\rceil, \sI) = \big([S]_X, \mathfrak{C}\big)$ is meaningless; solutions to $\eqref{appendix:sect:BSSformal:BSS_reformulation}_{(a)}$ that are of practical or theoretical relevance will lie strictly between these two extremes.} specified. This ambiguity can be avoided by demanding $\sI$ to be the `weakest' (i.e.\ globally maximal wrt.\ set inclusion) $\lceil\,\cdot\,\rceil$-sufficient IA in $\mathfrak{C}$, which would turn $\eqref{appendix:sect:BSSformal:BSS_reformulation}_{(a)}$ into the stronger task of characterising $\lceil\,\cdot\,\rceil$-accurate invertibility. We refrain from this stronger formulation here in order to better capture the BSS-problem in the form in which it is currently discussed and approached in the literature, cf.\ e.g.\ \citep{HBS,HKO,MNT} and the references therein. \hfill $\bdiam$   
\end{remark}

\begin{remark}The BSS-problem \eqref{appendix:sect:BSSformal:BSS_reformulation} consists of an abstract part, (a), which theoretically ensures the [sufficiently accurate] blind invertibility of an observable back to its source, and a more procedural part, (b), which asks for an algorithm to perform this inversion in (both) theory and statistical practice. The documented solutions to \eqref{appendix:sect:BSSformal:BSS_reformulation} usually address the tasks (a) and (b) simultaneously, typically by first proving the $\lceil\,\cdot\,\rceil$-sufficiency \eqref{def:blindinversion:eq1} of an IA constructively and then deriving an explicit inversion map \eqref{def:blindinversion:eq2} as a direct consequence of the proof strategy employed, cf.\ \cite{COM,HBS,HKO,MNT}. We also follow this general scheme in Section \ref{chap:robustICA:sect:auxiliaries}. \hfill $\bdiam$ 
\end{remark}

\noindent
In addition to the classical formulation \eqref{appendix:sect:BSSformal:BSS_reformulation}, we propose to (provisionally) say that a solution
\begin{equation}\label{BSS:robust_prelim1}
\begin{gathered}
(\sI, \Phi) \ \text{is \textbf{robust}} \quad \text{if,} \quad \text{in some appropriate sense,} \quad\text{the maps }\\
(\tilde{S},\tilde{f})\,\mapsto\,\Phi(\tilde{f}(\tilde{S})) \ \text{ and } \ (\tilde{S},\tilde{f})\,\mapsto\,\mathrm{dist}_{\varrho\,}\!\!\left(\Phi(\tilde{f}(\tilde{S})),\big\lceil\tilde{S}\big\rceil\right) \quad \text{are continuous \ on $\sI$,} 
\end{gathered} 
\end{equation}   
where as usual a map $\Phi$ is called \emph{continuous on $\mathscr{I}$} (or \emph{$\sI$-continuous}) if $\Phi$ is pointwise continuous on $\mathscr{I}$; compare to \citep{huber2009} for the general notion of robustness that we follow here. 

The above uses the `asymmetric distance'
\begin{equation}\label{BSS:robust_prelim1:eq1}
\mathrm{dist}_{\varrho\,}\!\!\left(\mathcal{A}, \mathcal{B}\right) \coloneqq \sup\nolimits_{a\in\mathcal{A}}\mathrm{d}_{\varrho\,}\!(a,\mathcal{B}) \quad\text{with}\quad \mathrm{d}_{\varrho\,}\!(a,\mathcal{B})\coloneqq\inf\nolimits_{b\in\mathcal{B}}\varrho(a,b) 
\end{equation}
where $\varrho : \mathcal{M}_1^{\times 2}\rightarrow\R_+$, with $\varrho\circ(\mathrm{id}\times\mathrm{id})=0$, is some appropriate distance function between the outputs of $\Phi$ and the desired signals in $\lceil\,\cdot\,\rceil$; specific choices of $\varrho$ will depend on $\lceil\,\cdot\,\rceil$.

\begin{remark}
From a purely conceptional angle, the (anticipated) notion of robustness \eqref{BSS:robust_prelim1} adds to the conventional analysis of problem \eqref{appendix:sect:BSSformal:BSS_reformulation} the classical aspects of (numerical and analytical) well-conditionedness and stability, which play a central role throughout many areas of pure and applied mathematics, see e.g.\ \citep{hadamard1902,higham2002,huber2009}, \citep[p.~139]{arendturban2018}, or \citep[Sect.\ IV.12.3]{gowers2008princeton}.
\end{remark}  

\subsection{Independent Component Analysis}\label{sect:robustica-basic}
As indicated above, the preliminary description of BSS-robustness \eqref{BSS:robust_prelim1} is still only an informal desideratum rather than a precise definition; for the latter, both $\varrho$ and a suitable topology (`$\sI$-continuity') need to be further specified. This gap is closed in the present subsection, which provides an according completion of \eqref{BSS:robust_prelim1} for the most widely received special instance of \eqref{appendix:sect:BSSformal:BSS_reformulation} known as Independent Component Analysis (ICA).\\[-0.5em] 

\noindent
To prepare, denote by $\lceil\,\cdot\,\rceil_{\mathfrak{m}} : \mathcal{M}_1\rightarrow 2^{\mathcal{M}_1}$ the function that sends a signal $S$ to its orbit $\lceil S\rceil_{\mathfrak{m}}\coloneqq \mathrm{M}_d\cdot S$ under the monomial group (action) $\mathrm{M}_d\coloneqq\{\Lambda P\mid \Lambda\in\tilde{\Delta}_d, P\in\mathrm{P}_d\}$ on $\mathcal{M}_1$, cf.\ \eqref{example:cocktailparty_contd:eq1}. Let similarly $\tilde{\mathrm{M}}_d\coloneqq\{h : u \mapsto Mu + b\mid M\in\M, \, b\in\R^d\}$ be the subgroup [of $C^{1,1}(\R^d)$] of affinely monomial transformations, with $\lceil S\rceil_{\tilde{\mathfrak{m}}}\coloneqq\tilde{\mathrm{M}}_d\cdot S$. Write also $\mathfrak{R}\coloneqq\{g : \R^d\rightarrow\R^d\,|\, g\text{ is Borel}\}$ for the set of Borel measurable transformations on $\R^d$.\\[-0.75em]          

\noindent
Below, see \eqref{def:icainversion:eq1}, defines a refinement of \eqref{def:blindinversion:eq2}. Note for this that if $\sI\!\!\subseteq\!\mathfrak{C}$ is $\lceil\,\cdot\,\rceil_{\mathfrak{m}}$-sufficient, then for any inversion map $\Phi$ on $\sI$ there is $\hat{\Phi} : \mathcal{M}_1 \rightarrow 2^{\mathfrak{R}}$, $X\mapsto\{B\in\mathfrak{R}\mid B\cdot X\in\Phi(X)\}$, with 
\begin{equation}\label{lem:solmap-matrixvalued:eq1}
\Phi(X) \, = \, \hat{\Phi}(X)\cdot X, \quad \text{for each BSS-triple } (X,S,f)_{\!\sI}.
\end{equation}
Among the class of inverse problems subsumed under \eqref{appendix:sect:BSSformal:BSS_reformulation}, the following variant is by far most established one to date, cf.\ for instance \cite{HBS, HRS, HKO, MNT}.      
\begin{definition}[Independent Component Analysis]\!\!\!\!\label{appendix:sect:BSSformal:def:ICA}
A set \!$\sI\!\subseteq\!\mathfrak{C}$ is called an \emph{ICA-condition} if
\begin{equation}\label{appendix:sect:BSSformal:def:ICA:eq1}
\begin{gathered}
\mathscr{I} = \mathscr{S}\times\GL \quad\text{for some}\quad\mathscr{S}\supseteq\hat{\mathscr{S}}_\bot \quad \text{and} \quad \mathscr{I} \ \text{ is \ $\lceil\,\cdot\,\rceil_{\mathfrak{m}}$-sufficient}; 
\end{gathered} 
\end{equation}
a BSS-triple $(X,S,f)_{\!\mathscr{I}}$ is called an \emph{ICA-triple} (\emph{on $\sI$}) if $\sI$ is an ICA-condition, and in this case we call an \emph{ICA-inversion on $\sI$} any function 
\begin{equation}\label{def:icainversion:eq1}
\hat{\Phi} : \mathcal{M}_1\rightarrow2^\mathfrak{R} \quad\text{such that:}\quad \emptyset\,\neq\,\hat{\Phi}(f(S))\circ f\,\subseteq\, \tilde{\mathrm{M}}_d, \quad \text{for each } (S,f)\in\sI.
\end{equation}The Problem of \emph{Independent Component Analysis (ICA)} is \eqref{appendix:sect:BSSformal:BSS_reformulation} for $\lceil\,\cdot\,\rceil\equiv\lceil\,\cdot\,\rceil_{\tilde{\mathfrak{m}}}$ and with $\sI$ and $\Phi$ of the form \eqref{appendix:sect:BSSformal:def:ICA:eq1} and \eqref{def:icainversion:eq1}; call an according pair $(\sI,\hat{\Phi})$ a \emph{solution} to the ICA problem.  
\end{definition}    
\begin{remark}In other words, an ICA-triple $(X,S,f)_{\!\mathscr{I}}$ is a BSS-triple where the source $S$ is to be recovered up to some permutation and scaling (and perhaps some constant offset) of its original components, and where the prior information $\sI\ni(S,f)$ for this consists of a condition on the source (namely $\mathscr{S}$, which describes the regularity of each source component and their statistical independence) and a linearity condition on the mixing ($\GL$), which are imposed separately by \eqref{appendix:sect:BSSformal:intext:eq:identconds}. Note also that if $\hat{\Phi}$ is an ICA-inversion of the form \eqref{def:icainversion:eq1} then $X\mapsto\hat{\Phi}(X)\cdot X$ is an inversion map on $\sI$ in the sense of Definition \ref{def:blindinversion}, as follows from \eqref{def:blindinversion:eq2} and \eqref{appendix:sect:BSSformal:intext:eq:maxsol_prodset} and provided that $\tilde{\mathrm{M}}_d\cdot\mathscr{S}\subseteq\mathscr{S}$. \hfill $\bdiam$ 
\end{remark} 
\noindent
The specificity \eqref{appendix:sect:BSSformal:def:ICA:eq1} and \eqref{def:icainversion:eq1} of ICA allows a natural analytical interpretation of the generic robustness notion \eqref{BSS:robust_prelim1}. For this, let $(\sI, \hat{\Phi})$ be any fixed ICA-solution, set $\Phi : \bm{X} \mapsto \hat{\Phi}(X)\cdot \bm{X}$, and let us quantify the (deviations from the) accuracy condition \eqref{def:icainversion:eq1} via the `deviance function'
\begin{equation}\label{def:deviance}
\begin{aligned}
\partial_{\hat{\Phi}}:\,\mathfrak{C}\rightarrow\R_+ \quad\text{ given by }\quad \partial_{\hat{\Phi}}(S,f)&\coloneqq\sup\nolimits_{B\in\hat{\Phi}(f(S))}\inf\nolimits_{(M,v)\in\mathrm{M}_d\times\R^d}\varrho_B((S,f),(M,v))\\ 
\text{for}\quad \varrho_{B}((S,f),(M,v))&\coloneqq \sup\nolimits_{u\in D_S^{(v)}}\!\!\tfrac{|B\circ f(u-v) - Mu|}{|Mu|}\mathbbm{1}_{\!\times}\!{\scriptstyle(Mu)},  
\end{aligned} 
\end{equation}
with $\mathbbm{1}_\times\!\coloneqq\mathbbm{1}_{\R^d\setminus\{0\}}$ and $D_S^{(v)}\coloneqq v + D_S$. To read inequality \eqref{lem:robustness-inversionstability:eq1} below, recall notation \eqref{BSS:robust_prelim1:eq1}.
  
\begin{lemma}\label{lem:robustness-inversionstability}
The map $\partial_{\hat{\Phi}}$ vanishes on $\sI$, i.e.\ $\restr{\partial_{\hat{\Phi}}}{\sI}=0$, and for any function $f$ and every stochastic process $\bm{S}=(\bm{S}_t)$ with $(S,f)\in\mathfrak{C}$ we have that, with probability one, 
\begin{equation}\label{lem:robustness-inversionstability:eq1}
\mathrm{dist}_{\tilde{\varrho}}\!\left(\Phi(f(\bm{S})),\lceil \bm{S}\rceil_{\tilde{\mathfrak{m}}}\right)\,\leq\,\partial_{\hat{\Phi}}(S,f) \quad\text{for}\quad \tilde{\varrho}(\bm{X},\bm{Y})\coloneqq \sup\nolimits_{t\in\I}\tfrac{|\bm{X}_t - \bm{Y}_t|}{|\bm{Y}_t|}\mathbbm{1}_{\!\times}{\scriptstyle\!(\bm{Y}_t)}.
\end{equation}  
\end{lemma}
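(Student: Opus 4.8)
The plan is to prove the two assertions in turn by unwinding the definitions of $\partial_{\hat{\Phi}}$, $\varrho_B$ and $\tilde\varrho$, the only non-formal ingredient being that the sample paths of $\bm S$ stay almost surely inside its spatial support $D_S$.

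\textbf{Vanishing on $\sI$.} Fix $(S,f)\in\sI$. Since $\sI$ is an ICA-condition and $\hat{\Phi}$ an ICA-inversion on it, \eqref{def:icainversion:eq1} gives $\emptyset\neq\hat{\Phi}(f(S))\circ f\subseteq\tilde{\mathrm{M}}_d$; in particular $\hat{\Phi}(f(S))\neq\emptyset$, and for each $B\in\hat{\Phi}(f(S))$ there are $M_0\in\M$, $b_0\in\R^d$ with $(B\circ f)(w)=M_0w+b_0$ for all $w\in D_S$. Setting $v_0\coloneqq M_0^{-1}b_0$ and writing $u\in D_S^{(v_0)}$ as $u=v_0+w$ with $w\in D_S$, one has $(B\circ f)(u-v_0)=M_0w+b_0=M_0u$, so the integrand defining $\varrho_B((S,f),(M_0,v_0))$ vanishes identically; hence $\inf_{(M,v)}\varrho_B((S,f),(M,v))=0$ for every such $B$, and a supremum over the nonempty set $\hat{\Phi}(f(S))$ yields $\partial_{\hat{\Phi}}(S,f)=0$, i.e.\ $\restr{\partial_{\hat{\Phi}}}{\sI}=0$.

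\textbf{The stability inequality.} Let $(S,f)\in\mathfrak C$ be arbitrary, put $\bm X\coloneqq f(\bm S)$, and note $\Phi(\bm X)=\hat{\Phi}(X)\cdot\bm X=\{(B(\bm X_t))_{t\in\I}\mid B\in\hat{\Phi}(f(S))\}$ and $\lceil\bm S\rceil_{\tilde{\mathfrak m}}=\{(M\bm S_t+b)_{t\in\I}\mid (M,b)\in\M\times\R^d\}$. By the definition of $D_S$ (cf.\ \eqref{def:spatial_support:eq1}) there is a full-measure event $\Omega_0$ on which $\bm S_t\in D_S$ for all $t\in\I$ (in continuous time this uses continuity of the paths of $\bm S$; in discrete time it is immediate). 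Fix $\omega\in\Omega_0$, $B\in\hat{\Phi}(f(S))$, $(M,b)\in\M\times\R^d$, and put $v\coloneqq M^{-1}b$. Substituting $u=v+w$ (so $u-v=w$ and $Mu=Mw+b$) in $\varrho_B$, and using $\bm X_t(\omega)=f(\bm S_t(\omega))$ with $\bm S_t(\omega)\in D_S$, one obtains
\begin{equation*}
\tilde\varrho(B(\bm X),M\bm S+b)(\omega)\;\leq\;\sup_{w\in D_S}\tfrac{|B(f(w))-(Mw+b)|}{|Mw+b|}\,\mathbbm{1}_\times(Mw+b)\;=\;\varrho_B((S,f),(M,v)),
\end{equation*}
the inequality holding because each term of the pathwise supremum on the left appears, at $w=\bm S_t(\omega)$, among the terms of the deterministic supremum in the middle. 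Since $(M,b)\mapsto(M,M^{-1}b)$ is a bijection of $\M\times\R^d$, taking the infimum over $\lceil\bm S\rceil_{\tilde{\mathfrak m}}$ turns this into $\mathrm d_{\tilde\varrho}(B(\bm X),\lceil\bm S\rceil_{\tilde{\mathfrak m}})(\omega)\leq\inf_{(M,v)}\varrho_B((S,f),(M,v))$, and a supremum over $B\in\hat{\Phi}(f(S))$ then gives $\mathrm{dist}_{\tilde\varrho}(\Phi(\bm X),\lceil\bm S\rceil_{\tilde{\mathfrak m}})(\omega)\leq\partial_{\hat{\Phi}}(S,f)$; as $\omega\in\Omega_0$ was arbitrary and $\mathbb P(\Omega_0)=1$, this is \eqref{lem:robustness-inversionstability:eq1}.

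\textbf{Expected main obstacle.} Nothing here is deep, but the one step needing care rather than algebra is the passage from the pathwise (hence random) functional $\tilde\varrho$ to the deterministic functional $\varrho_B$: one must carry the generally uncountable family of inequalities over $(B,M,b)$ on a \emph{single} probability-one event, which is exactly where the definition of $D_S$ and (in the continuous-time setting) the path regularity of $\bm S$ are invoked. Everything else — the substitution $u=v+w$ on the translated support $D_S^{(v)}$, and the reparametrisation $b\leftrightarrow Mv$ trading the affine offset built into $\tilde{\mathrm{M}}_d$ for the domain-shift $v$ appearing in $\varrho_B$ — is routine bookkeeping.
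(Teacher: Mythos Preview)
Your proof is correct and follows essentially the same route as the paper's: both parts unwind the definitions via the substitution $b=Mv$ (equivalently $u=v+w$ with $w\in D_S$) and the fact that the trace of $\bm S$ is almost surely contained in $D_S$, then take the appropriate $\inf$/$\sup$. You are more explicit than the paper on the vanishing part (which the paper dismisses as ``clear from \eqref{def:icainversion:eq1}'') and on isolating a single full-measure event $\Omega_0$, but the underlying argument is identical.
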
    

With the relative error between the output of $\Phi$ and the desired accuracy $\lceil\,\cdot\,\rceil_{\tilde{\mathfrak{m}}}$ controlled by $\partial_{\hat{\Phi}}$, we can now provide the anticipated formalisation of \eqref{BSS:robust_prelim1} for the special case of ICA. Note that the following is a natural combination of the BSS-specific stability requirements \eqref{BSS:robust_prelim1} with the classical notion of statistical robustness as set out in \citep[Sects.\ 1.2--1.4, 2.6]{huber2009}.     

\begin{definition}[Robust ICA]\label{def:robustICA} 
A solution $(\sI, \hat{\Phi})$ to the ICA-problem is said to be \emph{robust} if 
\begin{equation}\label{def:robustICA:eq1}
\partial_{\hat{\Phi}}:\mathfrak{C}\rightarrow\R_+ \quad\text{is continuous on } \sI
\end{equation} 
with respect to a sufficiently coarse topology on the causal space $\mathfrak{C}$. \mbox{Here, a topology $\tau_{\mathfrak{C}}$ on $\mathfrak{C}$ is}
\begin{equation}\label{rem:suffcoarse:eq1}
\text{\emph{sufficiently coarse} \, if the $(\tau_{\mathfrak{C}}, \tilde{\pi}_1)$-induced final topology on $\mathcal{M}_1$ is coarser than $\tau^\star$},   
\end{equation}
where $\tilde{\pi}_1 : \mathfrak{C}\rightarrow\mathcal{M}_1$, $(\tilde{S}, \tilde{f})\mapsto \tilde{S}$, is the canonical projection from a cause to its signal and $\tau^\star$ is the topology of weak convergence\footnote{\ See for instance \citep[Definition 8.1.1]{BOG} and \citep[Section 7]{BIL}.} on $\mathcal{M}_1(\mathcal{C}_d)$.
\end{definition}
In addition to the qualitative assertion \eqref{def:robustICA:eq1}, it is often desirable to also have an explicit \emph{quantitative analysis} of this robustness (cf.\ e.g.\ \citep[Sect.\ 1.3f.]{huber2009}). Provided that the underlying topology on $\mathfrak{C}$ is informative enough, this can be achieved by complementing \eqref{def:robustICA:eq1} with a modulus of continuity of $\partial_{\hat{\Phi}}$ at $\sI$, as done in Section \ref{chap:robustICA:sect:auxiliaries} below.    
 
\begin{remark}[`Sufficiently Coarse']\label{rem:coarseness}Intuitively speaking, any topology on $\mathfrak{C}$ for which condition \eqref{def:robustICA:eq1} holds is a model for how `sensitively' the $\lceil\,\cdot\,\rceil_{\tilde{\mathfrak{m}}}$-accuracy of $\Phi$ reacts to violations of the assumption $\sI$: the coarser such a topology [i.e., the fewer open sets it contains], the less sensitively (``the more robustly'') does $\Phi$ vary under general perturbations of $\sI$. Since the continuity requirement \eqref{def:robustICA:eq1} is redundant if understood wrt.\ the discrete, i.e.\ the finest, topology on $\mathfrak{C}$, a certain level of coarseness of the \eqref{def:robustICA:eq1}-supporting topology on $\mathfrak{C}$ is clearly required. However, there is no immediate or canonical choice for such a topology on $\mathfrak{C}$, so setting a threshold for `how coarse is coarse enough' is notoriously a matter of judgement and, as with other uses of the robustness concept \citep{hampel1971, huber2009}, typically depends on the specific (application) context in which the `stability of $\Phi$ under violations of $\sI$' is of interest; a one-size-fits-all topology -- or universal level of coarseness -- to optimally capture all conceivable violations is generally difficult to pinpoint. However, using classical theory as a rough guide, see e.g.\ \citep[Sections 1.3, 2.6]{huber2009}, we can hope to capture `most' relevant violations by the above specification \eqref{rem:suffcoarse:eq1}. We emphasise, however, that \eqref{rem:suffcoarse:eq1} is merely a reference point for orientation and should not be taken as a necessary or strictly binding constraint on \eqref{def:robustICA:eq1}; we do not restrict Definition \ref{def:robustICA} to topologies of this kind only. In fact, in the following sections, \eqref{def:robustICA:eq1} will be established for a topology $\tau_{\mathfrak{C}}$ on $\mathfrak{C}$ which, while satisfying \eqref{rem:suffcoarse:eq1} in the setting of most applications, is generally slightly finer than \eqref{rem:suffcoarse:eq1} but still sufficiently coarse to informatively capture (the `convergence to exactness' of) many IA-violations that are of relevance in theory and practice, cf.\ e.g.\ Example \ref{example:cocktailparty} and Section \ref{sect:applications}. \hfill $\bdiam$     
\end{remark}  

With the operable formulation \eqref{appendix:sect:BSSformal:BSS_reformulation} of the BSS-problem \eqref{BSS-Robust-BlindInversion} -- approached via ICA \eqref{appendix:sect:BSSformal:def:ICA:eq1} -- and its associated notion of robustness \eqref{BSS:robust_prelim1} at hand, the overarching goal for the remainder of this paper is to bring this framework to life and fruition: In Section \ref{chap:robustICA:sect:product_topology}, we will provide a natural, easy-to-handle and informative (premetric) coarse topology on $\mathfrak{C}$ that supports the notion of stability \eqref{BSS:robust_prelim1} for a generically derived ICA-solution $(\sI,\hat{\Phi})$ and allows the resulting BSS-robustness \eqref{def:robustICA:eq1} of this solution to be thoroughly quantified via explicit moduli of continuity, see Theorems \ref{thm:robust_ica} and \ref{thm:robustness}. A number of application-relevant corollaries to this robustness, presented in Section \ref{sect:applications}, illustrate the practical significance of the theory.\\[-0.5em] 

Fundamental to the announced topology on $\mathfrak{C}$, and thus to our above-mentioned robustness program as a whole, is a distinguished set of `coordinates' to describe the signals in $\mathcal{M}_1$. This leads to a structured and explicitly computable quantification of $\mathcal{M}_1$ which, alongside the topologies that this coordinatisation both captures and imposes, is studied in the next section.               

\section{Coordinatisable Signals and Their Convergences}\label{sect:signalsconvergence}
\noindent
We introduce coordinates on $\mathcal{M}_1$ in the form of moment-like statistics which are defined by iteratively integrating the sample paths of a signal against themselves. For this to be well-defined, the realisations of a signal are required to be of a certain smoothness. This is taken into account by certain `regularity classes' within $\mathcal{C}_d$ in which the samples of a coordinatisable signal are assumed to be contained. An expressive and convenient\footnote{\ \ldots though certainly not the largest possible regularity class, see Remark \ref{rem:roughpath-extensions} \ldots} such class is the subspace  
\begin{equation}\label{notation:absolutelycontinuous}
\mathcal{C}^1\equiv\mathcal{C}^1_d \,\coloneqq\, \left\{x : [0,1]\rightarrow\R^d \ \middle|\ x \text{ is absolutely continuous\footnotemark }\right\}
\end{equation}\footnotetext{\ To recall the definition of absolute continuity, see e.g.\ \citep[Definition 1.17]{FVI}.}of $\mathcal{C}_d$, and the signals whose samples are contained in this class are the elements of the subspace             
\begin{equation}\label{notation:absolutelycontinuous2}
\mathfrak{D}\coloneqq\{\mu\in\mathcal{M}_1(\mathcal{C}_d)\mid \mu(\mathcal{C}^1)=1\}
\end{equation} 
of $\mathcal{M}_1$, i.e.\ the set of all Borel probability measures on $(\mathcal{C}_d, \|\cdot\|_\infty)$ supported on $\mathcal{C}^1$. Writing $C^{1,1}(\R^d)\eqqcolon C^{1,1}$ for the group of all $C^1$-diffeomorphisms on $\R^d$, note that the space $\fD$ is left invariant under the $C^{1,1}$-action \eqref{transformation} on $\mathcal{M}_1(\mathcal{C}_d)$ (thanks to the chain rule, cf.\ \eqref{lem:absolutelycontinuous:eq1}).\\[-0.75em]

\noindent
In Section \ref{subsect:coordinatisable} we establish a few important structural properties of the space $\mathcal{C}^1$, which is also seen to contain any signals that are observed in discrete time. We are then ready to introduce the announced signal coordinates on $\mathfrak{D}$ in Section \ref{subsect:sigmoments}, and in Sections \ref{subsect:convergencetypes} and \ref{sect:convergence_sigmoments} will study how sensitively some natural topologies on $\mathcal{M}_1$ are captured by these coordinates.      

\subsection{Properties of the Sample Space}\label{subsect:coordinatisable}The path space $\mathcal{C}^1$ has some topological regularity properties that make it a convenient choice as the sample space of our signals. We note, however, that the main ideas of this paper can readily be extended to signals with much less regular sample paths, see Remark \ref{rem:roughpath-extensions} and the references therein at the end of this subsection. 

The content of the following lemma is well-known. 

\begin{lemma}\label{lem:absolutelycontinuous}
The above space $\mathcal{C}^1$ of absolutely continuous paths in $\R^d$ can be written as
\begin{equation}\label{lem:absolutelycontinuous:eq1}
\mathcal{C}^1 = \left\{x\in\mathcal{C}_d \ \middle|\ \exists\,!\, \dot{x}\in L^1([0,1];\R^d)\,:\, x = x_0 + \int_0^\cdot\!\dot{x}_s\,\mathrm{d}s\right\};
\end{equation}
this space is a Borel subset of $(\mathcal{C}_d, \|\cdot\|_\infty)$ and a separable Banach space with respect to the 1-variation norm
\begin{equation}\label{lem:absolutelycontinuous:eq2}
\|x\|_{1\text{-}\mathrm{var}} \,=\, |x_0| \, + \, \|\dot{x}\|_{L^1}.    
\end{equation}   
\end{lemma}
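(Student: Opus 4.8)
The plan is to deduce all three assertions from two classical inputs: the Lebesgue form of the fundamental theorem of calculus, and the observation that $\mathcal{C}^1$ is, as a normed space, nothing but a product of $\R^d$ with an $L^1$-space.

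First I would establish the representation \eqref{lem:absolutelycontinuous:eq1}. The inclusion ``$\subseteq$'' is the substantive direction of the Lebesgue FTC: an absolutely continuous path $x$ is differentiable Lebesgue-a.e., its derivative lies in $L^1([0,1];\R^d)$, and $x_t=x_0+\int_0^t x'_s\,\mathrm{d}s$ for all $t$; the reverse inclusion ``$\supseteq$'' is the elementary fact that an indefinite Lebesgue integral of an $L^1$-map is absolutely continuous. For uniqueness of the density I would note that if $g,h\in L^1$ share the same indefinite integral, then $g-h$ integrates to $0$ over every subinterval and hence vanishes a.e.\ (e.g.\ by the Lebesgue differentiation theorem); this legitimises writing $\dot x$ for the a.e.-defined derivative. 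All of this is textbook, cf.\ \citep{FVI}.

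Next I would read off the linear bijection $\Psi:\mathcal{C}^1\to\R^d\times L^1([0,1];\R^d)$, $x\mapsto(x_0,\dot x)$, with inverse $(a,g)\mapsto a+\int_0^\cdot g_s\,\mathrm{d}s$, directly from \eqref{lem:absolutelycontinuous:eq1}. By the very definition \eqref{lem:absolutelycontinuous:eq2}, $\Psi$ is an isometry once the codomain carries the $\ell^1$-sum of the Euclidean norm and the $L^1$-norm. Since that codomain is a separable Banach space --- separability of $L^1([0,1])$ being classical, e.g.\ via step functions with rational values and rational breakpoints --- it follows at once that $(\mathcal{C}^1,\|\cdot\|_{1\text{-}\mathrm{var}})$ is a separable Banach space, with the norm axioms (the only delicate one being definiteness, which reduces to $x_0=0$ together with $\dot x=0$ a.e.) and completeness inherited through $\Psi$. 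I would also record here, for use in the surrounding text, that $\mathcal{C}^1$ is a linear subspace of $\mathcal{C}_d$ and that $\fD$ is invariant under the $C^{1,1}$-action, both immediate from the chain rule.

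For the Borel claim I would observe that $\Psi^{-1}:(a,g)\mapsto a+\int_0^\cdot g_s\,\mathrm{d}s$ is \emph{continuous} from $\R^d\times L^1$ into $(\mathcal{C}_d,\|\cdot\|_\infty)$, since $\sup_{t}|\int_0^t(g-h)_s\,\mathrm{d}s|\le\|g-h\|_{L^1}$, and \emph{injective} by the first step; thus $\mathcal{C}^1$ is the injective continuous image of a Polish space inside the Polish space $(\mathcal{C}_d,\|\cdot\|_\infty)$, and the Lusin--Souslin theorem yields that it is Borel. This is the one point in the argument that invokes something beyond a routine verification; a more hands-on alternative would exhibit $\mathcal{C}^1$ as a Borel combination of level sets of the (lower semicontinuous, hence Borel) total-variation functional and of the a.e.-defined difference-quotient limit $x\mapsto\limsup_n n(x_{\cdot+1/n}-x_\cdot)$, but the descriptive-set-theoretic route is the shortest. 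I do not anticipate a genuine obstacle anywhere; the only care required is to keep the two directions of the FTC apart and to handle the a.e.\ identifications consistently throughout.
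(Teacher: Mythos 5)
Your proposal is correct and follows essentially the same route as the paper: the FTC representation, the Banach-space isomorphism $x\mapsto(x_0,\dot x)$ with $\R^d\times L^1$ (which the paper cites from \citep[Props.\ 1.31--1.32, Cor.\ 1.35]{FVI}), and the Lusin--Souslin theorem (the paper's \citep[Thm.\ 15.1]{kechris1995}) for the Borel claim. The only difference is that you spell out the classical ingredients that the paper simply cites.
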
 
\begin{proof}
The identity \eqref{lem:absolutelycontinuous:eq1} holds by \citep[Propositions 1.31$\,\&\,$1.32]{FVI}. In fact, \citep[Prop.\ 1.31]{FVI} asserts that for $\mathfrak{X} \coloneqq \R^d\times L^1([0,1];\R^d)$ and $\mathfrak{Y}\coloneqq\mathcal{C}_d$, the map $f: \mathfrak{X}\rightarrow \mathfrak{Y}$ given by $f(c,v)\coloneqq c + \int_0^\cdot\! v_s\,\mathrm{d}s$ is a Banach space isomorphism (which also proves \eqref{lem:absolutelycontinuous:eq2}). From this, \citep[Theorem 15.1]{kechris1995} implies that the image $f(\mathfrak{X})=\mathcal{C}^1$ is a Borel subset of $(\mathcal{C}^1, \|\cdot\|_\infty)$, i.e.\ that $\mathcal{C}^1\in\mathcal{B}(\mathcal{C}_d)$. That $(\mathcal{C}^1, \|\cdot\|_{^\text{-}\mathrm{var}})$ is separable and Banach is stated as \citep[Corollary 1.35]{FVI}.    
\end{proof} 

\begin{remark}\label{rem:pvariation}
Recall that the norm $\onevar{\,\cdot\,}$ in fact dominates a whole family $(\|\cdot\|_{p\text{-}\mathrm{var}})_{p\in[1,\infty]}$ of $p$-variation norms on $\mathcal{C}^1$ which, in generalisation of \eqref{lem:absolutelycontinuous:eq2} (cf.\ \citep[Section 5.1]{FVI}), are given by 
\begin{equation}\label{sect:pweak-convergence:eq2}
\|x\|_{p\text{-}\mathrm{var}}= |x_0| + \left[\sup_{(t_\nu)}\sum_{\nu}|x_{t_{\nu+1}} - x_{t_\nu}|^p\right]^{\!1/p} \quad \text{ for } \ \ 1\leq p < \infty 
\end{equation}  
and $\|\cdot\|_{\infty\text{-}\mathrm{var}}\coloneqq\|\cdot\|_\infty$, where the above supremum runs over all (finite) dissections $(t_\nu)$ of $[0,1]$.    
Since these norms become weaker as $p$ increases, that is $\|\cdot\|_{p\text{-}\mathrm{var}}\geq \|\cdot\|_{q\text{-}\mathrm{var}}$ for each $1\leq p\leq q\leq\infty$ (e.g.\ \citep[Prop.\ 5.3]{FVI} and \citep[Lem.\ 1.6.4.]{FLO}), we have the inclusions of subspaces
\begin{equation}\label{sect:pweak-convergence:eq3}
C_b\big(\mathcal{C}^1;\|\cdot\|_{1\text{-}\mathrm{var}}\big) \supseteq C_b\big(\mathcal{C}^1;\|\cdot\|_{p\text{-}\mathrm{var}}\big) \supseteq C_b\big(\mathcal{C}^1;\|\cdot\|_{q\text{-}\mathrm{var}}\big) \quad \text{ for }\ \ 1\leq p \leq q \leq\infty. 
\end{equation} 
\end{remark}
 
While the uniform topology on $\mathcal{C}^1$ is weaker than its 1-variation topology, it is useful to note that both induce the same measurable structure on $\mathcal{C}^1$. This follows essentially by definition $\left.\eqref{sect:pweak-convergence:eq2}\right|_{p=1}$ of the $1$-variation norm and the above fact that $\mathcal{C}^1$ is separable wrt.\ $\onevar{\,\cdot\,}$.

\begin{lemma}\label{lem:sigma-algebras}
The spaces $(\mathcal{C}^1, \|\cdot\|_\infty)$ and $(\mathcal{C}^1, \|\cdot\|_{1\text{-}\mathrm{var}})$ have the same Borel $\sigma$-algebra.
\end{lemma}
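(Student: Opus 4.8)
The plan is to establish the two inclusions $\mathcal{B}\big((\mathcal{C}^1,\|\cdot\|_\infty)\big)\subseteq\mathcal{B}\big((\mathcal{C}^1,\onevar{\,\cdot\,})\big)$ and $\mathcal{B}\big((\mathcal{C}^1,\onevar{\,\cdot\,})\big)\subseteq\mathcal{B}\big((\mathcal{C}^1,\|\cdot\|_\infty)\big)$ separately. The first is free: by Remark \ref{rem:pvariation} (the case $q=\infty$) we have $\onevar{\,\cdot\,}\geq\|\cdot\|_\infty$ on $\mathcal{C}^1$, so the $1$-variation topology is finer than the uniform one and every $\|\cdot\|_\infty$-open set is $\onevar{\,\cdot\,}$-open, hence $\onevar{\,\cdot\,}$-Borel. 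Everything therefore rests on the reverse inclusion.

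For that, I would first reduce to balls: by Lemma \ref{lem:absolutelycontinuous} the Banach space $(\mathcal{C}^1,\onevar{\,\cdot\,})$ is separable, so every $\onevar{\,\cdot\,}$-open set is a countable union of open $1$-variation balls $B(x,r)=\{y\in\mathcal{C}^1\mid\onevar{y-x}<r\}$ with $x\in\mathcal{C}^1$, $r>0$; hence it suffices to show $B(x,r)\in\mathcal{B}\big((\mathcal{C}^1,\|\cdot\|_\infty)\big)$ for all such $x,r$. Since $B(x,r)=\Psi_x^{-1}\big([0,r)\big)$ for $\Psi_x\colon\mathcal{C}^1\ni y\mapsto\onevar{y-x}$, and $y\mapsto y-x$ is $\|\cdot\|_\infty$-continuous, this in turn reduces to the single claim that the functional $z\mapsto\onevar{z}$ is $\|\cdot\|_\infty$-Borel measurable on $\mathcal{C}^1$.

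This last claim is the crux, and it is where continuity of the sample paths enters. Fixing $z\in\mathcal{C}_d$ and using that $z$ is uniformly continuous on $[0,1]$, one checks that the supremum over all finite dissections in $\left.\eqref{sect:pweak-convergence:eq2}\right|_{p=1}$ is unchanged if restricted to the countable family of dissections with rational nodes, i.e.\ $\onevar{z}=|z_0|+\sup\big\{\sum_\nu|z_{t_{\nu+1}}-z_{t_\nu}|\ \big|\ 0=t_0<\cdots<t_n=1,\ t_\nu\in\mathbb{Q}\big\}$. Each map $z\mapsto\sum_\nu|z_{t_{\nu+1}}-z_{t_\nu}|$ is $\|\cdot\|_\infty$-continuous (a finite combination of the $\|\cdot\|_\infty$-continuous coordinate evaluations $z\mapsto z_{t_\nu}$), so $z\mapsto\onevar{z}$ is a countable supremum of $\|\cdot\|_\infty$-continuous functionals, hence $\|\cdot\|_\infty$-lower semicontinuous on $\mathcal{C}_d$ and in particular $\|\cdot\|_\infty$-Borel; restricting to $\mathcal{C}^1$ gives the claim, and the lemma follows.

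The only point requiring any care — and the one I would expect to be the ``main obstacle'', such as it is — is the reduction to rational dissections: given $0=t_0<\cdots<t_n=1$ and $\varepsilon>0$, uniform continuity of $z$ lets one pick rationals $0=t_0'<t_1'<\cdots<t_n'=1$ with each $|z_{t_\nu'}-z_{t_\nu}|$ so small that $\sum_\nu|z_{t_{\nu+1}'}-z_{t_\nu'}|\geq\sum_\nu|z_{t_{\nu+1}}-z_{t_\nu}|-\varepsilon$; letting $\varepsilon\downarrow0$ and taking the supremum over $(t_\nu)$ gives ``$\geq$'' between the two suprema, while ``$\leq$'' is trivial. (If one prefers to bypass even this, the lemma is also immediate from descriptive set theory: by Lemma \ref{lem:absolutelycontinuous} the space $(\mathcal{C}^1,\onevar{\,\cdot\,})$ is Polish and $\mathcal{C}^1\in\mathcal{B}(\mathcal{C}_d)$, so the continuous injection $(\mathcal{C}^1,\onevar{\,\cdot\,})\hookrightarrow(\mathcal{C}_d,\|\cdot\|_\infty)$ is a Borel isomorphism onto its image by the Lusin--Souslin theorem \citep[Theorem 15.1]{kechris1995}, which is precisely the assertion.)
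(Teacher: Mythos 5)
Your proof is correct and rests on the same two pillars as the paper's: the trivial inclusion from $\onevar{\,\cdot\,}\geq\|\cdot\|_\infty$, and, for the converse, separability of $(\mathcal{C}^1,\onevar{\,\cdot\,})$ together with the $\|\cdot\|_\infty$-continuity of the finite-dissection sums $V_{\mathcal{I}}$ in $\left.\eqref{sect:pweak-convergence:eq2}\right|_{p=1}$. The one mechanical difference: the paper works with \emph{closed} $1$-variation balls and writes each as the intersection $\bigcap_{\mathcal{I}}\{y\mid V_{\mathcal{I}}(y-x)\leq r\}$ over \emph{all} dissections, which is $\|\cdot\|_\infty$-closed because closedness survives arbitrary intersections — no countability reduction is needed. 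Your detour through rational dissections (and the uniform-continuity argument supporting it), which you flag as the main obstacle, is therefore superfluous: a supremum of an \emph{arbitrary} family of continuous functionals is already lower semicontinuous, hence Borel, so $z\mapsto\onevar{z}$ is $\|\cdot\|_\infty$-Borel without restricting the index set. (The step is harmless and correctly executed, just unnecessary.) Your parenthetical Lusin--Souslin argument is also valid and is the slickest route of all, matching how the paper itself uses \citep[Theorem 15.1]{kechris1995} in Lemma \ref{lem:absolutelycontinuous}.
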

\begin{proof}
This essentially follows from the identity \eqref{sect:pweak-convergence:eq2} for the $1$-variation norm and the fact that $\mathcal{C}^1$ is separable wrt.\ $\onevar{\,\cdot\,}$ (Lemma \ref{lem:absolutelycontinuous}), see Appendix \ref{pf:lem:sigma-algebras}.
\end{proof} 

Sometimes the observable $\bm{X}=(\bm{X}_t)$ in \eqref{appendix:sect:BSSformal:eq1} is modelled a priori as a random vector or a discrete time series in $\R^d$ rather than as a continuous-time stochastic process. Such discrete-time models are still special cases of our formalism, as per the following identifications.
\begin{remark}[Discrete Signals]\label{rem:discrete-case}
For $Z\coloneqq\R^d$ and $\hat{\mathcal{C}}_{0}\coloneqq\{x\in\mathcal{C}_{(0,1)}\mid x_0=0\}$, consider maps
\begin{equation}\label{rem:discrete-case:eq1}
\hat{\iota}_1 \, : \, \R^d\ni u \,\stackrel{\sim}{\longmapsto}\, (t\cdot u)_{t\in[0,1]}\in\hat{\mathcal{C}}_0 \quad\text{and}\quad \hat{\iota}_n\,:\,Z^{\times n} \stackrel{\sim}{\longrightarrow} \hat{\mathcal{C}}_{\mathcal{E}_n} \quad (n\geq 2),
\end{equation}
where for a dissection $\mathcal{I}=(t_\nu \mid 0=t_0 < \ldots < t_{n-1}=1)$ of $[0,1]$ we set the subspace (of $\mathcal{C}^1$) 
\begin{equation}\label{rem:discrete-case:eq2} 
\hat{\mathcal{C}}_\mathcal{I}\coloneqq\left\{x=(x_t)_{t\in[0,1]} \ \middle| \ x_t= x_{t_\nu} + \tfrac{t - t_\nu}{t_{\nu+1}-t_\nu}(x_{t_{\nu+1}} - x_{t_\nu}) \text{ for } t\in[t_\nu, t_{\nu+1}), \, \nu\in[n-1]_0\right\}
\end{equation} 
(the space of all $\mathcal{I}$-piecewise linear paths from $[0,1]$ to $\R^d$) and denote by $\hat{\iota}_n$ the map that sends a tuple $(z_\nu)\in Z^{\times n}$ to its piecewise-linear interpolation along $\mathcal{E}_n\coloneqq\big((\nu-1)/(n-1)\mid\nu\in[n]\big)$ (the equidistant dissection of $[0,1]$; cf.\ Remark \ref{rem:pwl-interpolation}). The maps in \eqref{rem:discrete-case:eq1} are then identifications between their domain and co-domain spaces as they are (isometric for $n=1$) Banach isomorphisms  with inverses $\hat{\pi}_n\coloneqq\hat{\iota}_n^{-1}$ given by $\hat{\pi}_1\coloneqq\pi_1$ resp.\ $\hat{\pi}_n=\pi_{\mathcal{E}_n}$ for $n\geq 2$.  
For any `discrete signal' $\upsilon\in\mathcal{M}_1(Z^{\times n})$, its continuous-time identification is the pushforward $\hat{\upsilon}\coloneqq(\hat{\iota}_1)_\ast\upsilon\in\widehat{\fD}_1:\cong\{\mu\in\mathcal{M}_1(\mathcal{C}^1)\mid \mu(\hat{C}_0)=1\}$ (if $n=1$) resp.\  
\begin{equation}
\hat{\upsilon} \coloneqq (\hat{\iota}_n)_\ast\upsilon \,\in\, \widehat{\fD}_{\mathcal{E}_n}\coloneqq\!\big\{\mu\in\mathcal{M}_1(\mathcal{C}^1)\ \big|\ \mu(\hat{\mathcal{C}}_{\mathcal{E}_n})=1\big\} \quad (\text{for } n\geq 2). 
\end{equation} 
For the (`timeless') case $n=1$, the signature moments \eqref{def:coordinates:eq1} of $\hat{\upsilon}$ coincide with the classical [multivariate] moments of $\upsilon$ up to the factor $1/m!$. \hfill $\bdiam$   
\end{remark}  

\subsection{Signature Moments}\label{subsect:sigmoments}Let us now define the announced coordinates of a signal $\mu$ in $\fD$.\\[-0.5em]

The key idea behind these coordinates is that each sample $x\in\mathcal{C}^1$ of the signal $\mu$ admits a concise non-local (in-time) description in terms of the iterated-integral statistics     
\begin{equation}\label{subsect:sigmoments:eq1.1}
x=\big(x^1_t, \cdots, x^d_t\big)_{t\in[0,1]} \ \longmapsto\ \int_{0\leq t_1\leq\cdots\leq t_m\leq 1}\!\mathrm{d}x^{i_1}_{t_1}\cdots\mathrm{d}x ^{i_m}_{t_m}\,\eqqcolon\,\sig_{i_1\cdots i_m}\!(x) 
\end{equation}
for $i_1,\ldots, i_m\in[d]$, $m\in\N$, and $\sig_\emptyset(x)\coloneqq 1$; since every $x\in\mathcal{C}^1$ is of bounded variation, the integrals \eqref{subsect:sigmoments:eq1.1} exist in the sense of Lebesgue-Stieltjes. These numbers are the so-called \emph{signature coefficients} of $x$, and they combine to a multi-indexed family $\sig(x)\coloneqq\big(\sig_{w}(x)\mid w\in[d]^\star\big)$ which is known as the \emph{signature} of $x$. (Notice the formal similarity of $\sig_{i_1\cdots i_m}$ with the moment map $x \mapsto x_{0,1}^{i_1}\cdots x_{0,1}^{i_m}$, \mbox{motivating us to call \eqref{subsect:sigmoments:eq1.1} the `noncommutative moments' of $x$.)}\\[-0.5em] 

It has been shown in \citep{HLY} that the signature in fact characterises\footnote{\label{footnote:treelikeequivalence}\ \ldots up to an inessential (``equality up to a null-set''-like) equivalence relation on $\mathcal{C}^1$ \ldots} its path, and it does so in quite an informative way, see e.g.\ \cite{bonnier2020adapted, CHL, chevyrev2022signature, chevyrev2018persistence, crisan2013robust, KOB, perez2018signature} among many others. This justifies viewing $\sig(\,\cdot\,)$ as a (nonlinear) `coordinate system' on $\mathcal{C}^1$, where each `coordinate vector' $\sig(x)$ determines the position of a point $x\in\mathcal{C}^1$ uniquely\textsuperscript{\ref{footnote:treelikeequivalence}}. This signature-based description \eqref{subsect:sigmoments:eq1.1} of a path can then be naturally transferred from $\mathcal{C}^1$ to the signals in $\fD$ by duality (testing the measures in $\fD$ against the ordered family of nonlinear functionals \eqref{subsect:sigmoments:eq1.1}).\\[-0.5em] 

\hfill The resulting `dual coordinates' on $\fD$ are introduced in the definition below. 
 
\begin{lemma}\label{cor:signature-measurable}
The map $(\mathcal{C}^1, \|\cdot\|_\infty)\ni x\mapsto \sig_w(x)\in\R$ is Borel-measurable for each $w\in[d]^\star$. 
\end{lemma}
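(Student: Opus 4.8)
The plan is to move the problem to the $1$-variation topology, on which the signature behaves well, and then transfer back. (The case $w=\emptyset$ is trivial since $\sig_\emptyset\equiv 1$, so fix $w=i_1\cdots i_m$ with $m\ge 1$.) First I would invoke Lemma~\ref{lem:sigma-algebras}: since $(\mathcal{C}^1,\|\cdot\|_\infty)$ and $(\mathcal{C}^1,\|\cdot\|_{1\text{-}\mathrm{var}})$ carry the same Borel $\sigma$-algebra, it suffices to show that $x\mapsto\sig_w(x)$ is Borel as a map on $(\mathcal{C}^1,\|\cdot\|_{1\text{-}\mathrm{var}})$. In fact I would prove the stronger statement that this map is locally Lipschitz — hence continuous, hence Borel — on $(\mathcal{C}^1,\|\cdot\|_{1\text{-}\mathrm{var}})$, which by the coincidence of $\sigma$-algebras yields the claim for $\|\cdot\|_\infty$ as well.

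For the Lipschitz step, use the representation $x=x_0+\int_0^\cdot\dot x_s\,\mathrm ds$ with $\dot x\in L^1$ from \eqref{lem:absolutelycontinuous:eq1} to rewrite the Lebesgue–Stieltjes iterated integral \eqref{subsect:sigmoments:eq1.1} as $\sig_w(x)=\int_{0\le t_1\le\cdots\le t_m\le 1}\dot x^{i_1}_{t_1}\cdots\dot x^{i_m}_{t_m}\,\mathrm dt_1\cdots\mathrm dt_m$. Telescoping the difference of the two $m$-fold products over the factors, bounding each coordinate factor pointwise by $|\dot x_t|$ or $|\dot y_t|$, and using $\|\dot x\|_{L^1}\le\|x\|_{1\text{-}\mathrm{var}}$ from \eqref{lem:absolutelycontinuous:eq2}, one obtains the crude but sufficient estimate
\[
|\sig_w(x)-\sig_w(y)|\ \le\ m\,\bigl(\|x\|_{1\text{-}\mathrm{var}}\vee\|y\|_{1\text{-}\mathrm{var}}\bigr)^{m-1}\|x-y\|_{1\text{-}\mathrm{var}},
\]
which is a weak form of the standard factorial Lipschitz bounds for iterated integrals recorded e.g.\ in \citep[Ch.~2]{FVI}. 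This gives continuity of $\sig_w$ on $(\mathcal{C}^1,\|\cdot\|_{1\text{-}\mathrm{var}})$ and, with the first step, proves the lemma.

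If one prefers an argument that does not even cite this estimate in the abstract (but rather only on piecewise-linear paths), I would argue by approximation: for the dyadic dissection $\Pi_n$ of $[0,1]$ let $L_n$ denote piecewise-linear interpolation along $\Pi_n$ (cf.\ \eqref{rem:discrete-case:eq2}); then $x\mapsto L_nx$ depends $\|\cdot\|_\infty$-continuously on the finitely many values $(x_t)_{t\in\Pi_n}$, and by Chen's identity $\sig_w(L_nx)$ is a fixed polynomial in the increments of $x$ along $\Pi_n$, so $x\mapsto\sig_w(L_nx)$ is $\|\cdot\|_\infty$-continuous, hence Borel. For $x\in\mathcal{C}^1$ the derivative of $L_nx$ is the $\Pi_n$-local average (conditional expectation) of $\dot x$, which converges to $\dot x$ in $L^1$, so $L_nx\to x$ in $\|\cdot\|_{1\text{-}\mathrm{var}}$ with $\sup_n\|L_nx\|_{1\text{-}\mathrm{var}}\le\|x\|_{1\text{-}\mathrm{var}}$; the displayed estimate then gives $\sig_w(L_nx)\to\sig_w(x)$ pointwise on $\mathcal{C}^1$, and a pointwise limit of $\|\cdot\|_\infty$-Borel maps is $\|\cdot\|_\infty$-Borel. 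Either way, the one substantive ingredient — and the only place I expect any real work — is the $1$-variation stability of the iterated integrals (equivalently, $L_nx\to x$ in $1$-variation together with continuity of $\sig_w$ under it); the remaining pieces (Lemma~\ref{lem:sigma-algebras}, ``continuous $\Rightarrow$ Borel'', ``pointwise limits preserve Borel measurability'') are routine.
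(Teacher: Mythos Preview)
Your proposal is correct and follows essentially the same approach as the paper: invoke Lemma~\ref{lem:sigma-algebras} to reduce to Borel measurability on $(\mathcal{C}^1,\|\cdot\|_{1\text{-}\mathrm{var}})$, then use continuity of $\sig_w$ in $1$-variation. The only difference is that the paper simply cites this continuity (e.g.\ \citep[Thm.~3.1.3]{LYQ}) whereas you supply an explicit local Lipschitz estimate and an alternative approximation argument; both are fine but unnecessary given the cited reference.
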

\begin{proof}
By Lem.\ \ref{lem:sigma-algebras} and the fact (e.g.\ \citep[Thm.\ 3.1.3]{LYQ}) that $\sig_w$ is continuous wrt.\ $\onevar{\,\cdot\,}$.  
\end{proof}              
\begin{definition}[Signature Moments]\label{def:coredinates}
Given a signal $\mu$ in $\mathfrak{D}$, the numbers (in $\bar{\R}$)
\begin{equation}\label{def:coordinates:eq1}
\langle\mu\rangle_{i_1\ldots i_m} \,\coloneqq\, \int_{\mathcal{C}_d}\!\mathfrak{sig}_{i_1\ldots i_m}(x)\,\mu(\mathrm{d}x) \qquad (i_1, \ldots, i_m\in[d], \ m\in\N)
\end{equation}are called the \emph{signature moments of $\mu$}. We refer to the \emph{coredinates of $\mu$} as the matrices 
\begin{equation}\label{def:coordinates:eq2}
[\mu]_{0} \coloneqq \big(\langle\mu\rangle_{ij}\big)_{\!i,j\in[d]} \quad\text{ and }\quad [\mu]_{\nu}\coloneqq \big(\langle\mu\rangle_{ij\nu}\big)_{\!i,j\in[d]} \quad (\nu=1,\ldots, d),    
\end{equation} 
and denote by $[\mu]_{\mathfrak{c}}\coloneqq([\mu]_0, [\mu]_1, \cdots, [\mu]_d)$ their ordered collection. We say that $[\mu]_{\mathfrak{c}}$ \emph{exists}, in symbols: $[\mu]_{\mathfrak{c}}<\infty$, if all the signature moments in \eqref{def:coordinates:eq2} exist in $\R$.   
\end{definition}
\hfill (The word `coredinates' is a portmanteau of `core coordinates'.)\\[-0.5em]     

\noindent
Due to the above-mentioned characteristicness of the \eqref{subsect:sigmoments:eq1.1}-based coordinatisation of $\mathcal{C}^1$, the signature moments \eqref{def:coordinates:eq1}, if contained in $\R$, can characterise a signal uniquely, see \citep{CHO}. Moreover, as a result of algebraic interrelations within the family $(\sig_w\,|\,w\in[d]^\star)$, the moments \eqref{def:coordinates:eq1} also efficiently encode the pushforward-action on $\fD$ (cf.\ \eqref{transformation}) induced by any path-transformation $f : \mathcal{C}^1 \rightarrow \mathcal{C}^1$ (e.g.\ \citep[Thm.\ 5.1]{chevyrev2018persistence}). In fact, the coredinates \eqref{def:coordinates:eq2} already capture the \emph{linear} action  
\begin{equation}\label{subsect:sigmoments:linact}
\R^{d\times d}\ni A\,\longmapsto\, A\cdot\mu\equiv A_\ast\mu\coloneqq\mu\circ A^{-1}
\end{equation} 
sufficiently: Flattening the third-order tensors $\big(\langle\mu\rangle_{ijk}\big)$ to the matrices $[\mu]_\nu$ in \eqref{def:coordinates:eq2} yields a family of affine equivariant statistics (see below); this equivariance is a simple yet extremely useful algebraic property of $[\mu]_{\mathfrak{c}}$, and when combined with mild assumptions on $\mu$ can be used to study the action \eqref{subsect:sigmoments:linact} for a wide class of (non-IC) signals and under nonlinear perturbations. 

\begin{lemma}[Affine Equivariance]\label{lem:linequiv}
Let $\mu\in\fD$ and define $f\equiv f(u)\coloneqq A\cdot u + b$ on $\R^d$ for some $A=(a_{ij})\in\R^{d\times d}$ and $b=(b_i)\in\R^d$. Then for any $i_1, \ldots, i_m\in[d]$ and $m\in\N$, 
\begin{equation}\label{lem:linequiv:eq1}
\big\langle f_\ast\mu\big\rangle_{i_1\ldots i_m} \, = \, \sum_{j_1, \ldots, j_m=1}^d a_{i_1 j_1}\!\cdots\, a_{i_m j_m}\langle\mu\rangle_{j_1\ldots j_m}.  
\end{equation} 
\end{lemma}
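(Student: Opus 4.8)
The plan is to reduce the assertion to a deterministic, pathwise identity for the signature and then integrate it against $\mu$. First I would unfold the definitions: by Convention~\ref{convention:signals-distributions} and the pushforward convention \eqref{transformation}, $f_\ast\mu$ is the law of the postcomposed path $f\circ\bm{X}$ with $\bm{X}\sim\mu$, so the change-of-variables formula for images of measures yields
\begin{equation*}
\big\langle f_\ast\mu\big\rangle_{i_1\ldots i_m} \;=\; \int_{\mathcal{C}_d}\sig_{i_1\ldots i_m}(f\circ x)\,\mu(\mathrm{d}x).
\end{equation*}
This is legitimate because $\sig_{i_1\ldots i_m}$ is Borel on $(\mathcal{C}_d,\|\cdot\|_\infty)$ (Lemma~\ref{cor:signature-measurable}) and, $f$ being affine hence $C^{1,1}$, the map $x\mapsto f\circ x$ sends $\mathcal{C}^1$ into $\mathcal{C}^1$ (cf.\ the invariance of $\fD$ noted after \eqref{notation:absolutelycontinuous2}), so $\mu$-a.e.\ the integrand is evaluated on $\mathcal{C}^1$, where $\sig$ is defined.

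The core step is the pathwise identity: for every $x\in\mathcal{C}^1$, writing $y\coloneqq f\circ x$,
\begin{equation*}
\sig_{i_1\ldots i_m}(y) \;=\; \sum_{j_1,\ldots,j_m=1}^d a_{i_1 j_1}\cdots a_{i_m j_m}\,\sig_{j_1\ldots j_m}(x).
\end{equation*}
To prove this I would use Lemma~\ref{lem:absolutelycontinuous} to write $x = x_0 + \int_0^\cdot\dot x_s\,\mathrm{d}s$ with $\dot x\in L^1([0,1];\R^d)$; then $y_t = Ax_t + b$ is absolutely continuous with $\dot y_t = A\dot x_t$ for a.e.\ $t$, i.e.\ $\dot y^{i}_t = \sum_{j=1}^d a_{ij}\dot x^{j}_t$ --- the constant offset $b$ drops out upon differentiation, which is precisely the translation invariance of the signature. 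Substituting this into the iterated-integral representation $\sig_{i_1\ldots i_m}(y) = \int_{0\le t_1\le\cdots\le t_m\le 1}\dot y^{i_1}_{t_1}\cdots\dot y^{i_m}_{t_m}\,\mathrm{d}t_1\cdots\mathrm{d}t_m$ (valid since $\dot y\in L^1$ and $\mathrm{d}y^i = \dot y^i\,\mathrm{d}t$ in the sense of \eqref{subsect:sigmoments:eq1.1}) and expanding the product of the $m$ finite sums, each constant string $a_{i_1 j_1}\cdots a_{i_m j_m}$ pulls out of the integral, leaving the stated sum. (Equivalently one can phrase this via the multiplicativity of iterated Lebesgue--Stieltjes integrals using the signed-measure identity $\mathrm{d}y^i = \sum_j a_{ij}\,\mathrm{d}x^j$, bypassing densities.)

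Finally I would integrate the pathwise identity against $\mu$: since the index set $[d]^m$ is finite and the coefficients $a_{i_k j_k}$ are constants, linearity of the integral gives
\begin{equation*}
\begin{aligned}
\big\langle f_\ast\mu\big\rangle_{i_1\ldots i_m}
&= \int_{\mathcal{C}_d}\sig_{i_1\ldots i_m}(y)\,\mu(\mathrm{d}x)
= \sum_{j_1,\ldots,j_m=1}^d a_{i_1 j_1}\cdots a_{i_m j_m}\!\int_{\mathcal{C}_d}\!\sig_{j_1\ldots j_m}(x)\,\mu(\mathrm{d}x) \\
&= \sum_{j_1,\ldots,j_m=1}^d a_{i_1 j_1}\cdots a_{i_m j_m}\langle\mu\rangle_{j_1\ldots j_m},
\end{aligned}
\end{equation*}
which is \eqref{lem:linequiv:eq1}. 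I do not expect a genuine obstacle here --- the argument is essentially bookkeeping --- but two minor points deserve care: (i) the interchange of the finite $[d]^m$-sum with the integral in the last display is harmless whenever the relevant signature moments are finite (e.g.\ when $[\mu]_{\mathfrak{c}}<\infty$, the case of interest), and for the general $\bar\R$-valued statement one checks that no $\infty-\infty$ ambiguity arises from the sign bookkeeping, or one simply restricts to that regime; and (ii) one should make sure that ``$f_\ast\mu$'' in \eqref{lem:linequiv:eq1} refers to the action \eqref{transformation} induced on $\mathcal{M}_1(\mathcal{C}_d)$ by postcomposition of paths, not a static pushforward on $\mathcal{M}_1(\R^d)$ --- but these coincide here since $f$ acts on paths coordinatewise in time.
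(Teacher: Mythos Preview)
Your proposal is correct and follows essentially the same route as the paper: establish the pathwise identity $\sig_{i_1\cdots i_m}(f\circ x)=\sum_{j_1,\ldots,j_m}a_{i_1j_1}\cdots a_{i_mj_m}\sig_{j_1\cdots j_m}(x)$ via multilinearity of iterated Stieltjes integration (equivalently, via $\mathrm{d}y^i=\sum_j a_{ij}\,\mathrm{d}x^j$ with the constant $b$ dropping out), then apply the change-of-variables formula for pushforward measures. Your additional remarks on measurability and integrability are fine elaborations but not points of divergence.
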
 
\begin{proof} 
Clear by the multilinearity of iterated Stieltjes integration, but see Appendix \ref{pf:lem:linequiv}.  
\end{proof}
The signature statistics \eqref{def:coordinates:eq2} share their equivariance property \eqref{lem:linequiv:eq1} with many classical signal statistics of comparable order, such as covariance matrices or classical multivariate moments or cumulants, which is what underlies the latter's utility in many classical ICA approaches \cite{HBS, HKO, MNT}. Hence, and if preferred, one may simply use these classical statistics in lieu of \eqref{def:coordinates:eq1}, in which case the topological robustness approach developed in this paper remains equally valid, mutatis mutandis, upon replacement of \eqref{def:coordinates:eq2} with any of their (affine equivariant) classical counterparts; cf.\ also Rem.\ \ref{rem:discrete-case}. This leads us to the following remark.  
\begin{remark}[Why Signatures?]In stark contrast to the classical (`commutative') moment-based statistics for a stochastic process, the coredinates \eqref{def:coordinates:eq2} of a signal $\mu$ are `global' [in time] statistics of that signal, i.e.\ functions that exhaust (the law of) the \emph{whole} signal and not (as for classical statistics) just the finite-dimensional distributions of the signal probed at a small number of preselected time-points. This globality of the signal statistics \eqref{def:coordinates:eq1} allows them to capture non-local statistical effects (such as time-dependent noise or estimation errors due to asynchronously sampled channel data, cf.\ Section \ref{sect:applications}) much more naturally and efficiently than with classical (`fixed-$t$') moment statistics \cite{bonnier2020adapted, bonnier2019signature, CHL, CHO, chevyrev2018persistence, PVA}. Moreover, the derived statistics \eqref{def:coordinates:eq2} do in general\footnote{\ An exception to this are the coredinate matrices of mean-stationary product signals and (due to \eqref{prop1:eq2}) their linear transformations, which do coincide with classical moment statistics; see Lemma \ref{lemma1}.} also carry more information per matrix than classical statistics of the same dimension \cite{bonnier2019signature}, which is reflected in the fact that the matrices \eqref{def:coordinates:eq2} are generally asymmetric and thus contain about twice as much information as same-dimensional ICA statistics obtained from commutative moments, which are typically all symmetric. It is these aspects in particular, which will be developed and leveraged in the following, that make the signal coordinates \eqref{def:coordinates:eq1} and their derived statistics \eqref{def:coordinates:eq2} a very natural and informative quantification of statistical dependencies within and between multidimensional signals and their (non)linear transformations. Owing to these qualities, we will see that these coordinates provide an excellent basis for constructing the desired robustness topology for \eqref{def:robustICA:eq1}. \hfill $\bdiam$
\end{remark}

\subsection{Convergence of Signals}\label{subsect:convergencetypes}We formalise how different signals can be topologically related to each other by introducing a natural notion of $p$-variation-graded weak convergence on $\fD$. The resulting signal topology is at most gradually stronger than the topology of classical weak-convergence on $\fD$ and is in fact equal to the latter in the context of many applications.\\[-0.5em]

\noindent
To begin, let us recall\footnote{\ See for instance \citep[Definition 8.1.1]{BOG} and \citep[Section 7]{BIL}.} that a net of signals $(\mu_\alpha)$ in $\fD$ is said to converge \emph{weakly} to $\mu\in\fD$ if 
\begin{equation}\label{sect:convergence:classical}
\lim_{\alpha}\mu_\alpha(\tilde{f}) = \mu(\tilde{f}) \ \text{ for each }\tilde{f}\in C_{\mathrm{b}}(\mathcal{C}^1; \|\cdot\|_\infty), \quad\text{where}\quad \mu_\alpha(\tilde{f})\coloneqq\int_{\mathcal{C}^1}\tilde{f}\,\mathrm{d}\mu_\alpha
\end{equation}
and $C_b\big(\mathcal{C}^1;\|\cdot\|_\infty\big)$ denotes the space of bounded $\|\cdot\|_\infty$-continuous functions $\tilde{f} : \mathcal{C}^1\rightarrow\R$.\\[-0.5em] 

As can be seen from its definition above, the notion of weak convergence is an inherently topological concept: While different topologies on $\mathcal{C}^1$ may induce the same Borel $\sigma$-algebra, the set of (bounded) continuous functions they provide, and hence the `modes of weak convergence' that they induce on $\fD$, may well differ.\\[-1em]

\noindent
Lemma \ref{lem:sigma-algebras} pertains to this, as it implies that the signal spaces $\mathcal{M}_1(\mathcal{C}^1)\equiv\mathcal{M}_1\big((\mathcal{C}^1, \|\cdot\|_\infty)\big)\cong\fD$ and $\mathcal{M}_1\big((\mathcal{C}^1, \onevar{\,\cdot\,})\big)$ coincide. This allows us to integrate each measure $\mu$ in $\mathcal{M}_1\big(\mathcal{C}^1\big)$ against any element of the space $C_b\big(\mathcal{C}^1;\onevar{\,\cdot\,}\big)$ of bounded $\onevar{\,\cdot\,}$-continuous functions $\tilde{f} : \mathcal{C}^1\rightarrow\R$.\\[-0.5em]

\noindent
As before, cf.\ \citep[Def.\ 8.1.1]{BOG}, this gives rise to a notion of pointwise (``weak'') convergence 
\begin{equation}\label{sect:pweak-convergence:eq1}
\mu_\alpha\xrightharpoonup{1\text{-}\mathrm{var}}\mu \quad :\,\Longleftrightarrow \quad \left[\lim_{\alpha}\mu_\alpha(\tilde{f}) = \mu(\tilde{f}), \ \ \forall\, \tilde{f}\in C_b\big(\mathcal{C}^1;\onevar{\,\cdot\,}\big)\right] 
\end{equation}   
for a net $(\mu_\alpha)\subset\mathcal{M}_1(\mathcal{C}^1)$ and $\mu\in\mathcal{M}_1(\mathcal{C}^1)$. Thanks to the inclusions \eqref{sect:pweak-convergence:eq3}, this convergence can be systematically generalised to the following weaker notions of convergence.     

\begin{definition}[$p$-Weak Topology]\label{def:weakp-convergence} 
Let $p\in[1,\infty)$. A net $(\mu_\alpha)\subset\fD$ will be called \emph{$p$-weakly convergent} to a signal $\mu\in\fD$, in symbols: $\mu_\alpha\xrightharpoonup{p\text{-}\mathrm{var}}\mu$, if 
\begin{equation}\label{sect:pweak-convergence:eq4}
\lim_{\alpha}\mu_\alpha(\tilde{f}) = \mu(\tilde{f}) \quad \text{ for each } \ \tilde{f}\in C_b\big(\mathcal{C}^1;\|\cdot\|_{p\text{-}\mathrm{var}}\big).\vspace{-0.5em} 
\end{equation} 
The convergence of nets \eqref{sect:pweak-convergence:eq4} canonically\footnote{\ See for instance the section on convergence classes in \citep[Chapter 2 (pp.\ 73)]{kelley1961}. See also from there, cf.\ \citep[Theorem 2.9]{kelley1961}, that the notion of converging nets (as defined by the \eqref{sect:pweak-convergence:eq1}-induced $p$-weak topology) is exactly the same as the notion of convergence described by \eqref{sect:pweak-convergence:eq1}.} defines a unique topology on $\fD$ which we call the \emph{$p$-weak topology}, and any $q$-weak topology for $1\leq q < 2$ will be called a \emph{weak'-topology}. 
\end{definition} 
In other words, \eqref{sect:pweak-convergence:eq4} is precisely the weak convergence of $(\mu_\alpha)$ to $\mu$ wrt.\ the $p$-variation topology on $\mathcal{C}^1$ (cf.\ \citep[Section 8.1]{BOG}). The $p$-indexed convergences \eqref{sect:pweak-convergence:eq4} stand in natural hierarchy to each other: For any $1\leq p \leq q < \infty$, the $p$-weak topology is stronger (due to \eqref{sect:pweak-convergence:eq3}) than the $q$-weak topology which, in turn, is stronger than the classical (i.e.\ \eqref{sect:convergence:classical}-induced) weak topology on $\mathcal{M}_1(\mathcal{C}^1)$ but only gradually so, as per the following remark. 

\begin{remark}\label{rem:pweak-convergence}The norms $\|\cdot\|_{\mathrm{p}\text{-}\mathrm{var}}$ $(1 < p < \infty)$ and $\|\cdot\|_\infty$ can be related via a classical interpolation inequality (e.g.\ \citep[Proposition 5.5]{FVI}) which, for every $x\in\mathcal{C}_d$, implies that   
\begin{equation}\label{rem:pweak-convergence:eq1}
\|x\|_\infty \,\leq\, \|x\|_{p\text{-}\mathrm{var}} \,\leq\, 2\|x\|^{\frac{1}{p}}_{1\text{-}\mathrm{var}}\!\cdot\|x\|_\infty^{1 - \frac{1}{p}}  \quad \text{ for each } p\in(1,\infty)
\end{equation}
and hence that the norms $\|\cdot\|_{p\text{-}\mathrm{var}}$ and $\|\cdot\|_\infty$ are asymptotically equivalent in the limit $p\rightarrow\infty$. In particular, the convergence-defining sets of test functions $C_b\big(\mathcal{C}^1;\|\cdot\|_{p\text{-}\mathrm{var}}\big)$ and $C_b\big(\mathcal{C}^1;\|\cdot\|_\infty\big)$ are `asymptotically coincidental' as $p\rightarrow\infty$, and in this sense we may regard the convergence topologies defined by \eqref{sect:pweak-convergence:eq4} and \eqref{sect:convergence:classical}, respectively, as `essentially equivalent' for $p$ large. \hfill $\bdiam$    
\end{remark} 
The a priori gap (in strength) between the $p$-weak topologies and the classical [i.e.\ \eqref{sect:convergence:classical}-induced] weak topology on $\fD$ is mainly a theoretical one with no relevance for many practical applications. This is because for many practically relevant signal spaces the $p$-weak topologies and the classical weak topology coincide, as the following example shows.    

\begin{proposition}\label{prop:boundeddomains}
For any $p\in[1,\infty)$ and $K>0$, consider the `compactified' signal spaces
\begin{equation}\label{prop:boundeddomains:eq1}
\fD^p_K\coloneqq\big\{\mu\in\mathcal{M}_1(\mathcal{C}^1)\ \big| \ \mu(\mathcal{C}_{p,K}^1)=1\big\} \quad\text{for}\quad \mathcal{C}_{p,K}^1\coloneqq\big\{x\in\mathcal{C}^1 \ \big| \ \|x\|_{p\text{-}\mathrm{var}}\leq K\big\}, 
\end{equation}
and for any dissection $\mathcal{I}$ of $[0,1]$ consider the signals
\begin{equation}\label{prop:boundeddomains:eq2}
\widehat{\fD}_\mathcal{I}\coloneqq\big\{\mu\in\mathcal{M}_1(\mathcal{C}^1)\ \big| \ \mu(\hat{\mathcal{C}}_\mathcal{I})=1\big\} \ \text{ supported on } \ \hat{\mathcal{C}}_\mathcal{I},  
\end{equation} 
the space of $\mathcal{I}$-piecewise linear paths from \eqref{rem:discrete-case:eq2}. On these spaces, the classical weak topology is equivalent to: the $q$-weak topology on $\fD^p_K$ for any $q>p$, and to the $1$-weak topology on $\widehat{\fD}_\mathcal{I}$. 
\end{proposition}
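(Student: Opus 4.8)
The plan is to prove both equivalences by the same two-step scheme, with one direction of the topology comparison coming for free. Since $\|\cdot\|_{r\text{-}\mathrm{var}}\ge\|\cdot\|_\infty$ for every finite $r\ge 1$, the identity map $(\mathcal{C}^1,\|\cdot\|_{r\text{-}\mathrm{var}})\to(\mathcal{C}^1,\|\cdot\|_\infty)$ is continuous, so $C_b(\mathcal{C}^1;\|\cdot\|_\infty)\subseteq C_b(\mathcal{C}^1;\|\cdot\|_{r\text{-}\mathrm{var}})$ and hence every $r$-weak topology is finer than the classical weak topology — this is exactly the $q=\infty$ instance of the inclusion \eqref{sect:pweak-convergence:eq3}. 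So on $\fD^p_K$ (resp.\ $\widehat{\fD}_\mathcal{I}$) it remains only to show that a net of measures converging classically weakly already converges $q$-weakly (resp.\ $1$-weakly); by the net characterisation of topologies, this reverse implication forces the classical weak topology to be \emph{finer} than the $r$-weak one, and combined with the free inclusion it yields equality. Throughout write $A\coloneqq\mathcal{C}^1_{p,K}$ and $r\coloneqq q$ in the first case, $A\coloneqq\hat{\mathcal{C}}_\mathcal{I}$ and $r\coloneqq 1$ in the second, for the common support and exponent, and work in the ambient metric space $(\mathcal{C}^1,\|\cdot\|_\infty)$ as in the paper's convention.

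The key observation is that \emph{on $A$ the $r$-variation topology coincides with the uniform topology, and $A$ is $\|\cdot\|_\infty$-closed in $\mathcal{C}^1$}. For $A=\mathcal{C}^1_{p,K}$: closedness follows from lower semicontinuity of $\|\cdot\|_{p\text{-}\mathrm{var}}$ under uniform convergence, since each partition sum $\sum_\nu|x_{t_{\nu+1}}-x_{t_\nu}|^p$ and $|x_0|$ is $\|\cdot\|_\infty$-continuous; and the coincidence of topologies from the standard interpolation estimate $\|z\|_{q\text{-}\mathrm{var}}\le|z_0|+(2\|z\|_\infty)^{1-p/q}V_p(z)^{p/q}$ with $V_p$ the $p$-variation seminorm (the $p$-variation analogue of \eqref{rem:pweak-convergence:eq1}, cf.\ \citep[Prop.\ 5.5]{FVI}), which, applied to $z=x_\alpha-x$ and combined with $V_p(x_\alpha-x)\le V_p(x_\alpha)+V_p(x)\le 2K$, shows that $\|x_\alpha-x\|_\infty\to 0$ inside $\mathcal{C}^1_{p,K}$ forces $\|x_\alpha-x\|_{q\text{-}\mathrm{var}}\to 0$ (here $q>p$ is used). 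For $A=\hat{\mathcal{C}}_\mathcal{I}$: evaluation at the breakpoints identifies $\hat{\mathcal{C}}_\mathcal{I}$ with the finite-dimensional space $(\R^d)^{|\mathcal{I}|}$ (Remark \ref{rem:discrete-case}); under this identification $\|x\|_\infty=\max_\nu|x_{t_\nu}|$ — because $t\mapsto|x_t|$ is convex on each linear piece, hence maximised at a node — while $\|x\|_{1\text{-}\mathrm{var}}=|x_{t_0}|+\sum_\nu|x_{t_{\nu+1}}-x_{t_\nu}|$, two norms on a finite-dimensional space and therefore equivalent, with every intermediate $\|\cdot\|_{r\text{-}\mathrm{var}}$ squeezed between them; and $\hat{\mathcal{C}}_\mathcal{I}$ is closed as a finite-dimensional linear subspace.

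The transfer to the measure level is then formal. Let $(\mu_\alpha),\mu$ be supported on $A$ with $\mu_\alpha\to\mu$ classically weakly, and let $g\in C_b(\mathcal{C}^1;\|\cdot\|_{r\text{-}\mathrm{var}})$; by Lemma \ref{lem:sigma-algebras} together with the norm inequalities $\|\cdot\|_\infty\le\|\cdot\|_{r\text{-}\mathrm{var}}\le\|\cdot\|_{1\text{-}\mathrm{var}}$, the Borel $\sigma$-algebras of $\|\cdot\|_\infty$, $\|\cdot\|_{r\text{-}\mathrm{var}}$ and $\|\cdot\|_{1\text{-}\mathrm{var}}$ on $\mathcal{C}^1$ all agree, so $g$ is $\|\cdot\|_\infty$-Borel and the integrals below make sense. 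By the previous paragraph $g|_A$ is bounded and $\|\cdot\|_\infty$-continuous on the $\|\cdot\|_\infty$-closed set $A$, so the Tietze extension theorem — available since $(\mathcal{C}^1,\|\cdot\|_\infty)$ is metric, hence normal — produces $\bar g\in C_b(\mathcal{C}^1;\|\cdot\|_\infty)$ with $\bar g|_A=g|_A$. As $\mu_\alpha(A)=\mu(A)=1$, we get $\mu_\alpha(g)=\mu_\alpha(\bar g)\to\mu(\bar g)=\mu(g)$, i.e.\ $\mu_\alpha\to\mu$ $r$-weakly; with the free inclusion from the first paragraph this proves the two asserted equivalences.

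The only genuine (if routine) work sits in the middle step: pinning down the $p$-variation form of the interpolation estimate in the $\fD^p_K$ case — the paper records \eqref{rem:pweak-convergence:eq1} only in its $1$-variation version, so one needs its elementary $p$-variation counterpart — and, in the $\widehat{\fD}_\mathcal{I}$ case, verifying the explicit identities $\|x\|_\infty=\max_\nu|x_{t_\nu}|$ and $\|x\|_{1\text{-}\mathrm{var}}=|x_{t_0}|+\sum_\nu|x_{t_{\nu+1}}-x_{t_\nu}|$ on piecewise-linear paths; everything else is bookkeeping. I would also flag that the whole argument is deliberately phrased for nets rather than sequences, which costs nothing here: the interpolation bound in the middle step is uniform over $A$ and the Tietze extension in the last step is an honest continuous function, so no sequential reductions or metrisability arguments for the weak topology are needed.
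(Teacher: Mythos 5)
Your proof is correct and follows essentially the same route as the paper's: establish that on the support set the $q$- (resp.\ $1$-)variation and uniform topologies coincide — via the interpolation inequality $\|\cdot\|_{q\text{-}\mathrm{var}}\leq 2\|\cdot\|_{p\text{-}\mathrm{var}}^{p/q}\|\cdot\|_\infty^{1-p/q}$ with the $p$-variation bounded by $K$, resp.\ finite-dimensionality of $\hat{\mathcal{C}}_{\mathcal{I}}$ — and then transfer this to equality of the weak topologies. The only difference is presentational: your Tietze-extension step makes explicit the passage between test functions on $\mathcal{C}^1$ and on the closed support, which the paper handles implicitly through the identification $\fD^p_K\cong\mathcal{M}_1(\mathcal{C}^1_{p,K})$ and the equality $C_b(\mathcal{C}^1_{p,K};\|\cdot\|_\infty)=C_b(\mathcal{C}^1_{p,K};\|\cdot\|_{q\text{-}\mathrm{var}})$.
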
 
\begin{proof}
This essentially follows from a generalisation of \eqref{rem:pweak-convergence:eq1}, see Appendix \ref{pf:prop:boundeddomains}.
\end{proof}
The above signal spaces \eqref{prop:boundeddomains:eq1} and \eqref{prop:boundeddomains:eq2} owe their practical relevance to the fact that real-world signal processing systems are always subject to capacity constraints such as limited data storage and finite resolution of the recorded observables, e.g.\ \citep[Sects.\ 5, 12]{damelin2012signals}. The former type of constraint leads to \eqref{prop:boundeddomains:eq1}, while the latter type of constraint mandates (finite horizon) time-discretizations of the signals as represented by \eqref{prop:boundeddomains:eq2}, cf.\ for instance \cite{olevskii2020}.                  

\subsection{Convergence of Signature Moments}\label{sect:convergence_sigmoments}This section shows that under a natural growth condition, the signature coordinates of a signal are continuous wrt.\ the above signal topologies.\\[-0.5em]

In order to combine the considerations of Sections \ref{subsect:coordinatisable} and \ref{subsect:convergencetypes} into an informative robustness topology for \eqref{def:robustICA:eq1}, we will need to link the convergence \eqref{sect:pweak-convergence:eq4} of signals to the convergence of their moments \eqref{def:coordinates:eq1}. As usual for integral statistics, this is done by imposing a growth condition.     
\begin{definition}\label{def:unifsigintegr}
A subset $\mathfrak{U}$ of $\fD$ is called \emph{uniformly signature integrable of order $m$} if 
\begin{equation}\label{def:unifsigintegr:eq1}
\inf_{a>0}\sup_{\mu\in\mathfrak{U}}\int_{\{|\sig_w|>a\}}\!|\sig_w|\,\mathrm{d}\mu \, = \, 0  \quad \text{ for each } w\in[d]^\star_{\leq m}. 
\end{equation} 
We call $\mathfrak{U}$ \emph{uniformly signature integrable} if $\mathfrak{U}$ is uniformly signature integrable of order 3. 
\end{definition} 
Put plainly, a set of signals is uniformly signature integrable (of order $m$) if its elements attain with uniformly lower probability those sample paths whose signature coefficients (up to order $m$) are very large. The growth assumption \eqref{def:unifsigintegr:eq1} ensures that on such spaces, the moment coordinates \eqref{def:coordinates:eq1} are continuous with respect to any weak'-topology $\left.\eqref{sect:pweak-convergence:eq4}\right|_{p\in[1,2)}$. 
\begin{lemma}\label{lem:unifsigintegr}
Let $\mathfrak{U}$ be uniformly signature integrable of order $m$, and $(\mu_\alpha)\subset\mathfrak{U}$ be any net such that $\mu_\alpha\rightarrow\mu\in\mathfrak{U}$ in a weak'-topology. Then $\langle\mu_\alpha\rangle_w\rightarrow\langle\mu\rangle_w$ for each $w\in[d]_{\leq m}^\star$.   
\end{lemma}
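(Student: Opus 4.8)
The statement is a routine-looking uniform-integrability argument, so the plan is to reduce it to the classical fact that weak convergence plus uniform integrability of a fixed test functional implies convergence of its integrals. The only subtlety is that the functional $\sig_w$ is \emph{not bounded} on $\mathcal{C}^1$, so it is not admissible as a test function in \eqref{sect:pweak-convergence:eq4} directly. First I would fix $w\in[d]^\star_{\leq m}$ and note, via Lemma \ref{cor:signature-measurable} together with the continuity of $\sig_w$ in $\onevar{\,\cdot\,}$ (used in the proof of that lemma, e.g.\ \citep[Thm.\ 3.1.3]{LYQ}), that $\sig_w : (\mathcal{C}^1,\onevar{\,\cdot\,})\to\R$ is continuous — hence also $\onevar{\,\cdot\,}$-continuous after composition with any bounded continuous $\R\to\R$ map. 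Since a weak'-topology is a $p$-weak topology for some $p\in[1,2)$, the admissible test functions are exactly $C_b(\mathcal{C}^1;\|\cdot\|_{p\text{-}\mathrm{var}})$, and by the inclusion \eqref{sect:pweak-convergence:eq3} (with $q=1\le p$... actually $1\le p$, so $C_b(\mathcal{C}^1;\onevar{\,\cdot\,})\supseteq C_b(\mathcal{C}^1;\|\cdot\|_{p\text{-}\mathrm{var}})$, i.e.\ the $p$-weak convergence tests against \emph{fewer} functions) one must be slightly careful about the direction: what I actually need is that $\mu_\alpha\xrightharpoonup{p\text{-}\mathrm{var}}\mu$ implies convergence of integrals of $\onevar{\,\cdot\,}$-continuous bounded functions. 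This is \emph{not} automatic from \eqref{sect:pweak-convergence:eq3} in that direction; however, since $p<2\le\infty$ we do not need it — rather, I would observe that $\sig_w$ is continuous in $\|\cdot\|_{p\text{-}\mathrm{var}}$ as well, because $\sig_w$ is continuous in every $p$-variation norm for $p<2$ (this is the standard extension of the signature to paths of finite $p$-variation, $p<2$, via Young integration; cf.\ the rough-paths references \citep{FVI,LYQ}). So the genuinely clean route is: $\sig_w$ is $\|\cdot\|_{p\text{-}\mathrm{var}}$-continuous for any $p<2$, hence for our weak'-topology every bounded continuous function of $\sig_w$ is an admissible test function.

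\textbf{Main steps.} With that in hand the argument is the textbook truncation: for $a>0$ let $\chi_a:\R\to[-a,a]$ be the continuous truncation $\chi_a(s)\coloneqq\max(-a,\min(a,s))$, so that $\chi_a\circ\sig_w\in C_b(\mathcal{C}^1;\|\cdot\|_{p\text{-}\mathrm{var}})$ and therefore $\mu_\alpha(\chi_a\circ\sig_w)\to\mu(\chi_a\circ\sig_w)$ by the assumed weak' convergence. Then split
\begin{equation}
\big|\langle\mu_\alpha\rangle_w - \langle\mu\rangle_w\big| \,\leq\, \big|\mu_\alpha(\sig_w) - \mu_\alpha(\chi_a\circ\sig_w)\big| + \big|\mu_\alpha(\chi_a\circ\sig_w) - \mu(\chi_a\circ\sig_w)\big| + \big|\mu(\chi_a\circ\sig_w) - \mu(\sig_w)\big|.
\end{equation}
The middle term vanishes as $\alpha\to\infty$ for each fixed $a$. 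The first and third terms are each bounded by $\int_{\{|\sig_w|>a\}}|\sig_w|\,\mathrm{d}\nu$ for $\nu=\mu_\alpha$ resp.\ $\nu=\mu$ (since $|s-\chi_a(s)|\le|s|\mathbbm{1}_{\{|s|>a\}}\le|s|$ everywhere and is zero on $\{|s|\le a\}$), and by Definition \ref{def:unifsigintegr} applied to $\mathfrak{U}\ni\mu_\alpha,\mu$ these are $\le\varepsilon$ uniformly in $\alpha$ once $a$ is chosen large enough — here we use both that $(\mu_\alpha)\subset\mathfrak{U}$ and that the limit $\mu\in\mathfrak{U}$, which is exactly why the hypothesis asks for the limit to lie in $\mathfrak{U}$. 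A standard $\varepsilon/3$ argument (choose $a$ for the outer two terms, then $\alpha$ large for the middle term) closes it. One should also record that the finiteness $\langle\mu_\alpha\rangle_w,\langle\mu\rangle_w\in\R$ (as opposed to $\pm\infty$ in $\bar\R$) is part of the conclusion and follows from \eqref{def:unifsigintegr:eq1} with a single fixed $a$: $\int|\sig_w|\,\mathrm{d}\mu\le a + \sup_\nu\int_{\{|\sig_w|>a\}}|\sig_w|\,\mathrm{d}\nu<\infty$.

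\textbf{Anticipated obstacle.} The one point that needs care, and which I would isolate as a preliminary lemma or a remark, is the continuity of $\sig_w$ with respect to the $p$-variation norm for $p\in[1,2)$ rather than merely $p=1$. For $p=1$ this is \citep[Thm.\ 3.1.3]{LYQ} (Lebesgue–Stieltjes, already used for Lemma \ref{cor:signature-measurable}); for $1<p<2$ it is the local Lipschitz continuity of the Young-integral iterated integrals, which is classical rough-paths material (\citep{FVI}, Young's inequality) but should be cited explicitly. Everything else — the truncation, the $\varepsilon/3$ split, the use of uniform signature integrability to control the tails uniformly in the net — is entirely routine. If one prefers to avoid invoking Young integration, an alternative is to restrict attention to $p=1$ only: the notion of weak'-topology allows $q\in[1,2)$, but since the $1$-weak topology is the finest of them, $\mu_\alpha\xrightharpoonup{q\text{-}\mathrm{var}}\mu$ for $q\in[1,2)$ does \emph{not} imply $1$-weak convergence, so this shortcut does not work — hence the $p$-variation continuity of $\sig_w$ genuinely is needed, and that is the crux to get right.
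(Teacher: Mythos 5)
Your proposal is correct and follows essentially the same route as the paper: the paper's proof likewise rests on the $\|\cdot\|_{p\text{-}\mathrm{var}}$-continuity of $\sig_w$ for $p\in[1,2)$ and then simply cites the classical weak-convergence-plus-uniform-integrability result \citep[Lemma 8.4.3]{BOG}, which is exactly the truncation/$\varepsilon/3$ argument you write out in full. You correctly identified the one genuine point of care (that $p$-variation continuity for all $p<2$, not just $p=1$, is what is needed), which the paper handles by the same citation to \citep[Thm.\ 3.1.3]{LYQ}.
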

\begin{proof}
Since $\langle\tilde{\mu}\rangle_w = \int_{\mathcal{C}^1}\sig_w\,\mathrm{d}\tilde{\mu}$ and because $\sig_w$ is continuous wrt.\ the $p$-variation topology on $\mathcal{C}^1$ for any $p\in[1,2)$, e.g.\ \citep[Thm.\ 3.1.3]{LYQ}, the assertion follows from \citep[Lemma 8.4.3]{BOG}.   
\end{proof}
As an immediate\footnote{\label{footnote:signature-bound}\ Upon recalling that $|\sig_w(x)|\lesssim\|x\|_{p\text{-}\mathrm{var}}^{p|w|}$ for any $x\in\mathcal{C}^1$ and $p\in[1,2)$, e.g.\ \citep[Thm.\ 3.7 $\&$ Sect.\ 1.2.2]{FLO}; notice that said inequality also (and for all $\mu\in\mathcal{M}_1$) implies that: $\int_{\mathcal{C}_d}\!\|x\|^3_{p\text{-}\mathrm{var}}\,\mu(\mathrm{d}x)<\infty$ \ $\Longrightarrow$ \ $[\mu]_{\mathfrak{c}} < \infty$.} example, uniformly signature integrable spaces (of any order) include all subsets of $\fD$ whose elements have uniformly $\|\cdot\|_{p\text{-}\mathrm{var}}$-bounded support, cf.\ Proposition \ref{prop:boundeddomains}. Drawing on the classical notion of uniform integrability, these bounded-support examples of uniformly signature integrable spaces can be easily generalised as follows.  

\begin{proposition}\label{lem:unifsigintegr:sufficient}
Let $m\in\N$, and let $\mathfrak{U}$ be a subset of $\fD$ for which there is $q>1$ such that
\begin{equation}\label{lem:unifsigintegr:sufficient:eq1}
\sup_{\mu\in\mathfrak{U}}\int_{\mathcal{C}_d}\!\|x\|_{1\text{-}\mathrm{var}}^{mq}\,\mu(\mathrm{d}x) \, < \, \infty. 
\end{equation} 
Then $\mathfrak{U}$ is uniformly signature integrable of order $m$.  
\end{proposition}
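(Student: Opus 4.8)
The plan is to reduce the statement to the classical fact that a family of measurable functions that is bounded in $L^q$ for some $q>1$ is uniformly integrable, applied to the coordinate functionals $\sig_w$ with $|w|\le m$. So it suffices to show that $\sup_{\mu\in\mathfrak{U}}\int_{\mathcal{C}_d}|\sig_w|^q\,\mathrm{d}\mu<\infty$ for each $w\in[d]^\star_{\leq m}$, with $q>1$ the exponent from \eqref{lem:unifsigintegr:sufficient:eq1}.

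First I would invoke the deterministic growth estimate for signatures of bounded-variation paths recalled in the footnote to the preceding paragraph (see also \citep[Thm.\ 3.7]{FLO}): for every $x\in\mathcal{C}^1$ and every word $w$ one has $|\sig_w(x)|\le \onevar{x}^{\,|w|}/|w|!\le \onevar{x}^{\,|w|}$, the factorial decay being the standard bound on iterated Stieltjes integrals together with the fact that the total $1$-variation of $x$ is at most $\onevar{x}$. Raising this to the power $q$, distinguishing the cases $\onevar{x}\le 1$ and $\onevar{x}>1$, and using $|w|\le m$, I obtain the word-uniform pointwise domination $|\sig_w(x)|^q\le 1+\onevar{x}^{\,mq}$, valid for all $x\in\mathcal{C}^1$ and all $w\in[d]^\star_{\leq m}$. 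Integrating against an arbitrary $\mu\in\mathfrak{U}$ and invoking \eqref{lem:unifsigintegr:sufficient:eq1} with $M\coloneqq\sup_{\mu\in\mathfrak{U}}\int_{\mathcal{C}_d}\onevar{x}^{\,mq}\,\mu(\mathrm{d}x)<\infty$ then gives $\int_{\mathcal{C}_d}|\sig_w|^q\,\mathrm{d}\mu\le 1+M\eqqcolon C$ for every $\mu\in\mathfrak{U}$ and every $w\in[d]^\star_{\leq m}$, i.e.\ the desired uniform $L^q$-bound, with $C$ independent of both $\mu$ and $w$.

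Finally I would convert this $L^q$-bound into \eqref{def:unifsigintegr:eq1} by a Markov-type estimate: on the set $\{|\sig_w|>a\}$ one has $|\sig_w|\le a^{1-q}|\sig_w|^q$, so $\int_{\{|\sig_w|>a\}}|\sig_w|\,\mathrm{d}\mu\le a^{1-q}\int_{\mathcal{C}_d}|\sig_w|^q\,\mathrm{d}\mu\le a^{1-q}C$; taking $\sup_{\mu\in\mathfrak{U}}$ and then letting $a\to\infty$ — legitimate since $q>1$ — yields $\inf_{a>0}\sup_{\mu\in\mathfrak{U}}\int_{\{|\sig_w|>a\}}|\sig_w|\,\mathrm{d}\mu=0$ for each $w\in[d]^\star_{\leq m}$, which is exactly uniform signature integrability of order $m$ in the sense of Definition \ref{def:unifsigintegr}.

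I do not anticipate a genuine obstacle: the argument is essentially the de la Vallée-Poussin criterion in disguise. The only point requiring a little care is ensuring that all constants are uniform over $\mathfrak{U}$ and over the (finitely many lengths of) words $w$ with $|w|\le m$; this is precisely why I would discard the beneficial $1/|w|!$ factor in favour of the crude but word-independent bound $|\sig_w(x)|^q\le 1+\onevar{x}^{\,mq}$ before integrating.
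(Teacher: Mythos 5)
Your proof is correct and follows essentially the same route as the paper's: both rest on the polynomial signature growth bound $|\sig_w(x)|\lesssim\onevar{x}^{|w|}$ for bounded-variation paths and on the observation that the moment condition \eqref{lem:unifsigintegr:sufficient:eq1} makes the family $\{\sig_w\}_{|w|\le m}$ uniformly bounded in $L^q(\mu)$ over $\mu\in\mathfrak{U}$, which for $q>1$ yields uniform integrability. The only cosmetic difference lies in the final step: you extract the tail decay via the pointwise estimate $|\sig_w|\le a^{1-q}|\sig_w|^q$ on $\{|\sig_w|>a\}$, whereas the paper applies H\"older to $\int\|x\|_{1\text{-}\mathrm{var}}^{m}\mathbbm{1}_{\{|\sig_w|>a\}}\,\mathrm{d}\mu$ and controls $\mu(\{|\sig_w|>a\})$ by Chebyshev --- two standard renditions of the same de la Vall\'ee-Poussin-type argument.
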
  
\begin{proof}
This follows by combination of a few standard inequalities, see Appendix \ref{pf:lem:unifsigintegr:sufficient}. 
\end{proof}              
\subsubsection{Integrability}\label{subsubsect:moment-integrability}An analysis of the robustness \eqref{def:robustICA:eq1} requires to capture how nonlinear transformations of $\R^d$ affect nearby source signals [via \eqref{transformation}]. For this, and only for this (see Rems.\ \ref{rem:partialcontinuity}, \ref{rem:toproof:thm:robustness} \ref{rem:toproof:thm:robustness:it2}), it will be opportune to sharpen the notion of locality in $\fD$ by enforcing the comparability of (the coredinates of) signals via uniform integrability \eqref{lem:unifsigintegr:sufficient:eq1} of their strong moments:\\[-1.5em] 

\begin{notation*}A set $\mathfrak{U}$ in $\mathfrak{D}$ is called \emph{core integrable} if it is uniformly signature integrable and
\begin{equation}\label{notation:unifsigintegr:eq1}
K_{\mathfrak{U}|\beta}\coloneqq\sup_{\mu\in\mathfrak{U}}\int_{\mathcal{C}_d}\!\|x\|_{1\text{-}\mathrm{var}}^{\beta}\,\mu(\mathrm{d}x)\,<\,\infty \quad\text{ for some } \ \beta\in(5/2,3).
\end{equation} 
For any fixed $\beta$ as in \eqref{notation:unifsigintegr:eq1}, denote $p_\beta\coloneqq(\beta-2)^{-1}$ and $\alpha=\alpha_\beta\coloneqq 1 - 1/p_\beta$ $[\in(0,\tfrac{1}{2})]$.\\[-0.75em]

\noindent
We say that a subset $\mathfrak{U}\subseteq\fD$ is \emph{core-integrable as per $(K_{\mathfrak{U}|\beta}, \beta, p, \alpha)$} if \eqref{notation:unifsigintegr:eq1} holds for some fixed $\beta\in(5/2,3)$ and associated constants $p=p_\beta$ and $\alpha=\alpha_\beta$ as defined above, and for any such tuple we further define $c_p\coloneqq(1-2^{1-2/p})^{-1}$ and $C_p\coloneqq 2^{\alpha+1} c_p(1 + K_{\mathfrak{U}|\beta})$. \hfill $\bdiam$
\end{notation*} 

As an immediate example, a subspace $\mathfrak{U}\subseteq\fD$ is core integrable if it satisfies \eqref{lem:unifsigintegr:sufficient:eq1} for $m=3$.

\noindent
Remark \ref{rem:unifsigintegr} provides a bit of context on the condition \eqref{lem:unifsigintegr:sufficient:eq1} and how it relates to the usual integrability conditions for classical moments of random vectors in $\R^d$.\\[-0.5em] 

\noindent
Our approach towards ICA-robustness (\eqref{BSS:robust_prelim1}, \eqref{def:robustICA:eq1}) will make use of the coredinates \eqref{def:coordinates:eq2} of a signal, which is why for the following we will focus on the `coredinatisable' space
\begin{equation}\label{sect:convergence_sigmoments:subspace}
\dot{\fD}\,\coloneqq\, \left\{\mu\in\fD \ \middle| \ [\mu]_{\mathfrak{c}} < \infty, \, \langle\mu\rangle_{ii}\neq 0 \ \, \text{for} \ i=1,\ldots,d\right\}. 
\end{equation}    
Note that this is a very large subspace of $\fD$: The required existence of a signal's coredinates includes (e.g.) the full $3$-Wasserstein space on $(\mathcal{C}^1, \|\cdot\|_{1\text{-}\mathrm{var}})$ (Footnote \ref{footnote:signature-bound}; cf.\ \citep[Sect.\ 2.1]{panaretos2020}), while the condition $\langle\mu\rangle_{ii}\neq 0$ of non-vanishing \nth{2}-order diagonal moments is merely a mild non-degeneracy assumption that can be imposed with essentially\footnote{\ Indeed: Since for each $\mu=(\mu^1, \ldots, \mu^d)\in\fD$ we have $\langle\mu\rangle_{ii}=\tfrac{1}{2}\E[(\mu_{0,1}^i)^2]$, the condition $\langle\mu\rangle_{ii}\neq 0$ asks for the increments $\mu_{0,1}^i\coloneqq\mu_1^i - \mu_0^i$ to have non-zero variance (see Sect.\ \ref{sect:operationsonmeasures} for notation). Practically this is no restriction since one can simply cut-off any (`non-degenerate', cf.\ Sect.\ \ref{sect:signals}) signal in $\fD$ to a (regular) interval over whose end-points its increment has positive variance, and then rescale this cut-off signal back to $[0,1]$.} no loss of generality.\\[-0.5em]          

We are now in a position to endow the signal space $\dot{\fD}$ with an explicit and well-structured topology which will help us to arrive at an insightful quantification of the robustness \eqref{def:robustICA:eq1}.       

\section{An ICA-Tailored Topology on Causal Space}\label{chap:robustICA:sect:product_topology}
\noindent
We describe an informative and explicitly computable premetric topology on causal space $\mathfrak{C}$ that relates to the coarseness requirement \eqref{rem:suffcoarse:eq1} while still being strong enough to support, in a conveniently quantifiable manner, the robustness \eqref{def:robustICA:eq1} for a generic ICA-solution $(\sI, \hat{\Phi})$, such as the one derived in Section \ref{chap:robustICA:sect:auxiliaries}. As the first of three main steps to this end, Section \ref{chap:robustICA:sect:premetric} shows how the signature moments from Definition \ref{def:coredinates} allow for a systematic, divergence-like quantification of the (statistical) `distance' between two signals in $\dot{\fD}$. The information thus encoded amounts to a topological landscape of statistical dependence within and between signals which is algebraically and analytically flexible and sensitive to both spatial (non)linear transformations as well as to intrinsic statistical variations, including those explored in Section \ref{subsect:convergencetypes}. This topological dependence profile is then linked to the ICA-specific identifiability structure \eqref{appendix:sect:BSSformal:def:ICA:eq1} in Section \ref{sect:orthogonal_signals}, where accurately identifiable source signals are charted as (belonging to) the set of global minima of this landscape. Finally, Section \ref{subsect:topology_identspace} makes good on the final announcement of Section \ref{sect:robustica-basic} by extending our premetric signal topology to a coarse topology on the whole causal space $\mathfrak{C}$, for which robustness \eqref{def:robustICA:eq1} will then be established in Section \ref{chap:robustICA:sect:auxiliaries}.       

\subsection{A Premetric Topology on Signal Space}\label{chap:robustICA:sect:premetric}From the countable infinitude of coordinates provided by the hierarchically graded signature moments \eqref{def:coordinates:eq1}, the lower-order arrangement \eqref{def:coordinates:eq2} captures those aspects of a signal which are most expressive of any linear action \eqref{subsect:sigmoments:linact} performed upon it. In addition, and still similar to covariance matrices or cumulants for random vectors, the coredinate matrices \eqref{def:coordinates:eq2} describe the (second- and third-order) statistical dependence between the components of a signal in terms of how `pronounced' their off-diagonal structure is relative to their diagonal, cf.\ Lemma \ref{lemma1}. This, combined with the inversion theory of Section \ref{chap:robustICA:subsect:signature_identifiability:algorithm}, will allow for an explicit topological coordinatisation of ICA-identifiable causes (Fig.\ \ref{fig:BSStriple} a.) and the causes close to them (Fig.\ \ref{fig:BSStriple} b.). Let $\dot{\fD}$ be the signal space from \eqref{sect:convergence_sigmoments:subspace}.\\[-0.5em] 

We propose to implement this charting by way of the following distance function. Recall that $\mu\equiv(\mu_t)\in\mathcal{M}_1$ is called mean-stationary if $\E\mu_t = \E\mu_0$ for each $t\in[0,1]$ (Sect.\ \ref{sect:operationsonmeasures}).       
 
\begin{definition}[ICA-Premetric]\label{def:delta}
For any two signals $\mu, \tilde{\mu}\in\dot{\fD}$ with coredinates \eqref{def:coordinates:eq2}, define 
\begin{gather}\label{def:delta:eq1}
\delta(\mu,\tilde{\mu})\,\coloneqq\,\sqrt{\alpha_{\mu,\tilde{\mu}} + \beta_{\mu,\tilde{\mu}}} \qquad \text{ for }\\
\alpha_{\mu,\tilde{\mu}} \coloneqq \sum_{i,j=1}^d\left(\frac{\langle\tilde{\mu}\rangle_{ij} - \langle\mu\rangle_{ij}}{\sqrt{\langle\mu\rangle_{ii}\langle\mu\rangle_{jj}}}\right)^{\!\!2} \quad \text{ and } \quad 
\beta_{\mu,\tilde{\mu}} \coloneqq \sum_{\nu=1}^d\sum_{i,j=1}^d\left(\frac{\langle\tilde{\mu}\rangle_{ij\nu} - \langle\mu\rangle_{ij\nu}}{\sqrt{\langle\mu\rangle_{ii}\langle\mu\rangle_{jj}\langle\mu\rangle_{\nu\nu}}}\right)^{\!\!2}. 
\end{gather} 
For a mean-stationary $\mu=(\mu^1, \ldots, \mu^d)\in\dot{\fD}$ we refer to the \emph{IC-defect} of $\mu$ as the number  
\begin{equation}\label{def:delta:eq2}
\delta_{\independent}(\mu) \, \coloneqq \, \delta(\mu^1\otimes\cdots\otimes\mu^d,\mu)\,;
\end{equation}
for a non mean-stationary signal, this defect is defined\footnote{\ Lemma \ref{lemma1} guarantees that the two definitions \eqref{def:delta:eq2} and  \eqref{rem:icdefect-nonstationary:eq1} are consistent (see Remark \ref{rem:icdefect-nonstationary}).} by equation \eqref{rem:icdefect-nonstationary:eq1} below.
\end{definition} 

For signals $\mu, \tilde{\mu}\in\dot{\fD}$ with coredinates $[\mu]_{\mathfrak{c}}, [\tilde{\mu}]_{\mathfrak{c}}$ as in \eqref{def:coordinates:eq2} and for the standardisation matrix $N_\mu\coloneqq\mathrm{ddiag}\big(\langle\mu\rangle_{11}, \cdots, \langle\mu\rangle_{dd}\big)^{1/2}$, notice that \eqref{def:delta:eq1} can be more compactly written as 
\begin{equation}\label{sect:coredinates:eq4}
\delta(\mu,\tilde{\mu}) \, = \, \left\|N_\mu^{-1}\cdot\!\left([\tilde{\mu}]_0-[\mu]_0, \frac{[\tilde{\mu}]_1-[\mu]_1}{\sqrt{\langle\mu\rangle_{11}}}, \ldots, \frac{[\tilde{\mu}]_d-[\mu]_d}{\sqrt{\langle\mu\rangle_{dd}}}\right)\!\cdot N_\mu^{-1}\right\|_{\mathcal{V}}  
\end{equation} 
where $\big\|(C_0,\cdots,C_d)\big\|_{\mathcal{V}}\coloneqq\sqrt{\sum_{\nu=0}^d{\|C_\nu\|^2}}$ is the $\ell_2$-norm on the space $\mathcal{V}\coloneqq \big(\R^{d\times d}\big)^{\oplus\,(d+1)}$ derived from the Frobenius norm $\|\cdot\|$ on $\R^{d\times d}$.\\[-0.5em]

\noindent
The map $(\mu, \tilde{\mu})\mapsto\delta(\mu,\tilde{\mu})$ defines a \emph{premetric} on $\dot{\fD}$ (cf.\ \citep[Def.\ 2.4.4]{arkhangelski1990}), which is to say that 
\begin{equation}\label{subsect:premetric:defeq}
\delta(\mu,\tilde{\mu})\geq 0 \quad\text{ and }\quad \delta(\mu,\mu)=0 
\end{equation}    
for any two $\mu, \tilde{\mu}\in\dot{\fD}$. This premetric canonically (e.g.\ \citep[Section 2]{bruno2017}) induces a topology $\tau_\delta$ on the signal space $\dot{\fD}$, of which we note the following basic facts.  
\begin{lemma}\label{lem:premetric_topofacts}
For the premetric topology $\tau_\delta$ on $\dot{\fD}$ induced by \eqref{def:delta:eq1}, the following holds.  
\begin{enumerate}[font=\upshape, label=(\roman*)]
\item\label{lem:premetric_topofacts:it1} For each subset $\mathcal{O}$ in $\dot{\fD}$, we have $\mathcal{O}\in\tau_\delta$ iff for each $\mu\in\mathcal{O}$ there is an $r>0$ such that  
\begin{equation}\label{lem:premetric_topofacts:eq1}
B_\delta(\mu,r)\coloneqq\big\{\tilde{\mu}\in\dot{\fD} \ \big| \ \delta(\mu,\tilde{\mu}) < r\big\},
\end{equation}
the $\mu$-centred $\delta$-ball of radius $r$, is contained in $\mathcal{O}$.  
\item\label{lem:premetric_topofacts:it2} The topological space $(\dot{\fD}, \tau_\delta)$ is sequential. 
\item\label{lem:premetric_topofacts:it3} For any metric space $(W, d_W)$, a map $\phi : (\dot{\fD}, \tau_\delta) \rightarrow W$ is continuous at $\mu\in\dot{\fD}$ iff: 
\begin{equation}\label{lem:premetric_topofacts:it3:eq1}
\forall\,\tilde{\varepsilon}>0 \,:\, \exists\,\tilde{\delta}>0\,:\,\forall\,\tilde{\mu}\in\dot{\fD}\,:\ \delta(\mu,\tilde{\mu})<\tilde{\delta} \ \Longrightarrow \ d_W(\phi(\mu),\phi(\tilde{\mu}))<\tilde{\varepsilon}. 
\end{equation} 
\end{enumerate}  
\end{lemma}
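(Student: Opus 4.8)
For part \ref{lem:premetric_topofacts:it1} the plan is to check directly that the family $\tau$ of those $\mathcal{O}\subseteq\dot{\fD}$ for which every $\mu\in\mathcal{O}$ admits an $r>0$ with $B_\delta(\mu,r)\subseteq\mathcal{O}$ (notation \eqref{lem:premetric_topofacts:eq1}) is a topology and coincides with the premetric-induced $\tau_\delta$ in the sense of \citep[Sect.~2]{bruno2017}. Here $\emptyset,\dot{\fD}\in\tau$ and closure of $\tau$ under arbitrary unions are immediate from the definition; closure under finite intersections uses only the radius-monotonicity $B_\delta(\mu,r)\subseteq B_\delta(\mu,r')$ for $r\le r'$, so that $\min\{r_1,r_2\}$ witnesses $\mu$ in $\mathcal{O}_1\cap\mathcal{O}_2$ once $r_1,r_2$ witness it in $\mathcal{O}_1,\mathcal{O}_2$. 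I would also record the single consequence used below: if $\delta(\mu,\mu_n)\to 0$ then $\mu_n\to\mu$ in $\tau_\delta$, since any $\tau_\delta$-open $V\ni\mu$ contains some $B_\delta(\mu,r)$ and hence $\mu_n\in V$ eventually. Part \ref{lem:premetric_topofacts:it2} then follows quickly: if $U\subseteq\dot{\fD}$ were sequentially open but not open, part \ref{lem:premetric_topofacts:it1} gives a $\mu\in U$ with $B_\delta(\mu,1/n)\not\subseteq U$ for all $n$; choosing $\mu_n\in B_\delta(\mu,1/n)\setminus U$ yields $\delta(\mu,\mu_n)<1/n\to0$, hence $\mu_n\to\mu$ in $\tau_\delta$, while $\mu_n\notin U$ for all $n$ --- contradicting sequential openness, so $(\dot{\fD},\tau_\delta)$ is sequential. (No special feature of $\delta$ is needed for \ref{lem:premetric_topofacts:it1} or \ref{lem:premetric_topofacts:it2}.)

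For part \ref{lem:premetric_topofacts:it3}, the ``only if'' direction is routine: continuity of $\phi$ at $\mu$ makes the preimage of the open $d_W$-ball of radius $\tilde\varepsilon$ about $\phi(\mu)$ a $\tau_\delta$-neighbourhood of $\mu$, hence, by \ref{lem:premetric_topofacts:it1}, a superset of some $B_\delta(\mu,\tilde\delta)$, and this $\tilde\delta$ verifies \eqref{lem:premetric_topofacts:it3:eq1}. The ``if'' direction is the delicate one, since a general premetric may have $\delta$-balls that are not neighbourhoods of their centre; to close it I would use the explicit form \eqref{sect:coredinates:eq4}/\eqref{def:delta:eq1} of $\delta$. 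Put $\rho(\mu,\tilde\mu)\coloneqq\bigl\|[\tilde\mu]_{\mathfrak{c}}-[\mu]_{\mathfrak{c}}\bigr\|_{\mathcal{V}}$, which is a genuine pseudometric on $\dot{\fD}$ (real-valued since $[\,\cdot\,]_{\mathfrak{c}}<\infty$ there, and symmetric and subadditive because $\|\cdot\|_{\mathcal{V}}$ is a norm). Comparing \eqref{def:delta:eq1} with $\rho^2$ summand by summand and using that $\langle\mu\rangle_{ii}>0$ throughout $\dot{\fD}$ gives, for each fixed $\mu$, a finite constant $c(\mu)\ge1$ (built from $\min_i\langle\mu\rangle_{ii}$ and $\max_i\langle\mu\rangle_{ii}$) with $c(\mu)^{-1}\rho(\mu,\tilde\mu)\le\delta(\mu,\tilde\mu)\le c(\mu)\,\rho(\mu,\tilde\mu)$ for all $\tilde\mu\in\dot{\fD}$. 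Thus $\delta$- and $\rho$-balls about a common centre are mutually cofinal, whence $\tau_\delta=\tau_\rho$ by the criterion of \ref{lem:premetric_topofacts:it1}; and since $\rho$ is a pseudometric its balls $B_\rho(\mu,s)$ are themselves $\tau_\rho=\tau_\delta$-open. Now, assuming \eqref{lem:premetric_topofacts:it3:eq1} and given an open $V\ni\phi(\mu)$, choose $\tilde\varepsilon>0$ with the $d_W$-ball of radius $\tilde\varepsilon$ about $\phi(\mu)$ inside $V$ and then $\tilde\delta$ from \eqref{lem:premetric_topofacts:it3:eq1}: then $\phi^{-1}(V)\supseteq B_\delta(\mu,\tilde\delta)\supseteq B_\rho\!\bigl(\mu,\tilde\delta/c(\mu)\bigr)\ni\mu$, with the last set $\tau_\delta$-open, so $\phi^{-1}(V)$ is a $\tau_\delta$-neighbourhood of $\mu$; as $V$ was arbitrary, $\phi$ is $\tau_\delta$-continuous at $\mu$.

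The one genuinely non-routine point is the ``if'' direction of \ref{lem:premetric_topofacts:it3}: one must notice that the premetric axioms alone are too weak here and recover what is missing from the bi-Lipschitz comparison of $\delta$, near each point, with the pseudometric $\rho$ attached to the coredinate map $\mu\mapsto[\mu]_{\mathfrak{c}}$ --- the non-degeneracy $\langle\mu\rangle_{ii}>0$ hard-wired into $\dot{\fD}$ being exactly what keeps $c(\mu)$ finite and hence makes that comparison, and the ensuing identity $\tau_\delta=\tau_\rho$, valid. Everything else reduces to elementary point-set topology.
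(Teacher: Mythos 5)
Your proof is correct, and for part \ref{lem:premetric_topofacts:it3} it takes a genuinely different---and more self-contained---route than the paper. The paper disposes of all three items by citation: items \ref{lem:premetric_topofacts:it1} and \ref{lem:premetric_topofacts:it2} to Arkhangel'skii's treatment of premetric spaces, and item \ref{lem:premetric_topofacts:it3} to a general continuity criterion for premetric spaces (Arkhangel'skii resp.\ Bruno). Your arguments for \ref{lem:premetric_topofacts:it1} and \ref{lem:premetric_topofacts:it2} simply unfold what those citations contain. For \ref{lem:premetric_topofacts:it3}, however, you correctly identify that the ``if'' direction is not a formal consequence of the premetric axioms together with the ball criterion of \ref{lem:premetric_topofacts:it1}: in a general premetric space a ball $B_\delta(\mu,r)$ need not be a neighbourhood of its centre, so the inclusion $\phi^{-1}(V)\supseteq B_\delta(\mu,\tilde{\delta})$ obtained from \eqref{lem:premetric_topofacts:it3:eq1} does not by itself exhibit $\phi^{-1}(V)$ as a neighbourhood of $\mu$. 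Your repair---the two-sided pointwise comparison $c(\mu)^{-1}\rho(\mu,\tilde{\mu})\le\delta(\mu,\tilde{\mu})\le c(\mu)\rho(\mu,\tilde{\mu})$ with the coredinate pseudometric $\rho(\mu,\tilde{\mu})=\|[\tilde{\mu}]_{\mathfrak{c}}-[\mu]_{\mathfrak{c}}\|_{\mathcal{V}}$, which is legitimate because the normalising weights in \eqref{def:delta:eq1} depend only on the centre $\mu$ and are finite and nonzero on $\dot{\fD}$---is sound: it gives $\tau_\delta=\tau_\rho$ via the ball criterion, makes every $\delta$-ball a neighbourhood of its centre, and closes the gap. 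What the paper's route buys is brevity, at the price of resting on the conventions of the cited general theory; what yours buys is a verification that does not depend on those conventions and, as a by-product, the observation that $\tau_\delta$ is in fact the topology of a genuine pseudometric on $\dot{\fD}$, which is a slightly stronger (and occasionally useful) conclusion than the lemma itself.
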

\begin{proof}
The first two points follow directly from \eqref{subsect:premetric:defeq} -- for reference, item \ref{lem:premetric_topofacts:it1} is reported in \citep[p.\ 23]{arkhangelski1990} while statement \ref{lem:premetric_topofacts:it2} is \citep[Proposition 2.4.9]{arkhangelski1990}. Item \ref{lem:premetric_topofacts:it3} follows from the statements \ref{lem:premetric_topofacts:it1} and \ref{lem:premetric_topofacts:it2} and \citep[Proposition 3.1.3]{arkhangelski1990}; or alternatively from \citep[Lemma 2.2$\,\&\,$Corollary 2.3]{bruno2017}. 
\end{proof}
\begin{remark}\label{rem:premetric_topofacts:continuity}
The $\varepsilon$-$\delta$ characterisation \eqref{lem:premetric_topofacts:it3:eq1} of (topological) continuity holds more generally if $\phi$ is a metric-space-valued map on any premetric space, see e.g.\ \citep[Lem.\ 2.2 $\,\&\,$ Cor.\ 2.3]{bruno2017}.  
\end{remark}

\begin{proposition}\label{prop:coarseness}
For any uniformly signature integrable subset $\tilde{\fD}$ of $\dot{\fD}$, the (subspace) topology $\tau_\delta$ on $\tilde{\fD}$ is coarser than any weak' topology on $\tilde{\fD}$.
\end{proposition}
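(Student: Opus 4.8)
The plan is to show that every $\tau_\delta$-open subset of $\tilde{\fD}$ is open in any weak$'$ topology on $\tilde{\fD}$. Since $(\tilde\fD,\tau_\delta)$ is sequential by Lemma \ref{lem:premetric_topofacts}\ref{lem:premetric_topofacts:it2}, and since a weak$'$ topology is also sequential (being, by Proposition \ref{prop:boundeddomains}-type reasoning and the metrisability of weak convergence on Polish-valued measure spaces, at least sequential on the separable space $\mathcal{M}_1(\mathcal{C}^1)$), it suffices to work with sequences: I would show that whenever $\mu_n\to\mu$ in a weak$'$ topology (with all $\mu_n,\mu\in\tilde\fD$), one also has $\delta(\mu,\mu_n)\to 0$. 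Granting this, every $\tau_\delta$-closed set is weak$'$-sequentially closed, hence weak$'$-closed, and the inclusion of topologies follows by taking complements. (Alternatively one argues directly with the $\varepsilon$-$\delta$ characterisation \eqref{lem:premetric_topofacts:it3:eq1} applied to the identity map $(\tilde\fD,\tau_{\mathrm{weak}'})\to(\tilde\fD,\tau_\delta)$, but the sequential route is cleanest.)

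The core of the argument is the convergence $\delta(\mu,\mu_n)\to 0$. By the closed-form expression \eqref{sect:coredinates:eq4}, $\delta(\mu,\mu_n)$ is a fixed continuous function of the finitely many signature moments $\langle\mu_n\rangle_{ij}$ and $\langle\mu_n\rangle_{ij\nu}$ for $i,j,\nu\in[d]$ — all of which are signature moments $\langle\,\cdot\,\rangle_w$ with $w\in[d]^\star_{\le 3}$ — together with the fixed normalising quantities $\langle\mu\rangle_{ii}$, which are nonzero because $\mu\in\dot\fD$. So it is enough to show $\langle\mu_n\rangle_w\to\langle\mu\rangle_w$ for every word $w$ of length at most $3$, and then invoke continuity of the map in \eqref{sect:coredinates:eq4} at the point $[\mu]_{\mathfrak c}$ (here the denominators $\sqrt{\langle\mu\rangle_{ii}\langle\mu\rangle_{jj}}$ etc.\ stay bounded away from $0$, so no division-by-zero issue arises). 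But the componentwise convergence of the order-$\le 3$ signature moments is exactly the content of Lemma \ref{lem:unifsigintegr}: $\tilde\fD$ is uniformly signature integrable (of order $3$) by hypothesis, $\mu_n\to\mu$ in a weak$'$ topology, and $\mu,\mu_n\in\tilde\fD$, so $\langle\mu_n\rangle_w\to\langle\mu\rangle_w$ for each $w\in[d]^\star_{\le 3}$. This closes the loop.

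I expect the main obstacle to be the sequentiality bookkeeping, i.e.\ justifying that ``coarser'' for these two topologies can be tested on sequences rather than on nets — one must make sure the weak$'$ topology restricted to $\tilde\fD$ is sequential (or at least that $\tau_\delta$-closedness can be detected by weak$'$-convergent sequences), which on the separable space $\mathcal{M}_1(\mathcal{C}^1)$ one gets from metrisability of weak convergence plus Lemma \ref{lem:sigma-algebras}. Once that reduction is in place the analytic heart of the matter is essentially immediate from Lemma \ref{lem:unifsigintegr} and the explicit formula \eqref{sect:coredinates:eq4}; the remaining details (continuity of a rational function away from its poles, passing to complements) are routine. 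If one prefers to avoid any sequentiality discussion for the weak$'$ side, the same conclusion follows verbatim from Remark \ref{rem:premetric_topofacts:continuity} applied to $\phi=\mathrm{id}$, after upgrading Lemma \ref{lem:unifsigintegr} from nets to the statement that $\delta(\mu,\,\cdot\,)$ is weak$'$-continuous at $\mu$, which is exactly what the net-version of that lemma already gives.
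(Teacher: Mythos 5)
Your proposal is correct and follows essentially the same route as the paper: the analytic core in both cases is Lemma \ref{lem:unifsigintegr} (weak$'$ net-convergence plus uniform signature integrability gives convergence of all order-$\le 3$ signature moments), combined with the observation that $\delta(\mu,\cdot)$ is, via \eqref{sect:coredinates:eq4}, a continuous function of those moments with denominators fixed and nonzero. The paper simply works with nets throughout (invoking the premetric criterion that net-convergence in the finer topology forcing $\delta(\mu,\mu_\alpha)\to 0$ yields the inclusion of topologies), which sidesteps the sequentiality bookkeeping you identify as the main obstacle — exactly the alternative you offer in your closing sentences.
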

\begin{proof}
Let $\tilde{\fD}\subseteq(\dot{\fD}, \tau_\delta)$ be uniformly signature integrable, and recall that the $\tau_\delta$-subspace topology on $\tilde{\fD}$ is induced by the restricted premetric $\restr{\delta}{\tilde{\fD}^{\times 2}}$. Fix any $1\leq q < 2$. The assertion then holds if for any net $(\mu_\alpha)$ in $\tilde{\fD}$ we have (recalling Definition \ref{def:weakp-convergence} for notation) that:
\begin{equation}\label{lem:premetric_topofacts:aux1}
\mu_\alpha \xrightharpoonup{q\text{-}\mathrm{var}} \mu\in\tilde{\fD} \quad \text{ implies } \quad \delta(\mu,\mu_\alpha) \rightarrow 0,
\end{equation}  
see also \citep[Lemma 2.1]{bruno2017}. By the definition \eqref{def:delta:eq1} of $\delta$, the desired implication \eqref{lem:premetric_topofacts:aux1} clearly holds if, for any $(\mu_\alpha)$ and $\mu$ as above and any $i_1, \ldots, i_m\in[d]$ for $m\leq 3$, we have that: $\mu_\alpha \xrightharpoonup{q\text{-}\mathrm{var}} \mu$ implies $\langle\mu_\alpha\rangle_{i_1\cdots i_m} \!\rightarrow \langle\mu\rangle_{i_1\cdots i_m}$. This last implication, however, is guaranteed by Lemma \ref{lem:unifsigintegr}, which concludes the proof.                 
\end{proof} 
The idea behind the \eqref{sect:coredinates:eq4}-induced topologization of $\mathcal{M}_1$ is closely related to the natural idea of maximum mean discrepancy (MMD) over the finite set of test functions $\mathcal{H}\coloneqq\{\sig_w(\cdot)\mid |w|\leq 3\}$ and combined with an appropriate standardization that absorbs the $\tilde{\mathrm{M}}_d$-inflicted (`redundant') degrees of freedom which are due to the $\lceil\,\cdot\,\rceil_{\tilde{\mathfrak{m}}}$-controlled indistinguishability of the true source (for the latter, recall Definition \ref{appendix:sect:BSSformal:def:ICA}).\\[-0.75em] 

While not required here, note that the premetric \eqref{sect:coredinates:eq4} can be extended to a full metric via the systematic addition of higher-order signature moments, as detailed in Remark \ref{rem:pretometric}.\\[-0.5em]          

Let us include the following addition to Definition \ref{def:delta}. 
\begin{remark}\label{rem:icdefect-nonstationary}
By Lemma \ref{lemma1}, the IC-defect of a mean-stationary signal $\mu\in\dot{\fD}$ reads
\begin{equation}\label{rem:icdefect-nonstationary:eq1}
\displayindent0pt
\displaywidth\textwidth
\delta_{\independent}(\mu) \,=\, \left\|N_\mu^{-1}\!\left([\mu]_0-\big(\delta_{ij}\langle\mu\rangle_{ij}\big), \frac{[\mu]_1-\big(\delta_{ij1}\langle\mu\rangle_{ij1}\big)}{\sqrt{\langle\mu\rangle_{11}}}, \ldots, \frac{[\mu]_d-\big(\delta_{ijd}\langle\mu\rangle_{ijd}\big)}{\sqrt{\langle\mu\rangle_{dd}}}\right)\!N_\mu^{-1}\right\|_{\mathcal{V}};
\end{equation}
for a signal $\mu\in\dot{\fD}$ that is not mean-stationary, we declare its IC-defect $\delta_{\independent}(\mu)$ to be defined by the right-hand side of \eqref{rem:icdefect-nonstationary:eq1}. Clearly $\mu\in\dot{\fD}$ is orthogonal iff $\delta_{\independent}(\mu)=0$, and $\delta_{\independent}$ is invariant under the $\M$-action on $\dot{\fD}$ as follows directly from \eqref{rem:icdefect-nonstationary:eq1} and Lemma \ref{lem:linequiv}. \hfill $\bdiam$  
\end{remark}
\noindent    
The IC-defect $\delta_{\independent}$, defined by \eqref{def:delta:eq1} and \eqref{rem:icdefect-nonstationary:eq1}, is an auxiliary function on $\dot{\fD}$ with which the blind identifiability \eqref{def:blindinversion:eq1} of a (non-)linearly transformed signal can be quantitatively controlled: Source signals for which this defect vanishes (Sect.\ \ref{sect:orthogonal_signals}) will be exactly identifiable, while blind inversion for a source will become increasingly inaccurate -- or even impossible -- as its IC-defect increases, see Section \ref{chap:robustICA:sect:auxiliaries}. In anticipation of this, the next lemma paves the way for an informative robustness analysis \eqref{def:robustICA:eq1} by establishing moduli of continuity for $\delta_{\independent}$ that relate to the above premetric topology on $\dot{\fD}$ and the uniform topology on the space of spatial mixing transformations, respectively. (As it is mainly technical, the proof is delegated to Appdx.\ \ref{pf:lem:premetric_facts2}.)\\[-0.75em]                   

\noindent
Here and in the sequel, the Jacobian $D_R$ of a map $R\in C^1(\R^d;\R^d)$ is normed via $\|D_R\|_\infty\coloneqq\sup_{x\in\R^d}\|D_R(x)\|_2$, where $\|\cdot\|_2$ is the Euclidean operator norm (`spectral norm') on $\R^{d\times d}$.\\[-0.75em]  

\hfill
(For the following, the notation of Subsection \ref{subsubsect:moment-integrability} is in use.)         

\begin{lemma}\label{lem:premetric_facts2}
Let $\hat{\mu}\in\dot{\fD}$, and $B_r\equiv B_\delta(\hat{\mu}, r)$ be the $\hat{\mu}$-centered $\delta$-ball of radius $r>0$ \emph{(}see \eqref{lem:premetric_topofacts:eq1}\emph{)}. Suppose that $r < \hat{r}\coloneqq\rho_0\wedge (\rho_0/\rho_1)$, with $\rho_0\coloneqq\min_{i\in[d]}\langle \hat{\mu}\rangle_{ii}$ and $\rho_1\coloneqq\max_{i\in[d]}\langle\hat{\mu}\rangle_{ii}$.  
\begin{enumerate}[font=\upshape, label=(\roman*)]
\item\label{lem:premetric_facts2:it1}There are constants $\mathfrak{m}_{\hat{\mu}|r}=\mathfrak{m}_{\hat{\mu}|r}(\hat{\mu},r), K_{\hat{\mu}|r} = K_{\hat{\mu}|r}(\hat{\mu}, r) \geq 0$, growing monotonously in $r$, such that for each $\mu\in B_r$ we have $\max\nolimits_{|w|=2,3}\big|\langle\mu\rangle_w - \langle\hat{\mu}\rangle_w\big|\leq\mathfrak{m}_{\hat{\mu}|r}\cdot r$ and  
\begin{equation}\label{lem:premetric_facts2:eq1}
\big|\delta_{\independent}(\mu) - \delta_{\independent}(\hat{\mu})\big|\,\leq\,K_{\hat{\mu}|r}\sqrt{r + r^2}\,; 
\end{equation} 
\item\label{lem:premetric_facts2:it2} if $B_\delta(\hat{\mu}, \rho_0)$ is core-integrable as per $(K_0, \beta, p, \alpha)$, then for each $R\in C^1(\R^d;\R^d)$ there exists a constant $\tilde{K}_{\hat{\mu}|r} = \tilde{K}_{\hat{\mu}|r}(\hat{\mu}, r, K_0, R)\geq 0$, growing monotonously in $r$, such that: for each $\mu\in B_r$ we have, for the auxiliary functions $\phi_k(\cdot)$ from Lemma \ref{lem:premetric_facts} \ref{lem:premetric_facts:it3}, 
\begin{equation}\label{lem:premetric_facts2:eq2}
\big|\delta_{\independent}((I+R)_\star\mu) - \delta_{\independent}(\mu)\big| \,\leq\, \tilde{K}_{\hat{\mu}|r}\cdot\phi_3(\|D_R\|_\infty)^{\tfrac{1}{2}}\big(\|D_R\|_\infty\|R\|_\infty\big)^{\!\tfrac{\alpha}{2}}
\end{equation} 
provided that $\rho_R\coloneqq C_p\phi_2(\|D_R\|_\infty)(\|D_R\|_\infty\|R\|_\infty)^\alpha < \rho_0 - \rho_1r$. 
\end{enumerate}
For any given $c, C>0$ with $c < \rho_0$, the constant $\tilde{K}_{\hat{\mu}|r}$ in \eqref{lem:premetric_facts2:eq2} can be chosen to apply uniformly -- that is, dependent only on $\hat{\mu}, K_0$ and $c, C$ but independent of $R$, $\mu$ and $r$ -- on the set 
\begin{equation}\label{lem:premetric_facts2:eq3}
\mathcal{R}^C_c\coloneqq\{(\mu,R)\in \dot{\fD}\times C^1(\R^d;\R^d)\mid \rho_1\delta(\hat{\mu},\mu) + \rho_R \leq c \, \text{ and } \, \rho_R\,\vee\,\|D_R\|_\infty\leq C\}.
\end{equation} 
All of the above constants are explicit.        
\end{lemma}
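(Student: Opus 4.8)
The plan is to read both \eqref{lem:premetric_facts2:eq1} and \eqref{lem:premetric_facts2:eq2} as estimates for the variation of the explicit rational expression \eqref{rem:icdefect-nonstationary:eq1} for $\delta_{\independent}$ under a controlled perturbation of the finitely many signature moments $\langle\,\cdot\,\rangle_w$, $|w|\in\{2,3\}$, that occur in it. Unpacking \eqref{rem:icdefect-nonstationary:eq1}, $\delta_{\independent}(\mu)^2$ is a sum of $O(d^3)$ squared ratios of the shape $\langle\mu\rangle_w^2/\big(\langle\mu\rangle_{ii}\langle\mu\rangle_{jj}[\,\langle\mu\rangle_{\nu\nu}\,]\big)$. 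So in both parts I would: (a) bound the deviation of each $\langle\,\cdot\,\rangle_w$ under the relevant perturbation; (b) check that the diagonal second moments stay bounded away from $0$, so every denominator is controlled from below; (c) propagate (a)--(b) termwise into a bound on the difference of the \emph{squared} defects; and (d) descend to $\delta_{\independent}$ itself via the elementary $|\sqrt a-\sqrt b|\le\sqrt{|a-b|}$ --- this last step is precisely what produces the square-root shape on the right-hand sides of \eqref{lem:premetric_facts2:eq1} and \eqref{lem:premetric_facts2:eq2}. Since every bound in (a)--(c) will be an explicit polynomial in $\rho_0^{-1}$, $\rho_1$, $(\rho_0-\rho_1r)^{-1}$, $\max_{|w|\le3}|\langle\hat\mu\rangle_w|$, $K_0$ and (for part \ref{lem:premetric_facts2:it2}) $\|D_R\|_\infty$, both the explicitness and the monotonicity in $r$ of the constants fall out of the construction.

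For part \ref{lem:premetric_facts2:it1}, fix $\mu\in B_r$, so $\delta(\hat\mu,\mu)<r$. Every summand of $\alpha_{\hat\mu,\mu}+\beta_{\hat\mu,\mu}=\delta(\hat\mu,\mu)^2$ being nonnegative, each is $<r^2$; reading off the diagonal entries gives $|\langle\mu\rangle_{ii}-\langle\hat\mu\rangle_{ii}|<r\langle\hat\mu\rangle_{ii}$ and the remaining entries give $|\langle\mu\rangle_w-\langle\hat\mu\rangle_w|<r\rho_1^{|w|/2}$, which is the first assertion with $\mathfrak m_{\hat\mu|r}\coloneqq\rho_1\vee\rho_1^{3/2}$. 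Because $r<\hat r\le\rho_0/\rho_1$ this forces $\langle\mu\rangle_{ii}\ge\rho_0-\rho_1r>0$, so $\mu\in\dot\fD$ and every denominator in \eqref{rem:icdefect-nonstationary:eq1} is $\ge(\rho_0-\rho_1r)^3\rho_0^3$. I would then estimate each of the $O(d^3)$ terms of $\delta_{\independent}(\mu)^2-\delta_{\independent}(\hat\mu)^2$ by the standard $\tfrac{a^2}{b}$-difference identity, using $|a_\mu^2-a_{\hat\mu}^2|\le|\langle\mu\rangle_w-\langle\hat\mu\rangle_w|\cdot\big(2|\langle\hat\mu\rangle_w|+\mathfrak m_{\hat\mu|r}r\big)\lesssim r+r^2$ and $|b_\mu-b_{\hat\mu}|\lesssim r$ (telescoping the product of two or three diagonal moments); summation gives $|\delta_{\independent}(\mu)^2-\delta_{\independent}(\hat\mu)^2|\le K_{\hat\mu|r}^2(r+r^2)$, and step (d) finishes \eqref{lem:premetric_facts2:eq1}.

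Part \ref{lem:premetric_facts2:it2} is where the real work lies. With $\nu\coloneqq(I+R)_\star\mu$ and $\mu\in B_r\subseteq B_\delta(\hat\mu,\rho_0)$ (as $r<\hat r\le\rho_0$), I would invoke Lemma \ref{lem:premetric_facts}\ref{lem:premetric_facts:it3} on the core-integrable ball $B_\delta(\hat\mu,\rho_0)$: it gives that $I+R$ is a $C^1$-diffeomorphism, that $\nu$ has finite coredinates, and the crucial perturbation bound $|\langle\nu\rangle_w-\langle\mu\rangle_w|\le C_p\phi_{|w|}(\|D_R\|_\infty)(\|D_R\|_\infty\|R\|_\infty)^\alpha$ for $|w|\le3$; for $|w|=2$ this is $\le\rho_R$, so $\langle\nu\rangle_{ii}\ge\langle\mu\rangle_{ii}-\rho_R\ge(\rho_0-\rho_1r)-\rho_R>0$ under the standing hypothesis $\rho_R<\rho_0-\rho_1r$, whence $\nu\in\dot\fD$ with diagonal moments bounded below. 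Re-running the termwise estimate of part \ref{lem:premetric_facts2:it1} with $(\nu,\mu)$ in the roles of $(\mu,\hat\mu)$ --- numerators now controlled by the $\phi$-bounds, denominators by $(\rho_0-\rho_1r-\rho_R)^3(\rho_0-\rho_1r)^3$, and $\rho_R<\rho_0$ used to bound $(\|D_R\|_\infty\|R\|_\infty)^\alpha$, hence the moment deviations and the resulting quadratic cross-terms, by an explicit constant --- yields $|\delta_{\independent}(\nu)^2-\delta_{\independent}(\mu)^2|\le\tilde K_{\hat\mu|r}^2\,\phi_3(\|D_R\|_\infty)(\|D_R\|_\infty\|R\|_\infty)^\alpha$ with $\tilde K_{\hat\mu|r}$ explicit and monotone in $r$ (here one uses $\phi_2\le\phi_3$, or simply absorbs the two into $\phi_3$); step (d) then delivers \eqref{lem:premetric_facts2:eq2}. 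The genuine obstacle is not any single deep step but the bookkeeping: the whole estimate stands or falls on keeping \emph{all} denominators --- the products of diagonal second moments of both $\mu$ \emph{and} the nonlinearly perturbed $\nu$ --- uniformly positive, which is exactly what the two quantitative restrictions $r<\hat r$ and $\rho_R<\rho_0-\rho_1r$ are engineered to guarantee.

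Finally, for the uniformity claim I would argue from the defining inequality $\rho_1\delta(\hat\mu,\mu)+\rho_R\le c<\rho_0$ of $\mathcal R^C_c$: it (i) places $\mu$ in $B_\delta(\hat\mu,\rho_0)$ whenever the lemma's hypotheses can hold at all, (ii) gives $\rho_R<\rho_0-\rho_1\delta(\hat\mu,\mu)$, so part \ref{lem:premetric_facts2:it2} applies with $r=\delta(\hat\mu,\mu)$, and (iii) makes the two denominator-defining quantities $\rho_0-\rho_1\delta(\hat\mu,\mu)$ and $\rho_0-\rho_1\delta(\hat\mu,\mu)-\rho_R$ both $\ge\rho_0-c>0$. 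Since moreover $\|D_R\|_\infty\le C$ and, from $\rho_R\le c$, $(\|D_R\|_\infty\|R\|_\infty)^\alpha\le c/(C_p\inf_{t\in[0,C]}\phi_2(t))$, every constant entering the estimate of \ref{lem:premetric_facts2:it2} can be bounded in terms of $\hat\mu$, $K_0$, $c$ and $C$ alone, which produces the asserted uniform $\tilde K_{\hat\mu|r}$ on $\mathcal R^C_c$. All constants generated along the way are visibly explicit, as claimed.
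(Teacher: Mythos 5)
Your proposal is sound in substance and reaches the stated bounds, but it takes a different route from the paper's. The paper does not re-derive the difference estimates from scratch: it reduces everything to the pre-established Lemma \ref{lem:premetric_facts} (parts \ref{lem:premetric_facts:it1.2}, \ref{lem:premetric_facts:it2} for \eqref{lem:premetric_facts2:eq1} and part \ref{lem:premetric_facts:it3} for \eqref{lem:premetric_facts2:eq2}), and then obtains the constants $\mathfrak{m}_{\hat\mu|r}, K_{\hat\mu|r}, \tilde K_{\hat\mu|r}$ as suprema of the continuous coredinate actions of the auxiliary functionals over the compact image $[B_r]\subset\mathcal{V}$, i.e.\ via an extreme-value/compactness argument in which the lower bounds on the diagonal moments (your inequalities $\langle\mu\rangle_{ii}\ge\rho_0-\rho_1 r$ and $\langle\nu\rangle_{ii}\ge\rho_0-\rho_1 r-\rho_R$) serve only to keep those actions continuous. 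In the paper the $\sqrt{\eta+\eta^2}$ shape arises inside Lemma \ref{lem:premetric_facts} \ref{lem:premetric_facts:it1.2} from the reverse triangle inequality in $\|\cdot\|_{\mathcal{V}}$ and the entrywise bound \eqref{lem:premetric_facts:aux7.1}, where the square-rooted diagonal normalizations contribute the $\sqrt{\eta}$ term; in your version it arises from $|\sqrt a-\sqrt b|\le\sqrt{|a-b|}$ applied to the squared defects. Both mechanisms are legitimate, and you correctly identify the one genuinely nontrivial external input, namely the signature perturbation bound of Lemma \ref{lem:premetric_facts} \ref{lem:premetric_facts:it3} under core-integrability; your explicit termwise bookkeeping buys transparency of the constants at the price of redoing work the paper has already packaged.

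One step of your uniformity argument is broken as written: you bound $(\|D_R\|_\infty\|R\|_\infty)^\alpha\le c/\bigl(C_p\inf_{t\in[0,C]}\phi_2(t)\bigr)$, but $\phi_2(t)=(1+t)t^\gamma$ vanishes at $t=0$, so this infimum is zero and the bound is vacuous. The quantity you actually need to control uniformly is not $(\|D_R\|_\infty\|R\|_\infty)^\alpha$ alone but the third-order moment deviation, and for that the correct observation is $\phi_3(u)=(1+u)\phi_2(u)\le(1+C)\phi_2(u)$ on $\mathcal{R}^C_c$, so that $\phi_3(\|D_R\|_\infty)(\|D_R\|_\infty\|R\|_\infty)^\alpha\le(1+C)\rho_R/C_p\le(1+C)c/C_p$; together with the denominator bound $\rho_0-c>0$ this gives the uniform constant (and also bounds the factor $\hat\phi_p(R)$ from \eqref{lem:premetric_facts:it3:eq1.2}). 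This is a repairable slip rather than a fatal gap, and it is exactly the estimate the paper performs at the end of its proof of part \ref{lem:premetric_facts2:it2}. A second, cosmetic inaccuracy: Lemma \ref{lem:premetric_facts} \ref{lem:premetric_facts:it3} does not (and need not) assert that $I+R$ is a diffeomorphism; the pushforward is defined regardless.
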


The following section describes the signals whose IC-defect $\delta_{\independent}$ is minimal. These will then constitute a (large) class $\sI_\ast$ of ICA-identifiable signals in $\fD$, [cf.\ \eqref{appendix:sect:BSSformal:intext:eq:identconds}, \eqref{def:icainversion:eq1} and] see Section \ref{chap:robustICA:sect:auxiliaries}. 
\subsection{Orthogonal Signals}\label{sect:orthogonal_signals}It is well-known that general signals are not identifiable from their blind linear mixtures, but that recovery becomes possible if the components of the source are mutually independent or, in the time-dependent case, at least pairwise uncorrelated. Here we show that the coredinates of such signals take a particularly simple algebraic form.            

\begin{definition}[Product Signal]
An element $\mu\equiv(\mu^1, \cdots, \mu^d)\in\fD$ is called is called a \emph{product signal} if it is the product of its marginals, that is if $\mu = \mu^1\otimes\cdots\otimes\mu^d$. 
\end{definition}
Clearly, a process $Y$ in $\R^d$ is a product signal iff its components $Y^1, \ldots, Y^d$ are mutually independent. The coredinates \eqref{def:coordinates:eq2} of a product signal, if mean-stationary, are all diagonal.    

\begin{lemma}\label{lemma1}
For $\mu\in\fD$ a mean-stationary product signal, we have   
\begin{equation}\label{lemma1:eq1}
\langle\mu\rangle_{ij} = \delta_{ij}\cdot\langle\mu\rangle_{ij} \quad\text{ and }\quad \langle\mu\rangle_{ijk} = \delta_{ijk}\cdot\langle\mu\rangle_{ijk} 
\end{equation}
where $\delta_{ij}$ and $\delta_{ijk}$ are the Kronecker deltas in dimension two and three, respectively. 
\end{lemma}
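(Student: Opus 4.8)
The plan is to reduce \eqref{lemma1:eq1} to showing that the genuinely off-diagonal signature moments vanish: $\langle\mu\rangle_{ij}=0$ for $i\neq j$, and $\langle\mu\rangle_{ijk}=0$ whenever the indices $i,j,k$ are not all equal (the remaining cases in \eqref{lemma1:eq1} being tautological). Two ingredients drive the computation. First, $\mu=\mu^1\otimes\cdots\otimes\mu^d$ makes the coordinate processes $x^1,\ldots,x^d$ $\mu$-independent, so that any $\sigma(x^i)$-measurable functional is independent of any functional measurable with respect to the remaining coordinates, and expectations of products factor. Second, mean-stationarity says $\E_\mu[x^i_t]=\E_\mu[x^i_0]$ for \emph{every} $t\in[0,1]$, hence $\E_\mu[x^i_t-x^i_0]=0$ for all $t$ and --- writing $x^i_t-x^i_0=\int_0^t\dot x^i_s\,\mathrm{d}s$ and swapping the order of integration against $\mu$ --- $\E_\mu[\dot x^i_s]=0$ for a.e.\ $s$. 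I will also use that $x\in\mathcal{C}^1$ $\mu$-a.s., so the iterated Lebesgue-Stieltjes integrals defining $\sig_w$ become ordinary integrals against $\dot x$; in particular $\sig_{ij}(x)=\int_0^1(x^i_t-x^i_0)\,\dot x^j_t\,\mathrm{d}t$ and $\sig_{ijk}(x)=\int_0^1 F_t(x)\,\dot x^k_t\,\mathrm{d}t$ with $F_t(x)\coloneqq\int_0^t(x^i_s-x^i_0)\,\dot x^j_s\,\mathrm{d}s$.

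For the second-order identity, fix $i\neq j$: the integrand $x^i_t-x^i_0$ depends only on $x^i$ and the integrator $\dot x^j_t$ only on $x^j$, so independence and Fubini give $\langle\mu\rangle_{ij}=\int_0^1\E_\mu[x^i_t-x^i_0]\,\E_\mu[\dot x^j_t]\,\mathrm{d}t=0$. For the third-order identity I would split according to the coincidence pattern of $(i,j,k)$. If $k\notin\{i,j\}$ --- which includes the pattern $i=j\neq k$ --- then $F_t$ is a functional of $(x^i,x^j)$ while $\dot x^k_t$ is one of $x^k$, so $\langle\mu\rangle_{ijk}=\int_0^1\E_\mu[F_t]\,\E_\mu[\dot x^k_t]\,\mathrm{d}t=0$. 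If $k=j\neq i$, conditioning on the path $x^j$ and using independence of $x^i$ gives $\E_\mu[F_t\mid x^j]=\int_0^t\E_\mu[x^i_s-x^i_0]\,\dot x^j_s\,\mathrm{d}s=0$ by (full) mean-stationarity, and since $\dot x^j_t$ is $\sigma(x^j)$-measurable we get $\langle\mu\rangle_{ijj}=\int_0^1\E_\mu\!\big[\E_\mu[F_t\mid x^j]\,\dot x^j_t\big]\,\mathrm{d}t=0$. If $k=i\neq j$, conditioning on $x^i$ instead gives $\E_\mu[F_t\mid x^i]=\int_0^t(x^i_s-x^i_0)\,\E_\mu[\dot x^j_s]\,\mathrm{d}s=0$, so likewise $\langle\mu\rangle_{iji}=0$. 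These cover every pattern except $i=j=k$, for which nothing is claimed, and \eqref{lemma1:eq1} follows.

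The only point requiring care --- the ``hard part'', such as it is --- is the justification of the order-swaps and product-factorisations above: these are Tonelli for the non-negative rearrangements and Fubini once the relevant iterated integrals are finite, which holds in particular whenever $[\mu]_{\mathfrak c}<\infty$, i.e.\ on the subspace $\dot{\fD}$ of interest; in the general $\bar\R$-valued setting the identical splitting applies whenever the moments in question are defined at all. Beyond this I expect no substantive obstacle: the whole content lies in the bookkeeping of the third-order case split together with the right choice, in each case, of the conditioning under which a mean-zero factor (either $\E_\mu[\dot x^i_s]=0$ or $\E_\mu[x^i_s-x^i_0]=0$) can be peeled off.
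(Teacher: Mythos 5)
Your proof is correct and follows essentially the same route as the paper's: factor the iterated integral using the product structure of $\mu$, and kill one factor using the a.e.\ vanishing of $\E_\mu[\dot x^i_s]$ (equivalently $\E_\mu[x^i_t-x^i_0]=0$) that mean-stationarity provides. The paper merely organises your third-order case split into a single criterion --- it proves $\langle\mu\rangle_{i_1\ldots i_m}=0$ for any order $m$ whenever some index appears \emph{exactly once}, always isolating the factor attached to that index --- but the mechanism and the Fubini/integrability caveat are identical to yours.
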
   
\begin{proof}
Let $\mu=\mu^1\otimes\cdots\otimes\mu^d\in\fD$ be a mean stationary (signed) measure on the path space $\mathcal{BV}$. The assertion \eqref{lemma1:eq1} then follows from the more general statement that: 
\begin{equation}\label{lem1:pf:aux1}
\langle\mu\rangle_{i_1\ldots i_m} = \, 0 \qquad (m\in\N)
\end{equation}
for every multiindex $(i_1,\ldots,i_m)\in[d]^{\times m}$ of such kind that at least one of its entries $i_\nu$ appears exactly once, i.e.\ for every multiindex contained in the set  
\begin{equation}
J \, \coloneqq \, \big\{(i_1,\ldots,i_m) \in [d]^{\times m} \ \big|\ \exists\, \nu_0 \in [m]  \text{ : } i_{\nu_0}\neq i_\nu \ \text{for each} \ \nu\neq \nu_0 \big\}.
\end{equation} 
To prove \eqref{lem1:pf:aux1}, recall from Lemma \ref{lem:absolutelycontinuous} that each path $x\equiv(x^1_t, \ldots, x^d_t)_{t\in\I}\in\mathcal{C}^1$ admits an integrable derivative $\dot{x}\equiv(\dot{x}^i)$ almost everywhere (set $\dot{x}$ to zero everywhere else) with    
\begin{equation}\label{lem1:pf:aux3}
x^i_t \,=\, x^i_s + \int_s^t\!\dot{x}^i_r\,\mathrm{d}r \qquad (0\leq s\leq t\leq 1)
\end{equation}  
for all $i\in[d]$. Now by the mean-stationarity of $\mu$, we for each $i\in[d]$ have $0 = \E[\mu^i_{s,t}] = \int_{\mathcal{C}_d}\!x_{s,t}\,\mu^i(\mathrm{d}x) = \int_{\mathcal{C}_d}\!(x^i_t - x^i_s)\,\mu(\mathrm{d}x)$ for each $0\leq s\leq t\leq 1$ (see Section \ref{sect:operationsonmeasures} for notation), whence from \eqref{lem1:pf:aux3} and Fubini we find that $0 = \int_s^t\!\phi_\mu^i(r)\,\mathrm{d}r$ for $\phi_\mu^i(r)\coloneqq\int_{\mathcal{C}_d}\!\dot{x}^i_r\,\mu(\mathrm{d}x).$
Since $s,t\in[0,1]$ (with $s\leq t$) have been arbitrary, it follows that 
\begin{equation}\label{lem1:pf:aux6}
\phi^i_\mu \,=\, 0 \ \ \text{ a.e.\ \ on \ $\I$}, \qquad \text{for all \ } i\in[d].
\end{equation}Now since $\phi^i_\mu(r) = \int_{\mathcal{C}_1}\dot{z}_r\,\mu^i(\mathrm{d}z)$, observe from \eqref{def:coordinates:eq1} that for every $\bm{i}\equiv(i_1,\ldots,i_m)\in J$ we have 
\begin{equation}
\begin{aligned}
\langle\mu\rangle_{i_1\ldots i_m} \,&=\, \int_{\Delta_m}\int_{\mathcal{C}_d}\,\dot{x}^{i_1}_{t_1}\cdots\dot{x}^{i_m}_{t_m}\,\mu^1\otimes\cdots\otimes\mu^d(\mathrm{d}(x^i))\,\mathrm{d}^m\bm{t} \,=\, \int_{\Delta_m}\!\phi^{i_{\nu_0}}_\mu(t_{i_{\nu_0}})\cdot Q^{\bm{i}}_\mu(\bm{t})\,\mathrm{d}^m\bm{t}\,\eqqcolon\,\beta   
\end{aligned} 
\end{equation}
for the product $Q^{\bm{i}}_\mu(\bm{t})\coloneqq\prod_{i\in[d]\setminus\{i_{\nu_0}\}}\int_{\mathcal{C}_1}\!z^{\bm{q}_i(\bm{t})}\,\mu^i(\mathrm{d}z)$ with $z^{\bm{q}_i(\bm{t})}\coloneqq \prod\nolimits_{\nu\in[m]\,:\,i_\nu=i} \dot{z}^{i_\nu}_{t_\nu}$ $(z^i\in\mathcal{C}_1$) for each $i\in[d]$ (setting $z^{\bm{q}_i(\bm{t})}\coloneqq 1$ if the product is empty). Consequently, for $\bm{t}'\coloneqq(t_2,\ldots, t_m)$,
\begin{equation}
\begin{aligned}
\beta \,=\, \int_{\I^{d-1}}\!Q^{\bm{i}}_\mu(\bm{t}')\!\left[\int_\I\!\phi^1_\mu(t_1)\cdot\mathbbm{1}_{\Delta_m}(\bm{t})\,\mathrm{d}t_1\right]\!\mathrm{d}^{m-1}\bm{t}' \,=\, 0    \qquad (\text{by } \eqref{lem1:pf:aux6}),   
\end{aligned}
\end{equation} 
where for convenience we assumed $i_{\nu_0}=1$ wlog (otherwise permuting $\Delta_m\coloneqq\{(t_1, \cdots, t_m)\mid 0\leq t_1\leq t_2\leq\ldots\leq t_m\leq 1\}$ accordingly). This proves \eqref{lem1:pf:aux1}, as desired.     
\end{proof} 
\begin{definition}\label{def:orthogonal}A signal $\mu\in\fD$ for which \eqref{lemma1:eq1} holds will be called (\nth{3}-order) \emph{orthogonal}. 
\end{definition}
\begin{remark}Given the fact, verifiable by a direct computation, that the signature moments \eqref{def:coordinates:eq1} of a mean-stationary product signal $\mu$ coincide with its signature cumulants \citep{bonnier2019signature} up to order $m=3$, the assertion of Lemma \ref{lemma1} can also be derived from \citep[Theorem 1.2]{bonnier2019signature}.        
\end{remark}   
  
As outlined at the end of Section \ref{sect:robustica-basic}, the following subsection introduces an identifiability space $E$ that is endowed with a (coarse) extension of the above signature topology on $\dot{\fD}$.
\subsection{A Coarse Topology on the Space of Causes}\label{subsect:topology_identspace}In this section we extend the signal topology of Section \ref{chap:robustICA:sect:premetric} to an `identifiability' topology on the causal space $\mathfrak{C}$, based on which the robustness property \eqref{def:robustICA:eq1} can then be appropriately analysed.\\[-0.75em] 

\noindent
Exploiting the premetric structure $\tau_\delta$ from Section \ref{chap:robustICA:sect:premetric} and for technical convenience, said topology will be supported on `regular' subspaces $E$ of $\mathfrak{C}$ of the product form
\begin{equation}\label{subsect:topology_identspace:eq1}
E\,=\,\mathfrak{S}\times C^{1,1}(\R^d) \quad\text{ with }\quad \fS\subseteq\dot{\fD} \text{ core-integrable} 
\end{equation} 
as per $(K_{\fS},\beta,p,\alpha)\eqqcolon c_{\fS}$, see Subsection \ref{subsubsect:moment-integrability} for notation.\\[-0.5em]  

\noindent
The factor $\fS$ is endowed with the $\tau_\delta$-induced subspace topology $\tau'_\delta$ on $\fS$, i.e.\ $\tau_\delta'=\{\mathcal{O}\cap\fS\mid\mathcal{O}\in\tau_\delta\}$,\footnote{\ Clearly then, $\tau_\delta'$ is the topology induced by the restriction of $\delta$ to the subset $\fS\times\fS$, which is again a premetric on $\fS$. (We use the same symbol for this restriction and the original premetric \eqref{def:delta:eq1}, for convenience.)} while $C^{1,1}(\R^d)\eqqcolon C^{1,1}$ is assigned the premetric $\bar{d}:C^{1,1}\times C^{1,1}\rightarrow\mathbb{R}_+$ given by
\begin{equation}\label{intext:trafopremetric}
\bar{d}(\theta_1, \theta_2)\coloneqq \min\left(\big(\big\|\theta_1^{-1}\circ\theta_2 - \mathrm{id}\big\|_{\infty}+1\big)\!\cdot\!\|D(\theta_1^{-1}\circ\theta_2 - \mathrm{id})\big\|_\infty, \, 1 \,\right)
\end{equation}
which measures the residual deviation between two transformations, in adaptation of the classical (pseudo)norm $\min(\|\cdot\|_\infty + \|D(\,\cdot\,)\|_\infty, 1)$ on $C^1(\R^d;\R^d)$. Both $\delta$ and $\bar{d}$ combine to a premetric $\mathbbm{d} \, : \, E^{\times 2}\rightarrow\mathbb{R}_+$ on the subspace \eqref{subsect:topology_identspace:eq1}, which we define by           
\begin{equation}\label{subsect:topology_identspace:eq2}
\mathbbm{d}\big((\mu_1, \theta_1), (\mu_2, \theta_2)\big)\coloneqq\, \delta(\mu_1, \mu_2) \, + \, \bar{d}(\theta_1, \theta_2).   
\end{equation}  
Being the sum of the premetrics \eqref{def:delta:eq1} and \eqref{intext:trafopremetric}, the premetric\footnote{\ Since $(E, \mathbbm{d})$ is a premetric space, Lemma \ref{lem:premetric_topofacts} holds as stated upon replacing $(\dot{\fD}, \delta)$ with $(E,\mathbbm{d})$.} $\mathbbm{d}$ combines the marginal toplogies $(\fS, \delta)$ and $(C^{1,1}(\R^d), \bar{d})$ to a coarse joint topology $\tau_{\mathbbm{d}}$ on $E$. The lemma below shows how this topology relates to the `identifiability controlling' defect $\delta_{\independent}$ of a signal.\\[-0.75em] 

The premetric space $(E,\mathbbm{d})$ is the domain on which we will show and analyse the robustness \eqref{def:robustICA:eq1}. (Accordingly, our corresponding IA $\sI$ will then be a subset of $E$.) To stay consistent with the setup of Section \ref{sect:robustica-basic}, we may (trivially) complement $\tau_{\mathbbm{d}}$ into a topology on $\mathfrak{C}$ as follows. 

\begin{definition}[Causal Topology]\label{def:causaltopology}Given $E$ as in \eqref{subsect:topology_identspace:eq1} and $E^{\mathrm{c}}\coloneqq\mathfrak{C}\setminus E$, we call 
\begin{equation}\label{def:causaltopology:eq1}
\text{the disjoint union topology \ $\tau_{\mathfrak{C}}$ \ on } \ (E, \tau_{\mathbbm{d}}) \, \sqcup \, (E^{\mathrm{c}}, \tau_\emptyset)
\end{equation}
the ICA-supporting \emph{causal topology} on $\mathfrak{C}$. (Here, $\tau_\emptyset=\{\emptyset, E^{\mathrm{c}}\}$ is the trivial topology on $E^{\mathrm{c}}$.) 
\end{definition}
\begin{remark}\label{rem:causaltopology}
By definition, the topology $\tau_{\mathfrak{C}}$ on $\mathfrak{C}$ is simply the disjoint union $\tau_{\mathfrak{C}}=\tau_{\mathbbm{d}}\sqcup\tau_{\emptyset}$. Hence and by construction of \eqref{subsect:topology_identspace:eq2}, the topology $\tau_{\mathfrak{C}}$ relates to the referential coarseness \eqref{rem:suffcoarse:eq1} via Proposition \ref{prop:coarseness}. Consequently still and by Remark \ref{rem:premetric_topofacts:continuity}, a map $\phi : (\mathfrak{C}, \tau_{\mathfrak{C}})\rightarrow (W, d_W)$ [the latter pair a metric space] is seen to be continuous at a point $\mathfrak{c}_\star\in E \subseteq\mathfrak{C}$ if for every $\varepsilon>0$ there is a $\tilde{\delta}>0$ such that $d_W(\phi(\mathfrak{c}), \phi(\mathfrak{c}_\star))<\varepsilon$ for each $\mathfrak{c}\in\{\tilde{\mathfrak{c}}\in E \mid \mathbbm{d}(\mathfrak{c}_\star,\tilde{\mathfrak{c}})<\tilde{\delta}\}$. \hfill $\bdiam$ 
\end{remark}
      
\begin{lemma}\label{lem:robustness1}
For a fixed orthogonal signal $\mu_\star\in E$ and $A\in\GL$, consider the $\mathbbm{d}$-ball  
\begin{equation}\label{lem:robustness1:eq1}
\mathbb{B}_r^\star\equiv\mathbb{B}_r(\mu_\star, A)\coloneqq \left\{\mathfrak{p}\in E \ \middle| \ \mathbbm{d}\big((\mu_\star, A), \mathfrak{p}\big) < r\right\} 
\end{equation}
for $r>0$. Let further $\rho_0\coloneqq\min_{i\in[d]}\langle \mu_\star\rangle_{ii}$ and $\rho_1\coloneqq\max_{i\in[d]}\langle\mu_\star\rangle_{ii}$. Provided $r < r_0\coloneqq\big(\rho_0/(\rho_1 + 2C_p)\big)^{1/\alpha}$, there are then explicit constants $\hat{K}=\hat{K}(\mu_\star, C_p)\geq 1$ and $K_r=K_r(\mu_\star, r, C_p)$, increasing in $r$, such that for any given $(\mu, f)\in\mathbb{B}_r^\star$ there exists a unique $\tilde{\mu}\in\fD$ such that:   
\begin{equation}\label{lem:robustness1:eq2}
f_\ast\mu = A_\ast\tilde{\mu} \quad\text{ with }\quad \delta_{\independent}(\tilde{\mu})\,\leq\, K_r\cdot r^{\alpha/2} \quad\text{ and }\quad \big\|[\tilde{\mu}]_{\mathfrak{c}} - [\mu_\star]_{\mathfrak{c}}\big\|_{\mathcal{V}} \,\leq\, \hat{K}r^\alpha. 
\end{equation}   
\end{lemma}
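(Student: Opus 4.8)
\emph{Reduction.} The plan is to pass to the residual map $R\coloneqq A^{-1}\circ f-\mathrm{id}$ and to feed its smallness into the moduli of continuity of Lemma~\ref{lem:premetric_facts2}, using the explicit form~\eqref{sect:coredinates:eq4} of $\delta$. Given $(\mu,f)\in\mathbb{B}_r^\star$, the additivity~\eqref{subsect:topology_identspace:eq2} of $\mathbbm{d}$ splits $\mathbbm{d}((\mu_\star,A),(\mu,f))<r$ into $\delta(\mu_\star,\mu)<r$ and $\bar{d}(A,f)<r$. Since $\rho_0\le\rho_1$ and $1/\alpha>1$, one has $r_0=(\rho_0/(\rho_1+2C_p))^{1/\alpha}<\rho_0/(\rho_1+2C_p)<1$ and, as $\rho_1+2C_p\ge1$, also $r_0<\rho_0\wedge(\rho_0/\rho_1)=\hat{r}$; in particular $r<1$, so the minimum defining $\bar{d}$ in~\eqref{intext:trafopremetric} is attained by its first entry. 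As $A\in\GL\subset C^{1,1}$ and $f\in C^{1,1}$, the map $I+R=A^{-1}\circ f$ is a $C^1$-diffeomorphism with $f=A\circ(I+R)$, and
\[
(\|R\|_\infty+1)\,\|D_R\|_\infty<r,\qquad\text{hence}\qquad\|D_R\|_\infty<r\ \ \text{and}\ \ \|D_R\|_\infty\|R\|_\infty<r .
\]
Set $\tilde{\mu}\coloneqq(I+R)_\ast\mu$. Then $f_\ast\mu=A_\ast((I+R)_\ast\mu)=A_\ast\tilde{\mu}$; since $A$ is invertible, $\tilde{\mu}=(A^{-1})_\ast(f_\ast\mu)$ is the only measure with $f_\ast\mu=A_\ast\tilde{\mu}$, and $\tilde{\mu}\in\dot{\fD}$ because $\fS$ is core-integrable and $\|D_R\|_\infty<1$ makes $I+R$ bi-Lipschitz with constants controlled by $r$ (cf.\ Footnote~\ref{footnote:signature-bound} and Lemma~\ref{lem:linequiv}). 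This establishes the first assertion of~\eqref{lem:robustness1:eq2}.

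\emph{The IC-defect bound.} Since $\mu_\star$ is orthogonal, $\delta_{\independent}(\mu_\star)=0$ by Remark~\ref{rem:icdefect-nonstationary}, so
\[
\delta_{\independent}(\tilde{\mu})\ \le\ \big|\delta_{\independent}((I+R)_\ast\mu)-\delta_{\independent}(\mu)\big|\ +\ \big|\delta_{\independent}(\mu)-\delta_{\independent}(\mu_\star)\big| .
\]
For the last term, $\mu\in B_\delta(\mu_\star,r)$ with $r<\hat{r}$, so Lemma~\ref{lem:premetric_facts2} \ref{lem:premetric_facts2:it1} bounds it by $K_{\mu_\star|r}\sqrt{r+r^2}\le\sqrt{2}\,K_{\mu_\star|r}\,r^{\alpha/2}$ (using $r<1$ and $\alpha/2<1/2$). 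For the first term, apply Lemma~\ref{lem:premetric_facts2} \ref{lem:premetric_facts2:it2}, in its uniform form over the class~\eqref{lem:premetric_facts2:eq3}, with $\hat{\mu}=\mu_\star$ and $K_0=K_{\fS}$ (so the required core-integrability of $B_\delta(\mu_\star,\rho_0)$ holds within $\fS$): its hypothesis $\rho_R<\rho_0-\rho_1r$ is exactly what $r<r_0$ secures, since $\|D_R\|_\infty<r<1$ and $\|D_R\|_\infty\|R\|_\infty<r$ give $\rho_R=C_p\phi_2(\|D_R\|_\infty)(\|D_R\|_\infty\|R\|_\infty)^\alpha<2C_p\,r^\alpha$ (bounding $\phi_2\le2$ on $[0,r_0]$), whence $\rho_R+\rho_1r<(\rho_1+2C_p)r^\alpha<\rho_0$. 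This yields
\[
\big|\delta_{\independent}((I+R)_\ast\mu)-\delta_{\independent}(\mu)\big|\ \le\ \tilde{K}_{\mu_\star|r}\,\phi_3(\|D_R\|_\infty)^{1/2}\big(\|D_R\|_\infty\|R\|_\infty\big)^{\alpha/2}\ \le\ \tilde{K}_{\mu_\star|r}\,\phi_3(r)^{1/2}\,r^{\alpha/2}.
\]
Adding the two estimates and putting $K_r\coloneqq\sqrt{2}\,K_{\mu_\star|r}+\tilde{K}_{\mu_\star|r}\,\phi_3(r)^{1/2}$ — increasing in $r$, since all three ingredients are — gives $\delta_{\independent}(\tilde{\mu})\le K_r\,r^{\alpha/2}$.

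\emph{The coredinate bound.} By the triangle inequality for $\|\cdot\|_{\mathcal{V}}$,
\[
\big\|[\tilde{\mu}]_{\mathfrak{c}}-[\mu_\star]_{\mathfrak{c}}\big\|_{\mathcal{V}}\ \le\ \big\|[(I+R)_\ast\mu]_{\mathfrak{c}}-[\mu]_{\mathfrak{c}}\big\|_{\mathcal{V}}\ +\ \big\|[\mu]_{\mathfrak{c}}-[\mu_\star]_{\mathfrak{c}}\big\|_{\mathcal{V}}.
\]
For the second summand, reading~\eqref{sect:coredinates:eq4} for $\delta(\mu_\star,\mu)$ and undoing the standardisation by $N_{\mu_\star}^{-1}$ and the scalar factors $\sqrt{\langle\mu_\star\rangle_{\nu\nu}}$ — each of which costs at most a power of $\|N_{\mu_\star}\|_2^2=\rho_1$ — bounds it by $\max(\rho_1,\rho_1^{3/2})\,\delta(\mu_\star,\mu)<\max(\rho_1,\rho_1^{3/2})\,r\le\max(\rho_1,\rho_1^{3/2})\,r^\alpha$. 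For the first summand, which concerns only the change of the order-$\le3$ signature moments under the push-forward by $I+R$, the estimate underlying Lemma~\ref{lem:premetric_facts} \ref{lem:premetric_facts:it3} together with the core-integrability of $\fS$ gives a bound of the form $c\,\phi_3(\|D_R\|_\infty)(\|D_R\|_\infty\|R\|_\infty)^\alpha$ (with $c=c(\mu_\star,\fS)$); bounding $\phi_3(\|D_R\|_\infty)\le\phi_3(r_0)$ — so no further $r$-dependence enters — and $\|D_R\|_\infty\|R\|_\infty<r$ turns this into $\le C(\mu_\star,C_p)\,r^\alpha$. Adding the two and enlarging the constant so that $\hat{K}=\hat{K}(\mu_\star,C_p)\ge1$ yields $\big\|[\tilde{\mu}]_{\mathfrak{c}}-[\mu_\star]_{\mathfrak{c}}\big\|_{\mathcal{V}}\le\hat{K}\,r^\alpha$. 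All constants produced are explicit, assembled from $\rho_0,\rho_1,C_p$ and the explicit constants of Lemmas~\ref{lem:premetric_facts} and~\ref{lem:premetric_facts2}.

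I expect the crux to be the application of Lemma~\ref{lem:premetric_facts2} \ref{lem:premetric_facts2:it2}: one must verify that the single scalar threshold $r<r_0$ simultaneously keeps $\mu$ inside the ball on which part~\ref{lem:premetric_facts2:it1} is valid \emph{and} forces the residual to obey $\rho_R<\rho_0-\rho_1r$, so that the nonlinear perturbation estimate is available over the uniform class~\eqref{lem:premetric_facts2:eq3}; and then to keep track of which emerging constants are genuinely $r$-dependent — those inherited from Lemma~\ref{lem:premetric_facts2}, collected into $K_r$ — and which may be frozen at $r=r_0$ — those in the coredinate bound, collected into $\hat{K}$. The verification $\tilde{\mu}\in\dot{\fD}$, i.e.\ survival of the order-$\le3$ signature moments under the push-forward, is a secondary technical point, handled by the bi-Lipschitz control on $I+R$ coming from $\|D_R\|_\infty<1$.
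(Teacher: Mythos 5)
Your proposal is correct and follows essentially the same route as the paper's proof: the same residual $R=A^{-1}\circ f-\mathrm{id}$ with $\tilde{\mu}=(I+R)_\ast\mu$, the same splitting of $\mathbbm{d}$ and the same arithmetic $(\rho_1+2C_p)r^\alpha<\rho_0$ to verify the hypothesis of Lemma \ref{lem:premetric_facts2} \ref{lem:premetric_facts2:it2} over the uniform class \eqref{lem:premetric_facts2:eq3}, and the same two triangle-inequality decompositions (through $\mu$) for the IC-defect and coredinate bounds. The only differences are cosmetic bookkeeping of constants (e.g.\ you keep $\phi_3(\|D_R\|_\infty)^{1/2}$ explicit and bound it by $\phi_3(r)^{1/2}$, where the paper absorbs $\phi_3\le 4$ into $\tilde{K}_r$).
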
  
\begin{proof}
Fix any $(\mu, f)\in\mathbb{B}_r^\star$. Then $\delta(\mu_\star, \mu) + \bar{d}(A, f) < r$ by defs.\ \eqref{lem:robustness1:eq1} and \eqref{subsect:topology_identspace:eq2}. Further, $\bar{d}(A,f) \geq \|D_R\|_\infty\|R\|_\infty$ for $R\coloneqq A^{-1}\circ f - \mathrm{id}$, by \eqref{intext:trafopremetric} (note that $r_0<(\rho_0/\rho_1)^{1/\alpha}\leq 1$). Abbreviating $r_\delta\equiv\delta(\mu_\star, \mu)$ and $\rho_R\coloneqq C_p\phi_2(\|D_R\|_\infty)\big(\|D_R\|_\infty\|R\|_\infty\big)^{\!\alpha}$, we thus find that
\begin{equation}
r_\delta + \|D_R\|_\infty\|R\|_\infty <  r < r_0 \qquad\text{ and hence }\qquad \rho_1 r_\delta + \rho_R \,<\, c_0 r^\alpha \,<\, \rho_0
\end{equation}
for $c_0\coloneqq\rho_1 + 2C_p$ (note that $\|D_R\|_\infty\leq 1$ implies $\phi_2(\|D_R\|_\infty)\leq 2$ and $\phi_3(\|D_R\|_\infty)\leq 4$). Thus $(\mu, R)\in\mathcal{R}_c^C$, cf.\ \eqref{lem:premetric_facts2:eq3}, for $c\coloneqq c_0r^\alpha$ and $C\coloneqq\max\{2C_p, 1\}$. Hence by Lemma \ref{lem:premetric_facts2} \ref{lem:premetric_facts2:it2} there is a constant $\tilde{K}_r=\tilde{K}_r(\mu_\star, r, K_{\fD})\geq 0$, independent of $\mu$ and $r_\delta$ and $R$, such that      
\begin{equation}\label{lem:robustness1:aux2}
\big|\delta_{\independent}((I+R)_\ast\mu) - \delta_{\independent}(\mu)\big| \,\leq\, \tilde{K}_{r}\cdot r_R^{\alpha/2}
\end{equation}   
for $r_R\coloneqq\|D_R\|_\infty\|R\|_\infty$. Denoting $\tilde{\mu}\coloneqq(\mathrm{I} + R)_\ast\mu$, we also find that 
\begin{equation}
f_\ast\mu = [A(I+R)]_\ast\mu = A_\ast\tilde{\mu} \qquad \text{(by definition of $R$)}
\end{equation}  
and, by combination of \eqref{lem:robustness1:aux2} with Lemma \ref{lem:premetric_facts2} \ref{lem:premetric_facts2:it1}, using that $\delta_{\independent}(\mu_\star) = 0$ (cf.\ Lemma \ref{lemma1}),   
\begin{equation}\label{lem:robustness1:aux3}
\begin{aligned}
\big|\delta_{\independent}(\tilde{\mu})\big| \,\leq\, \big|\delta_{\independent}(\tilde{\mu}) - \delta_{\independent}(\mu)\big| + \big|\delta_{\independent}(\mu) - \delta_{\independent}(\mu_\star)\big| \,\leq\, K_r'\left(\sqrt{r_R^\alpha} + \sqrt{r_\delta}\right) \,\leq\, K_r\cdot r^{\alpha/2}
\end{aligned}
\end{equation}
for the constants $K_r'\coloneqq\max\{\tilde{K}_{r}, \sqrt{2}K_{\mu_\star\!|r}\}$, for $K_{\mu_\star\!|r}$ as in \eqref{lem:premetric_facts2:eq1}, and for $K_r\coloneqq 2K_r'$.  

As to the distance between the coredinates $[\tilde{\mu}]_{\mathfrak{c}}$ and $[\mu_\star]_{\mathfrak{c}}$, note that by \eqref{lem:premetric_facts:it3:eq1} and \eqref{lem:premetric_facts:it2:eq1}, 
\begin{equation}\label{lem:robustness1:aux4}
\begin{gathered}
\big\|[\tilde{\mu}]_{\mathfrak{c}} - [\mu_\star]_{\mathfrak{c}}\big\|_{\mathcal{V}} \,\leq\, \big\|[\tilde{\mu}]_{\mathfrak{c}} - [\mu]_{\mathfrak{c}}\big\|_{\mathcal{V}} + \big\|[\mu]_{\mathfrak{c}} - [\mu_\star]_{\mathfrak{c}}\big\|_{\mathcal{V}} \\
=\, \sqrt{\textstyle\sum_{|w|=2,3}\Delta_w(\tilde{\mu}, \mu)^2} + \sqrt{\textstyle\sum_{|w|=2,3}\Delta_w(\mu, \mu_\star)^2}\,\leq\, c_d\big[5C_p r_R^\alpha + \mathfrak{m}_{\mu_\star} r_\delta\big] \,\leq\, \hat{K}r^\alpha 
\end{gathered} 
\end{equation} 
where we denoted $\Delta_w(\mu^{(1)}, \mu^{(2)})\coloneqq |\langle\mu^{(1)}\rangle_w - \langle\mu^{(2)}\rangle_w|$ (so that the identity holds by definition of $\|\cdot\|_{\mathcal{V}}$) as well as $c_d\coloneqq(\sum_{|w|=2,3}1)^{1/2} = d\sqrt{d+1}$ and $\hat{K}\coloneqq c_d(5C_p + \mathfrak{m}_{\mu_\star})$.   
\end{proof} 
\noindent
Inherent in the definition \eqref{def:causaltopology:eq1} of the causal topology $\tau_{\mathfrak{C}}$ is an element of choice concerning the selection of $\fS$ in \eqref{subsect:topology_identspace:eq1}: That we resort to an \emph{integrable} subset $\fS$ of $\dot{\fD}$ is to ensure uniform comparability of different nonlinearly transformed signals under the residual metric \eqref{intext:trafopremetric}, cf.\  Section \ref{subsubsect:moment-integrability} and \eqref{lem:robustness1:aux2},\,\eqref{lem:robustness1:aux3}; the larger the enclosed threshold \eqref{notation:unifsigintegr:eq1}, the more `fine grained' will be the $\tau_{\mathfrak{C}}$-supported variation of  \eqref{def:robustICA:eq1} on $\mathfrak{C}$. The `necessity of choice' behind \eqref{subsect:topology_identspace:eq1} can be avoided by replacing \eqref{intext:trafopremetric} with a `non-incremental' (e.g., quotient) [pre]metric on $C^{1,1}$, or by restricting the latter space to a `tamed' subspace of transformations (so as to obtain a $\mu$-uniformly integrable bound on the RHS of \eqref{lem:premetric_facts:aux14}). No integrability is needed (i.e., $\fS = \dot{\fD}$ works out) if only variations in the source, and not in its transformation, are considered.           

\begin{remark}\label{rem:partialcontinuity}
Taking $\fS=\dot{\fD}$, the proof of Lemma \ref{lem:robustness1} shows that on the $A$-sections 
\begin{equation}
E_{A, r}\coloneqq\big(\dot{\fD}\times\{A\}\big)\cap \mathbb{B}_r(\mu_\star, A) = B_\delta(\mu_\star,r)\times\{A\}, 
\end{equation}
i.e.\ when the hidden relation $A$ between source and observable is linear and the same for all signals so that any inexactness in the observable-to-source inversion \eqref{def:robustICA:eq1} is due to source deviations (from $\mu_\star$) only, then no integrability assumptions on $\fS$ of the form \eqref{notation:unifsigintegr:eq1} are required and the estimates \eqref{lem:robustness1:eq2} improve as follows: 

{\ }\\[-0.5em] 
\noindent\makebox[\linewidth][c]{
\begin{minipage}{.9\textwidth}
\centering
\begin{lemma*}
\emph{Let $\mu_\star\in\dot{\fD}$ be orthogonal and $A\in\GL$, and let $0< r < \rho_0/\rho_1$. Then there are constants $\hat{L}=\hat{L}(\mu_\star)$ and $L_r=L_r(\mu_\star, r)$, the latter increasing in $r$, with
\begin{equation}\label{rem:partialcontinuity:eq2}
\delta_{\independent}(\mu)\,\leq\, L_r\sqrt{r} \quad\text{ and }\quad \big\|[\mu]_{\mathfrak{c}} - [\mu_\star]_{\mathfrak{c}}\big\|_{\mathcal{V}} \,\leq\, \hat{L}r \quad\text{ for each } \, \mu\in B_\delta(\mu_\star, r). 
\end{equation}} 
\end{lemma*}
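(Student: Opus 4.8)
\emph{Proof plan.} The plan is to read both inequalities straight off Lemma \ref{lem:premetric_facts2} \ref{lem:premetric_facts2:it1}, instantiated at $\hat\mu = \mu_\star$, after recording the one structural fact that makes everything collapse: since $\mu_\star$ is orthogonal, its IC-defect vanishes, $\delta_{\independent}(\mu_\star) = 0$ (Remark \ref{rem:icdefect-nonstationary}; equivalently Lemma \ref{lemma1} and the formula \eqref{rem:icdefect-nonstationary:eq1}). The fixed matrix $A\in\GL$ never enters the conclusion --- it is retained only to parallel the hypotheses of Lemma \ref{lem:robustness1}; indeed, on the $A$-section $B_\delta(\mu_\star,r)\times\{A\}$ the residual $R = A^{-1}\circ f - \mathrm{id}$ is identically zero, so the nonlinear-transformation estimate Lemma \ref{lem:premetric_facts2} \ref{lem:premetric_facts2:it2}, and with it any core-integrability requirement on $\fS$, is simply never invoked. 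The radius restriction $0 < r < \rho_0/\rho_1$ is exactly what is needed to run Lemma \ref{lem:premetric_facts2} \ref{lem:premetric_facts2:it1} on all of $B_\delta(\mu_\star,r)$: from $\alpha_{\mu_\star,\mu}\le\delta(\mu_\star,\mu)^2 < r^2$ one gets $\rho_0(1-r)\le\langle\mu\rangle_{ii}\le\rho_1(1+r)$ for every $\mu$ in the ball, so the diagonal second moments --- hence the standardisation $N_\mu$ and the defect $\delta_{\independent}(\mu)$ --- stay uniformly non-degenerate, and also $r < \rho_0/\rho_1 \le 1$.

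With this in hand the rest is bookkeeping. For the IC-defect bound: Lemma \ref{lem:premetric_facts2} \ref{lem:premetric_facts2:it1} supplies a constant $K_{\mu_\star|r}$, increasing in $r$, with $|\delta_{\independent}(\mu) - \delta_{\independent}(\mu_\star)|\le K_{\mu_\star|r}\sqrt{r+r^2}$ for all $\mu\in B_\delta(\mu_\star,r)$; since $\delta_{\independent}(\mu_\star) = 0$ and $r < 1$, the right-hand side is at most $\sqrt{2}\,K_{\mu_\star|r}\sqrt{r}$, which gives the first inequality with $L_r := \sqrt{2}\,K_{\mu_\star|r}$ (increasing in $r$). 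For the coredinate bound: the same lemma gives $\mathfrak{m}_{\mu_\star|r}$, again increasing in $r$, with $\max_{|w|=2,3}|\langle\mu\rangle_w - \langle\mu_\star\rangle_w|\le\mathfrak{m}_{\mu_\star|r}\,r$; since, by the definition of $\|\cdot\|_{\mathcal{V}}$ (cf.\ \eqref{def:coordinates:eq2} and the computation in \eqref{lem:robustness1:aux4}), $\|[\mu]_{\mathfrak{c}} - [\mu_\star]_{\mathfrak{c}}\|_{\mathcal{V}}^2 = \sum_{|w|=2,3}(\langle\mu\rangle_w - \langle\mu_\star\rangle_w)^2$ is a sum of $d^2 + d^3 = d^2(d+1)$ squares, this yields $\|[\mu]_{\mathfrak{c}} - [\mu_\star]_{\mathfrak{c}}\|_{\mathcal{V}}\le d\sqrt{d+1}\,\mathfrak{m}_{\mu_\star|r}\,r$. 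To make the prefactor $r$-free, as the statement demands, I would use monotonicity once more: $r < \rho_0/\rho_1$ forces $\mathfrak{m}_{\mu_\star|r}\le\mathfrak{m}_{\mu_\star|\rho_0/\rho_1}$, so the second inequality holds with $\hat L := d\sqrt{d+1}\,\mathfrak{m}_{\mu_\star|\rho_0/\rho_1}$, a constant depending only on $\mu_\star$.

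I do not expect a genuine obstacle here: all of the real work --- the local Lipschitz control of $\mu\mapsto\delta_{\independent}(\mu)$ and of the individual signature moments $\langle\cdot\rangle_w$ ($|w|\le 3$) in the premetric $\delta$, and the explicitness of the constants --- is already carried out inside (the appendix proof of) Lemma \ref{lem:premetric_facts2} \ref{lem:premetric_facts2:it1}. The only points that need a little care are cosmetic: confining the $r$-dependence to $L_r$ in the $\sqrt r$-order defect estimate while folding it into the $\mu_\star$-only constant $\hat L$ in the $r$-order coredinate estimate, and checking that the weaker radius hypothesis $r < \rho_0/\rho_1$ (rather than the $r < \rho_0\wedge(\rho_0/\rho_1)$ blanket assumption of Lemma \ref{lem:premetric_facts2}) really suffices here --- which it does, because for the part \ref{lem:premetric_facts2:it1} bounds the only role of that radius is to keep the $\langle\mu\rangle_{ii}$ uniformly positive and bounded, which $r < \rho_0/\rho_1$ already secures, the stronger bound being relevant only for the condition $\rho_R < \rho_0 - \rho_1 r$ in part \ref{lem:premetric_facts2:it2}. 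This, together with the vanishing of $\delta_{\independent}$ at $\mu_\star$, is precisely the simplification the remark is advertising over Lemma \ref{lem:robustness1}.

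\end{lemma*}
\end{minipage}}
\end{remark}
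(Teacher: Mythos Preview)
The proposal is correct and essentially identical to the paper's proof: both specialise the proof of Lemma~\ref{lem:robustness1} (equivalently, Lemma~\ref{lem:premetric_facts2}\ref{lem:premetric_facts2:it1}) to the section $f\equiv A$ where $R\equiv 0$, and both arrive at the same $L_r=\sqrt{2}\,K_{\mu_\star|r}$. One cosmetic difference: the paper reads $\hat L$ directly from \eqref{lem:premetric_facts:it2:eq1} as (a multiple of) $\mathfrak{m}_{\mu_\star}$ at the \emph{centre} $\mu_\star$, which is $r$-free from the outset, whereas you route through $\mathfrak{m}_{\mu_\star|r}$ and then freeze $r$ at $\rho_0/\rho_1$ via monotonicity---both are valid, but the direct reading spares the extra step and yields the slightly smaller constant.
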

\begin{proof}
Revisiting the proof of Lemma \ref{lem:robustness1} on $E_{A,r}$, we have $R\equiv 0$ and hence note \eqref{rem:partialcontinuity:eq2} for the constants $L_r\coloneqq \sqrt{2}K_{\mu_\star|r}$ via \eqref{lem:robustness1:aux3}, and $\hat{L}\coloneqq\mathfrak{m}_{\mu_\star}$ via \eqref{lem:robustness1:aux4}.      
\end{proof}
\end{minipage}}\\[0.5em]
The above simplification is useful in many applications, as the relation between source and observable can often be modelled as exactly linear but with the underlying source signal deviating from orthogonality; see for instance Section \ref{sect:applications} for a few examples. \hfill $\bdiam$          
\end{remark} 

\section{Robust Independent Component Analysis}\label{chap:robustICA:sect:auxiliaries}
\noindent
We introduce a quantifiably robust statistical procedure for recovering (the inverse of) a matrix from its action on an unobserved [non]orthogonal source signal. The general strategy for this is based on the classical idea of jointly diagonalising a set of derived equivariant matrix statistics, given in our case by the coredinates \eqref{def:coordinates:eq2} of the source signal (Section \ref{chap:robustICA:subsect:signature_identifiability:algorithm}). By linking our inversion procedure to the causal topology from Definition \ref{def:causaltopology}, we obtain both an identifiability theorem that provides explicit stability bounds for the recovery of signals that may deviate from orthogonality in any way (Theorem \ref{thm:robust_ica}) and, as our main result, a readily implementable new ICA-solution that is provably robust in the sense of Definition \ref{def:robustICA}. This solution comes with strong and general robustness guarantees that can be expressed in terms of explicitly computable and naturally interpretable moduli of continuity (Theorem \ref{thm:robustness}).          
  
\subsection{ICA-Inversion from Coredinates}\label{chap:robustICA:subsect:signature_identifiability:algorithm}Throughout this section, we follow the classical ICA-paradigm \eqref{appendix:sect:BSSformal:def:ICA:eq1} and consider two linearly related signals $\zeta$ and $\chi$ in $\dot{\fD}$, that is  
\begin{equation}\label{transformation_A}
\chi = A\cdot\zeta \quad\text{ with }\quad A\equiv(a_{ij})\equiv(a_1|\cdots|a_d) \ \in \ \R^{d\times d}
\end{equation}some fixed invertible $d\times d$ matrix with columns $a_i$. For instance, we can always think of 
\begin{equation}\label{measures_BSStriple}
\zeta=S \ \text{ and } \ \chi=X \quad \text{ for a BSS-triple \ $(X,S,A)$ \ as in \eqref{appendix:sect:BSSformal:eq1.1}},
\end{equation}    
with $X=\mathbb{P}_{\bm{X}}$ and $S=\mathbb{P}_{\bm{S}}$ for \eqref{appendix:sect:BSSformal:eq1}-related stochastic processes $\bm{S}=(\bm{S}_t)$ and $\bm{X}=(\bm{X}_t)$ in $\R^d$.\\[-0.75em] 

\noindent
The Problem of ICA (Definition \ref{appendix:sect:BSSformal:def:ICA}) is then to recover the inverse $A^{-1}$ from the input $\chi$, i.e.\ to find a map $\hat{\Phi}$ on $\dot{\fD}$ such that $\hat{\Phi}(\chi)\doteq A^{-1}$ up to some minimal ambiguity, see \eqref{def:icainversion:eq1}.\\[-0.75em]

\noindent
In principle, such a map $\hat{\Phi}$ may be expected to operate on certain `pieces of information' about $\chi$ that efficiently relate the action of $A$ to some relevant statistical properties of $\zeta$, cf.\ \eqref{appendix:sect:BSSformal:intext:eq:identconds}.\\[-0.75em] 

The following ensures that the coredinates $[\chi]_0, \ldots, [\chi]_d$ of $\chi$ can be used to this effect.    
\begin{proposition}\label{prop1}
Let $\chi$ and $\zeta$ be as in \eqref{transformation_A} with coredinates $[\chi]_{\mathfrak{c}}$ and $[\zeta]_{\mathfrak{c}}$, respectively. Provided that $[\zeta]_{0}$ is invertible, we have for the inverse $\theta\coloneqq A^{-1}$ that 
\begin{equation}\label{prop1:eq2} 
\theta\cdot[\chi]_{0}\cdot\theta^\intercal = [\zeta]_{0} \quad\text{ and }\quad \theta\cdot[\chi]_{\nu}\cdot\theta^\intercal = \sum_{\ell=1}^da_{\nu\ell}[\zeta]_{\ell}
\end{equation}for each $\nu\in[d]$, and for each $c\equiv(c_0, \underline{c})\in\R^{1+d}$ with $\underline{c}\equiv(\tilde{c}_1, \ldots, \tilde{c}_d)$ find that  
\begin{gather}\label{prop1:eq3}
\theta^{-1}\cdot[\chi]_\odot^c\cdot \theta \ = \ [\zeta]_\odot^{A|c} \qquad \text{for the matrices}\\\label{prop1:eq4}
[\chi]_\odot^c \,\coloneqq\, c_0[\chi]_{0}^{-1}\cdot\sum_{\nu=1}^d \tilde{c}_\nu[\chi]_{\nu} \quad \text{ and }
\quad [\zeta]_\odot^{A|c} \,\coloneqq\, c_0[\zeta]_{0}^{-1}\cdot \sum_{\ell=1}^d \langle a_\ell, \underline{c}\rangle_2[\zeta]_{\ell}.
\end{gather}  
If $\zeta$ is \nth{3}-order orthogonal, then $[\zeta]^{A|c}_\odot$ is diagonal. 
\end{proposition}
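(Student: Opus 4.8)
The plan is to read off from 3rd-order orthogonality that every matrix entering the definition \eqref{prop1:eq4} of $[\zeta]^{A|c}_\odot$ is diagonal, and then to invoke the elementary closure of the set of diagonal matrices in $\R^{d\times d}$ under sums, scalar multiples, products, and inverses (when they exist).

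First I unpack what orthogonality gives. By Definition \ref{def:orthogonal} (cf.\ Lemma \ref{lemma1}), $\zeta$ being 3rd-order orthogonal means $\langle\zeta\rangle_{ij} = \delta_{ij}\langle\zeta\rangle_{ij}$ and $\langle\zeta\rangle_{ijk} = \delta_{ijk}\langle\zeta\rangle_{ijk}$ for all $i,j,k\in[d]$, with $\delta_{ij}, \delta_{ijk}$ the two- and three-dimensional Kronecker deltas. Reading this off the coredinates \eqref{def:coordinates:eq2}, the matrix $[\zeta]_0 = (\langle\zeta\rangle_{ij})_{ij}$ equals $\diag(\langle\zeta\rangle_{11},\ldots,\langle\zeta\rangle_{dd})$, and for each $\nu\in[d]$ the matrix $[\zeta]_\nu = (\langle\zeta\rangle_{ij\nu})_{ij}$ has a single (possibly) nonzero entry, namely $\langle\zeta\rangle_{\nu\nu\nu}$ in position $(\nu,\nu)$; in particular each $[\zeta]_\nu$ is diagonal.

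Next I assemble $[\zeta]^{A|c}_\odot = c_0 [\zeta]_0^{-1}\sum_{\ell=1}^d\langle a_\ell,\underline{c}\rangle_2[\zeta]_\ell$ from \eqref{prop1:eq4}. The sum $\sum_{\ell=1}^d\langle a_\ell,\underline{c}\rangle_2[\zeta]_\ell$ is a linear combination of the diagonal matrices $[\zeta]_\ell$ and hence diagonal; explicitly it equals $\diag\big(\langle a_1,\underline{c}\rangle_2\langle\zeta\rangle_{111},\ldots,\langle a_d,\underline{c}\rangle_2\langle\zeta\rangle_{ddd}\big)$. Since $[\zeta]_0$ is invertible by the standing hypothesis of Proposition \ref{prop1} and is diagonal by the previous step, its inverse $[\zeta]_0^{-1} = \diag(\langle\zeta\rangle_{11}^{-1},\ldots,\langle\zeta\rangle_{dd}^{-1})$ is again diagonal; multiplying it with the diagonal matrix above and rescaling by $c_0$ preserves diagonality. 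Hence $[\zeta]^{A|c}_\odot$ is diagonal.

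There is essentially no obstacle here: the argument is pure bookkeeping of diagonal matrices. The only point worth a word is that $[\zeta]_0^{-1}$ is legitimate and diagonal, which uses both the invertibility hypothesis on $[\zeta]_0$ and the fact that a diagonal matrix has a diagonal inverse precisely when all its diagonal entries are nonzero --- for $\zeta\in\dot{\fD}$ this is guaranteed by the defining condition $\langle\zeta\rangle_{ii}\neq 0$ in \eqref{sect:convergence_sigmoments:subspace}.
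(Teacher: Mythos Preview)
Your argument for the diagonality claim is correct and is exactly what the paper does: once orthogonality forces each $[\zeta]_\nu$ (and $[\zeta]_0$) to be diagonal, the assertion is immediate from the closure of diagonal matrices under the operations in \eqref{prop1:eq4}. The paper's proof of this part is the single line ``clear by Lemma~\ref{lemma1}'', which you have correctly unpacked.

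However, your proposal addresses only the final sentence of the proposition. The congruence relations \eqref{prop1:eq2} and the conjugacy \eqref{prop1:eq3} are left unproved. These are not difficult, but they do require an argument: the paper derives \eqref{prop1:eq2} by applying the affine equivariance Lemma~\ref{lem:linequiv} at orders $m=2,3$ (which yields $[\chi]_0=A[\zeta]_0A^\intercal$ and $[\chi]_\nu=A\big(\sum_{\ell}a_{\nu\ell}[\zeta]_\ell\big)A^\intercal$, then conjugates by $\theta=A^{-1}$), and then obtains \eqref{prop1:eq3} by forming $c_0[\chi]_0^{-1}\sum_\nu\tilde{c}_\nu[\chi]_\nu$ and cancelling the inner $A^{-1}A$ factors. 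You should either supply these computations or at least indicate that \eqref{prop1:eq2}--\eqref{prop1:eq3} follow from Lemma~\ref{lem:linequiv} before turning to the diagonality claim.
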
 
\begin{proof}
Straightforward from Lemmas \ref{lem:linequiv}, \ref{lemma1} and the definitions \eqref{prop1:eq4}; see Appendix \ref{pf:prop1}. 
\end{proof}
The main takeaway from Proposition \ref{prop1} is how the congruences \eqref{prop1:eq2} between the `input' and the `target' statistics $[\chi]_\nu$ and $[\zeta]_\nu$ combine to the $A$-carried conjugacy \eqref{prop1:eq3} between the ($c$-parametrized families of) contracted statistics $[\chi]_\odot^c$ and $[\zeta]_\odot^{A|c}$. This contraction of the coredinates $[\chi]_{\mathfrak{c}}$, $[\zeta]_{\mathfrak{c}}$ opens up additional degrees of freedom, parametrised by $c$, which can be conveniently exploited, via \eqref{prop1:eq3}, to characterise $A^{-1}$ up to monomial ambiguity if the eigenspectrum of $[\zeta]_\odot^{A|c}$ is non-degenerate. This spectrum can in turn be controlled by the amount of statistical dependence between the components of $\zeta$, cf.\ Lemma \ref{lemma1} and \eqref{def:delta:eq2}, provided that these components satisfy some mild statistical non-degeneracy condition, see e.g.\ \eqref{robustica:s0}.\\[-0.75em]
 
The above strategy underlies the proof of Theorem \ref{thm:robust_ica} below. Its mathematical implementation, i.e.\ the construction of an ICA-inversion \eqref{def:icainversion:eq1} from Proposition \ref{prop1}, is next. 

\subsubsection{Auxiliary and Core Statistics}
For a given $\mu\in\fD$ we define $C_\mu\coloneqq\tfrac{1}{2}([\mu]_0 + [\mu]_0^\intercal)\in\R^{d\times d}$, which is symmetric and positive semidefinite. In the context of \eqref{transformation_A}, suppose that 
\begin{equation}\label{rem:choiceofR:eq1}
C_\chi \in \GL  \quad\text{ and } \quad R\equiv R_{(\chi)}\coloneqq C_{\chi}^{-1/2} \ \text{ is the square root of the inverse } C_\chi^{-1}.
\end{equation}
The matrix $R$ exists and is unique, e.g.\ \citep[Korollar 1.10]{kanzow2007}, and, by its definition, $R^2 = C_\chi^{-1}$ and hence $C_{R\cdot\chi} = R C_\chi R^\intercal = \mathrm{I}$. This latter identity marks the purpose of $R$, and its choice as an inverse square root \eqref{rem:choiceofR:eq1} is also known in the literature as Mahalanobis (or ZCA) whitening.
 
\begin{remark}From the spectral theorem we know that 
\begin{equation}\label{rem:choiceofR:eq2}
R_{(\chi)}\equiv \mathcal{R}(C_\chi) \,=\, Q\Lambda_\chi^{-1/2} Q^\intercal
\end{equation} 
for some $Q=Q(C_\chi)\in\operatorname{O}_d(\R)$ and $\Lambda_\chi\coloneqq\mathrm{ddiag}(\lambda_1^\chi, \ldots, \lambda_d^\chi)$, where $\lambda_1^\chi\geq\ldots\geq\lambda_d^\chi>0$ are the eigenvalues of $C_\chi$ (enumerated in descending order, counting multiplicities). Further note
\begin{equation}\label{rem:choiceofR:eq3}
C_\chi \, = \,  A\big[\tfrac{1}{2}\big([\zeta]_0 + [\zeta]_0^\intercal\big)\big]A^\intercal \, = \, A C_\zeta A^\intercal, 
\end{equation}
so that $C_\chi$ is invertible iff $C_\zeta$ is invertible. The invertibility of $C_\zeta$ is violated only for the degenerate case that the increment $\zeta_{0,1}$ is supported on a hyperplane of $\R^d$ (Lemma \ref{lem:invertcmu}). \hfill $\bdiam$ 
\end{remark}
For usage below, consider the pre-transformed matrices $A_R\coloneqq RA$ and 
\begin{equation}\label{rem:choiceofR:eq4}
B_R\equiv(b_1|\cdots|b_d)^\intercal\coloneqq(A_R)^{-1}, \ \text{ and } \ \bar{B}_R\coloneqq\mathrm{ddiag}(|b_1|, \cdots, |b_d|)^{-1}B_R
\end{equation} 
for the matrix $B_R$ rescaled to unit rows.\\[-0.5em] 
 
\noindent
The central components for our inversion procedure are the normalised statistics
\begin{equation}\label{contrast:eq0.1}
\begin{aligned}
\mathfrak{x}_0(\theta)\coloneqq N_{\theta\cdot\chi}^{-1}\cdot[\theta\cdot\chi]_0\cdot N_{\theta\cdot\chi}^{-1} \quad\text{ and } \quad
\mathfrak{x}_\nu(\theta)\coloneqq N_{\theta\cdot\chi}^{-1}\cdot\frac{[\theta\cdot\chi]_\nu}{\sqrt{\langle\theta\cdot\chi\rangle_{\nu\nu}}}\cdot N_{\theta\cdot\chi}^{-1} \quad (\nu\in[d])
\end{aligned} 
\end{equation}
for $\theta\in\GL\eqqcolon\Theta$ and with $N_\mu =\mathrm{ddiag}\big(\langle\mu\rangle_{11}, \cdots, \langle\mu\rangle_{dd}\big)^{1/2}$ for each $\mu\in\fD$, as before.

\begin{remark}
The statistics in \eqref{contrast:eq0.1} are all well-defined. Indeed, since $C_\chi$ is invertible by assumption and hence $C_\chi\in\mathrm{Sym}_d^+$, we have $C_{\theta\cdot\chi}=\theta C_\chi\theta^\intercal\in\mathrm{Sym}_d^+$ and thus $\langle\theta\cdot\chi\rangle_{ii} = e_i^\intercal C_{\theta\cdot\chi} e_i > 0$ for each $i\in[d]$ and all $\theta\in\Theta$. 
\end{remark}

\begin{lemma}\label{lem:scaleinvariance}
The statistics $\mathfrak{x}_\nu(\theta)$ in \eqref{contrast:eq0.1} are each \emph{scale-invariant} for any $\chi\in\fD$, that is
\begin{equation}\label{lem:scaleinvariance:eq2}
\mathfrak{x}_\nu(\Lambda\cdot\theta) \, = \, \mathfrak{x}_\nu(\theta) \quad \text{for all \ $\theta, \Lambda\in\GL$ \ with $\Lambda$ positive diagonal}. 
\end{equation}
\end{lemma}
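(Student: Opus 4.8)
The plan is to recognise \eqref{lem:scaleinvariance:eq2} as a purely algebraic identity that follows from the way the coredinates \eqref{def:coordinates:eq2} transform under a diagonal rescaling, combined with the cancellation built into the normalisation in \eqref{contrast:eq0.1}. Fix $\theta\in\GL$ and a positive diagonal $\Lambda=\mathrm{ddiag}(\lambda_1,\dots,\lambda_d)$, and set $\mu\coloneqq\theta\cdot\chi\in\fD$. Since the linear action \eqref{subsect:sigmoments:linact} is compatible with composition, $(\Lambda\theta)\cdot\chi=\Lambda\cdot\mu$, so it suffices to show that substituting $\Lambda\cdot\mu$ for $\mu$ changes neither $\mathfrak{x}_0$ nor any $\mathfrak{x}_\nu$; i.e. I would prove $N_{\Lambda\cdot\mu}^{-1}[\Lambda\cdot\mu]_0 N_{\Lambda\cdot\mu}^{-1}=N_\mu^{-1}[\mu]_0N_\mu^{-1}$ and the analogous identity for the $\nu$-th statistic.

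The first concrete step is to record the transformation rules for the coredinates under $\Lambda$. Applying Lemma \ref{lem:linequiv} with $A=\Lambda$ and $b=0$ gives $\langle\Lambda\cdot\mu\rangle_{ij}=\lambda_i\lambda_j\langle\mu\rangle_{ij}$ and $\langle\Lambda\cdot\mu\rangle_{ij\nu}=\lambda_i\lambda_j\lambda_\nu\langle\mu\rangle_{ij\nu}$, that is $[\Lambda\cdot\mu]_0=\Lambda[\mu]_0\Lambda$ and $[\Lambda\cdot\mu]_\nu=\lambda_\nu\,\Lambda[\mu]_\nu\Lambda$ for $\nu\in[d]$ (using $\Lambda^\intercal=\Lambda$). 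In particular $\langle\Lambda\cdot\mu\rangle_{ii}=\lambda_i^2\langle\mu\rangle_{ii}$; since $\langle\theta\cdot\chi\rangle_{ii}=e_i^\intercal C_{\theta\cdot\chi}e_i>0$ whenever these statistics are defined (cf. the Remark preceding the lemma, where $C_{\theta\cdot\chi}=\theta C_\chi\theta^\intercal\in\mathrm{Sym}_d^+$), I may take square roots with a definite sign and obtain $N_{\Lambda\cdot\mu}=\Lambda N_\mu$ and $\sqrt{\langle\Lambda\cdot\mu\rangle_{\nu\nu}}=\lambda_\nu\sqrt{\langle\mu\rangle_{\nu\nu}}$.

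It then remains to substitute these into \eqref{contrast:eq0.1} and watch the scalings telescope away, using that $\Lambda$ and the diagonal matrix $N_\mu$ commute and that $(\Lambda N_\mu)^{-1}=\Lambda^{-1}N_\mu^{-1}$. For the second-order statistic this gives
\[
\mathfrak{x}_0(\Lambda\theta)=(\Lambda N_\mu)^{-1}\,\Lambda[\mu]_0\Lambda\,(\Lambda N_\mu)^{-1}=N_\mu^{-1}[\mu]_0\,N_\mu^{-1}=\mathfrak{x}_0(\theta),
\]
while for $\nu\in[d]$ the additional scalar $\lambda_\nu$ carried by the numerator $[\Lambda\cdot\mu]_\nu=\lambda_\nu\Lambda[\mu]_\nu\Lambda$ is exactly cancelled by the $\lambda_\nu$ coming from $\sqrt{\langle\Lambda\cdot\mu\rangle_{\nu\nu}}$, whence
\[
\mathfrak{x}_\nu(\Lambda\theta)=(\Lambda N_\mu)^{-1}\,\frac{\lambda_\nu\,\Lambda[\mu]_\nu\Lambda}{\lambda_\nu\sqrt{\langle\mu\rangle_{\nu\nu}}}\,(\Lambda N_\mu)^{-1}=N_\mu^{-1}\,\frac{[\mu]_\nu}{\sqrt{\langle\mu\rangle_{\nu\nu}}}\,N_\mu^{-1}=\mathfrak{x}_\nu(\theta),
\]
which is exactly \eqref{lem:scaleinvariance:eq2}. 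There is no genuine obstacle here; the only points requiring a moment's care are the well-definedness of the square-root normalisations — equivalently, the strict positivity of the diagonal moments $\langle\theta\cdot\chi\rangle_{ii}$, which holds for every $\theta\in\GL$ once $C_\chi$ is invertible — and tracking the single factor $\lambda_\nu$ that the coredinate $[\cdot]_\nu$ picks up under $\Lambda$ and that the normalisation $\sqrt{\langle\cdot\rangle_{\nu\nu}}$ absorbs.
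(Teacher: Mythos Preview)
Your argument is correct and is essentially the paper's own proof: both reduce to the case of applying a positive diagonal $\Lambda$ to $\mu=\theta\cdot\chi$, invoke Lemma~\ref{lem:linequiv} to obtain $[\Lambda\cdot\mu]_0=\Lambda[\mu]_0\Lambda$, $[\Lambda\cdot\mu]_\nu=\lambda_\nu\Lambda[\mu]_\nu\Lambda$ and $N_{\Lambda\cdot\mu}=\Lambda N_\mu$, and then observe the telescoping in \eqref{contrast:eq0.1}. Your added remark on the well-definedness of the square roots (positivity of $\langle\theta\cdot\chi\rangle_{ii}$) is a nice touch but not strictly needed beyond what the paper already notes.
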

\begin{proof}
Immediate by Lemma \ref{lem:linequiv}, but see Appendix \ref{pf:prop1} for completeness.
\end{proof}  

\subsubsection{Blind Inversion via Contrast Optimization}Similar in spirit to classical ICA-approaches, cf.\ \citep{HBS, HKO} and Section \ref{sect:perturbedjdp:relatedwork}, we aim to recover the inverse of the hidden mixing transformation as minimizers of a specially constructed cost function: Quantifying the `off-diagonality' of a matrix $\theta$ via $\|\theta\|_\times\coloneqq\|\theta-\mathrm{ddiag}(\theta)\|$, for $\|\cdot\|$ the Frobenius norm on $\R^{d\times d}$, we combine the statistics \eqref{contrast:eq0.1} to the objective (or `contrast') function
\begin{equation}\label{contrast:eq1}
\Theta\ni\theta \,\longmapsto\,\phi_{\chi}(\theta) \, \coloneqq \, \sum_{\nu=0}^d\big\|\mathfrak{x}_\nu(\theta\cdot R)\big\|_{\times}^2 
\end{equation} 
which is minimal over the (approximate) `joint diagonalisers' of the matrices \eqref{contrast:eq0.1}, see below. 

The objective function $\phi_\chi$ is monomially invariant, which means that
\begin{equation}\label{phi:monomialinvar}
\phi_\chi(M\theta) = \phi_\chi(\theta)  \quad \text{for each } M\in\M, \ \theta\in\Theta.
\end{equation} 
Indeed, since each $M\in\M$ is of the form $M=P\Lambda$ for $P\in\mathrm{P}_d^{\pm}$ and some diagonal $\Lambda>0$, the asserted invariance follows directly from Lemmas \ref{lem:linequiv}, \ref{lem:scaleinvariance} (cf.\ \eqref{sect:linequiv:eq1}) and the definition of the Frobenius norm. Since the desired demixing matrices will be identified as minimisers of \eqref{contrast:eq1}, a suitable restriction (`compactification') of the original domain $\Theta$ will be opportune.\footnote{\ In classical ICA, where the components of the source $\zeta$ are assumed (at least pairwise) independent, such a compactification of the domain is usually reached via `whitening' of $\chi$; indeed, this reduces the original domain $\Theta$ to the compact subdomain $\mathrm{O}_d$ of orthogonal matrices. This whitening step, however, requires the intercomponental covariance structure of $\zeta$ to diagonalise, which we do not assume of our (potentially non-orthogonal) source.}

We propose to consider for this the set      
\begin{equation}\label{lica:unitrows}
\Xi_1^0 \coloneqq \big\{\theta\equiv(\theta_1|\cdots|\theta_d)^\intercal\in\R^{d\times d} \, \big| \, \|\theta_i\|_2=1, \,\forall\, i\in[d] \big\}
\end{equation} 
of all matrices in $\R^{d\times d}$ whose rows constitute a unit basis of $\R^d$, and define as our compactified (see Lemma \ref{lem:xicompact}) subdomain for the optimisation of $\phi_\chi$ the superset of $\mathrm{O}_d$ which is given by 
\begin{equation}\label{lica:xi1_domain}
\Xi_1\,\coloneqq\,\Xi_1^0\cap\big\{\theta\in\GL \, \big| \, \kappa_2(\theta)\leq\kappa_0 \big\}
\end{equation}
for any fixed condition bound $\kappa_0\geq\kappa_2(\bar{B}_R)$ (imposed as an a priori assumption\footnote{\ That is, the assumption that the observer does not underestimate the condition number $\kappa_2(\bar{A}_R)$.}). Since $\Xi_1$ is compact and $\phi_\chi$ is continuous in $\theta$ (even continuously differentiable, cf.\ Lemma \ref{lem:linequiv}), we have
\begin{equation}\label{lica:inverseset}
\mathfrak{I}_\chi\coloneqq\left[\argmin_{\theta\,\in\,\Xi_1}\phi_\chi(\theta)\right]\!\cdot\!R_{(\chi)} \, \in \, \mathfrak{F}
\end{equation}
for the subset $\mathfrak{F}\coloneqq\big\{\mathcal{M}\subseteq\GL \ \big| \ \mathcal{M} \text{ is non-empty and compact}\big\}$ of $2^{\GL}$. Recall here that 
\begin{equation}\label{hausdorffmetric}
(\mathfrak{F}, \mathfrak{d}) \ \text{ is a metric space \quad wrt.\ } \quad \mathfrak{d}(\mathcal{A}, \mathcal{B}) \coloneqq \max\big\{\sup\nolimits_{a\in\mathcal{A}}d(a,\mathcal{B}),\,\sup\nolimits_{b\in\mathcal{B}}d(\mathcal{A},b)\big\},
\end{equation}
the classical Hausdorff distance on $\mathfrak{F}$, where $d(a, \mathcal{B})\coloneqq\inf\nolimits_{b\in\mathcal{B}}\|b-a\|$. We set
\begin{equation}\label{lica:inversemap1}
\hat{\Phi} \,:\ \dot{\fD} \rightarrow \mathfrak{F}, \quad \mu \rightarrow \mathfrak{I}_\mu, 
\end{equation} 
and claim that $\hat{\Phi}$ is not only an ICA-inversion \eqref{def:icainversion:eq1} on a yet to be specified IA $\sI_\star$, but in fact an informatively robust such inversion in the sense of \eqref{BSS:robust_prelim1} and Definition \ref{def:robustICA}. 

\hfill The following two sections establish this claim. 

\begin{remark}[On the Optimization \eqref{lica:inverseset}]As further detailed in Section \ref{sect:perturbedjdp:relatedwork}, the optimization of contrasts of the form \eqref{contrast:eq1} over constrained subsets of $\Theta$, such as \eqref{lica:xi1_domain}, is an established topic in its own right, with strong and ongoing research activity. As for row constraints \eqref{lica:unitrows}, e.g.\ \citep{shi2015}, the imposition of additional condition constraints \eqref{lica:xi1_domain} in the optimization of \eqref{contrast:eq1} has been studied and does not introduce any new mathematical subtleties, see e.g.\ \cite{zhou2009nonorthogonal}. \hfill $\bdiam$           
\end{remark}
\begin{remark}\label{rem:robustness}
\label{rem:robustness2:it2}Since the identifiability map \eqref{lica:inversemap1} returns invertible transformations for its input observables, it may also seem natural to use as an alternative topology on $\mathfrak{F}$ the `residual' semimetric given by \eqref{hausdorffmetric} but for the alternative point-set distances 
\begin{equation}\label{rem:robustness2:it2:eq1}
\tilde{d}(a, \mathcal{B})\coloneqq\inf\nolimits_{b\in\mathcal{B}}\tilde{d}(a,b) \quad\text{with}\quad \tilde{d}(a,b)\coloneqq\big\|a\circ b^{-1} - \mathrm{id}\big\|\,\vee\,\big\|b\circ a^{-1} - \mathrm{id}\big\|
\end{equation}
for $\mathrm{id}$ the identity matrix in $\GL$. Continuity wrt.\ the multiplicative setting \eqref{rem:robustness2:it2:eq1} is implied by [continuity wrt.] \eqref{hausdorffmetric}, however, since the metric topology \eqref{hausdorffmetric} is finer than the \eqref{rem:robustness2:it2:eq1}-induced topology, as can be seen by recalling that matrix inversion is Lipschitz at each point.    
\end{remark}    

\subsection{Blind Inversion for (Non-)Orthogonal Sources}\label{sect:blindinversionthm1}
The main goal of this section is to prove that the blind inversion of signals can be robust -- or stable -- under general perturbations of the source, including violations of its orthogonality (`IC') assumption. Recalling that infringements of the latter type are quantified by the defect $\delta_{\independent}\!(\zeta)$ from \eqref{rem:icdefect-nonstationary:eq1}, the following stability analysis of the relation between $\hat{\Phi}(\chi)$ and the true inverse $A^{-1}$ (up to prefactors in $\mathrm{M}_d$) is an important step towards a quantitative understanding of the desired robustness \eqref{def:robustICA:eq1}. This step involves some constants and auxiliary functions, which we introduce next.\\[-0.5em] 

The following considerations are about source signals in $\mathcal{M}_1$ that lie within the set 
\begin{equation}\label{robustica:s0}
\mathscr{S}_0\coloneqq\big\{\mu\in\dot{\fD}\ \big| \ C_{\mu}\!\in\GL \text{ and } \, \sharp\{i\in[d]\,|\,\langle\mu\rangle_{iii}=0\}\leq 1\big\}
\end{equation}  
of measures $\mu$ such that $C_{\mu}$ is invertible (Lem.\ \ref{lem:invertcmu}) and $\langle\mu\rangle_{iii}=0$ for at most one $i\in[d]$.\footnote{\ Note that $\langle\mu\rangle_{iii}=\E[(\mu_{0,1}^i)^3]$. Further, we can (wlog -- upon re-enumeration of components) assume for convenience that it is $\langle\mu\rangle_{111}$ which may vanish; note that this assumption underlies the definition of $\varsigma_1$.}
 
\begin{notation}For a BSS-triple $(\chi, \zeta, A)_{\mathscr{S}_0\times\GL}$, let $\gamma\coloneqq 1 + \sqrt{5}$ and $k_d\coloneqq{\scriptstyle\sqrt{\tfrac{1}{6}(d-1)d(2d-1)}}$ and $R\equiv R_{(\chi)}$ as in \eqref{rem:choiceofR:eq2}, and consider the $(\zeta, A)$-dependent $\displaystyle\varepsilon_0\coloneqq q_0/(1+q_0)>0$ defined via    
\begin{align}\label{chap:robustICA:sect:signature_identifiability:constants:1}
q_0 \, \coloneqq\, (\gamma k_d r_0)^{-1} \ \text{ and } \ r_0\,\coloneqq\,\kappa_0\varsigma_1\!\Bigg[\tfrac{\big\|B_R\big\|}{\sqrt{d}}\big(1 + \kappa_0 + (1+\xi d)\kappa_0\kappa_2(B_R)\big) + \kappa_2(B_R)\varsigma\Bigg].
\end{align}
Let further $c_1\coloneqq 2dk_d r_0$ and $c_2\coloneqq \sqrt{d}\kappa_2(B_R)c_1$, and note that \eqref{chap:robustICA:sect:signature_identifiability:constants:1} involves the terms
\begin{equation}
\varsigma_1\coloneqq k_d^{-1}\sqrt{\sum_{i=1}^d\frac{(i-1)^2\langle\zeta\rangle_{ii}^3}{\langle\zeta\rangle_{iii}^2}}, \quad\xi\,\coloneqq\,\sqrt{\sum_{\nu=1}^d\big\|[A_R\cdot\zeta]_\nu\big\|^2}, \quad\text{and} \quad \varsigma\coloneqq\sqrt{\sum_{i=1}^d\langle\zeta\rangle_{iii}^2/\langle\zeta\rangle_{ii}^3}, 
\end{equation}  
cf.\ \eqref{def:coordinates:eq1}, \eqref{rem:choiceofR:eq1}. In the following theorems, the above constants should not be taken too seriously in terms of optimality, as we have not attempted to optimize the sharpness of our inequalities.     
\end{notation}

The following theorem is a cornerstone for our general robustness result (Theorem \ref{thm:robustness}). 
\begin{theorem}[Blind Inversion]\label{thm:robust_ica}
Let $(\chi,\zeta,A)$ be a BSS-triple on $\mathscr{S}_0\times\GL$ and $\hat{\Phi}$ as in \eqref{lica:inversemap1}. If $\delta\coloneqq\delta_{\independent}(\zeta) \leq \varepsilon_0$, then for each $\theta_\star\in\hat{\Phi}(\chi)$ there is an $M\in\M$ and $E\in\R^{d\times d}$ such that 
\begin{equation}\label{thm:robust_ica:eq1}
\theta_\star = M(\mathrm{I} + E)A^{-1} \ \ \text{ with } \ \ \|E\|\,\leq\, c_1\tfrac{\delta}{1-\delta} \quad\text{and}\quad \tfrac{\|\theta_\star - M\!A^{-1}\|}{\|M\!A^{-1}\|}\leq c_2\tfrac{\delta}{1-\delta}\,.
\end{equation}
Moreover, $\partial_{\hat{\Phi}}(\zeta, A)\!\leq\! c_2\tfrac{\delta}{1-\delta}$ for the deviance \eqref{def:deviance}, and $(\sI_\star, \hat{\Phi})$ is an ICA-solution for the IA  
\begin{equation}\label{thm:robust_ica:eq2}
\sI_\star\coloneqq\mathscr{S}_\star\times\GL \ \ \text{ with }\ \ \mathscr{S}_\star\coloneqq\big\{\mu\in\mathscr{S}_0\ \big| \ \mu \text{ is orthogonal }\big\}.
\end{equation}    
\end{theorem}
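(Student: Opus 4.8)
The plan is to prove the three assertions of Theorem \ref{thm:robust_ica} in sequence, with the bulk of the work devoted to the first, the factorisation $\theta_\star = M(\mathrm{I}+E)A^{-1}$ with the stated bound on $\|E\|$.

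\textbf{Step 1: Reduction to a perturbed joint-diagonalisation problem.} Fix a BSS-triple $(\chi,\zeta,A)$ on $\mathscr{S}_0\times\GL$ and let $R\equiv R_{(\chi)}$ be the whitening matrix from \eqref{rem:choiceofR:eq2}, so that $C_{R\cdot\chi}=\mathrm{I}$ by \eqref{rem:choiceofR:eq1}. The idea is to work with the pre-whitened signal $R\cdot\chi = A_R\cdot\zeta$ (where $A_R=RA$) and apply Proposition \ref{prop1} to the linearly related pair $(R\cdot\chi,\zeta)$: the congruences \eqref{prop1:eq2} show that for $\theta=A_R^{-1}=B_R$ the contracted matrices $\mathfrak{x}_\nu(\theta\cdot R)$ in \eqref{contrast:eq0.1}, read off from $[\theta\cdot\chi]_\nu = [B_R\cdot R\cdot\chi]_\nu$, are conjugate (up to the standardisation built into $\mathfrak{x}_\nu$) to the coredinate matrices $[\zeta]_\nu$. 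When $\zeta$ is orthogonal these target matrices are all diagonal by Lemma \ref{lemma1}, so $B_R$ (rescaled to unit rows) is an \emph{exact} joint diagonaliser, i.e.\ $\phi_\chi(B_R)=0$; when $\delta_{\independent}(\zeta)=\delta>0$ the off-diagonal mass of the $[\zeta]_\nu$ is controlled by $\delta$ via Definition \ref{def:delta} and \eqref{rem:icdefect-nonstationary:eq1}, so that $\bar B_R\in\Xi_1$ (using the a priori condition bound $\kappa_0\geq\kappa_2(\bar B_R)$) is an \emph{approximate} joint diagonaliser with $\phi_\chi(\bar B_R)=O(\delta)$. The output $\theta_\star\in\hat\Phi(\chi)$ is, by \eqref{lica:inverseset}, of the form $\theta_\star = \hat\theta\cdot R$ for some global minimiser $\hat\theta\in\Xi_1$ of $\phi_\chi$, hence $\phi_\chi(\hat\theta)\leq\phi_\chi(\bar B_R)=O(\delta)$.

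\textbf{Step 2: Perturbation bound for the approximate joint diagonaliser.} Now comes the main analytical core: translating the smallness of $\phi_\chi(\hat\theta)$ into proximity of $\hat\theta$ to the true demixer $B_R$ up to monomial ambiguity. Writing $\hat\theta = \Pi(\mathrm{I}+E')B_R$ for a suitable permutation $\Pi$ and a residual $E'$, a perturbation argument for the joint diagonalisation of the matrices $\mathfrak{x}_\nu$ — exploiting that their common exact diagonaliser has a \emph{spectrally separated} contraction $[\zeta]_\odot^{A|c}$ (Proposition \ref{prop1}, last sentence), whose eigengaps are bounded below in terms of the $\langle\zeta\rangle_{iii}/\langle\zeta\rangle_{ii}^{3/2}$ appearing in $\varsigma_1,\varsigma$ — yields $\|E'\|\lesssim \delta/(1-\delta)$ with an explicit constant built from $k_d$, $\gamma$, $\|B_R\|$, $\kappa_0$, $\kappa_2(B_R)$, $\xi$, $\varsigma$, $\varsigma_1$; this is exactly where $r_0$ and the threshold $\varepsilon_0 = q_0/(1+q_0)$, $q_0=(\gamma k_d r_0)^{-1}$, enter, ensuring the implicit-function/contraction step is in its regime of validity precisely when $\delta\leq\varepsilon_0$. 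Collapsing the unit-row rescaling back into a diagonal factor and absorbing $\Pi$ and this diagonal into $M\in\M$, and transporting through $R$, gives $\theta_\star = M(\mathrm{I}+E)A^{-1}$ with $\|E\|\leq c_1\frac{\delta}{1-\delta}$ for $c_1 = 2dk_d r_0$. The relative-error bound $\|\theta_\star - MA^{-1}\|/\|MA^{-1}\| \leq c_2\frac{\delta}{1-\delta}$ with $c_2 = \sqrt d\,\kappa_2(B_R)c_1$ then follows from $\theta_\star - MA^{-1} = ME A^{-1}$ together with the submultiplicative estimate $\|MEA^{-1}\|\leq \|M\|\,\|E\|\,\|A^{-1}\|$ and $\|MA^{-1}\|^{-1}$ controlled via the condition number, a routine computation. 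I expect Step 2 to be \emph{the main obstacle}: making the eigengap/perturbation estimate fully explicit and tracking all constants through the standardisation and whitening is delicate, though conceptually it is the classical ``joint diagonalisation is well-conditioned when the target spectrum is separated'' principle, here with separation guaranteed by $\zeta\in\mathscr{S}_0$.

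\textbf{Step 3: Deviance bound and the ICA-solution claim.} For the deviance, unwind Definition \eqref{def:deviance}: $\partial_{\hat\Phi}(\zeta,A) = \sup_{B\in\hat\Phi(\chi)}\inf_{(M,v)}\varrho_B((\zeta,A),(M,v))$, and taking $v=0$ and $M$ as produced in Step 1–2 for each $\theta_\star=B\in\hat\Phi(\chi)$, the quantity $\varrho_B((\zeta,A),(M,0)) = \sup_{u\in D_\zeta}\frac{|B\circ A(u)-Mu|}{|Mu|}\mathbbm 1_\times(Mu) = \sup_u\frac{|(\theta_\star-MA^{-1})Au|}{|Mu|}$ is bounded by $\|\theta_\star - MA^{-1}\|/\|MA^{-1}\| \leq c_2\frac{\delta}{1-\delta}$ after estimating $|(\theta_\star-MA^{-1})Au|\leq\|ME A^{-1}\|\,|Au|$ and $|Mu|\geq\|MA^{-1}\|^{-1}|Au|$... actually more directly via $\theta_\star-MA^{-1}=MEA^{-1}$ so $(\theta_\star-MA^{-1})Au = MEu$, giving the ratio $\leq\|E\|\cdot(\text{cond.\ factor})\leq c_2\frac{\delta}{1-\delta}$; taking the supremum over $B$ preserves the bound. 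Finally, that $(\sI_\star,\hat\Phi)$ is an ICA-solution: $\sI_\star = \mathscr{S}_\star\times\GL$ with $\mathscr{S}_\star\supseteq\hat{\mathscr{S}}_\bot$ (orthogonal signals in $\mathscr{S}_0$) is a product set of the required form \eqref{appendix:sect:BSSformal:def:ICA:eq1}, and on $\sI_\star$ we have $\delta_{\independent}(\zeta)=0\leq\varepsilon_0$ by Lemma \ref{lemma1}/Remark \ref{rem:icdefect-nonstationary}, so the factorisation \eqref{thm:robust_ica:eq1} gives $E=0$, i.e.\ $\theta_\star\in\M\cdot A^{-1}$; hence $\hat\Phi(f(\zeta))\circ f = \hat\Phi(\chi)\circ A \subseteq \M \subseteq\tilde{\mathrm M}_d$ is nonempty, which is exactly the ICA-inversion requirement \eqref{def:icainversion:eq1}. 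The $\lceil\cdot\rceil_{\mathfrak m}$-sufficiency of $\sI_\star$ (needed for the ICA-condition \eqref{appendix:sect:BSSformal:def:ICA:eq1}) follows from \eqref{appendix:sect:BSSformal:intext:eq:maxsol_prodset} together with $\tilde{\mathrm M}_d\cdot\mathscr{S}_\star\subseteq\mathscr{S}_\star$ and the fact that any $\tilde g$ with $(\tilde g(\chi),\tilde g^{-1})\in\sI_\star$ must, by the $E=0$ rigidity just shown, satisfy $\tilde g\in\M\cdot A^{-1}$, so $\tilde g(\chi)\in\M\cdot\zeta = \lceil\zeta\rceil_{\mathfrak m}$. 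This completes the proof.
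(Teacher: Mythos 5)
Your outline follows the same route as the paper: whiten, observe that $\bar B_R\in\Xi_1$ is an approximate joint diagonaliser with $\phi_\chi(\bar B_R)\le\delta^2$, so any minimiser satisfies the same bound, then convert this into proximity of $\vartheta=\theta_\star R A$ to a monomial matrix via a spectral perturbation argument on a contracted statistic, and finally read off the deviance bound and the $\sI_\star$-claim from the factorisation with $E=0$ when $\delta=0$. Steps 1 and 3 are essentially the paper's arguments and are correct (your identity $(\theta_\star-MA^{-1})Au=MEu$ and the bound by $\|MEM^{-1}\|_2$ is exactly how the paper handles the deviance).

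The gap is that Step 2 — which you yourself flag as the main obstacle — is asserted rather than proved, and it is precisely where all the content of the theorem lives. Two concrete ingredients are missing. First, the contraction parameter $c$ is not just "some $c$ for which the spectrum is separated": the paper chooses $c_\alpha$ (via the spreadlength Lemma \ref{lem:spread}) so that the target eigenvalues $\underline\lambda_\alpha=(0,\alpha/k_d,\dots,(d-1)\alpha/k_d)$ are \emph{equispaced}, maximising the minimal eigengap subject to $|\underline\lambda_\alpha|=\alpha$, and then calibrates $\alpha$ by \eqref{thm:robust_ica:aux37} so that $|\underline c_\alpha|=1/(1-\varepsilon)$; this is what produces the $\delta/(1-\delta)$ scaling and the specific constant $r_0$ in $\varepsilon_0$. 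Without this choice one gets a bound with an uncontrolled dependence on $c$. Second, the perturbed conjugacy $\tilde\vartheta^{-1}[\hat\Lambda^\zeta_c+E_c]\tilde\vartheta=\hat\Lambda^\star_c$ involves a \emph{non-symmetric} perturbation $E_c=\hat\Delta_c^\zeta-\tilde\vartheta\hat\Delta_c^\star\tilde\vartheta^{-1}$ of a diagonal matrix, so the classical symmetric (Davis–Kahan type) eigenvector perturbation theory does not apply; the paper invokes the second-order eigenpair bounds of Karow–Kressner (Lemma \ref{lem:perturbationresults}), and the hypothesis $\|E_\alpha\|<\gamma^{-1}\min_i s_i/\kappa_i$ of that lemma is exactly what the threshold $\delta\le\varepsilon_0$ is engineered to guarantee. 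You would also need the argument that the resulting assignment $\sigma$ is a permutation (which the paper deduces from the eigengap being larger than twice the eigenvalue displacement) and the lower bound $\beta_i\ge\|A_R\|_2/(\kappa_2(A_R)\sqrt d)$ on the normalising diagonal entries coming from the unit-row constraint, which is where the factor $\sqrt d\,\kappa_2(B_R)$ in $c_2$ originates. As it stands, your Step 2 names the right principle but does not supply the construction that makes the constants $c_1,c_2,\varepsilon_0$ come out as stated.
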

\begin{proof}
The proof of \eqref{thm:robust_ica:eq1} combines standard arguments from matrix analysis with recent perturbation bounds for eigenspaces of (nonsymmetric) matrices; it is rather lengthy and hence deferred to Section \ref{chap:robustICA:subsect:signature_identifiability:proof}. The remaining assertions are direct corollaries of this proof and \eqref{thm:robust_ica:eq1}: 

Statement \eqref{thm:robust_ica:eq1} guarantees that for each $B\in\hat{\Phi}(\chi)$ there is $M\in\M$ and $E\in\R^{d\times d}$ with \begin{equation}\label{thm:robust_ica:eq2.1}
\tfrac{\!|BAu - Mu|}{|Mu|} \,=\, \tfrac{|M\cdot E\cdot M^{-1}(Mu)|}{|Mu|}\,\leq\,\|M\cdot E\cdot M^{-1}\|_2 \,\leq\,c_2\tfrac{\delta}{1-\delta}, \quad \text{for each } \ u\in \R^d\setminus\{0\},
\end{equation} 
where the last inequality holds by the same argument as for \eqref{thm:robust_ica:aux61.2}. This together with the definition \eqref{def:deviance} of the deviance function $\partial_{\hat{\Phi}}$ then implies the asserted $\partial_{\hat{\Phi}}(\zeta,A)$-inequality.  

The $\lceil\,\cdot\,\rceil_{\mathfrak{m}}$-sufficiency of \eqref{thm:robust_ica:eq2} follows from \eqref{thm:robust_ica:eq1} and the fact that $\delta_{\independent}(\mathscr{S}_\star)=0$, cf.\ \eqref{rem:icdefect-nonstationary:eq1}. Indeed: For any BSS-triple $(\chi,\zeta,A)_{\sI_\star}$ we have, in the notation of \eqref{thm:robust_ica:aux2.1}, that $(\chi,\tilde{\zeta},\tilde{A})_{\sI_\star}$ and hence $\mathrm{I}=R C_\chi R^\intercal=(R\tilde{A})C_{\tilde{\zeta}}(R\tilde{A})^\intercal = \tilde{A}_R\tilde{A}_R^\intercal$ and thus $\tilde{B}_R\coloneqq\tilde{A}_R^{-1}\in\mathrm{O}_d\subseteq\Xi_1$ (cf.\ \eqref{lica:xi1_domain}) for each ($\Xi_1$-defining choice of) $\kappa_0\geq 1$. Statement \eqref{thm:robust_ica:eq1} thus applies to [the triple $(\chi,\tilde{\zeta},\tilde{A})_{\sI_\star}$ and] the given map $\hat{\Phi}$ in particular, implying $\hat{\Phi}(\chi)\subseteq\M\cdot\tilde{A}^{-1}=\M\cdot A^{-1}$ as desired.        
\end{proof}

As is to be expected, the deviation \eqref{thm:robust_ica:eq1} between $\mathrm{M}_d\cdot A^{-1}$ and the recovered inverses $\theta_\star\in\hat{\Phi}(\chi)$ recovered from $\chi$ --- and likewise the deviation $\mathrm{dist}_{\tilde{\varrho}}(\hat{\Phi}(X)\cdot X, \lceil S\rceil_{\mathfrak{m}})$, via \eqref{lem:robustness-inversionstability:eq1}, between the exact quasi sources and their estimates --- are expressed on a multiplicative scale [resp.\ in terms of relative error] rather than on an additive scale [resp.\ in absolute error], owing to the multiplicative nature of the group $\GL$ and its action on $\mathcal{M}_1$.\\[-0.75em]

We also note that the proof of \eqref{thm:robust_ica:eq1} (see Section \ref{sect:mainproofs}) allows to quantify the breakdown point of the blind inversion procedure \eqref{lica:inversemap1} as being reached at exactly those causes $(\zeta, A)$ in $\mathfrak{C}$ for which one of the eigenvalues $(\tilde{\lambda}_i)$ in \eqref{thm:robust_ica:aux49} attains an algebraic multiplicity strictly greater than one; we can interpret these causes as the `singularities' of $\hat{\Phi}$ in the causal space $\mathfrak{C}$.        

\begin{remark}\label{rem:lica:proxcomponents}Let us revisit the usual case \eqref{measures_BSStriple} that $\zeta$ and $\chi$ are the laws $S$ and $X$ of some $\R^d$-valued processes $\bm{S}=(\bm{S}_t)_{t\in\I}$ and $\bm{X}=(\bm{X}_t)_{t\in\I}$, respectively, see \eqref{measures_BSStriple}. Denoting $\hat{\bm{S}}\coloneqq\theta_\star\cdot \bm{X}$ for any $\theta_\star\in\hat{\Phi}(X)$, Theorem \ref{thm:robust_ica} and \eqref{lem:robustness-inversionstability:eq1} then yield\footnote{\ Notice that if $|M\cdot \bm{S}_{t}| = 0$ then $\hat{\bm{S}}_t=\theta_\star A \bm{S}_{t}=0$, and for such $t\in\I$ the quotient in \eqref{rem:lica:proxcomponents:eq1} is set to $0/0\coloneqq 0$.} that
\begin{equation}\label{rem:lica:proxcomponents:eq1}
\tilde{\varrho}(\hat{\bm{S}}, M\bm{S})\equiv\sup\nolimits_{t\in\I}\tfrac{|\hat{\bm{S}}_t - M\bm{S}_t|}{|M\bm{S}_t|}\leq c_2\,\hat{\delta} \quad\text{for some } \ M=M(\theta_\star)\in\M 
\end{equation}
and with $\hat{\delta}\coloneqq\tfrac{\delta_{\!\independent\!}(S)}{(1-\delta_{\!\independent\!}(S))}$ and $c_2\equiv c_2(S,A)$ as above, provided that $\delta_{\independent}(S)\leq\varepsilon_0\equiv\varepsilon_0(S,A)$. If in addition to the `global' deviation \eqref{rem:lica:proxcomponents:eq1} one is also interested in the reconstruction error for a single component $\bm{S}^i=(\bm{S}^i_t)_{t\in\I}$ of the source (any $i\in[d]$), then one may consider the proportionality factor $p^i_t\coloneqq|\hat{\bm{S}}_t|/|\hat{\bm{S}}^i_t|\cdot\mathbbm{1}_\times(\hat{\bm{S}}^i_t)$ and note that by \eqref{rem:lica:proxcomponents:eq1}, with $M\eqqcolon(\beta_i\cdot\delta_{\sigma(i)j})_{ij}$, 
\begin{equation}
\tfrac{\big|\hat{\bm{S}}^i_t - \beta_i\bm{S}^{\sigma(i)}_t\big|}{\big|\beta_i\bm{S}^{\sigma(i)}_t\big|}\cdot\imath \,\leq\,\frac{p^i_t\cdot\delta'}{1 - p^i_t\cdot\delta'} \qquad\text{ for each } \ t\in\I \ \text{ with } \ (p^i_t\cdot\delta')\vee\tilde{\delta} < 1,
\end{equation}
where $\imath\coloneqq\mathbbm{1}_\times(\hat{\bm{S}}^i_t)\mathbbm{1}_\times(\bm{S}^{\sigma(i)}_t)$ and $\delta'\coloneqq \tilde{\delta}/(1-\tilde{\delta})$, $\tilde{\delta}\coloneqq c_2\hat{\delta}$; see e.g.\ \cite[Proposition 1.12]{green2003}. \hfill $\bdiam$  
\end{remark}
As mentioned above, the general idea of performing a blind inversion of linearly mixed stochastic sources by jointly diagonalising a suitably chosen family of equivariant tensor statistics of the observed signal -- as done in \eqref{lica:inverseset} -- is a classical one in the current theory of blind source separation. A brief outline of how the results of Theorem \ref{thm:robust_ica} and the arguments behind it fit into existing work for this joint-diagonalisation approach is given in Section \ref{sect:perturbedjdp:relatedwork}.       

\subsection{A Robust ICA-Inversion}\label{sect:robustsolution}In this section, we `globalise' the identifiability result of Section \ref{sect:blindinversionthm1} by embedding it into the robustness framework of Sections \ref{sect:robustica-basic} and \ref{chap:robustICA:sect:premetric}. More specifically, we show that the inversion map $\hat{\Phi}$ from \eqref{lica:inversemap1} can be readily extended to an ICA-solution $(\sI, \hat{\Phi})$ that is robust in the general sense of (both \eqref{BSS:robust_prelim1} and) Definition \ref{def:robustICA}. 

\begin{definition}[$(\sI_\star, \hat{\Phi})$]Let $\sI_\star$ be as in \eqref{thm:robust_ica:eq2} and consider the set-valued inversion map  
\begin{equation}\label{sect:robustsolution:eq1} 
\hat{\Phi} \, : \, \dot{\fD} \longrightarrow \mathfrak{F}, \quad \mu \,\mapsto\, \hat{\Phi}(\mu)\coloneqq\left[\argmin_{\theta\,\in\,\tilde{\Xi}_1}\phi_\mu(\theta)\right]\cdot R_{(\mu)}    
\end{equation} 
defined as in \eqref{lica:inversemap1} and \eqref{lica:inverseset} but with $\Xi_1$ generalised to compact domains $\tilde{\Xi}_1$ of the form \eqref{lica:xi1_domain2}. (Since the difference between \eqref{lica:inversemap1} and \eqref{sect:robustsolution:eq1} is minor, we give both maps the same symbol.)
\end{definition}
\noindent
Recall from Theorem \ref{thm:robust_ica} that the above pair $(\sI_\star, \hat{\Phi})$ is an ICA-solution, that is 
\begin{equation}
\hat{\Phi}(A\cdot\tilde{\mu})\cdot A \,\subseteq\, \M \quad \text{ for each } \ (\tilde{\mu},A)\in\sI_\star, 
\end{equation}
and recall from Lemma \ref{lemma1}, cf.\ also Remark \ref{rem:discrete-case}, that the family $\sI_\star$ of exactly identifiable causes is far from empty and in fact very large.\\[-0.75em]

In line with its blueprint \eqref{def:blindinversion:eq2}, the inversion map \eqref{lem:solmap-matrixvalued:eq1} associated with the M-estimator \eqref{sect:robustsolution:eq1} essentially parameterises (a subset of) the maximal solution \eqref{appendix:sect:BSSformal:intext:eq:maxsol_prodset}. To ensure an informative robustness analysis à la \eqref{BSS:robust_prelim1} of this inversion map,\footnote{\ For formal consistency with Section \ref{sect:robustica-basic}, the following Theorem \ref{thm:robustness} considers a `full-domain' extension $\hat{\Phi} : \mathcal{M}_1\rightarrow\mathfrak{F}$ of \eqref{sect:robustsolution:eq1}, where $\hat{\Phi} : \dot{\fD}\rightarrow\mathfrak{F}$ is as in \eqref{sect:robustsolution:eq1} and $\hat{\Phi} :\mathcal{M}_1\!\setminus\dot{\fD}\rightarrow\mathfrak{F}$ is defined arbitrarily. More meaningful $\mathcal{M}_1$-extensions of \eqref{sect:robustsolution:eq1} can be defined, e.g.\ as per Remark \ref{rem:roughpath-extensions}.} we follow \eqref{lica:xi1_domain} and exhaust the set $\sI_\star$ of identifiable causes by regular sublevel sets \eqref{rem:xi1_domain2:eq1} related to the condition numbers of the cause.          

\begin{remark}[Condition Bounds]\label{rem:xi1_domain2}
Recall that the optimisation domains $\Xi_1=\Xi_1(\kappa_0)$ in \eqref{lica:xi1_domain} are defined in dependence of a prior condition bound $\kappa_0 < \infty$. To `globalise' this definition for use in \eqref{sect:robustsolution:eq1}, note that the `identifiability superset' $G\coloneqq\fD\times\GL$ can be exhausted via 
\begin{equation}\label{rem:xi1_domain2:eq1}
G = \bigcup\nolimits_{b\geq 1}G_b \qquad\text{ for }\quad G_b\coloneqq\big\{(\mu,A)\in G \ \big| \ \kappa_2\big(\bar{B}_{(A,\mu)}\big)\leq b \big\}
\end{equation} 
where for any given $(\mu, A)\in G$ the matrix $\bar{B}_{(A,\mu)}$ is defined analogous to \eqref{rem:choiceofR:eq4}, that is: $\bar{B}_{(A,\mu)}\coloneqq\mathrm{ddiag}(|b_1|, \ldots, |b_d|)^{-1}B_{(A,\mu)}\in\Xi^0_1$ with $B_{(A,\mu)}\equiv(b_1|\cdots|b_d)^{\intercal}\coloneqq (R_\mu A)^{-1}$ for $R_\mu\coloneqq R_{(A\mu)}$ the square root of the inverted (half) covariance $C_{A\mu}^{-1}$, cf.\ \eqref{rem:choiceofR:eq1}. For any $b\geq 1$ and $\Delta_\kappa>0$, we set 
\begin{equation}\label{lica:xi1_domain2}
\tilde{\Xi}_1 \coloneqq \Xi_1^0\cap\big\{\theta\in\GL \, \big| \, \kappa_2(\theta)\leq b + \Delta_\kappa\eqqcolon\tilde{\kappa}_0\big\}.  
\end{equation}
For any priorly chosen regularity parameter $b$ and tolerance $\Delta_\kappa>0$, the compact (see Lem.\ \ref{lem:xicompact}) sets \eqref{lica:xi1_domain2} are the domains we use in \eqref{sect:robustsolution:eq1}. Note that since for each $(\mu, A)\in G_b$ the condition number $\kappa_2(\bar{B}_{(A,\mu)})$ is continuous\footnote{\ More precisely, we have $\kappa_2(\bar{B}_{(A,\mu)}) = \tilde{\varphi}_A([\mu]_0)$ for the continuous (at the point $[\mu]_0\in\GL$; see Lemma \ref{lem:choiceofR}) function $\tilde{\varphi}_A(\mathfrak{a})\coloneqq\kappa_2(\mathcal{R}(\mathcal{C}_{\mathfrak{a}})A\cdot\mathrm{ddiag}_i(|(A^{-1}\mathcal{R}(\mathcal{C}_{\mathfrak{a}})^{-1})^\intercal\cdot e_i|))\in\bar{\R}$; here we set $\kappa_2(\mathfrak{b})=+\infty$ if $\mathfrak{b}\notin\GL$.} in $[\mu]_0$, there is an (explicitly computable) $r_\ast = r_\ast((\mu,A),\Delta_\kappa)>0$ such that $\sup\{\kappa_2(\bar{B}_{(A,\tilde{\mu})})\mid \tilde{\mu}\in\fD\,:\, \|[\tilde{\mu}]_0 - [\mu]_0\|\leq r_\ast\}\leq b + \Delta_\kappa$.  \hfill $\bdiam$    
\end{remark}  

\noindent
Let us fix any $E$ as in \eqref{subsect:topology_identspace:eq1} and denote by $\tau_{\mathfrak{C}}$ its associated causal topology \eqref{def:causaltopology:eq1}. Let further $\tilde{\sI}\coloneqq\sI_\star\cap G_b$ for any fixed $b\geq 1$, which then defines $\tilde{\Xi}_1$ via \eqref{lica:xi1_domain2}, and set $\sI\coloneqq\tilde{\sI}\cap E$.\\[-0.75em]

The next result asserts that the ICA-solution $(\sI, \hat{\Phi})$ is robust in the sense of \eqref{BSS:robust_prelim1} and \eqref{def:robustICA:eq1}. It also quantifies this robustness by providing explicit moduli for the underlying continuities.           

\begin{theorem}[Robustness]\label{thm:robustness}
The ICA-solution $(\sI, \hat{\Phi})$ is robust in the sense of \eqref{BSS:robust_prelim1}$\,\&\,$\eqref{def:robustICA:eq1}. Specifically: Both the map $\mathfrak{I} : (\mathfrak{C}, \tau_{\mathfrak{C}})\rightarrow(\mathfrak{F}, \mathfrak{d})$ given by $(\mu,f)\mapsto\hat{\Phi}(f_\ast\mu)$, and the deviance $\partial_{\hat{\Phi}} : (\mathfrak{C}, \tau_{\mathfrak{C}})\rightarrow\R_+$ of $\hat{\Phi}$, are continuous on $\sI$. In fact, for each cause $(\mu_\star, A)\in\sI$ there are explicit constants $\tilde{c}_i=\tilde{c}_i(\mu_\star, A, c_\fS, \tilde{\kappa}_0)>0$, $i=0,1,2,3,4$, with the following property: Given $\varepsilon>0$ let $\tilde{\delta}\coloneqq \tilde{c}_0\wedge \big[\tilde{c}_1^{2/\alpha}\big(\varepsilon/(\tilde{c}_2 + \varepsilon)\big)^{\!2/\alpha}\big]$, then for each cause $\mathfrak{c}\equiv(\mu, f)\in \mathbb{B}_{\tilde{\delta}}(\mu_\star, A)$ we have:  
\begin{equation}\label{cor:robustness:eq1}  
\begin{gathered}
\text{for every } \ \theta\in\fI(\mathfrak{c}) \ \text{ there is } \ M=M(\theta)\in\M \text{ with } \ \|M\|\leq \tilde{c}_3 \, \text{ and}\\ 
E = E(\theta)\in\R^{d\times d} \text{ with } \ \|E\|\leq\varepsilon\ \text{ such that }\ \theta = M(\mathrm{I} + E)A^{-1};
\end{gathered}
\end{equation}
\begin{equation}\label{cor:robustness:eq2}
\text{further, } \ \ \ \quad \partial_{\hat{\Phi}}(\mathfrak{c})\,\leq\, \tilde{c}_4\big(\varepsilon + \bar{d}(A,f)(1+\varepsilon)\big).
\end{equation} 
If in fact $\mu\in\fD$ and $f\equiv A$, then the above holds as stated but for $\alpha=1$ and with simpler explicit constants $\tilde{c}_0, \ldots, \tilde{c}_4>0$ that can each be chosen independently of $c_\fS$.    
\end{theorem}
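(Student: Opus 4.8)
The plan is to reduce both continuity assertions to a local, quantitative estimate around a fixed identifiable cause, to pass from a (possibly nonlinearly mixed) nearby cause to a purely linearly mixed surrogate via Lemma~\ref{lem:robustness1}, to invoke the blind-inversion bound of Theorem~\ref{thm:robust_ica} for that surrogate, and then to transport everything back. First, since $\tau_{\mathfrak C}=\tau_{\mathbbm d}\sqcup\tau_\emptyset$ (Definition~\ref{def:causaltopology}), $\sI\subseteq E$ and $(E,\mathbbm d)$ is a premetric space, Remarks~\ref{rem:causaltopology} and~\ref{rem:premetric_topofacts:continuity} reduce the claim to: for each $(\mu_\star,A)\in\sI$ and each $\varepsilon>0$ there is $\tilde\delta>0$ of the stated form so that \eqref{cor:robustness:eq1}--\eqref{cor:robustness:eq2} hold on $\mathbb B_{\tilde\delta}(\mu_\star,A)$. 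Granting this, continuity of $\partial_{\hat\Phi}$ on $\sI$ in the sense of \eqref{BSS:robust_prelim1}, \eqref{def:robustICA:eq1} is immediate from \eqref{cor:robustness:eq2} (as $\restr{\partial_{\hat\Phi}}{\sI}=0$ by Lemma~\ref{lem:robustness-inversionstability}), and continuity of $\fI$ wrt the Hausdorff metric $\mathfrak d$ follows from \eqref{cor:robustness:eq1} together with upper hemicontinuity of the $\argmin$-correspondence in \eqref{sect:robustsolution:eq1} (Berge's maximum theorem, $\phi_\mu$ being jointly continuous in $(\theta,[\mu]_0,\dots,[\mu]_d)$ by Lemma~\ref{lem:linequiv} and $\tilde\Xi_1$ compact) and the non-degeneracy of these minimisers at identifiable causes. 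I record that $\mu_\star$ is orthogonal and in $\mathscr S_0$ (as $\sI\subseteq\sI_\star$), so $\delta_{\independent}(\mu_\star)=0$ by Lemma~\ref{lemma1}, and that $(\mu_\star,A)\in G_b$ for the fixed $b$ defining $\tilde\Xi_1$ via \eqref{lica:xi1_domain2} (since $\sI=\sI_\star\cap G_b\cap E$).

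Next, fix $(\mu,f)\in\mathbb B_{\tilde\delta}(\mu_\star,A)$. Lemma~\ref{lem:robustness1} (applicable once $\tilde\delta<r_0$) produces a unique $\tilde\mu\in\fD$ with $f_\ast\mu=A_\ast\tilde\mu$ and, writing $\delta'\coloneqq\delta_{\independent}(\tilde\mu)$, the bounds $\delta'\le K_{\tilde\delta}\tilde\delta^{\alpha/2}$ and $\|[\tilde\mu]_{\mathfrak c}-[\mu_\star]_{\mathfrak c}\|_{\mathcal V}\le\hat K\tilde\delta^\alpha$, cf.\ \eqref{lem:robustness1:eq2}. For $\tilde\delta$ small I verify that $\tilde\mu\in\mathscr S_0$ --- $C_{\tilde\mu}=\tfrac12([\tilde\mu]_0+[\tilde\mu]_0^\intercal)$ lies within $\hat K\tilde\delta^\alpha$ of $C_{\mu_\star}\in\GL$ and invertibility is open, and each $\langle\tilde\mu\rangle_{iii}$ stays close to $\langle\mu_\star\rangle_{iii}$, at most one of which vanishes --- and that $\kappa_2(\bar B_{(A,\tilde\mu)})\le\tilde\kappa_0=b+\Delta_\kappa$, by the continuity in $[\tilde\mu]_0$ of the condition-number map recalled in the footnote to Remark~\ref{rem:xi1_domain2} and $(\mu_\star,A)\in G_b$. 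Hence $(\chi,\tilde\mu,A)$, with $\chi\coloneqq f_\ast\mu$, is a BSS-triple on $\mathscr S_0\times\GL$ whose associated condition number is $\le\tilde\kappa_0$, so Theorem~\ref{thm:robust_ica} applies: provided $\delta'\le\varepsilon_0(\tilde\mu,A)$, for every $\theta\in\hat\Phi(\chi)=\fI(\mu,f)$ (equality because $\hat\Phi$ sees a cause only through its signal) there are $M\in\M$ and $E\in\R^{d\times d}$ with $\theta=M(\mathrm I+E)A^{-1}$, $\|E\|\le c_1(\tilde\mu,A,\tilde\kappa_0)\,\delta'/(1-\delta')$ and $\partial_{\hat\Phi}(\tilde\mu,A)\le c_2(\tilde\mu,A,\tilde\kappa_0)\,\delta'/(1-\delta')$.

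It remains to assemble the modulus and transport the deviance. The constants $c_1,c_2$ and $\varepsilon_0$ of Theorem~\ref{thm:robust_ica} are explicit continuous functions of the coredinates of $\tilde\mu$, of $A$, and of $\tilde\kappa_0$, the degeneracy quantities $\varsigma_1,\varsigma,\xi$ remaining finite near $\mu_\star\in\mathscr S_0$ (the possibly-vanishing cube moment being neutralised by the factor $(i-1)^2$ in $\varsigma_1$), so on $\mathbb B_{\tilde\delta}(\mu_\star,A)$ with $\tilde\delta\le\tilde c_0$ they are bounded above by constants $\bar c_1,\bar c_2$ and $\varepsilon_0$ is bounded below; likewise $K_{\tilde\delta}\le K_{\tilde c_0}$, and I set $\tilde c_1\coloneqq1/K_{\tilde c_0}$. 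Substituting $\delta'\le K_{\tilde c_0}\tilde\delta^{\alpha/2}\le\tfrac12$ into $\|E\|\le\bar c_1\delta'/(1-\delta')$ and solving $\bar c_1\delta'/(1-\delta')\le\varepsilon$ for $\tilde\delta$ yields precisely $\tilde\delta=\tilde c_0\wedge[\tilde c_1^{2/\alpha}(\varepsilon/(\tilde c_2+\varepsilon))^{2/\alpha}]$ with $\tilde c_2\coloneqq\bar c_1$, where $\tilde c_0$ is chosen small enough to also guarantee $\tilde\delta<r_0$, $\delta'\le\varepsilon_0$, $\delta'\le\tfrac12$, $\tilde\mu\in\mathscr S_0$ and the $\kappa_2$-bound. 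The estimate $\|M\|\le\tilde c_3$ comes from $M=\theta A(\mathrm I+E)^{-1}$: $\theta\in\tilde\Xi_1\cdot R_{(\chi)}$ forces $\|\theta\|\le\sqrt d\,\|R_{(\chi)}\|_2$, the eigenvalues of $C_\chi=AC_{\tilde\mu}A^\intercal$ stay bounded and bounded away from $0$ near $\mu_\star$, and $\|(\mathrm I+E)^{-1}\|_2\le2$ for $\tilde c_0$ small. For \eqref{cor:robustness:eq2} I pass from $\partial_{\hat\Phi}(\tilde\mu,A)$ to $\partial_{\hat\Phi}(\mu,f)$ using $f=A\circ(\mathrm{id}+R)$, $R\coloneqq A^{-1}\circ f-\mathrm{id}$, so that $\tilde\mu=(\mathrm{id}+R)_\ast\mu$ and $\bar d(A,f)=(1+\|R\|_\infty)\|D_R\|_\infty$ dominates both $\|D_R\|_\infty$ and $\|D_R\|_\infty\|R\|_\infty$: for $\theta\in\fI(\mu,f)$ and the offset $v$ chosen to cancel the constant part of $(\mathrm I+E)\circ(\mathrm{id}+R)$ one gets $\theta f(w)-M(v+w)=MEw+M(\mathrm I+E)(R(w)-R(0))$, whence $|R(w)-R(0)|\le\|D_R\|_\infty|w|$ bounds the residual $\varrho_\theta((\mu,f),(M,v))$ in \eqref{def:deviance} by $\|MEM^{-1}\|_2$ plus a constant multiple of $\bar d(A,f)$, giving \eqref{cor:robustness:eq2} with a suitable $\tilde c_4$. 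I expect the bulk of the work to be exactly this bookkeeping --- rendering the parameters of Lemma~\ref{lem:robustness1} and Theorem~\ref{thm:robust_ica} uniform over one common $\mathbbm d$-ball, checking the joint compatibility of the several smallness requirements on $\tilde\delta$, and inverting the composite modulus into the advertised closed form --- together with the choice of the offset $v$ in \eqref{def:deviance} and the control of the relative-error denominator near the zero set of $w\mapsto M(v+w)$ (handled by the indicator $\mathbbm{1}_\times$ and by keeping $\|R\|_\infty$ and $\|D_R\|_\infty$ of comparable order through $\bar d$).

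Finally, for the last sentence ($\mu\in\fD$, $f\equiv A$): the cause then stays on the section $E_{A,r}=B_\delta(\mu_\star,r)\times\{A\}$, so $R\equiv0$, $\tilde\mu=\mu$, $\bar d(A,f)=0$, and the nonlinear machinery collapses. I would replace Lemma~\ref{lem:robustness1} by the auxiliary lemma inside Remark~\ref{rem:partialcontinuity}, which gives $\delta_{\independent}(\mu)\le L_r\sqrt r$ and $\|[\mu]_{\mathfrak c}-[\mu_\star]_{\mathfrak c}\|_{\mathcal V}\le\hat L r$ with $L_r,\hat L$ not referencing the integrability threshold \eqref{notation:unifsigintegr:eq1}; since the constants of Theorem~\ref{thm:robust_ica} in the purely linear case likewise do not involve it, all $\tilde c_i$ can be chosen independently of $c_\fS$, the role of $\alpha/2$ is played by $\tfrac12$ (the statement's ``$\alpha=1$''), so $\tilde\delta=\tilde c_0\wedge[\tilde c_1^2(\varepsilon/(\tilde c_2+\varepsilon))^2]$, and \eqref{cor:robustness:eq2} simplifies to $\partial_{\hat\Phi}(\mathfrak c)\le\tilde c_4\varepsilon$.
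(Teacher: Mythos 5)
Your route is essentially the paper's: reduce to a local quantitative statement around $(\mu_\star,A)$ via Remark \ref{rem:causaltopology}, pass to the linear surrogate $(f_\ast\mu, \tilde\mu, A)$ via Lemma \ref{lem:robustness1}, check $\tilde\mu\in\mathscr S_0$ and the condition-number bound so that Theorem \ref{thm:robust_ica} applies, make its constants uniform over a small $\mathbbm d$-ball by the continuity arguments of Lemmas \ref{lem:choiceofR} and \ref{lem:minfuncont}, invert the composite modulus into the advertised $\tilde\delta$, and transport the deviance via $f=A(\mathrm{id}+R)$ with the offset $v=-f^{-1}(0)$; the $f\equiv A$ simplification via Remark \ref{rem:partialcontinuity} also matches the paper. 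The one step where your argument is genuinely thinner than the paper's is the Hausdorff continuity of $\fI$: Berge's maximum theorem yields only upper hemicontinuity of the argmin correspondence, i.e.\ the half $\sup_{b\in\fI(\mathfrak c)}d(\fI(\fp_\star),b)\to0$, whereas the other half $\sup_{a\in\fI(\fp_\star)}d(a,\fI(\mathfrak c))\to0$ does not follow from it (argmin correspondences are not lower hemicontinuous in general). The paper instead computes both directions explicitly: it identifies $\fI(\fp_\star)=\{P\Lambda_{\fp_\star}^{-1}A^{-1}\mid P\in\mathrm P_d^\pm\}$, uses the $\mathrm P_d^\pm$-invariance of $\phi_\chi$ and of $\tilde\Xi_1$ to produce, for each prescribed signed permutation, a matching element of $\fI(\mathfrak c)$, and pins down the diagonal scaling of the $M$ in \eqref{cor:robustness:eq1} through the unit-row constraint \eqref{lica:unitrows} (display \eqref{thm:robustness:aux22}); your phrase ``non-degeneracy of these minimisers'' gestures at this but does not supply it, and without the scaling computation even the easy direction does not follow from \eqref{cor:robustness:eq1} alone, since $MA^{-1}$ need not lie in $\fI(\fp_\star)$ for an arbitrary $M\in\M$. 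Everything else, including the $\|M\|\le\tilde c_3$ bound (your $M=\theta A(\mathrm I+E)^{-1}$ detour versus the paper's direct $\beta_i\le\|A_R\|_2$) and the treatment of the indicator $\mathbbm 1_\times$ in the deviance, is a faithful, correctly ordered reconstruction of the paper's bookkeeping.
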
     
\begin{proof} 
The proof of the $\sI$-continuity of $\fI$ is rather technical and hence delegated to Section \ref{sect:pf:thm:robustness}, whose notation we adopt here. As we will see next, the asserted constants $\tilde{c}_0, \ldots, \tilde{c}_4$ can be distilled from said proof. Indeed: Going through Section \ref{sect:pf:thm:robustness} for any fixed $\fp_\star\equiv(\mu_\star, A)\in\sI$, let first $\tilde{c}_0\coloneqq r_3$ for $r_3=r_3(\p_\star, c_\fS, \tilde{\kappa}_0)>0$ as defined in \eqref{thm:robustness:aux12}. Set further $\tilde{c}_1\coloneqq 1/K'$ and $\tilde{c}_2\coloneqq \tilde{K}_1$, for $K'\,(\equiv\left.K'\right|_{r=r_3})=K_{r_3}(\mu_\star, K_\fS)$ and $\tilde{K}_1=\tilde{K}_1(r_3)$ both as defined right after display \eqref{thm:robustness:aux12}. The $\theta$-related (first) assertion in \eqref{cor:robustness:eq1} then follows from \eqref{thm:robustness:aux14}. Indeed, \eqref{thm:robustness:aux14} gives the $\theta$-identity in \eqref{cor:robustness:eq1} with $\|E\|\leq\epsilon_r$, while the monotonicity of $[0, C_0\vee \hat{r}_\varepsilon]\ni \tilde{r}\mapsto \epsilon_{\tilde{r}}$ implies that $\|E\|\leq\epsilon_{\tilde{\delta}}\leq\epsilon_{\hat{r}_\varepsilon}=\varepsilon$ for $\hat{r}_\varepsilon\coloneqq \tilde{c}_1^{\alpha/2}(\varepsilon/(\tilde{c}_2 + \varepsilon))^{2/\alpha}$. The bound $\|M\|\leq \sqrt{d}\|A_{R_{f_\ast\mu}}\|_2\leq\sqrt{d}L_{\p_\star}\|A\|_2 \eqqcolon \tilde{c}_3$ holds by (\eqref{thm:robust_ica:aux59} and) \eqref{thm:robust_ica:aux56} and \eqref{thm:robustness:aux6.2}. 

We now prove \eqref{cor:robustness:eq2}, from which the $\sI$-continuity of $\partial_{\hat{\Phi}}$ follows immediately. For this, fix any $\mathfrak{c}\equiv(\mu,f)\in\mathbb{B}_{\tilde{\delta}}(\p_\star)$ as before and let $B\in\hat{\Phi}(f_\ast\mu)$ be arbitrary. Then by \eqref{cor:robustness:eq1} there is $M$ in $\M$, in fact with $\kappa_2(M)\leq\sqrt{d}\hat{\sigma}^\star_{\tilde{c}_0}L_{\p_\star}\kappa_2(A)\eqqcolon\tilde{c}_4$ (see \eqref{thm:robust_ica:aux57}, followed by \eqref{rem:choiceofR:eq2},\eqref{thm:robustness:aux6.2}), s.t.\
\begin{equation}\label{cor:robustness:aux1}
\eta\coloneqq\sup\nolimits_{u\in\R^d\setminus\{0\}}\tfrac{|B\!Au - Mu|}{|Mu|}\,\leq\, \tilde{c}_4\cdot\varepsilon\,,  
\end{equation}              
which follows by the same argument as in \eqref{thm:robust_ica:eq2.1}. Recalling \eqref{lem:robustness1:eq1}, \eqref{subsect:topology_identspace:eq2}, \eqref{intext:trafopremetric}, we have $f = A(\mathrm{I} + R)$ for $R\in C^1(\R^d;\R^d)$ with $\|D_R\|_\infty\leq\bar{d}(A,f)\leq\tilde{\delta}$, and since $f$ is a bijection we can choose $v\coloneqq -f^{-1}(0)\in\R^d$. Further $(\mathrm{I} + R)(u -v) = (\mathrm{I} + \tilde{R})(u)$ for $\tilde{R}(u)\coloneqq R(u-v) - v$, so that
\begin{equation}
\begin{aligned}
\tfrac{|B\circ f(u-v) - Mu|}{|Mu|} \,=\, \tfrac{|B\!A(\mathrm{I}+R)(u-v) - Mu|}{|Mu|} \,=\, \tfrac{|B\!Au - Mu + B\!A\tilde{R}(u)|}{|Mu|} \,\leq\, \eta + \tfrac{|BA\tilde{R}(u)|}{|Mu|}  
\end{aligned}
\end{equation}        
for each $u\in\R^d\setminus\{0\}$. As to the last summand, note $BA=M(\mathrm{I}+E)$ (by \eqref{cor:robustness:eq1}) and $\tilde{R}(0) =R(f^{-1}(0)) + f^{-1}(0) = A^{-1}(f(-v)) = 0$, so that for $\hat{R}\coloneqq \tilde{R}\circ M^{-1}$ and each $u\in\R^d$,
\begin{equation}
|B\!A\tilde{R}(u)|\leq\|M(\mathrm{I}+E)\|_2\big|\hat{R}(Mu)\big| \quad\text{and}\quad \big|\hat{R}(Mu)\big|\,\leq\, \|M^{-1}\|\|D_R\|_\infty|Mu|
\end{equation}
where the last inequality uses (the chain rule and) the mean-value theorem. All combined, 
\begin{equation}
\sup\nolimits_{u\in D_{\mu}^{(v)}}\tfrac{|B\circ f(u-v) - Mu|}{|Mu|}\mathbbm{1}_\times(u) \,\leq\, \tilde{c}_4\cdot\varepsilon \, + \, \kappa_2(M)(1+\varepsilon)\bar{d}(A,f) \,\leq\, \tilde{c}_4\big(\varepsilon + \bar{d}(A,f)(1+\varepsilon)\big)  
\end{equation}
from which the asserted $\partial_{\hat{\Phi}}$-inequality \eqref{cor:robustness:eq2} follows via \eqref{def:deviance}. The theorem's last assertion is a direct consequence of Remark \ref{rem:toproof:thm:robustness} \ref{rem:toproof:thm:robustness:it2} applied to \eqref{cor:robustness:eq1} and \eqref{cor:robustness:eq2}.        
\end{proof}  
We emphasize that the proof of Theorem \ref{thm:robustness} is entirely constructive, and that each of the above constants $c_i$ and $\tilde{c}_i$, while not fine-tuned for optimality, can be read off in explicit form as referenced in the proof.    

\section{Applications}\label{sect:applications}
\noindent 
The robustness of an ICA inversion map, as formulated in Definition \ref{def:robustICA} and established by Theorem \ref{thm:robustness}, is clearly of central importance in any situation where the exact structural assumptions \eqref{appendix:sect:BSSformal:def:ICA:eq1} of the ICA-model are violated or other sources of (non)systematic error, including approximations of the involved signals and their statistics, are unavoidable. This naturally includes virtually all practical applications, where the (exact) law of a signal is typically not available and its associated statistics can only be estimated from empirical data, which in turn is often corrupted by noise or subject to other forms of uncertainty. In this section, we present a selection of three essentially independent example applications (cf.\ Example \ref{example:cocktailparty}) to illustrate how our theory helps to conveniently achieve a meaningful quantification of various practice-relevant robustness scenarios.\\[-0.75em]  

Throughout this section, let $\bm{S}=(\bm{S}_t)$ be a stochastic process in $\R^d$ (defined on a probability space $(\Omega,\mathscr{F}, \mathbb{P})$) with an associated BSS-triple $(X,S,A)_{\sI_\ast}$ on \eqref{thm:robust_ica:eq2}, and consider $\hat{\Phi}$ as in \eqref{sect:robustsolution:eq1} but with $\tilde{\Xi}_1$ given by \eqref{lica:xi1_domain2} for $b\coloneqq\kappa_2(\bar{B}_{(A,S)})$ and any fixed $\Delta_\kappa>0$.  

\subsection{Statistical Estimation}\label{sect:applications:estimation}
In practice, we usually have access neither to the law $X\equiv\mathbb{P}_{\bm{X}}$ of the observable (which is needed to compute the statistics \eqref{def:coordinates:eq1}) nor to its continuous-time realisations, but only to time-discretized sample trajectories of $\bm{X}$. The issue of time-discretisation is easily resolved, since a sequence of (random) vectors $\bm{X}_{t_1}, \ldots, \bm{X}_{t_n}$ in $\R^d$ can be immediately identified with a (random) element of $\mathcal{C}_d$ via piecewise-linear interpolation, see Remark \ref{rem:discrete-case}. The inaccessibility of the law, however, is more subtle and must be addressed by estimating $X$ from the available data. While $X$ is generally an infinite-dimensional object, our inversion procedure \eqref{sect:robustsolution:eq1} only requires information about its low-dimensional projection \eqref{def:coordinates:eq2}, so any viable estimate $\hat{\mu}$ of $X$ needs to be accurate enough only at the low-order levels          
\begin{equation}\label{sect:applications:estimation:eq1}
\langle\hat{\mu}\rangle_w \approx\, \langle X\rangle_w = \mathbb{E}\big[\sig_w(\bm{X})\big] \quad \text{ for } \ |w|=2,3.     
\end{equation} 
What all reasonable estimators $\hat{\mu}=(\hat{\mu}_n)\subset\fD$ of $ X$ will have in common is the property of statistical \emph{consistency}, i.e.\ that in the `infinite data limit' $n\rightarrow\infty$ (see e.g.\ \cite{sigNICA2021} for details)
\begin{equation}\label{sect:applications:estimation:eq2}
\langle\hat{\mu}_n\rangle_w = \int_{\mathcal{C}_d}\!\sig_w(x)\,\hat{\mu}_n(\mathrm{d}x) \ \xrightarrow{n\rightarrow\infty} \ \langle X\rangle_w \quad \text{ for each } \ |w|=2,3, 
\end{equation}   
almost surely or in probability; the convergence \eqref{sect:applications:estimation:eq2} is but a topological specification of the informal accuracy requirement \eqref{sect:applications:estimation:eq1}.
\begin{remark}
To understand \eqref{sect:applications:estimation:eq2}, note that strictly speaking any sample-based approximation $(\hat{\mu}_n)$ of $ X$ is in fact naturally a sequence of random measures, i.e.\ a sequence of $\fD$-valued random variables $\hat{\mu}_n \equiv \hat{\mu}(\bm{X},n) : \Omega\ni\omega\mapsto\hat{\mu}_n(\bm{X}(\omega))\in\fD$. This also pertains to the example estimators in Remark \ref{sect:applications:estimation:rem1} below, but we suppress this additional complexity in our notation to avoid cluttering our exposition.
\end{remark} 

\begin{remark}[Signature Estimators]\label{sect:applications:estimation:rem1}An `optimal' choice of $\hat{\mu}$ will generally depend on the data that is given, but for many applications useful estimates can be straightforward. For example, if $\bm{X}$ describes the temporal evolution of some $d$-dimensional health trajectory of a prototypical patient and our data consists of $n$ independently sampled path-valued\footnote{\ Upon piecewise-linear interpolation of discretely-measured data points, say.} observations $x_1, \ldots, x_n\in\mathcal{C}_d$, corresponding to the trajectories of $n$ different real patients, then the empirical measure $\hat{\mu}\equiv\hat{\mu}_n(x_1,\ldots, x_n)=\tfrac{1}{n}\sum_{j=1}^n\delta_{x_j}\in\fD$ can be sufficient when $n$ is large. For other applications, a good choice of $\hat{\mu}$ may be more involved and may require additional statistical assumptions on $\bm{X}$ to allow a feasible approximation of its law from its samples. A very common such case is when the data consists of a single trajectory $x=(x_t)$ of $\bm{X}$ only; in this case, common mixing or ergodicity assumptions on $\bm{X}$ [or, equivalently, on $\bm{S}$] often suffice to qualify the usual ergodic plug-in estimators $\hat{\mu}\equiv\hat{\mu}_n(x)=\tfrac{1}{n}\sum_{j=1}^n\delta_{\left.x\right|_{((j-1)\tau, j\tau]}}$ ($\tau>0$ large enough) as suitable approximators for $ X$; see e.g.\ \citep[Section 8]{sigNICA2021} for details. \hfill $\bdiam$       
\end{remark}

The stability results in Section \ref{chap:robustICA:sect:auxiliaries} provide explicit bounds to relate the convergence \eqref{sect:applications:estimation:eq2} to a controllable approximation of the inverse $A^{-1}$. This allows us to conveniently derive finite-sample results of the following type. 

\begin{notation}[Convergence Rate]\label{notation:estimators}
In line with classical theory, an estimator $\hat{\mu}\equiv\hat{\mu}_n$ of $ X$ is said to have convergence rate $\tilde{\alpha}_n$, for $(\tilde{\alpha}_n)$ some strictly decreasing null sequence, if $\gamma_n\coloneqq\max_{|w|=2,3}|\langle\hat{\mu}_n\rangle_w - \langle X\rangle_w|=O_P(\tilde{\alpha}_n)$, which is to say that for any $\epsilon>0$ there exist numbers $0<m_{\epsilon}, M_{\epsilon}<\infty$ such that $\sup_{n\geq m_\epsilon}\mathbb{P}(\gamma_n\geq M_\epsilon\tilde{\alpha}_n)\leq\epsilon$, see e.g.\ \citep[Section 2.2]{vdv1998}. 
\end{notation}
   
\begin{proposition}\label{prop:estimation-finsample}
Let $\hat{\mu}\equiv(\hat{\mu}_n)\subset\dot{\fD}$ be a consistent estimator of $ X$ with $\max\nolimits_{|w|=2,3}|\langle\hat{\mu}_n\rangle_w - \langle X\rangle_w|= O_P(\tilde{\alpha}_n)$ for some convergence rate $\tilde{\alpha}\equiv(\tilde{\alpha}_n)$. Then for any $\varepsilon>0$ and any $0\leq q<1$, there exists an explicitly computable threshold $n_0=n_0(\varepsilon, q, \tilde{\alpha}, \hat{\mu}, A,  S)\in\N$ such that for each $n\geq n_0$ the following holds with a probability of at least $q$:   
\begin{equation}\label{prop:estimation-finsample:eq1}
\text{for every } \ \hat{\theta}\in\Phi(\hat{\mu}_n) \ \text{ there is } \ M\in\M \quad \text{such that} \quad \sup\nolimits_{t\in\I}\tfrac{|\hat{\theta} \bm{X}_t - M\bm{S}_t|}{|M\bm{S}_t|}\mathbbm{1}_{\!\times}\!{\scriptstyle(\bm{S}_t)}\,\leq\,\varepsilon\,;  
\end{equation} 
in explicit terms, one such threshold $n_0$ is given by the formula \begin{equation}\label{prop:estimation-finsample:eq2}
n_0\coloneqq \min\left\{n\geq m_{1-q} \ \middle|\ \tilde{\alpha}_n < \frac{\eta(\tilde{\delta}(\varepsilon/\tilde{c}_4))}{(1 + \|A^{-1}\|_\infty)^3 M_{1-q}}\right\}
\end{equation}
for any $m_{1-q}$ and $M_{1-q}$ as in Notation \ref{notation:estimators}, $\tilde{c}_4$ and $\tilde{\delta}\equiv\tilde{\delta}(\epsilon\,; \tilde{c}_i)$ as in Theorem \ref{thm:robustness} (for $\alpha\equiv 1$ and independent of $c_{\mathfrak{S}}$), and $\eta\coloneqq \omega_{\varphi}^{-1} : \R_+ \rightarrow \R_+$ the inverse of the modulus of continuity $\omega_{\varphi}(\tilde{\eta})\coloneqq\sup\{\varphi(v)\mid v\in\mathcal{V}, \, \|v - [S]_{\mathfrak{c}}\|_\infty\leq\tilde{\eta}\}$ at $[S]_{\mathfrak{c}}$ of the function $\varphi =\varphi([\tilde{\mu}]_{\mathfrak{c}}): \mathcal{V}\rightarrow\R_+$ defined by the RHS (for $\mu=S$ fixed) of $\left.\eqref{sect:coredinates:eq4}\right|_{\mu=S}$.           
\end{proposition}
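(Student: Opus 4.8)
The plan is to chain together the two quantitative results that the paper has already placed on the table: the statistical consistency/rate hypothesis $\gamma_n\coloneqq\max_{|w|=2,3}|\langle\hat\mu_n\rangle_w-\langle X\rangle_w|=O_P(\tilde\alpha_n)$ on the one hand, and the explicit robustness modulus from Theorem \ref{thm:robustness} on the other. The key observation is that in Theorem \ref{thm:robustness} the cause $(\mu_\star,A)$ plays the role of the true cause $(S,A)$ and the perturbed cause $(\mu,f)$ is taken to be $(\hat\mu_n,A)$ — i.e.\ the mixing is left exactly linear and only the source law is perturbed, which is precisely the ``$\mu\in\fD$ and $f\equiv A$'' special case of Theorem \ref{thm:robustness} for which $\alpha=1$ and the constants $\tilde c_0,\dots,\tilde c_4$ are independent of $c_{\mathfrak S}$. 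Since $f\equiv A$ gives $\bar d(A,f)=0$, the robustness guarantee \eqref{cor:robustness:eq1}--\eqref{cor:robustness:eq2} reduces to: whenever $\hat\mu_n\in\mathbb B_{\tilde\delta(\varepsilon/\tilde c_4)}(S,A)$ (equivalently, whenever $\delta(S,\hat\mu_n)<\tilde\delta(\varepsilon/\tilde c_4)$, using $\bar d(A,A)=0$ in \eqref{subsect:topology_identspace:eq2}), every $\hat\theta\in\hat\Phi(\hat\mu_n)$ factors as $\hat\theta=M(\mathrm I+E)A^{-1}$ with $\|E\|$ small enough that, via the now-standard computation \eqref{thm:robust_ica:eq2.1}, $\sup_{u}|M E M^{-1}(Mu)|/|Mu|\le\varepsilon$, which combined with $\hat\theta\bm X_t=M(\mathrm I+E)\bm S_t$ and Lemma \ref{lem:robustness-inversionstability} (applied here in the linear case) yields exactly \eqref{prop:estimation-finsample:eq1}.

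The remaining work is therefore to convert the $O_P(\tilde\alpha_n)$ control on the raw signature-moment errors $\gamma_n$ into control on $\delta(S,\hat\mu_n)$, and then to invert the resulting bound to produce the explicit threshold \eqref{prop:estimation-finsample:eq2}. First I would unwind the compact formula \eqref{sect:coredinates:eq4}: since the standardisation matrix $N_S$ and the denominators $\sqrt{\langle S\rangle_{ii}},\sqrt{\langle S\rangle_{\nu\nu}}$ depend only on the fixed true source $S$ (and are nonzero by $S\in\dot\fD$), the map $\varphi=\varphi([\tilde\mu]_{\mathfrak c}):\mathcal V\to\R_+$ defined by the right-hand side of $\left.\eqref{sect:coredinates:eq4}\right|_{\mu=S}$ satisfies $\delta(S,\hat\mu_n)=\varphi([\hat\mu_n]_{\mathfrak c})$, and $\varphi$ is continuous (indeed a fixed linear-rescaling composed with the $\ell_2$-norm on $\mathcal V$) with $\varphi([S]_{\mathfrak c})=0$. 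Hence if $\omega_\varphi$ denotes its modulus of continuity at $[S]_{\mathfrak c}$, we get $\delta(S,\hat\mu_n)\le\omega_\varphi(\|[\hat\mu_n]_{\mathfrak c}-[S]_{\mathfrak c}\|_\infty)$. The coredinates $[\mu]_{\mathfrak c}$ in \eqref{def:coordinates:eq2} are assembled entirely from the signature moments $\langle\mu\rangle_w$ with $|w|\in\{2,3\}$, so $\|[\hat\mu_n]_{\mathfrak c}-[S]_{\mathfrak c}\|_\infty\le\gamma_n$ (up to the trivial combinatorial constant absorbing how many such $w$ there are — or, choosing the $\ell_\infty$ norm on $\mathcal V$ suitably, with no constant at all). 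Thus $\delta(S,\hat\mu_n)\le\omega_\varphi(\gamma_n)$, and since $\eta=\omega_\varphi^{-1}$ is the stated inverse, the inequality $\gamma_n<\eta(\tilde\delta(\varepsilon/\tilde c_4))$ forces $\delta(S,\hat\mu_n)<\tilde\delta(\varepsilon/\tilde c_4)$, putting us into the regime where Theorem \ref{thm:robustness} applies. (One should note the factor $(1+\|A^{-1}\|_\infty)^3$ in \eqref{prop:estimation-finsample:eq2}: this is the comparison constant between the signature-moment errors of $\hat\mu_n$ under the \emph{observable's} law $X$ — which is what the estimator is built from — and the pulled-back errors relevant to the \emph{source} $S$, arising via the affine equivariance Lemma \ref{lem:linequiv} with matrix $A^{-1}$, since $\langle S\rangle_w=\langle (A^{-1})_\ast X\rangle_w$ is a sum of at most $(\text{number of entries})\le\|A^{-1}\|_\infty^{|w|}$-weighted terms $\langle X\rangle_{w'}$, and $|w|\le 3$.)

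Finally I would assemble the probabilistic bookkeeping: by the definition of the convergence rate (Notation \ref{notation:estimators}), for the given $\varepsilon$ and $q$ pick $m_{1-q},M_{1-q}$ so that $\sup_{n\ge m_{1-q}}\mathbb P(\gamma_n\ge M_{1-q}\tilde\alpha_n)\le 1-q$; then for every $n\ge n_0$ with $n_0$ as in \eqref{prop:estimation-finsample:eq2} we have $M_{1-q}\tilde\alpha_n<\eta(\tilde\delta(\varepsilon/\tilde c_4))/(1+\|A^{-1}\|_\infty)^3$, so on the event $\{\gamma_n<M_{1-q}\tilde\alpha_n\}$ — which has probability $\ge q$ — the chain $\gamma_n\cdot(1+\|A^{-1}\|_\infty)^3<\eta(\tilde\delta(\varepsilon/\tilde c_4))$ holds, hence $\delta(S,\hat\mu_n)<\tilde\delta(\varepsilon/\tilde c_4)$, hence \eqref{prop:estimation-finsample:eq1} holds by the argument in the first paragraph. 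I expect the main obstacle to be purely expository rather than mathematical: getting the constants to line up exactly as written in \eqref{prop:estimation-finsample:eq2} — in particular tracking the $(1+\|A^{-1}\|_\infty)^3$ equivariance factor and making sure the role of $\tilde c_4$ inside $\tilde\delta(\varepsilon/\tilde c_4)$ matches Theorem \ref{thm:robustness}'s $\tilde\delta\equiv\tilde c_0\wedge[\tilde c_1^{2/\alpha}(\varepsilon'/(\tilde c_2+\varepsilon'))^{2/\alpha}]$ evaluated at $\varepsilon'=\varepsilon/\tilde c_4$ with $\alpha=1$ — together with verifying that $\varphi$ is genuinely continuous at $[S]_{\mathfrak c}$ (so that $\omega_\varphi$ is finite near $0$ and invertible), which is immediate since $\varphi$ is, up to a fixed invertible linear change of variables, the Euclidean norm. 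No deep estimate is needed beyond what Theorems \ref{thm:robust_ica} and \ref{thm:robustness} and Lemmas \ref{lem:linequiv} and \ref{lem:robustness-inversionstability} already supply.
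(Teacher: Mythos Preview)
Your approach is essentially the paper's, but there is one notational slip that, as written, breaks the argument. The perturbed cause to feed into Theorem~\ref{thm:robustness} is not $(\hat\mu_n,A)$ but $(A^{-1}\hat\mu_n,A)$: since $\hat\mu_n$ estimates the \emph{observable} $X=AS$, taking $(\mu,f)=(\hat\mu_n,A)$ would give $\mathfrak I(\mu,f)=\hat\Phi(A\hat\mu_n)$ rather than $\hat\Phi(\hat\mu_n)$, and the quantity $\delta(S,\hat\mu_n)$ you propose to control converges to $\delta(S,X)\neq 0$, not to $0$. The paper fixes this at the outset by setting $\upsilon_n\coloneqq A^{-1}\hat\mu_n$ and then controlling $\delta(S,\upsilon_n)$; the affine equivariance (Lemma~\ref{lem:linequiv}) is used exactly here, to bound $\tilde\gamma_n\coloneqq\max_{|w|=2,3}|\langle\upsilon_n\rangle_w-\langle S\rangle_w|$ by $C\gamma_n$ with $C\le\max\{\|A^{-1}\|_\infty^2,\|A^{-1}\|_\infty^3\}$, which is precisely the $(1+\|A^{-1}\|_\infty)^3$ factor you correctly flagged. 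With this correction your chain $\gamma_n\to\tilde\gamma_n\to\delta(S,\upsilon_n)\to$ Theorem~\ref{thm:robustness} $\to$ \eqref{prop:estimation-finsample:eq1} matches the paper's proof exactly.
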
  
\begin{proof}
Let $B\coloneqq A^{-1}$ and $\upsilon_n\coloneqq B\cdot\hat{\mu}_n$. Lemma \ref{lem:linequiv} then asserts the identities of 2- and 3-tensors $[\upsilon_n]^{(2)}\coloneqq(\langle\upsilon_n\rangle_{ij}) = B^{\otimes 2}(\langle\hat{\mu}_n\rangle_{ij})$ and $[\upsilon_n]^{(3)}\coloneqq(\langle\upsilon_n\rangle_{ijk}) = B^{\otimes 3}(\langle\hat{\mu}_n\rangle_{ijk})$, where $B^{\otimes 2}=B\otimes B$ and $B^{\otimes 3}=B\otimes B\otimes B$ are Kronecker powers of $B$, see also \citep[Proposition 7.53]{FVI}. Since the action of $B^{\otimes m}$ on $(\R^d)^{\otimes m}$ is continuous, we obtain from the [in probability] convergence \eqref{sect:applications:estimation:eq2} by way of the continuous mapping theorem that also $[\upsilon_n]^{(m)} = B^{\otimes m}[\hat{\mu}_n]^{(m)}\longrightarrow_{n\rightarrow\infty} B^{\otimes m}[ X]^{(m)} = [S]^{(m)}$ [in probability] for $m=2,3$. In fact we have $\tilde{\gamma}_n\coloneqq\max_{|w|=2,3}|\langle\upsilon_n\rangle_w - \langle S\rangle_w| = \|[\upsilon_n]_{\mathfrak{c}} - [S]_{\mathfrak{c}}\|_\infty \leq \|\underline{B}\|_{\infty}\|[\hat{\mu}_n]_{\mathfrak{c}} - [X]_{\mathfrak{c}}\|_\infty = C\gamma_n$, for the (direct-sum) linear operator $\underline{B}\coloneqq B^{\otimes 2} + B^{\otimes 3}$ on $(\R^d)^{\otimes 2}\oplus(\R^d)^{\otimes 3}\cong\mathcal{V}$ and its $\ell_\infty$-induced operator norm $C\coloneqq \|\underline{B}\|_\infty \,(\leq \max\{\|B^{\otimes 2}\|_\infty, \|B^{\otimes 3}\|_\infty\})\leq \max\{\|B\|_\infty^2, \|B\|_\infty^3\}$. Take now $\tilde{\varepsilon}>0$, and set $\varepsilon\coloneqq\tilde{\varepsilon}/\tilde{c}_4$ for $\tilde{c}_4\equiv\tilde{c}_4(S, A)>0$ as in Theorem \ref{thm:robustness}. The latter then provides an explicit radius $\tilde{\delta}=\tilde{\delta}(\varepsilon,S, A, \tilde{\kappa}_0)>0$ such that \eqref{cor:robustness:eq2} holds for $\mu_\star=S$, i.e.\ for which
\begin{equation}\label{prop:estimation-finsample:aux1}
\sup\nolimits_{(\upsilon, A)\in\mathbbm{B}_{\tilde{\delta}}(S, A)}\partial_{\hat{\Phi}}(\upsilon,A) \,\leq\, \tilde{\varepsilon}. 
\end{equation}
To connect this with \eqref{sect:applications:estimation:eq2}, note that by definition \eqref{def:delta:eq1} of $\delta(\cdot, \cdot)$ and continuity\footnote{\ More precisely, by the fact that $\delta(S, \upsilon) = \varphi([\upsilon]_{\mathfrak{c}})$ for $\varphi : \mathcal{V}\rightarrow\R_+$ continuous with $\varphi([S]_{\mathfrak{c}})=0$.} there is an $\eta>0$ such that $\delta(S,\upsilon)< \tilde{\delta}$ for each $\upsilon\in\dot{\fD}$ with $\max_{|w|=2,3}|\langle\upsilon\rangle_w - \langle S\rangle_w|<\eta$. Now fix any $0\leq q < 1$ and let $q'\coloneqq 1-q$. Since $\gamma_n=O_P(\tilde{\alpha}_n)$, we can find $m_q, M_q>0$ with $\sup_{n\geq m_q}\mathbb{P}(\gamma_n\geq M_q\cdot\tilde{\alpha}_n)\leq q'$. Due to $\tilde{\alpha}_n\downarrow 0$, we can choose a (smallest) $n_0\geq m_q$ such that $\tilde{\alpha}_n\leq\eta/(CM_q)$ for each $n\geq n_0$. For this $n_0$ we have $\sup_{n\geq n_0}\mathbb{P}(\tilde{\gamma}_n\geq \eta)\leq \sup_{n\geq n_0}\mathbb{P}(C\gamma_n\geq \eta) \leq \sup_{n\geq n_0}\mathbb{P}(\gamma_n\geq M_q\tilde{\alpha}_n)\leq\sup_{n\geq m_q}\mathbb{P}(\gamma_n\geq M_q\tilde{\alpha}_n)\leq q'$ and thus $\inf_{n\geq n_0}\mathbb{P}(\tilde{\gamma}_n<\eta)\geq 1-q' = q$. Hence for each $n\geq n_0$, it holds with probability at least $q$ that $\mathbbm{d}((\upsilon_n, A), (\mu_S, A)) = \delta(S, \upsilon_n) < \tilde{\delta}$, i.e.\ that $(\upsilon_n, A)\in\mathbbm{B}_{\tilde{\delta}}(S, A)$. Assertion \eqref{prop:estimation-finsample:eq1} now follows from \eqref{prop:estimation-finsample:aux1} and Lemma \ref{lem:robustness-inversionstability} (noting that here, by \eqref{cor:robustness:aux1}, the spatial support in \eqref{lem:robustness-inversionstability:aux3} can be replaced by the entirety of $\R^d$). The formula \eqref{prop:estimation-finsample:eq2} is but a summary of this proof.
\end{proof}

\subsection{Asynchronous Observations}
Apart from statistical estimation \eqref{sect:applications:estimation:eq2}, another source of systematic error in blind inversion of time-dependent signals are deviations from the instantaneity assumption \eqref{appendix:sect:BSSformal:eq1} that are caused by an asynchronous time discretisation of the components $\bm{X}^1,$ $\ldots,$ $\bm{X}^d$ of $\bm{X}$, cf.\ e.g.\ \cite{aitsahalia2010} for the classical context of covariance estimation:

In practice, the channel signals $\bm{X}^i$ are often each observed by different sensors that take their measurements at some discrete sets of time-points which may differ from sensor to sensor. Empirical observations (e.g.\ \citep{li2014asynchronism, signalstack2021}) demonstrate that this asynchronicity affects BSS-performance to the point of dysfuntionality. As the following proposition shows, the causal topology \eqref{def:causaltopology:eq1} is coarse enough to capture this `perturbation by asynchronicity', so that its effect can then be quantified and analysed via Theorem \ref{thm:robustness}.\\[-0.75em]   

\noindent
To do this, we can formalise the synchronicity infringement of \eqref{appendix:sect:BSSformal:eq1} as follows: Write $\bm{X}^i_{\mathcal{J}_i}\coloneqq\big(\bm{X}^i_t\ \big| \ t\in\mathcal{J}_i\coloneqq\{0\leq t^{(i)}_0 < t^{(i)}_1 < \ldots < t^{(i)}_{n_i-1}\leq 1\}\big)$ for the sensor data of channel $\bm{X}^i$, denoting $\hat{\bm{X}}^i_{\mathcal{J}_i}$ for the piecewise-linear interpolation of that data. Formally, $\hat{\bm{X}}^i_{\mathcal{J}_i}=\hat{\iota}_{n_i}(\bm{X}^i_{\mathcal{J}_i})$ for $\hat{\iota}_{n_i}$ as in \eqref{rem:discrete-case:eq1} for $d=1$, see Remark \ref{rem:discrete-case}. We then say that the channels $\bm{X}^i$ are observed synchronously if $\mathcal{J}_1 = \mathcal{J}_2 = \ldots = \mathcal{J}_d$, otherwise we say that the $\bm{X}^1, \ldots, \bm{X}^d$ are observed asynchronously. \\[-0.75em]  

Writing $\|\mathcal{J}_i\|\coloneqq\max_{\nu\in[n_i-1]}|t^{(i)}_\nu-t^{(i)}_{\nu-1}|$ for the mesh-size of $\mathcal{J}_i$, we can handle asynchronicity via Theorem \ref{thm:robustness} by noting that the interpolated observations $\hat{\bm{X}}^\ast_{(\mathcal{J}_i)}\coloneqq\big(\hat{\bm{X}}^1_{\mathcal{J}_1}, \cdots, \hat{\bm{X}}^d_{\mathcal{J}_d}\big)$ of $\bm{X}$ over the channel-dependent sample times $(\mathcal{J}_i)\equiv(\mathcal{J}_1, \ldots, \mathcal{J}_d)$ converge to $\bm{X}$ in \eqref{def:causaltopology:eq1} as $\|(\mathcal{J}_i)\|\coloneqq\max_{i}\|\mathcal{J}_{i}\|\rightarrow 0$. The symbol $\tilde{\varrho}$ below denotes the relative error from \eqref{lem:robustness-inversionstability:eq1}.  

\begin{proposition}
For each $i\in[d]$, let $(\mathcal{J}_i^n)_{n\in\N}$ be a sequence of dissections of $[0,1]$ whose mesh-size $\|\mathcal{J}_i^{n}\|$ converges to zero. Suppose further that $\E\|\bm{X}\|^3_{1\text{-}\mathrm{var}} < \infty$ and let $\hat{\mu}_n\coloneqq\mathbb{P}_{\hat{\bm{X}}^\ast_n}\in\dot{\fD}$ be the law of $\hat{\bm{X}}^\ast_n\coloneqq\hat{\bm{X}}^\ast_{(\mathcal{J}_i^n)}$. Then for each $\varepsilon>0$ there is an explicit $n_0=n_0(\varepsilon,A,S, (\hat{\mu}_n))\in\N$ such that the error bound $\sup_{n\geq n_0}\operatorname{dist}_{\tilde{\varrho}}(\hat{\Phi}(\hat{\mu}_n)\cdot\bm{X}, \lceil\bm{S}\rceil_{\mathfrak{m}})\leq\varepsilon$ holds almost surely.            
\end{proposition}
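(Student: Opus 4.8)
The strategy is to apply Theorem \ref{thm:robustness} to the perturbed causes $(\hat{\mu}_n, A) \in \mathfrak{C}$ and the target cause $(S, A) \in \sI_\ast$, which requires showing two things: first, that the $\hat{\mu}_n$ eventually lie in the $\mathbbm{d}$-ball $\mathbb{B}_{\tilde\delta}(S, A)$ around the true cause for the radius $\tilde\delta = \tilde\delta(\varepsilon/\tilde c_4)$ provided by that theorem; and second, that the resulting bound on the deviance $\partial_{\hat\Phi}$ translates, via Lemma \ref{lem:robustness-inversionstability}, into the asserted bound on $\operatorname{dist}_{\tilde\varrho}(\hat\Phi(\hat\mu_n)\cdot\bm X, \lceil\bm S\rceil_{\mathfrak{m}})$. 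Since the mixing transformation is held fixed at $f\equiv A$, we are in the simplified regime of Theorem \ref{thm:robustness} (the last sentence of its statement, with $\alpha = 1$ and constants independent of $c_{\fS}$), so $\mathbbm{d}((\hat\mu_n, A), (S, A)) = \delta(S, \hat\mu_n)$ and it suffices to control the premetric $\delta(S, \hat\mu_n)$ alone.

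The key step is therefore to prove $\delta(S, \hat\mu_n) \to 0$ as $n \to \infty$. By the definition \eqref{def:delta:eq1} of $\delta$ (equivalently \eqref{sect:coredinates:eq4}), and since $\langle S\rangle_{ii} \neq 0$ for all $i$ (as $S \in \dot{\fD}$), it suffices to show $\langle\hat\mu_n\rangle_w \to \langle S\rangle_w$ for each $w \in [d]^\star$ with $|w| \leq 3$. To get this, I would invoke Lemma \ref{lem:unifsigintegr}: the family $\{\hat\mu_n : n \in \N\} \cup \{S\}$ is uniformly signature integrable because, by the hypothesis $\E\|\bm X\|_{1\text{-}\mathrm{var}}^3 < \infty$ together with the interpolation estimate $\|\hat{\bm X}^\ast_n\|_{1\text{-}\mathrm{var}} \lesssim \|\bm X\|_{1\text{-}\mathrm{var}}$ (piecewise-linear interpolation does not increase $1$-variation when the interpolation nodes are sampled from the path — indeed $\|\hat{\bm X}^i_{\mathcal J^n_i}\|_{1\text{-}\mathrm{var}} \leq \|\bm X^i\|_{1\text{-}\mathrm{var}}$), one has a uniform bound $\sup_n \E\|\hat{\bm X}^\ast_n\|_{1\text{-}\mathrm{var}}^{3} < \infty$, hence a uniform $\|\cdot\|_{1\text{-}\mathrm{var}}^{mq}$-bound for $m q$ slightly below $3$, so Proposition \ref{lem:unifsigintegr:sufficient} applies with $m = 3$; and one has weak'-convergence $\hat\mu_n \xrightharpoonup{1\text{-}\mathrm{var}} S$ because $\hat{\bm X}^\ast_n \to \bm X$ uniformly (a.s., as the mesh sizes $\|\mathcal J^n_i\| \to 0$ and $\bm X$ has continuous — indeed absolutely continuous — paths) and one can upgrade pointwise path convergence to weak convergence of laws against $\|\cdot\|_{1\text{-}\mathrm{var}}$-bounded-continuous test functions using the uniform integrability just established (a standard Vitali-type argument, or: the signatures $\sig_w$ are continuous in $\|\cdot\|_{1\text{-}\mathrm{var}}$-topology on sets of uniformly bounded $1$-variation, and $\|\hat{\bm X}^\ast_n - \bm X\|_\infty \to 0$ with $\sup_n\|\hat{\bm X}^\ast_n\|_{1\text{-}\mathrm{var}} < \infty$ a.s. forces $\sig_w(\hat{\bm X}^\ast_n) \to \sig_w(\bm X)$ a.s., then dominated convergence). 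Either route yields $\langle\hat\mu_n\rangle_w \to \langle S\rangle_w$, hence $\delta(S, \hat\mu_n) \to 0$, hence $(\hat\mu_n, A) \in \mathbb{B}_{\tilde\delta}(S, A)$ for all $n \geq n_0$, where $n_0$ is the explicit threshold at which $\delta(S,\hat\mu_n)$ drops below $\tilde\delta$.

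The final step is bookkeeping. For $n \geq n_0$, Theorem \ref{thm:robustness} (simplified regime) gives $\partial_{\hat\Phi}(\hat\mu_n, A) \leq \tilde c_4(\varepsilon/\tilde c_4 + \bar d(A,A)(1 + \varepsilon/\tilde c_4)) = \varepsilon$, since $\bar d(A, A) = 0$. Lemma \ref{lem:robustness-inversionstability} then yields $\operatorname{dist}_{\tilde\varrho}(\Phi(A_\ast\hat\mu_n), \lceil\bm S\rceil_{\tilde{\mathfrak m}}) \leq \partial_{\hat\Phi}(\hat\mu_n, A) \leq \varepsilon$ with probability one; here one uses that $A_\ast\hat\mu_n = \mathbb{P}_{A\hat{\bm X}^\ast_n}$ and that the observable $\bm X$ in the BSS-triple satisfies $\bm X = A\bm S$, so that the relevant inversion $\hat\Phi(\hat\mu_n)\cdot\bm X$ is being compared against the orbit of the true source. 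Replacing $\tilde{\mathfrak m}$ by $\mathfrak m$ is harmless here since no constant offset is present. The value of $n_0$ is read off as the smallest $n$ for which $\delta(S, \hat\mu_n) < \tilde\delta(\varepsilon/\tilde c_4)$, which is explicit given an explicit rate for $\max_{|w|\leq 3}|\langle\hat\mu_n\rangle_w - \langle S\rangle_w|$ — and such a rate is available in terms of the mesh sizes $\|\mathcal J^n_i\|$ and the third moment $\E\|\bm X\|_{1\text{-}\mathrm{var}}^3$ by quantifying the interpolation error.

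\textbf{Main obstacle.} The delicate point is the passage from path-wise uniform convergence $\hat{\bm X}^\ast_n \to \bm X$ to the weak'-convergence $\hat\mu_n \xrightharpoonup{1\text{-}\mathrm{var}} S$ of the laws — i.e. convergence tested against $\|\cdot\|_{1\text{-}\mathrm{var}}$-continuous bounded functions, which is genuinely stronger than classical weak convergence and does \emph{not} follow from $\|\hat{\bm X}^\ast_n - \bm X\|_\infty \to 0$ alone. The resolution is exactly the uniform signature integrability hypothesis $\E\|\bm X\|_{1\text{-}\mathrm{var}}^3 < \infty$: it is what lets one localise to sets of uniformly bounded $1$-variation (on which $\|\cdot\|_\infty$ and $\|\cdot\|_{p\text{-}\mathrm{var}}$ topologies interact well, cf. Remark \ref{rem:pweak-convergence} and Proposition \ref{prop:boundeddomains}) and upgrade accordingly; one should also double-check the elementary but essential claim that sampling-and-linearly-interpolating a path of finite $1$-variation produces a path of no larger $1$-variation, so that the uniform moment bound genuinely transfers from $\bm X$ to the whole sequence $(\hat{\bm X}^\ast_n)$.
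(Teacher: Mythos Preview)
There is a genuine gap: you conflate the source $S$ with the observable $X = A\cdot S$. The interpolated process $\hat{\bm X}^\ast_n$ approximates $\bm X$, not $\bm S$, so the path-wise convergence you establish yields $\sig_w(\hat{\bm X}^\ast_n) \to \sig_w(\bm X)$ a.s.\ and hence $\langle\hat\mu_n\rangle_w \to \langle X\rangle_w$, not $\langle S\rangle_w$. Consequently $\delta(S, \hat\mu_n)$ does not tend to zero (it tends to $\delta(S, X)$, which is positive unless $A$ happens to be monomial), so the cause $(\hat\mu_n, A)$ never enters $\mathbb{B}_{\tilde\delta}(S, A)$ for small $\tilde\delta$. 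Relatedly, the observable associated with the cause $(\hat\mu_n, A)$ is $A_\ast\hat\mu_n$, not $\hat\mu_n$, so even if Theorem~\ref{thm:robustness} applied at that cause it would control $\hat\Phi(A_\ast\hat\mu_n)$ rather than the $\hat\Phi(\hat\mu_n)$ appearing in the statement. The correct object to insert into the ball is the pulled-back measure $\upsilon_n \coloneqq A^{-1}_\ast\hat\mu_n$: the cause $(\upsilon_n, A)$ has observable $\hat\mu_n$, and by the affine equivariance of Lemma~\ref{lem:linequiv} the convergence $[\hat\mu_n]_{\mathfrak c} \to [X]_{\mathfrak c}$ transfers to $[\upsilon_n]_{\mathfrak c} \to [S]_{\mathfrak c}$, whence $\delta(S, \upsilon_n) \to 0$. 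This pull-back step is precisely what the proof of Proposition~\ref{prop:estimation-finsample} carries out, and the paper's proof of the present proposition defers to it once $[\hat\mu_n]_{\mathfrak c} \to [X]_{\mathfrak c}$ is in hand.

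A secondary point: your first route to uniform signature integrability, via Proposition~\ref{lem:unifsigintegr:sufficient} with $m = 3$, would require $\mathbb{E}\|\bm X\|^{3q}_{1\text{-}\mathrm{var}} < \infty$ for some $q > 1$, which is strictly stronger than the hypothesis. Your second route---pointwise $\sig_w$-convergence (via uniform convergence plus the $1$-variation bound, upgraded to $p$-variation convergence) followed by dominated convergence with majorant proportional to $\|\bm X\|^3_{1\text{-}\mathrm{var}}$---is exactly the one the paper takes and is the correct one here.
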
 
\begin{proof}
This follows from Theorem \ref{thm:robustness} and a few standard convergence arguments. Indeed: Denoting by $\hat{\pi}_n^i : \mathcal{C}^1_1\rightarrow \mathcal{C}^1_1$ (cf.\ \eqref{notation:absolutelycontinuous}) the linear operator that sends a $\mathcal{C}^1_1$-path $y : [0,1]\rightarrow\R$ to its $\hat{\mathcal{J}}_n^i$-piecewise-linear interpolation $\hat{y}_{\mathcal{J}^i_n}\coloneqq\hat{\iota}_{|\mathcal{J}^i_n|}(\pi_{\mathcal{J}^i_n}(y))$ (cf.\ Remark \ref{rem:discrete-case}), note that $\|\hat{\pi}^i_n(y)\|_{1\text{-}\mathrm{var}}\leq\|y\|_{1\text{-}\mathrm{var}}$ for all $y\in\mathcal{C}^1_1$ and each $n\in\N$, and further that $\lim_{n\rightarrow\infty}\hat{\pi}^i_n = \mathrm{id}$ pointwise [on $\mathcal{C}^1_1$] wrt.\ $\|\cdot\|_\infty$, see \citep[Thm.\ 5.23]{FVI}. Consequently, for each $x\in\mathcal{C}^1_d$ and $n\in\N$,
\begin{equation}
d^{-1/2}\|x - \hat{x}^\ast_n\|_\infty \,\leq\, \max_{i\in[d]}\big\|x^i - \pi_i(\hat{x}^\ast_n)\big\|_\infty = \max_{i\in[d]}\big\|x^i - \hat{\pi}^i_n(x^i)\big\|_\infty \ \longrightarrow \ 0 \ \ (\text{as } n\rightarrow\infty)\vspace{-0.5em}    
\end{equation}   
and $\|\hat{x}^\ast_n\|_{1\text{-}\mathrm{var}}\leq \sum_{i=1}^d\|\hat{\pi}^i_n(x^i)\|_{1\text{-}\mathrm{var}}\leq\sum_{i=1}^d\|x^i\|_{1\text{-}\mathrm{var}}$ (recall \eqref{sect:pweak-convergence:eq2} for the first inequality). By \citep[Lemma 5.27 (i)]{FVI}, the above implies that $\lim_{n\rightarrow\infty}\|x - \hat{x}^\ast_n\|_{p\text{-}\mathrm{var}} = 0$ for each $x\in\mathcal{C}^1_d$, for any $p>1$. Hence by the $p$-variation continuity of $\sig(\cdot)$ (for $p\in[1,2)$) we have
\begin{equation}
\lim_{n\rightarrow\infty}\sig_w(\hat{x}^\ast_n) \,=\, \sig_w(x) \qquad (\text{each } |w|\leq 3)\vspace{-0.5em}
\end{equation}
for each $x\in\mathcal{C}^1_d$. Due to this and the $L_1(\mathbb{P}_{\bm{X}})$-domination $|\sig_w(\hat{x}^\ast_n)|\lesssim \|\hat{x}^\ast_n\|^3_{1\text{-}\mathrm{var}}\leq d^3\|x\|_{1\text{-}\mathrm{var}}^3$ (which holds by the signature estimate underlying \eqref{lem:unifsigintegr:sufficient:aux1.1}), we can apply dominated convergence to obtain \eqref{sect:applications:estimation:eq2}, i.e.\ that $[\hat{\mu}_n]_{\mathfrak{c}} \rightarrow [X]_{\mathfrak{c}}$ and hence $\delta(\hat{\mu}_n, X)\rightarrow 0$ as $n\rightarrow\infty$. The claimed conclusion, which is equivalent to \eqref{prop:estimation-finsample:eq1}, now follows as in the proof of Proposition \ref{prop:estimation-finsample}.  
\end{proof} 

\subsection{Additive and Multiplicative Noise}\label{sect:noise}Despite ever-increasing efforts to develop more sophisticated measuring instruments and data-processing techniques, empirical observations such as experimental measurements or econometric records are never exact. Instead, any kind of observation-based data is prone to unwanted deviations and corruptions, whose effects need to be understood as they may otherwise invalidate the (algorithmic) consistency of a given statistical method. One way to account for this is through mathematical `noise models', the most established of which are \emph{additive} and \emph{multiplicative} noise affecting a signal $\bm{Y}=(\bm{Y}_t)_{t\in\I}$,
\begin{equation}\label{sect:noise:eq1}
\bm{Y}_t \, = \, \bm{Y}_t + \bm{\eta}_t \qquad\text{or}\qquad \bm{Y}_t \, = \, \bm{\eta}_t\cdot \bm{Y}_t \qquad (t\in\I) 
\end{equation}for some stochastic process\footnote{\ The multiplication $\bm{\eta}_t\cdot \bm{Y}_t$ is understood componentwise, i.e.\ $\bm{\eta}_t\cdot \bm{Y}_t \equiv (\bm{\eta}_t^1 \bm{Y}^1_t, \cdots, \bm{\eta}_t^d \bm{Y}_t^d)^\intercal$ for each $t\in\I$.} $\bm{\eta} = (\bm{\eta}_t)_{t\in\I}$ in $\R^d$ referred to as the ``noise''. For noisy BSS models \cite{MOU}, the noise $\bm{\eta}$ is typically subject to additional, often (semi)parametric and usually quite constricting structural assumptions, see e.g.\ \citep{belkin2013, HBS, kawanabe2005,kervazo2021robust} and the references therein.\footnote{\ To avoid dealing with any identifiability issues at all, some authors impose rather restrictive (`contrasting') structural assumptions on $\bm{S}$ or $\bm{\eta}$ to ensure that the additive noise model \eqref{sect:noise:eq2} remains exactly invertible. Notable such assumptions include: time-dependence (of $\bm{S}$) vs.\ time-independence (of $\bm{\eta}$) \cite{choi2000blind}, non-Gaussianity ($\bm{S}$) vs.\ Gaussianity ($\bm{\eta}$) \cite{hyvarinen1999fast}, or time-variability of [auto]covariances ($\bm{S}$) vs.\ group-wise stationary confounding ($\bm{\eta}$) \cite{pfister2019}. Of course, these cases are all hand-picked exceptions, and the treatment of the general, model-free case \eqref{sect:noise:eq2} requires a more comprehensive analytical framework, see Proposition \ref{prop:noise}.} The generality of our topological framework allows us to keep such assumptions to a minimum.\\[-0.5em]   

In the classical BSS-context \eqref{appendix:sect:BSSformal:eq1} where $f$ is linear, it is essentially irrelevant whether the additive noise \eqref{sect:noise:eq1} applies to $\bm{Y}=\bm{X}$ or $\bm{Y}=\bm{S}$ (or both) because 
\begin{equation}\label{sect:noise:eq2}
\bm{X} + \bm{\eta} \,=\, A(\bm{S}+\bm{\eta}') \qquad\text{for}\quad \bm{\eta}'\coloneqq A^{-1}\bm{\eta}\,;
\end{equation} 
for the corruption with multiplicative noise, we may distinguish\footnote{\ Since $A$ is a homoeomorphism, these cases, too, are essentially equivalent, but see the proof of Prop.\ \ref{prop:noise}.} the cases 
\begin{equation}\label{sect:noise:eq3}
\bm{X}^{\bm{\eta}}_t\coloneqq \bm{\eta}_t\cdot \bm{X}_t \quad\text{ or }\quad \bm{S}^{\bm{\eta}}_t\coloneqq \bm{\eta}_t\cdot\bm{S}_t \qquad(t\in\I).
\end{equation} 
Theorem \ref{thm:robustness} allows to quantify how these corruptions affect the accuracy of blind inversion.  

\begin{proposition}\label{prop:noise}
Let $\bm{\eta}$ be a noise process with law $\eta\in\fD$, and consider the corruptions 
\begin{equation}\label{prop:noise:eq1}
\tilde{\bm{X}}^{(1)}\coloneqq A(\bm{S} + \bm{\eta}) \quad \text{ and } \quad \tilde{\bm{X}}^{(2)}\coloneqq A \bm{S}^{\bm{\eta}} \quad \text{ and } \quad \tilde{\bm{X}}^{(3)}\coloneqq \bm{X}^{\bm{\eta}},
\end{equation}
with $\tilde{X}^{(i)}\coloneqq\mathbb{P}_{\tilde{\bm{X}}^{(i)}}$ their respective laws. There are then explicit constants $c_j = c_j(S,A,\Delta_\kappa)>0$ $(j=0,\ldots,3)$ with the following property: Given $\varepsilon>0$ let $\beta_\varepsilon\coloneqq c_0\wedge\!\big(c_1(\varepsilon/(c_2 + \varepsilon))^2\big)$, then
\begin{gather}\label{prop:noise:eq2}
\text{for each } \ \hat{\theta}\in\hat{\Phi}(\tilde{X}^{(i)}) \ \text{ there is } \ M\in\M \ \text{ with } \, \ \tfrac{\|\hat{\theta} - MA^{-1}\|}{\|MA^{-1}\|}\leq\varepsilon \quad\text{and}\\\label{prop:noise:eq3}
\max\big\{\partial_{\hat{\Phi}}(\mathbb{P}_{\bm{S}+\bm{\eta}}, A), \partial_{\hat{\Phi}}(S^\eta, A), \partial_{\hat{\Phi}}(S, \bm{\eta} A)^{\ast}\big\}\,\leq\,\varepsilon,
\end{gather}\let\thefootnote\relax\footnotetext{\label{footnote:noisepartial}\hspace{-1em}${\ }^{\ast}$\ \, Denoting $\bm{\eta} A : [0,1]\times\R^d\rightarrow\R^d,\ (t,u)\mapsto\bm{\eta}_t Au\coloneqq\mathrm{ddiag}(\bm{\eta}_t^1, \cdots, \bm{\eta}^d_t)\cdot Au$ (a random time-dependent transformation), the last inequality in \eqref{prop:noise:eq3} (`$\partial_{\hat{\Phi}}(S,\bm{\eta} A)\leq\varepsilon$') is to be read as: $\partial_{\hat{\Phi}}(S, \bm{\eta} A)\equiv \sup_{B\in\hat{\Phi}(\mathbb{P}_{(\bm{\eta}_t AS_t\,|\,t\in[0,1])})}\inf_{(M,v)\in\M\times\R^d}\sup_{t\in[0,1]}\sup_{u\in D^{(v)}_{S_t}}\!\!\tfrac{|B\circ (\bm{\eta}_tA)(u-v) - Mu|}{|Mu|}\mathbbm{1}_{\!\times}\!{\scriptstyle(Mu)}\leq\varepsilon$ with prob.\ one.}provided that $\bm{S}$ and the noise process $\bm{\eta}$ meet the following case-dependent assumptions: 
\begin{itemize}
\item for the case $i=1$, suppose that $\bm{\eta}$ and $\bm{S}$ are independent and both mean-stationary and that $\bm{\eta}$ satisfies the growth condition $\sum_{\nu=0}^d\langle S\rangle_{\nu\nu}^{-1}\big\|N_S^{-1}[\eta]_\nu N_S^{-1}\big\|^2 \leq \beta_\varepsilon^2$\,;
\item for the case $i=2$, suppose that $\bm{S}$ is mean-stationary and centered and that $\bm{\eta}$ is independent of $\bm{S}$ and satisfies the growth condition $\beta_2(S,\eta)\leq\beta_\varepsilon^2$;
\item for the case $i=3$, suppose that $\bm{\eta}$ is independent from $\bm{S}$, mean-stationary with $\E[\bm{\eta}]\equiv\mathrm{I}$ and IC and satisfies the growth condition $\beta_3(X,\eta)\leq\gamma_\varepsilon$ with $\beta_3$ from \eqref{prop:noise:aux14} and for $\gamma_\varepsilon=\gamma_\varepsilon(\beta_\varepsilon, A)>0$ as introduced directly thereafter;
\end{itemize} 
here, $\langle S\rangle_{00}\coloneqq 1$ and $\beta_2(S,\eta)\coloneqq\sum_{i=1}^d \langle S\rangle_{ii}^{-2}(\tilde{\alpha}_i^2 + \langle S\rangle_{ii}^{-1}\check{\alpha}_i^2)$ with $\tilde{\alpha}_i\coloneqq \eqref{prop:noise:aux8}$, $\check{\alpha}_i\coloneqq \eqref{prop:noise:aux9}$. 
\end{proposition}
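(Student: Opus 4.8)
The plan is to read each of the three corruptions in \eqref{prop:noise:eq1} as a perturbation of the exact orthogonal cause $(S,A)$ inside the premetric causal space $(E,\mathbbm d)$ of Section \ref{subsect:topology_identspace}, and then to apply Theorem \ref{thm:robustness} to the resulting perturbed cause. Since in all three cases the mixing is the \emph{linear} map $A$, we are in the situation of the final sentence of Theorem \ref{thm:robustness}: that theorem applies with $\alpha=1$ and with constants $\tilde c_0,\dots,\tilde c_4$ that do not depend on $c_{\fS}$. Accordingly I would set $c_0\coloneqq\tilde c_0$, $c_1\coloneqq\tilde c_1^{\,2}$, $c_2\coloneqq\tilde c_2$ and $c_3\coloneqq\tilde c_4$, so that (after the harmless rescaling $\varepsilon\mapsto\varepsilon/\tilde c_4$ that absorbs the prefactor in \eqref{cor:robustness:eq2}) the radius $\beta_\varepsilon$ of the statement coincides with the radius $\tilde\delta$ of Theorem \ref{thm:robustness}. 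It then suffices, under the respective case hypotheses, to show that the cause attached to $\tilde X^{(i)}$ lies in the ball $\mathbb B_{\beta_\varepsilon}(S,A)$; once this is known, \eqref{cor:robustness:eq1} gives the factorisation $\hat\theta=M(\mathrm I+E)A^{-1}$ with $\|E\|\le\varepsilon$ and a uniform bound on $\|M\|$, from which the relative-error estimate \eqref{prop:noise:eq2} follows from the monomial structure of $M$ exactly as \eqref{thm:robust_ica:eq1} is deduced from that structure in the proof of Theorem \ref{thm:robust_ica}, while \eqref{cor:robustness:eq2} with $\bar d(A,f)=0$ yields the deviance bounds \eqref{prop:noise:eq3}; the process-level reformulations then follow from Lemma \ref{lem:robustness-inversionstability} and the pointwise argument around \eqref{cor:robustness:aux1}, as in the proof of Proposition \ref{prop:estimation-finsample}.

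For the additive case $i=1$ the corrupted observable has law $A_\ast\mathbb P_{\bm S+\bm\eta}$, so its cause is $(\mathbb P_{\bm S+\bm\eta},A)$ and, by \eqref{subsect:topology_identspace:eq2}, the $\mathbbm d$-distance to $(S,A)$ reduces to $\delta(S,\mathbb P_{\bm S+\bm\eta})$. Expanding $\sig_w(\bm S+\bm\eta)$ for $|w|\le 3$ by multilinearity of iterated Stieltjes integration and using the absolutely-continuous representation \eqref{lem:absolutelycontinuous:eq1} of sample paths, every term that mixes $\bm S$-increments with $\bm\eta$-increments factorises, by independence and Fubini, into a product containing a factor $\E[\dot{\bm S}^i_t]$ or $\E[\dot{\bm\eta}^i_t]$, which vanishes a.e.\ by mean-stationarity --- this is exactly the bookkeeping of the proof of Lemma \ref{lemma1}. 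Hence $\langle\mathbb P_{\bm S+\bm\eta}\rangle_w=\langle S\rangle_w+\langle\eta\rangle_w$ for all $|w|\le 3$, so that by \eqref{sect:coredinates:eq4} one has $\delta(S,\mathbb P_{\bm S+\bm\eta})^2=\sum_{\nu=0}^d\langle S\rangle_{\nu\nu}^{-1}\big\|N_S^{-1}[\eta]_\nu N_S^{-1}\big\|^2$ (with $\langle S\rangle_{00}\coloneqq 1$), which is $\le\beta_\varepsilon^2$ by the stated growth hypothesis; thus $(\mathbb P_{\bm S+\bm\eta},A)\in\mathbb B_{\beta_\varepsilon}(S,A)$ and Theorem \ref{thm:robustness} applies.

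The multiplicative cases $i=2,3$ are handled the same way, with the Leibniz rule $\mathrm d(\bm\eta^i_t\bm S^i_t)=\bm\eta^i_t\,\mathrm d\bm S^i_t+\bm S^i_t\,\mathrm d\bm\eta^i_t$ (valid since the paths are absolutely continuous) replacing the additive expansion: it turns each $\sig_w(\bm\eta\cdot\bm S)$ with $|w|\le 3$ into a finite sum of iterated integrals in the components of $\bm S$ and $\bm\eta$, and under the hypotheses of case $i=2$ ($\bm S$ centred and mean-stationary, $\bm\eta$ independent of $\bm S$) all of these terms except the pure-$\bm S$ one either vanish by mean-stationarity/centeredness or collapse to the explicit correction quantities $\tilde\alpha_i,\check\alpha_i$ that make up $\beta_2(S,\eta)$, giving $\delta(S,S^\eta)^2\le\beta_2(S,\eta)\le\beta_\varepsilon^2$ and hence $(S^\eta,A)\in\mathbb B_{\beta_\varepsilon}(S,A)$. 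For case $i=3$ one writes $\bm X^\eta=(\bm\eta\,A)\cdot\bm S$, with $\bm\eta\,A$ the random time-dependent transformation $u\mapsto\mathrm{ddiag}(\bm\eta_t^1,\dots,\bm\eta^d_t)Au$ of the footnote; the hypotheses $\E[\bm\eta]\equiv\mathrm I$, mean-stationarity, IC and independence ensure that the order-$\le 3$ signature moments of $\tilde X^{(3)}$ agree with those of $X=A_\ast S$ up to a residual governed by the growth functional $\beta_3(X,\eta)$, so that (via Lemma \ref{lem:linequiv} and the $N_S^{-1}$-normalisation of $\delta$) $\delta(S,A^{-1}_\ast\tilde X^{(3)})\le\beta_\varepsilon$, i.e.\ $(A^{-1}_\ast\tilde X^{(3)},A)\in\mathbb B_{\beta_\varepsilon}(S,A)$; feeding this into Theorem \ref{thm:robustness} and then running the relative-error estimate of its proof (the computation ending at \eqref{cor:robustness:aux1}) $t$-wise, with the time-dependent map $\bm\eta_tA$ in place of $f$, yields the generalised deviance bound $\partial_{\hat\Phi}(S,\bm\eta A)^{\ast}\le\varepsilon$ of the statement.

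The hard part is the multiplicative case: componentwise multiplication does not commute with iterated integration, so the Leibniz expansion of $\sig_w(\bm\eta\cdot\bm S)$ for $|w|=2,3$ contains several qualitatively distinct terms (pure-$\bm S$, pure-$\bm\eta$, and mixed ones of various shapes), and one must check carefully which survive under the independence-plus-stationarity (and, for $i=2$, centeredness) hypotheses and then bound the survivors by the explicit quantities $\tilde\alpha_i,\check\alpha_i$ (resp.\ by $\beta_3$), keeping the $N_S^{-1}$-normalisation of \eqref{def:delta:eq1} consistent throughout so that the bound lands on $\delta$ itself rather than on unnormalised moments. A secondary point requiring care is case $i=3$, where one has to justify treating $\bm\eta_tA$ as an admissible random, time-dependent perturbation and verify that the relative-error machinery behind Theorem \ref{thm:robustness} still applies pointwise in $t$; by contrast, the additive case reduces to an essentially routine moment identity.
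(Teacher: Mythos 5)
Your proposal is correct and follows essentially the same route as the paper's proof: reduce each corruption to a source-side perturbation $(\upsilon^{(i)},A)$ with $f\equiv A$, invoke the linear-mixing ($\alpha=1$) case of Theorem \ref{thm:robustness} to convert $\delta(S,\upsilon^{(i)})\le\beta_\varepsilon$ into \eqref{prop:noise:eq2}--\eqref{prop:noise:eq3}, and verify that $\delta$-bound by expanding the order-$\le 3$ signature moments (multilinearity for $i=1$, the Stieltjes product rule for $i=2,3$) and eliminating or bounding the mixed terms via independence, mean-stationarity and centredness, with the $i=3$ deviance handled by dominating the time-dependent supremum by the static one for $(\upsilon^{(3)},A)$. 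The only difference is one of completeness rather than of method: the term-by-term Leibniz expansion for the multiplicative cases, which the paper carries out in Appendix \ref{pf:prop:noise} to produce the explicit quantities $\tilde\alpha_i,\check\alpha_i$ and $\beta_3$, is described but not executed in your write-up.
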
  
\begin{proof}
Thm.\ \ref{thm:robustness} provides constants $\tilde{c}_j = \tilde{c}_j(S,A)>0$ $(j=0,1,2)$ and $\tilde{c}_4=\tilde{c}_4(S,A,\Delta_\kappa)>0$ such that for any $\varepsilon>0$ and $r_{\tilde{\varepsilon}}\coloneqq \tilde{c}_0\wedge \big[\tilde{c}_1^2\big(\tilde{\varepsilon}/(\tilde{c}_2 + \tilde{\varepsilon})\big)^{\!2}\big]$ with $\tilde{\varepsilon}\coloneqq\varepsilon/\tilde{c}_4$, we have that
\begin{equation}
\text{if } \ \delta(S, \tilde{\mu})\leq r_{\tilde{\varepsilon}} \quad \text{then:} \quad \text{for every }\, \hat{\theta}\in\Phi(\!A\tilde{\mu}) \ \text{ there is } \  M\in\M \ \text{ with } \ \tfrac{\|\hat{\theta} - M\!A^{-1}\|}{\|M\!A^{-1}\|}\leq \varepsilon,
\end{equation} 
where the above implication applies to all $\tilde{\mu}\in\dot{\fD}$. To see that this holds, simply combine \eqref{cor:robustness:eq1} with the argument behind $\eqref{thm:robust_ica:aux61}\Rightarrow\eqref{thm:robust_ica:aux61.2}$ and the definition of $\tilde{c}_4$. Setting $c_j\coloneqq\tilde{c}_j$ for $j=0,1$ and $c_2\coloneqq\tilde{c}_2 \tilde{c}_4$ yields $r_{\tilde{\varepsilon}}=\beta_\varepsilon$, for $\beta_\varepsilon$ as defined above. Hence \eqref{prop:noise:eq2} follow if we have
\begin{equation}\label{prop:noise:aux1.1.3}
\delta(S,A^{-1}\tilde{X}^{(i)}) \,\leq\,\beta_\varepsilon \quad \text{ for each of the corruptions in } \ \eqref{prop:noise:eq1}.
\end{equation}
Moreover, from Theorem \ref{thm:robustness} \eqref{cor:robustness:eq2} we further obtain that \eqref{prop:noise:aux1.1.3} also implies $\partial_{\hat{\Phi}}(\upsilon^{(i)}, A)\leq \varepsilon$ for $\upsilon^{(i)}\coloneqq A^{-1}\tilde{X}^{(i)}$ and each $i=1,2,3$, which proves \eqref{prop:noise:eq3} for the cases $i=1,2$. To see that it also implies the case $\partial_{\hat{\Phi}}(S,\eta A)\leq\varepsilon$ and hence \eqref{prop:noise:eq3} altogether, notice that (cf.\ Footnote $\ast$)
\begin{equation}
\sup\nolimits_{t\in[0,1]}\sup\nolimits_{u\in D^{(v)}_{S_t}}\!\!\tfrac{|B\circ (\eta_tA)(u-v) - Mu|}{|Mu|}\mathbbm{1}_{\!\times} \leq \, \sup\nolimits_{u\in {D_{\!\upsilon^{(3)}}^{(v)}}}\!\!\tfrac{|BA(u-v) - Mu|}{|Mu|}\mathbbm{1}_{\!\times} \quad\text{ almost surely }  
\end{equation}
(for any $B\in\hat{\Phi}(\tilde{X}^{(3)})$ and each $(M,v)\in\M\times\R^d$) and hence $\partial_{\hat{\Phi}}(S,\bm{\eta} A) \leq \partial_{\hat{\Phi}}(\upsilon^{(3)}, A)$ a.s. 

\noindent
Let us first establish \eqref{prop:noise:aux1.1.3} for the additive-noise case $i=1$, which the triple $(\tilde{X}^{(1)}, A, \tilde{S})$, with $\tilde{\bm{S}}\coloneqq \bm{S} + \bm{\eta}$, represents in full generality (cf.\ \eqref{sect:noise:eq2}). For any $ijk\in[d]^\star_3$ and by multilinearity,   
\begin{align}\label{prop:noise:aux1.1}
\langle\tilde{S}\rangle_{ij} \, &= \, \mathbb{E}\!\left[\int_0^1\!\!\!\int_0^t\!\!\mathrm{d}\tilde{\bm{S}}^i_s\,\mathrm{d}\tilde{\bm{S}}^j_t\right] \,=\, \langle S\rangle_{ij} + \langle S^i, \eta^j\rangle + \langle\eta^i,S^j\rangle + \langle \eta\rangle_{ij}, \ \text{ and similarly}\\\label{prop:noise:aux1.2}
\langle\tilde{S}\rangle_{ijk} \,&= \, \langle S\rangle_{ijk} + \langle S^i,S^j,\eta^k\rangle + \langle S^i,\eta^j,S^k\rangle + \langle S^i,\eta^j,\eta^k\rangle + \langle\eta^i,S^j,S^k\rangle\\
 &\hspace{1em}+ \langle \eta^i,S^j,\eta^k\rangle + \langle\eta^i,\eta^j,S^k\rangle + \langle\eta\rangle_{ijk},
\end{align}
with $\langle Y^i, \tilde{Y}^j\rangle\coloneqq\mathbb{E}\big[\!\int_0^1\!\!\int_0^t\mathrm{d}\bm{Y}^i_s\mathrm{d}\tilde{\bm{Y}}^j_t\big]$ and $\langle Y^i, \tilde{Y}^j, \hat{Y}^k\rangle\coloneqq\mathbb{E}\big[\!\int_0^1\!\!\int_0^t\!\!\int_0^s\mathrm{d}\bm{Y}^i_r\mathrm{d}\tilde{\bm{Y}}^j_s\mathrm{d}\hat{\bm{Y}}^k_t\big]$ for $Y, \tilde{Y}, \hat{Y}\in\{S, \eta\}$. Since $\bm{\eta}, \bm{S}\in\mathcal{C}^1_d$, we can use Fubini to evaluate the above statistics \eqref{prop:noise:aux1.1} and \eqref{prop:noise:aux1.2} to  
\begin{equation}\label{prop:noise:aux1.3}
\langle S^i, \eta^j\rangle = \int_0^1\!\!\!\int_0^t\!\!\mathbb{E}\big[\dot{\bm{S}}^i_s\dot{\bm{\eta}}^j_t\big]\mathrm{d}s\mathrm{d}t  \quad\text{ and }\quad \langle\eta^i,S^j,\eta^k\rangle = \int_0^1\!\!\!\int_0^t\!\!\!\int_0^s\!\!\mathbb{E}\big[\dot{\bm{\eta}}^i_r\dot{\bm{S}}^j_s\dot{\bm{\eta}}^k_t\big]\mathrm{d}r\mathrm{d}s\mathrm{d}t \quad\text{etc.}
\end{equation}
From the assumptions that $\bm{\eta}$ and $\bm{S}$ are independent and both mean-stationary, we obtain $\mathbb{E}[\dot{\bm{S}}^i_s\dot{\bm{\eta}}^j_t]=\E[\dot{\bm{S}}^i_s]\E[\dot{\bm{\eta}}^j_t] = 0$ and $\E[\dot{\bm{\eta}}^i_r\dot{\bm{S}}^j_s\dot{\bm{\eta}}^k_t] = \E[\dot{\bm{S}}^j_s]\E[\dot{\bm{\eta}}^i_r\dot{\bm{\eta}}^k_t]=0$ etc. For this we used that $\E[\dot{\bm{S}}^i_t]=0=\E[\dot{\bm{\eta}}^j_t]$ for each $i,j\in[d]$ and $t\in[0,1]$, which follows by interchanging expectation and differentiation (as is permitted by the assumption that the derivative $\dot{\bm{S}}$ and $\dot{\bm{\eta}}$ are $L^1(\mathbb{P})$-dominated). This implies that the mixed statistics \eqref{prop:noise:aux1.3} vanish, so that we are left with
\begin{equation}
[\tilde{S}]_\nu \,=\, [S]_\nu + [\eta]_\nu \qquad\text{for each } \quad \nu=0,1,\ldots, d. 
\end{equation} 
Hence and from \eqref{sect:coredinates:eq4}, we obtain that
\begin{equation}
\delta(S,\tilde{S})^2 \,=\, \sum_{\nu=0}^d\langle S\rangle_{\nu\nu}^{-1}\big\|N_S^{-1}[\eta]_\nu N_S^{-1}\big\|^2 \,\leq\, \beta_\varepsilon^2,
\end{equation}
which proves \eqref{prop:noise:aux1.1.3} for the additive-noise case $i=1$ as desired. The proof of \eqref{prop:noise:aux1.1.3} for the multiplicative-noise cases $i=2,3$ is similar but slightly more technical, see Appendix \ref{pf:prop:noise}.  
\end{proof}

Notice how the deviance bounds \eqref{prop:noise:eq3} provide a concise quantification of the inversion stability of $\hat{\Phi}$ under the model deviations \eqref{prop:noise:eq1}. The proposition in fact shows that both the additive and the multiplicative noise case are continuous generalisations of the `noiseless' idealisations $\eta\equiv 0$ and $\eta\equiv 1$, respectively.

\subsection{Extensions and Outlook}\label{chap:robustICA:sect:addrems}The robustness analysis of this paper may be extended and applied in several further directions, including the derivation of stability guarantees for the blind inversion of strongly nonlinear mixtures and the study of stochastic dynamics for iterated function systems. As an example, let us briefly consider how our approach can be adapted to accommodate the alternative base assumption that the hidden mixing relationship \eqref{BSS-Robust-BlindInversion} between source and observable is (strongly) nonlinear. The central update for this would concern generalisations of the algebraic property \eqref{lem:linequiv:eq1} of linear equivariance, which was one of the core properties of the signature on which our blind inversion algorithm was based, see Section \ref{chap:robustICA:subsect:signature_identifiability:algorithm}: Is there a similarly structured way in which even non-linear functions of a signal are encoded in the signature representation \eqref{def:coordinates:eq1} of the untransformed signal? For general polynomial transformations the answer is yes, and is in fact a direct generalisation of \eqref{lem:linequiv:eq1}, see e.g.\ \cite{colmenarejo2018}. This `nonlinear equivariance' of the signature allows for a straightforward extension of the low-order robustness topology from Section \ref{chap:robustICA:sect:product_topology} to higher-order signature moments, where the relevant order of these higher moments grows controllably with the degree of any polynomial (mixing) relationship between source and observable. Using the fact that the source identities \eqref{lem1:pf:aux1} hold up to any order (at least for mean-stationary IC sources), one can derive corresponding families of appropriately diagonalisable tensor contractions (similar to \eqref{def:coordinates:eq2}) from these higher-order signature moments of the source, see e.g.\ \citep[Section 5.9.4]{schell2022phd}, which, in generalisation of Proposition \ref{prop1} and Section \ref{sect:blindinversionthm1}, can then be arranged in such a way as to explicitly characterise (up to permutation and scale, cf.\ \citep[Def.\ 5]{sigNICA2021} in generalisation of \eqref{def:icainversion:eq1}) exactly invertible nonlinear retransformations of the observable via the aforementioned relations of polynomial equivariance that these transformations inflict on the multivariate signature representation \eqref{def:coordinates:eq1} of the signal. This would lead to an explicitly computable procedure for the exact blind inversion of polynomial mixtures that comes with explicit robustness guarantees similar to those in Theorems \ref{thm:robust_ica} and \ref{thm:robustness}. Potential applications could include the stability analysis for random dynamical systems induced by polynomial function iterations \cite{grassberger1989symbolic}, among others. Further explorations of these and other extensions are left to future research. 

\subsection*{Acknowledgements}The author is grateful to Harald Oberhauser for his supervision of this work and for many helpful discussions and comments, as well as to Aapo Hyvärinen, Zhongmin Qian, and Markus Reiß for helpful discussions and suggestions. During the main development of this project, the author was financially supported by the Oxford-Cocker Graduate Scholarship and a Mathematical Institute Scholarship, both provided by the University of Oxford. 

\bibliographystyle{plain}
\bibliography{References}

\begin{thebibliography}{10}

\bibitem{afsari2008}
B.~Afsari.
\newblock {Sensitivity Analysis for the Problem of Matrix Joint
  Diagonalization}.
\newblock {\em SIAM J.\ Matrix Anal.\ Appl.}, \textbf{30}.3:1148--1171, 2008.

\bibitem{aitsahalia2010}
Y.\ A{\"\i}t-Sahalia, J.~Fan, and D.~Xiu.
\newblock {High-Frequency Covariance Estimates with Noisy and Asynchronous
  Financial Data}.
\newblock {\em J.\ Amer.\ Statist.\ Assoc.}, \textbf{105}.492:1504--1517, 2010.

\bibitem{akutsu2020application}
T~Akutsu et~al.
\newblock {Application of independent component analysis to the iKAGRA data}.
\newblock {\em Progress of Theoretical and Experimental Physics},
  \textbf{2020}.5:053F01, 2020.

\bibitem{arendturban2018}
W.~Arendt and K.~Urban.
\newblock {\em Partielle Differenzialgleichungen \emph{(in German)}}.
\newblock Springer, 2018.

\bibitem{arkhangelski1990}
A.V. Arkhangel'skii and V.V. Fedorchuk.
\newblock {\em General Topology I: Basic Concepts and Constructions Dimension
  Theory}.
\newblock Encyclopaedia Math.\ Sci., Vol.\ 17, Springer, 1990.

\bibitem{aziz2016fuzzy}
R.~Aziz, C.~Verma, and N.~Srivastava.
\newblock A fuzzy based feature selection from independent component subspace
  for machine learning classification of microarray data.
\newblock {\em Genomics Data}, \textbf{8}:4--15, 2016.

\bibitem{back1997first}
A.~Back and A.~Weigend.
\newblock A first application of independent component analysis to extracting
  structure from stock returns.
\newblock {\em Int.\ J.\ Neural Syst.}, \textbf{8}.4:473--484, 1997.

\bibitem{behr2018multiscale}
M.~Behr, C.~Holmes, and A.~Munk.
\newblock {Multiscale Blind Source Separation}.
\newblock {\em Ann.\ Statist.}, \textbf{46}.2:711--744, 2018.

\bibitem{belkin2013}
M.~Belkin, L.~Rademacher, and J.~Voss.
\newblock {Blind Signal Separation in the Presence of Gaussian Noise}.
\newblock pages 270--287, 2013.

\bibitem{BIL}
P.~Billingsley.
\newblock {\em Convergence of Probability Measures}.
\newblock 2nd Ed., John Wiley \& Sons, 1999.

\bibitem{bogachev2000}
V.~Bogachev.
\newblock {Average Approximations and Moments of Measures}.
\newblock {\em J.\ Complexity}, 16:390--410, 2000.

\bibitem{BOG}
V.~I. Bogachev.
\newblock {\em Measure Theory}.
\newblock Vol. I, II. Springer, 2007.

\bibitem{bonnier2020adapted}
P.~Bonnier, C.~Liu, and H.~Oberhauser.
\newblock Adapted topologies and higher rank signatures.
\newblock {\em Ann.\ Appl.\ Prob.}, 2023+.

\bibitem{bonnier2019signature}
P.~Bonnier and H.~Oberhauser.
\newblock {Signature Cumulants, Ordered Partitions, and Independence of
  Stochastic Processes}.
\newblock {\em Bernoulli}, \textbf{26}.4:2727--2757, 2020.

\bibitem{bruno2017}
J.~Bruno and P.~Szeptycki.
\newblock {Quantales, Generalised Premetrics and Free Locales}.
\newblock {\em Appl.\ Categor.\ Struct.}, \textbf{25}:1045–1058, 2017.

\bibitem{cai2019perturbation}
Y.~Cai and R.-C. Li.
\newblock {Perturbation Analysis for Matrix Joint Block Diagonalization}.
\newblock {\em Linear Algebra Appl.}, \textbf{581}:163--197, 2019.

\bibitem{cai2017algebraic}
Y.~Cai and C.~Liu.
\newblock {An Algebraic Approach to Nonorthogonal General Joint Block
  Diagonalization}.
\newblock {\em SIAM J.\ Matrix Anal.\ Appl.}, \textbf{38}.1:50--71, 2017.

\bibitem{chabriel2014}
G.~Chabriel, M.~Kleinsteuber, E.~Moreau, H.~Shen, P.~Tichavsky, and A.~Yeredor.
\newblock {Joint matrices decompositions and blind source separation: A survey
  of methods}.
\newblock {\em IEEE Signal Process.\ Mag.}, \textbf{31}.3:34–43, 2014.

\bibitem{chen2005consistent}
A.~Chen and P.~J. Bickel.
\newblock {Consistent Independent Component Analysis and Prewhitening}.
\newblock {\em IEEE Trans.\ Signal Process.}, \textbf{53}.10:3625--3632, 2005.

\bibitem{chen2006efficient}
A.~Chen and P-J. Bickel.
\newblock {Efficient Independent Component Analysis}.
\newblock {\em Ann.\ Statist.}, \textbf{34}.6:2825--2855, 2006.

\bibitem{CHL}
I.~Chevyrev and T.~Lyons.
\newblock {Characteristic functions of measures on geometric rough paths}.
\newblock {\em Ann.\ Probab.}, \textbf{44}.6:4049--4082, 2016.

\bibitem{chevyrev2018persistence}
I.~Chevyrev, V.~Nanda, and H.~Oberhauser.
\newblock Persistence paths and signature features in topological data
  analysis.
\newblock {\em IEEE transactions on pattern analysis and machine intelligence},
  42(1):192--202, 2018.

\bibitem{CHO}
I.~Chevyrev and H.~Oberhauser.
\newblock {Signature Moments to Characterize Laws of Stochastic Processes}.
\newblock {\em Preprint}, page \url{arXiv:1810.1097}, 2018.

\bibitem{chevyrev2022signature}
I.~Chevyrev and H.~Oberhauser.
\newblock Signature moments to characterize laws of stochastic processes.
\newblock {\em J.\ Mach.\ Learn.\ Res.}, 23(176):1--42, 2022.

\bibitem{choi2000blind}
S.~Choi and A.~Cichocki.
\newblock {Blind Separation of Nonstationary Sources in Noisy Mixtures}.
\newblock {\em Electronics Letters}, \textbf{36}.9:848--849, 2000.

\bibitem{colmenarejo2018}
L.~Colmenarejo and R.~Preiß.
\newblock {Signatures of paths transformed by polynomial maps}.
\newblock {\em Preprint}, page \url{arXiv:1812.05962v1}, 2018.

\bibitem{COM}
P.~Comon.
\newblock {Independent Component Analysis, a new concept?}
\newblock {\em Signal Processing}, \textbf{36}.3:287--314, 1994.

\bibitem{HBS}
P.~Comon and C.~Jutten, editors.
\newblock {\em Handbook of Blind Source Separation: Independent Component
  Analysis and Applications}.
\newblock Academic Press, 2010.

\bibitem{crisan2013robust}
D.~Crisan, J.~Diehl, P.K. Friz, and H.~Oberhauser.
\newblock Robust filtering: Correlated noise and multidimensional observation.
\newblock {\em Ann.\ Appl.\ Prob.}, 23(5), 2013.

\bibitem{cvejic2007improving}
N.~Cvejic, D.~Bull, and N.~Canagarajah.
\newblock Improving fusion of surveillance images in sensor networks using
  independent component analysis.
\newblock {\em IEEE Transactions on Consumer Electronics},
  \textbf{53}.3:1029--1035, 2007.

\bibitem{damelin2012signals}
S.~B. Damelin and W.~Miller~Jr.
\newblock {\em {The Mathematics of Signal Processing}}.
\newblock Cambridge Texts in Applied Mathematics \textbf{48}, Cambridge
  University Press, 2012.

\bibitem{DAR}
G.~Darmois.
\newblock {Analyse générale des liaisons stochastiques}.
\newblock {\em Rev.\ Inst.\ Intern.\ Stat.}, \textbf{21}:2--8, 1953.

\bibitem{delorme2004eeglab}
A.~Delorme and S.~Makeig.
\newblock {EEGLAB: an open source toolbox for analysis of single-trial EEG
  dynamics including independent component analysis}.
\newblock {\em Journal of Neuroscience Methods}, \textbf{134}.1:9--21, 2004.

\bibitem{FVI}
P.~K. Friz and N.~B. Victoir.
\newblock {\em Multidimensional Stochastic Processes as Rough Paths: Theory and
  Applications}.
\newblock Cambridge Studies in Advanced Mathematics \textbf{120}, Cambridge
  University Press, 2010.

\bibitem{garling1973}
D.~J.~H. Garling.
\newblock {Radonifying Mappings and Functional Central Limit Theorems}.
\newblock {\em Z.\ Wahrscheinlichkeitstheorie verw.\ Geb.},
  \textbf{26}:297--308, 1973.

\bibitem{gowers2008princeton}
T.~Gowers, J.\ Barrow-Green, and I.~Leader~(eds.).
\newblock {\em The Princeton Companion to Mathematics}.
\newblock Princeton University Press, 2008.

\bibitem{grassberger1989symbolic}
P~Grassberger, H~Kantz, and U~Moenig.
\newblock On the symbolic dynamics of the h{\'e}non map.
\newblock {\em Journal of Physics A: Mathematical and General}, 22(24):5217,
  1989.

\bibitem{green2003}
P.~Green.
\newblock {\em Note on Error Estimation}.
\newblock Lecture Notes, University of Maryland, available at
  \url{http://www.math.umd.edu/~psg/401/errorestimation.pdf}, 2003.

\bibitem{guggenheimer1995}
H.~W Guggenheimer, A.~S Edelman, and C.~R. Johnson.
\newblock A simple estimate of the condition number of a linear system.
\newblock {\em College Math J.}, \textbf{26}:2--5, 1995.

\bibitem{hadamard1902}
J.~Hadamard.
\newblock Sur les probl{\`e}mes aux d{\'e}riv{\'e}es partielles et leur
  signification physique.
\newblock {\em Princeton University Bulletin}, pages 49--52, 1902.

\bibitem{halva2021disentangling}
H.~H{\"a}lv{\"a}, S.~Le~Corff, L.~Leh{\'e}ricy, J.~So, Y.~Zhu, E.~Gassiat, and
  A.~Hyvarinen.
\newblock Disentangling identifiable features from noisy data with structured
  nonlinear ica.
\newblock {\em 35th Conference on Neural Information Processing Systems
  (NeurIPS)}, \textbf{34}:1624--1633, 2021.

\bibitem{HLY}
B.~Hambly and T.~Lyons.
\newblock {Uniqueness for the signature of a path of bounded variation and the
  reduced path group}.
\newblock {\em Ann.\ of Math.}, \textbf{171}.1:109--167, 2010.

\bibitem{hampel1971}
F.~R. Hampel.
\newblock {A general qualitative definition of robustness}.
\newblock {\em Ann.\ Math.\ Stat.}, \textbf{42}.6:1887--1896, 1971.

\bibitem{higham2008}
N.~J. Higham.
\newblock {\em Functions of Matrices : Theory and Computation}.
\newblock SIAM, 2008.

\bibitem{higham2002}
N.~J. Higham.
\newblock {\em Accuracy and Stability of Numerical Algorithms}.
\newblock SIAM, 2002.

\bibitem{horn2012matrix}
R.~A. Horn and C.~R. Johnson.
\newblock {\em Matrix Analysis}.
\newblock Cambridge University Press, 2012.

\bibitem{huber2009}
P.~Huber and E.~Ronchetti.
\newblock {\em Robust Statistics}.
\newblock Wiley, 2009.

\bibitem{hyvarinen1999fast}
A.~Hyvärinen.
\newblock Fast ica for noisy data using gaussian moments.
\newblock In {\em {ISCAS’99. 1999 Proc. - IEEE Int. Symp. Circuits Syst.
  VLSI}}, pages 57--61. IEEE, 1999.

\bibitem{HRS}
A.~Hyvärinen.
\newblock {Independent Component Analysis: Recent Advances}.
\newblock {\em Phil.\ Trans.\ R.\ Soc.\ A}, \textbf{371}.1984:20110534, 2013.

\bibitem{HKO}
A.~Hyvärinen, J.~Karhunen, and E.~Oja.
\newblock {\em Independent Component Analysis}.
\newblock John Wiley \& Sons, 2001.

\bibitem{HYP}
A.~Hyvärinen and P.~Pajunen.
\newblock {Nonlinear Independent Component Analysis: Existence and Uniqueness
  Results}.
\newblock {\em Neural Networks}, \textbf{12}.3:429--439, 1999.

\bibitem{AUX}
A.~Hyvärinen, H.~Sasaki, and R.~Turner.
\newblock {Nonlinear ICA Using Auxiliary Variables and Generalized Contrastive
  Learning}.
\newblock {\em AISTATS}, 2019.

\bibitem{kanzow2007}
C.~Kanzow.
\newblock {\em Numerik linearer Gleichungssysteme: Direkte und iterative
  Verfahren}.
\newblock Springer-Verlag, 2007.

\bibitem{karowkressner}
M.~Karow and D.~Kressner.
\newblock {On a Perturbation Bound for Invariant Subspaces of Matrices}.
\newblock {\em SIAM J.\ Matrix Anal.\ Appl.}, \textbf{35}.2:599–618, 2014.

\bibitem{kawanabe2005}
M.\ Kawanabe and K.-R. M{\"u}ller.
\newblock {Estimating Functions for Blind Separation When Sources Have Variance
  Dependencies.}
\newblock {\em J.\ Mach.\ Learn.\ Res.}, \textbf{6}.4, 2005.

\bibitem{kechris1995}
A~Kechris.
\newblock {\em Classical Descriptive Set Theory}.
\newblock Graduate Texts in Mathematics \textbf{156}, Springer, 1995.

\bibitem{kelley1961}
J.~L. Kelley.
\newblock {\em General Topology}.
\newblock D.\ van Nostrand, 1961.

\bibitem{kervazo2021robust}
C.~Kervazo, N.~Gillis, and N.~Dobigeon.
\newblock Provably robust blind source separation of linear-quadratic
  near-separable mixtures.
\newblock {\em SIAM J.\ Imaging Sci.}, \textbf{14}.4:1848--1889, 2021.

\bibitem{khemakhem20iVAE}
I.~Khemakhem, D.~P. Kingma, R.~P. Monti, and A.~Hyvärinen.
\newblock Variational autoencoders and nonlinear {ICA}: A unifying framework.
\newblock {\em Proc.\ Artificial Intelligence and Statistics (AISTATS2020)},
  2020.

\bibitem{KOB}
F.~Kiraly and H.~Oberhauser.
\newblock {Kernels for Sequentially Ordered Data}.
\newblock {\em J.\ Mach.\ Learn.\ Res.}, \textbf{19}:1--45, 2018.

\bibitem{lee2003application}
S.-I. Lee and S.~Batzoglou.
\newblock {Application of Independent Component Analysis to Microarrays}.
\newblock {\em Genome Biology}, \textbf{4}.11:1--21, 2003.

\bibitem{li2022vector}
B.~Li, J.~Lu, and Z.~Yu.
\newblock Vector-wise joint diagonalization of almost commuting matrices.
\newblock {\em arXiv preprint arXiv:2205.15519}, 2022.

\bibitem{li2014asynchronism}
H.~Li.
\newblock {Asynchronism-based principal component analysis for time series data
  mining}.
\newblock {\em Expert Systems with Applications}, \textbf{41}.6:2842--2850,
  2014.

\bibitem{linde1982}
W.~Linde.
\newblock Moments of measures on banach spaces.
\newblock {\em Math.\ Ann.}, \textbf{258}:277--287, 1982.

\bibitem{FLO}
T.~J. Lyons, M.~Caruana, and T.~Lévy.
\newblock {\em Differential Equations Driven by Rough Paths}.
\newblock Springer, 2007.

\bibitem{LYQ}
T.~J. Lyons and Z.~Qian.
\newblock {\em System Control and Rough Paths}.
\newblock Oxford Mathematical Monographs, Oxford University Press, 2002.

\bibitem{makeig2011erp}
S.~Makeig and J.~Onton.
\newblock {ERP features and EEG dynamics: an ICA perspective}.
\newblock In {\em Oxford Handbook of Event-Related Potential Components}.
  Oxford University Press, 2011.

\bibitem{MNT}
J.~Miettinen, K.~Nordhausen, and S.~Taskinen.
\newblock {Blind source separation based on joint diagonalization in R: The
  packages JADE and BSSasymp}.
\newblock {\em J.\ Stat.\ Softw.}, \textbf{76}.2, 2017.

\bibitem{MOU}
E.~Moulines, J.~F. Cardoso, and E.~Gassiat.
\newblock {Maximum likelihood for blind separation and deconvolution of noisy
  signals using mixture models}.
\newblock {\em Proc.\ IEEE Int.\ Conf.\ on Acoustics, Speech and Signal
  Processing (ICASSP’97)}, page 3617–3620, 1997.

\bibitem{nordh2012}
K.~Nordhausen.
\newblock {On robustifying some second order blind source separation methods
  for nonstationary time series}.
\newblock {\em Statistical Papers}, \textbf{55}:141–156, 2014.

\bibitem{nordhausen2022}
K.~Nordhausen and A.~Ruiz-Gazen.
\newblock {On the Usage of Joint Diagonalization in Multivariate Statistics}.
\newblock {\em J.\ Multivariate Anal.}, \textbf{188}:104844, 2022.

\bibitem{nuzillard2000blind}
D.~Nuzillard and A.~Bijaoui.
\newblock Blind source separation and analysis of multispectral astronomical
  images.
\newblock {\em Astron.\ Astrophys.\ Suppl.\ Ser.}, \textbf{147}.1:129--138,
  2000.

\bibitem{olevskii2020}
A.~Olevskii and A.~Ulanovskii.
\newblock {Reconstruction of Signals: Uniqueness and Stable Sampling}.
\newblock In {\em Sampling: Theory and Applications}, pages 9--49. Springer,
  2020.

\bibitem{panaretos2020}
V.~M. Panaretos and Y.~Zemel.
\newblock {\em An Invitation to Statistics in Wasserstein Space}.
\newblock Springer Briefs in Probability and Mathematical Statistics. Springer
  Nature, 2020.

\bibitem{PVA}
A~Papavasiliou and C.~Ladroue.
\newblock {Parameter Estimation for Rough Differential Equations}.
\newblock {\em Ann.\ Statist.}, \textbf{39}.4:2047–2073, 2011.

\bibitem{perez2018signature}
I.~Perez~Arribas, G.~M. Goodwin, J-R. Geddes, T.~Lyons, and K.~Saunders.
\newblock A signature-based machine learning model for distinguishing bipolar
  disorder and borderline personality disorder.
\newblock {\em Translational Psychiatry}, 8(1):274, 2018.

\bibitem{pfister2019}
N.~Pfister, S.~Weichwald, P.~Bühlmann, and B.~Schölkopf.
\newblock {Robustifying Independent Component Analysis by Adjusting for
  Group-Wise Stationary Noise}.
\newblock {\em J.\ Mach.\ Learn.\ Res.}, \textbf{20}.147:1--50, 2019.

\bibitem{reiersol1950}
O.~Reiers{\o}l.
\newblock {Identifiability of a linear relation between variables which are
  subject to error}.
\newblock {\em Econometrica: Journal of the Econometric Society}, pages
  375--389, 1950.

\bibitem{signalstack2021}
rmcerafl (https://dsp.stackexchange.com/users/59697/rmcerafl) and Gillespie
  (https://dsp.stackexchange.com/users/55647/gillespie).
\newblock Blind source separation for asynchronously observed mixture channels.
\newblock dsp.stackexchange.
\newblock \url{https://dsp.stackexchange.com/questions/78796} (version:
  2022-09-16).

\bibitem{samarov2004nonparametric}
A.~Samarov and A.~Tsybakov.
\newblock {Nonparametric Independent Component Analysis}.
\newblock {\em Bernoulli}, \textbf{10}.4:565--582, 2004.

\bibitem{SAM}
R.J. Samworth and M.~Yuan.
\newblock {Independent Component Analysis via Nonparametric Maximum Likelihood
  Estimation}.
\newblock {\em Ann.\ Statist.}, \textbf{40}.6:2973--3002, 2012.

\bibitem{schell2022phd}
A.~Schell.
\newblock {\em Nonlinear and Robust Independent Component Analysis for
  Stochastic Processes}.
\newblock Doctoral dissertation, University of Oxford, 2022.

\bibitem{sigNICA2021}
A.~Schell and H.~Oberhauser.
\newblock {Nonlinear Independent Component Analysis for Discrete-Time and
  Continuous-Time Signals. (Reference is made to the arXiv version
  \href{https://arxiv.org/abs/2102.02876}{\nolinkurl{arXiv:2102.02876}})}.
\newblock {\em Ann.\ Statist.}, 2023+.

\bibitem{schwartz1969}
L.~Schwartz.
\newblock {Applications radonifiantes}.
\newblock {\em Séminaire de l'École Polytechnique, Paris}, 1969-1970.

\bibitem{schwartz1971}
L.~Schwartz.
\newblock {Probabilités cylindriques et applications radonifiantes}.
\newblock {\em J.\ Fac.\ Sci.\ Univ.\ Tokyo, Sect.\ IA}, \textbf{18}:139--286,
  1971.

\bibitem{shi2015}
D.~Shi, Y.~Cai, and S.~Xu.
\newblock {Some Perturbation Results for a Normalized Non-Orthogonal Joint
  Diagonalization Problem}.
\newblock {\em Linear Algebra Appl.}, \textbf{484}:457--476, 2015.

\bibitem{SKI}
V.~P. Skitovich.
\newblock {On a Property of a Normal Distribution}.
\newblock {\em Doklady Akad.\ Nauk.\ SSSR}, \textbf{89}:217--219 (in Russian),
  1953.

\bibitem{takahashi1985}
Y.~Takahashi.
\newblock {On the relation between Radonifying mappings and kernels of
  probability measures on Banach spaces}.
\newblock {\em Hokkaido Mathematical Journal}, 14(1):97--106, 1985.

\bibitem{takahata2012unsupervised}
A.~Takahata, E.~Nadalin, R.~Ferrari, L.~Duarte, R.~Suyama, R.~Lopes, J.~Romano,
  and M.~Tygel.
\newblock Unsupervised processing of geophysical signals: A review of some key
  aspects of blind deconvolution and blind source separation.
\newblock {\em IEEE Signal Processing Magazine}, \textbf{29}.4:27--35, 2012.

\bibitem{theis2006}
F.~J. Theis and Y.~Inouye.
\newblock On the use of joint diagonalization in blind signal processing.
\newblock {\em 2006 IEEE International Symposium on Circuits and Systems}, page
  4 pp., 2006.

\bibitem{usevich2020}
K.~Usevich, J.~Li, and P.~Comon.
\newblock Approximate matrix and tensor diagonalization by unitary
  transformations: Convergence of jacobi-type algorithms.
\newblock {\em SIAM J.\ Optim.}, \textbf{30}.4:2998--3028, 2020.

\bibitem{vdv1998}
A.~W. van~der Vaart.
\newblock {\em Asymptotic Statistics}.
\newblock Cambridge Series in Statistical and Probabilistic Mathematics, 1998.

\bibitem{zhou2009nonorthogonal}
G.~Zhou, S.~Xie, Z.~Yang, and J.~Zhang.
\newblock {Nonorthogonal Approximate Joint Diagonalization With
  Well-Conditioned Diagonalizers}.
\newblock {\em IEEE Trans.\ Neural Netw.\ Learn.\ Syst.},
  \textbf{20}.11:1810--1819, 2009.

\end{thebibliography}

\newpage

\appendix

\setcounter{equation}{0}
\renewcommand{\theequation}{\thesection.\arabic{equation}}

\etocsettocstyle{}{}
\addcontentsline{toc}{section}{Appendix}
\part*{\large \centerline{Appendix}}
\localtableofcontents

\vspace{1em}
\section{Notation and Preliminaries}\label{sect:preliminariesnotation}
\vspace{0.5em}
\subsection{List of Symbols}\label{sect:notation}
Below are some of the symbols and terminology that we use.
\begin{scriptsize}
\begin{center}\label{tab:notation}
\renewcommand{\arraystretch}{1.5}
  \begin{longtable}{cp{10cm}r}
\toprule
Symbol & Meaning & Page \\
\toprule 
$\mathcal{C}_d$ & $\coloneqq\{x : [0,1]\rightarrow\R^d\mid x \text{ is continuous}\}$; space of continuous paths in $\R^d$, often endowed with the uniform norm $\|x\|_\infty\coloneqq\sup_{t\in[0,1]}|x_t|$. & \pageref{convention:signals-distributions}\\

$\mathcal{M}_1(E)$ & $\coloneqq\{\mu : \mathcal{B}(E)\rightarrow[0,1]\}$; set of all Borel probab.\ measures on a (topol.) space $E$. & \pageref{convention:signals-distributions}\\

$\mathcal{M}_1$ & $\coloneqq\mathcal{M}_1(\mathcal{C}_d)$; set of Borel probability measures on $(\mathcal{C}_d,\|\cdot\|_\infty)$. & \pageref{convention:signals-distributions}\\

$D_Y$ & see $\eqref{def:spatial_support:eq1}$; the spatial support of a signal $Y\in\mathcal{M}_1$.& \pageref{appendix:sect:BSSformal:eq1}\\

$C^{0,0}(D)$ & $\coloneqq\{\tilde{f} : D \rightarrow \R^d\mid \text{$\tilde{f}$ homeomorph.\ onto $\tilde{f}(D)$}\}$; set of all homeomorphisms on $D$. & \pageref{causalspace}\\

$\operatorname{ev}_X(g)$ & $\equiv \operatorname{ev}(X,g)\coloneqq g(X)$ for $g : D_X \rightarrow \R^d$; the evaluation map at the process $X$. & \pageref{appendix:sect:BSSformal:intext:eq:identconds}\\

$\mathrm{M}_d$ & $\coloneqq \{M \in \operatorname{GL}_d \ | \ M=D\cdot P \ \text{ for } \ D \in\operatorname{GL}_d \text{ diagonal }\text{ and } P \in \mathrm{P}_d\}$; the group of (real) monomial matrices of degree $d$. & \pageref{fig:BSStriple}\\

$\tilde{\Delta}_d$ & $\coloneqq \{\Lambda=(\lambda_i\cdot\delta_{ij}) \in \operatorname{GL}_d \ | \  \lambda_1, \ldots, \lambda_d \in\mathbb{R}\setminus\{0\}\}$; the group of invertible diagonals. &  \pageref{sect:robustica-basic}\\

$C^{1,1}$ & $\equiv C^{1,1}(\R^d)\coloneqq C^{1,1}(\R^d;\R^d)$; group of all $C^1$-diffeomorphisms on $\R^d$. & \pageref{sect:robustica-basic}\\

$\restr{\varphi}{\tilde{A}}$ & the restriction of a map $\varphi : A\rightarrow B$ to a subdomain $\tilde{A}\subseteq A$.& \pageref{lem:robustness-inversionstability}\\

$\operatorname{GL}_d$& $\coloneqq\{A \in \mathbb{R}^{d\times d} \ | \
                       \operatorname{det}(A)\neq 0\};$ the general linear group of degree $d$ over $\mathbb{R}$. & \pageref{appendix:sect:BSSformal:def:ICA:eq1}\\
                       
$|\cdot|$ & $: \R^n\rightarrow\R_+$, Euclidean norm on $\R^n$; in partic., $|\cdot|$ is the absolute value on $\R=\R^1$.& \pageref{def:deviance}\\

$\mathfrak{D}$ $[\dot{\fD}]$ & see \eqref{notation:absolutelycontinuous2}/\eqref{sect:convergence_sigmoments:subspace}; space of $\sig$-coordinatisable [-premetrizable] signals. & \pageref{notation:absolutelycontinuous}/\pageref{sect:convergence_sigmoments:subspace}\\ 

$[k]$ & $\coloneqq\{1, \ldots, k\}$, \ and \ $[k]_0\coloneqq[k]\cup\{0\}$ \ ($k\in\mathbb{N}$). & \pageref{subsect:sigmoments:eq1.1}\\

$[d]^\star$ & $\coloneqq\bigcup_{m\geq 0}[d]^{\times m}=\{\emptyset, 1, 2, \ldots, 11, 12, 112, \ldots\}$; the set of all multi-indices with entries in $[d]$, where $[d]^0\coloneqq\{\emptyset\}$ and $i_1\cdots i_m\equiv (i_1, \ldots, i_m)$. & \pageref{subsect:sigmoments:eq1.1}\\

$\bar{\R}$ & $\coloneqq\R\cup\{-\infty, \infty\}$; the affine closure of the real numbers. & \pageref{def:coredinates}\\

$[d]^\star_{\leq m}$ & $\coloneqq\bigcup_{0\leq j\leq m}[d]^{\times j}$; the set of all multi-indices of order up to $m$ ($m\in\N$). & \pageref{lem:unifsigintegr}\\

$\mathrm{ddiag}_{i\in[d]}[a_i]$ & $\coloneqq (a_i\cdot\delta_{ij})_{ij}$; the diagonal matrix with main diagonal $(a_1, \ldots, a_d)$; for $\theta=(\theta_{ij})\in\R^{d\times d}$ we write $\mathrm{ddiag}(\theta) = \mathrm{ddiag}_{i\in[d]}[\theta_{ii}]$. & \pageref{sect:coredinates:eq4}\\

$D_\varphi$ & $\coloneqq \big(\frac{\partial}{\partial x_j}\varphi_i\big)_{ij}$; the Jacobian of a map $\varphi\equiv(\varphi_1,\cdots,\varphi_d)^\intercal\in C^1(G;\R^d)$.& \pageref{lem:premetric_facts2}\\ 

$\Delta_m(\mathbb{I})$ & $\coloneqq\{(t_1, \ldots, t_m)\in\mathbb{I}^{\times m}\mid t_1 < \ldots < t_m\}$; the (relatively) open $m$-simplex on $\I^{\times m}$; we denote $\Delta_m\equiv\Delta_m([0,1])$. & \pageref{lem1:pf:aux6}\\ 

$\langle u, v\rangle_2$ & $\coloneqq u_1v_1+\ldots+u_dv_d$; the dot product of two vectors $u=(u_i), v=(v_i)$ in $\R^d.$ & \pageref{prop1:eq4}\\

$\mathrm{P}_d^{\pm}$ & $\coloneqq \{(\epsilon\cdot\delta_{\sigma(i),j})_{i,j\in[d]} \in \operatorname{GL}_d \ | \ \epsilon\in\{0,1\}, \ \sigma\in S_d \}$; signed permutation matrices.  & \pageref{phi:monomialinvar}\\ 

$S_d$ & $\coloneqq \{\sigma : [d]\rightarrow[d]\mid \text{$\sigma$ is bijective}\}$; the group of all permutations of $[d]$.  & \pageref{tab:notation}\\

$f_1\times f_2$ & $:\,U_1\times U_2\rightarrow V_1\times V_2, \ (u_1, u_2)\mapsto (f_1(u_1), f_2(u_2));$ the Cartesian product of two maps $f_i : U_i\rightarrow V_i$ $(i=1,2)$.& \pageref{transformation}\\ 

wlog/wrt./s.t. & ``without loss of generality''/ ``with respect to'' / ``such that''. & \\

i.o.w. & ``in other words'' & \\

\bottomrule
\end{longtable} 
\vspace{-1em}
\end{center} 
\end{scriptsize}

\subsection{Discrete-Time and Continuous-Time Signals}\label{sect:signals}Throughout this paper, the term \emph{signal} refers to a stochastic processes in $\R^d$, that is to an ordered family (printed in bold)
\begin{equation}\label{prelim:signals:eq1}
\bm{Y}=(\bm{Y}_t \mid t\in\I) \ \ \text{ of (Borel) random vectors } \ \ \bm{Y}_t=(\bm{Y}^1_t, \cdots, \bm{Y}^d_t) \ \text{in} \ \R^d,
\end{equation}
for $\I$ an ordered subset of $\R$, typically either $\I=\N$ (`discrete-time') or $\I=[0,1]$ (`continuous-time'). The scalar processes $\bm{Y}^i=(\bm{Y}^i_t \mid t\in\I)\equiv (\bm{Y}^i_t)$, $i\in[d]$, are called the components of $\bm{Y}$.\\[-0.5em]

\noindent
By rescaling $\I$ and thanks to Rem.\ \ref{rem:pwl-interpolation}, we may assume without loss of generality that $\I=[0,1]$ and\footnote{\ See also Remark \ref{rem:discrete-case}.} that the vectors of any signal \eqref{prelim:signals:eq1} are ordered continuously, i.e.\ that with probab.\ one 
\begin{equation}\label{subsect:sigmoments:eq1_prelim}
\I \,\ni\, t \ \longmapsto \ \bm{Y}_t \ \ \text{ is continuous}.
\end{equation} 
The latter is to say that (with probability one) the samples $\bm{Y}(\omega)=(\bm{Y}_t(\omega))_{t\in\I}$ of $\bm{Y}$ are in $\mathcal{C}_d$, the space of continuous paths from $[0,1]$ to $\R^d$; the space $\mathcal{C}_d$ carries the uniform norm topology per default. The continuity assumption \eqref{subsect:sigmoments:eq1_prelim} allows us to 
\begin{itemize}
\item use the term `stochastic process' synonymously to `continuous-time stochastic process', as we will do unless stated otherwise, and to
\item view a signal \eqref{prelim:signals:eq1} as a random path, that is as a $\mathcal{C}_d$-valued (Borel) random variable over some probability space $(\Omega, \mathscr{F}, \mathbb{P})$. 
\end{itemize} 
\begin{remark}\label{rem:pwl-interpolation}Any signal $\bm{Y}=(\bm{Y}_t)_{t\in\I}$ with $\I$ discrete can be viewed as a continuous-time process $\hat{\bm{Y}}$ upon piecewise-linear interpolation of its points $(\bm{Y}_t\mid t\in\I)\equiv(\bm{Y}_{t_j}\mid j\in\N)$, namely
\begin{equation}
\hat{\bm{Y}}=\big(\hat{\bm{Y}}_t\coloneqq \bm{Y}_{t_j} + \tfrac{t - t_j}{t_{j+1}-t_j}(\bm{Y}_{t_{j+1}} - \bm{Y}_{t_j}) \text{ for } t\in[t_j, t_{j+1})\big).
\end{equation} 
(Note that this interpolation commutes with linear transformations: $A\cdot\hat{\bm{Y}}\equiv(A\hat{\bm{Y}}_t)=\widehat{(A\bm{Y}_{t_j})}$ for each $A\in\R^{d\times d}$.) Upon scaling $\I$, we may further assume $\hat{\bm{Y}}$ to be $\mathcal{C}_d$-valued. \hfill $\bdiam$
\end{remark}

\noindent
By identifying a signal with its law (Convention \ref{convention:signals-distributions}) we can embed\footnote{\ \ldots up to equality in distribution, which is entirely sufficient for our purposes here \ldots} the class of all signals into $\mathcal{M}_1\equiv\mathcal{M}_1(\mathcal{C}_d)$, the space of all (Borel) probability measures on $\mathcal{C}_d$. This is of significant mathematical convenience as it allows us to describe the Problem of Blind Source Separation in terms of maps on a single pre-structured domain ($\mathcal{M}_1$), from where the notion of robustness can then be flexibly formulated and analysed from the angle of general topology (see Sect.\ \ref{appendix:sect:BSSformal}). 

\convA*

{\ }\hfill Unless otherwise mentioned, Convention \ref{convention:signals-distributions} is in continuous use throughout this paper.\hfill{\ }\\[-0.75em]

\noindent
We will at times make use of the \emph{spatial support} of a signal \eqref{prelim:signals:eq1}, which is defined as the set 
\begin{equation}\label{def:spatial_support:eq1}
D_{\bm{Y}} \, = \, \overline{\bigcup\nolimits_{t\in\I}\supp(\bm{Y}_t)}, \quad \text{ where \ \ $\supp(\bm{Y}_t)\equiv\supp(\mathbb{P}_{\bm{Y}_t})$}
\end{equation}
is the support of the law $\mathbb{P}_{\bm{Y}_t}$ of $\bm{Y}_t$ and the closure is taken wrt.\ the Euclidean topology on $\R^d$. The set $D_{\bm{Y}}$ is the smallest closed subset of $\R^d$ which contains (the trace of) $\mathbb{P}$-almost each sample path of $\bm{Y}$, see \citep[Section 4.2]{sigNICA2021} for details. We call a signal $\bm{Y}$ `degenerate' if its spatial support $D_{\bm{Y}}$ is contained in a hyperplane of $\R^d$.\\[-0.75em] 

\hfill(Readers uncomfortable with \eqref{def:spatial_support:eq1} may for simplicity assume that $D_{\bm{Y}}=\R^d$ throughout.)\\[-0.75em]

The support \eqref{def:spatial_support:eq1} and other notions and operations on signals \eqref{prelim:signals:eq1} can be readily generalised to measures in $\mathcal{M}_1$, as described in the following subsection. 

\subsection{Operations on Measures}\label{sect:operationsonmeasures}The Problem of Blind Source Separation naturally involves a range of (non)linear transformations $\varphi$ applied to a signal. Since the space $\mathcal{M}_1(\mathcal{C}_d)\equiv\mathcal{M}_1$ of (Borel) probability measures on $\mathcal{C}_d$ is convex but not a vector space, these transformations need to be `induced' on $\mathcal{M}_1(\mathcal{C}_d)$ in order to be well-defined.\\[-0.75em] 

\noindent
Given some target vector space $V$, a canonical way to do this is via the pushforward 
\begin{equation}\label{sect:notation:pushforward}
\varphi_\ast \, : \, \mathcal{M}_1\ni\mu \, \longmapsto\,\varphi_\ast(\mu)\equiv \mu\circ\varphi^{-1}\in\mathcal{M}_1(V) \quad \text{ of a (Borel) map } \ \varphi : \mathcal{C}_d \rightarrow V. 
\end{equation}     
Owing to their ubiquity, a few special signal actions \eqref{sect:notation:pushforward} will be given their own name:\\[-0.75em] 

\noindent
For $x=(x^1_t, \cdots, x^d_t)_{t\in[0,1]}\in\mathcal{C}_d$ any fixed path, consider the maps 
\begin{equation}\label{sect:notation:pushforward:eq2}
\pi_i \,: \, x \mapsto x^i \equiv (x^i_t)_{t\in[0,1]} \quad \text{and} \quad \pi_t \, : \, x \mapsto x_t \quad\text{and} \quad \pi^i_t\,:\, x\mapsto x^i_t \quad (i\in[d], \ t\in[0,1])
\end{equation}   
and denote their respectively induced pushforward measures from a signal $\mu\in\mathcal{M}_1$ by $\mu^i\coloneqq\mu\circ\pi_i^{-1}$ and $\mu_t\coloneqq\mu\circ\pi_t^{-1}$ and $\mu^i_t\coloneqq\mu\circ(\pi_t^i)^{-1}$; we may then also write $\mu=(\mu^1, \cdots, \mu^d)$ and $\mu=(\mu_t)$ to denote that $\mu$ has the spatial marginals $\mu^i$ and the fixed-time marginals $\mu_t$.\\[-0.5em] 

In accordance with \eqref{def:spatial_support:eq1}, the spatial support of $\mu\in\mathcal{M}_1$ is defined as 
\begin{equation}\label{spatsup:law}
D_\mu\coloneqq\overline{\bigcup\nolimits_{t\in\I}\mathrm{supp}(\mu_t)}
\end{equation} 

\noindent
Any $g : \R^d\rightarrow \R^d$ induces a map $\tilde{g} : \mathcal{C}_d \rightarrow \mathcal{C}_d$ via $\tilde{g}(x)\coloneqq(g(x_t))_{t\in[0,1]}$, and we set 
\begin{equation}\label{transformation}
g(\mu) \equiv g_\ast\mu \,\coloneqq\, \mu\circ\tilde{g}^{-1}. 
\end{equation}  
Extending \eqref{sect:notation:pushforward:eq2}, we may further declare in-time increments $\mu_{s,t}\equiv\mu_t - \mu_s\coloneqq(\pi_t - \pi_s)_\ast(\mu)$ and products $\mu^i_{s,t}\cdot\mu^j_{s,t}\coloneqq\big[\mathfrak{m}\circ(\pi^i_{s,t}\times\pi^j_{s,t})\big]_\ast(\mu)$ for the maps $\pi^k_{s,t}\coloneqq\pi^k_t - \pi^k_s$ and $\mathfrak{m}(a,b)\coloneqq a\cdot b$ and $s,t\in\I$. Further, $\E\,\nu\coloneqq\int_{\R^n}\!u\,\nu(\mathrm{d}u)$ is the expectation of a (possibly signed) Borel measure $\nu$ on $\R^n$, and a measure $\mu=(\mu_t)\in\mathcal{M}_1$ is called mean-stationary if $\E[\mu_t]=\E[\mu_0]$ for each $t\in\I$.\\[-0.5em] 

Finally, a signal $\mu\in\mathcal{M}_1$ is said to have (mutually) \emph{independent components}, or to be \emph{IC}, if $\mu=\mu^1\otimes\cdots\otimes\mu^d$ (up to the identification $\mathcal{M}_1(\mathcal{C}_d)\cong\mathcal{M}_1(\mathcal{C}_1^{\times d})$, see e.g.\ \citep[Section A.1]{sigNICA2021}). The set of all IC signals in $\mathcal{M}_1$ is denoted $\mathscr{S}_\bot$.   

\begin{example}To illustrate, suppose that $\mu=\mathbb{P}_{\bm{Y}}\in\mathcal{M}_1$ for a some signal $\bm{Y}\equiv(\bm{Y}^i)=(\bm{Y}_t)$ in $\R^d$. Then $\mu^i = Y^i$ and $\mu_t = Y_t$ and $g(\mu) = g(Y) \equiv \mathbb{P}_{g(\bm{Y})}$ for any measurable $g : \R^d\rightarrow\R^d$. Further $\E\,\mu_{s,t} = \E \bm{Y}_t - \E \bm{Y}_s$ and $\E[\mu_{s,t}^i\mu_{s,t}^j] = \mathbb{E}[\bm{Y}^i_{s,t}\bm{Y}^j_{s,t}]$ for any $i,j\in[d]$ and $s,t\in\I$, and $\mu$ is mean-stationary iff the process $\bm{Y}$ is mean-stationary, and $\mu$ is IC iff the components $\bm{Y}^1, \ldots, \bm{Y}^d$ of $\bm{Y}$ are mutually statistically independent (as stochastic processes). 
\end{example}       

\section{Complementary Motivation for Section \ref{appendix:sect:BSSformal}}\label{sect:bssformal:add_motivation}
\noindent
The following builds on Example \ref{example:cocktailparty} to both illustrate and motivate the definitions introduced in Section \ref{appendix:sect:BSSformal}. While the below perspectives and ideas on the robustness of BSS are relevant and intuitive, the current literature does not seem to offer any flexible and informative theoretical framework to formalise and exploit them. This paper is a first proposal in this direction.

\begin{example}[Cocktail Party Revisited]\label{example:cocktailparty_contd}The original speech signals belonging to the speakers in Example \ref{example:cocktailparty} can be modelled as [drawn from] the components $S^i$ of a stochastic process $S=(S^1, \ldots, S^d)\in\mathcal{M}_1$. The microphone recordings of these signals are then mixtures of the form $X^i = f_i(S^1,\ldots, S^d)$ which combine to give an observable $X=(X^1,\ldots, X^d) = f(S)$ for some invertible map $f=(f_1, \ldots, f_d) : D_S \rightarrow \R^d$. The inverse problem from Example \ref{example:cocktailparty} thus translates into the BSS-problem \eqref{appendix:sect:BSSformal:BSS_reformulation} for the triple $(X,S,f)$, and is to be solved to an accuracy $\lceil\,\cdot\,\rceil$ not explicitly given in the example. The example states, however, that the goal is to ``recover from $X$ what each of the $d$ speakers has said''. So the goal is \emph{not} an exact recovery\footnote{\ $\ldots$ i.e., to obtain from $X$ not only the content of \emph{what was said}, but also \emph{which speaker} (identified by their number $i$) it was who said it and \emph{at what original sound volume} they said it; this is usually impossible.} of $S$ from $X$, but to recover from $X$ only the messages -- that is, the informational content of their respective speeches -- that the speakers communicated: which speaker it was that said something, or how loudly they originally said it, is irrelevant.\\[-0.75em] 

\noindent
This abstracts from the original source $S$ the order of its components [i.e., the attribution (via enumeration) of a speech signal to its original speaker] and the scale of its components [i.e., how loud such speech was originally communicated]. Consequently, any two signals $(b_1 S^{j_1}, \ldots, b_d S^{j_d})$ and $(\hat{b}_1 S^{k_1}, \ldots, \hat{b}_d S^{k_d})$ with $b_i, \hat{b}_i\neq 0$ and $\{j_i\}=\{k_i\}=[d]$ are both equally desirable outcomes of the blind inversion. I.o.w., the task in Example \ref{example:cocktailparty} is `accurately' accomplished if from $X$ we arrive at (not necessarily $S$ but instead) any signal of the form
\begin{equation}\label{example:cocktailparty_contd:eq1}
\tilde{S}\,\in\,\big\{\big(\alpha_1 S^{\tau(1)}, \ldots, \alpha_d S^{\tau(d)}\big) \, \big| \, \alpha_1, \ldots, \alpha_d\in\R_\times, \, \tau\in S_d \big\} \, = \, \mathrm{M}_d\cdot S,
\end{equation}
where $\mathrm{M}_d\coloneqq\{\Lambda P\mid \Lambda\in\tilde{\Delta}_d, P\in\mathrm{P}_d\}$ is the subgroup [of $\GL$] of monomial matrices. This establishes the monomial orbit $\lceil S\rceil\coloneqq\M\cdot S$ as a suitable task-specific accuracy \eqref{def:blindinversion:eq1} for Ex.\ \ref{example:cocktailparty}.\\[-0.75em] 

\noindent
Since our prior knowledge of the speakers, $S$, and of `the physics' of their recording situation, $f$, is very limited, we can generally locate the original constellation $(S,f)$ only as an element of some subset $\sI$ in $\mathfrak{C}$, and not as a singleton in said space -- this is our `blindness'. Consequently, at this prior level, every signal that conforms to $\sI$ (that is, any $\hat{S}\coloneqq g(X)$ s.t.\ $(\hat{S}, g^{-1})\in\sI$) appears to us indistinguishable from the original speeches we are seeking. We therefore call such signals `quasi sources' and subsume them in the set \eqref{appendix:sect:BSSformal:intext:eq:maxsol2}.\\[-0.75em] 

\noindent
The main question for us at this point is whether our limited knowledge $\sI$ of the speakers and their mixing is sufficient, at least in principle, to recover the signal $S$ from $X$ up to the desired accuracy \eqref{example:cocktailparty_contd:eq1}. This translates into the requirement that $\langle
X\rangle_{\!\sI}\subseteq\lceil S\rceil$ on $\sI$, and if this requirement is met, we want to perform the inversion $X\mapsto S$ accordingly. The latter asks for an explicitly computable inversion procedure $\Phi : \mathcal{M}_1\rightarrow 2^{\mathcal{M}_1}$ which,  since it should make optimal use of the data $(X,\sI)$, we naturally expect to satisfy $\Phi(X)\neq\emptyset$ and $\Phi(X)\subseteq\langle X\rangle_{\!\sI}$.\\[-0.75em]
   
\noindent
Now due to complex interdependencies between the speakers $S^1, \ldots, S^d$, or because the recording situation $f$ is somewhat complicated, or both, we may not be able to formulate an assumption $\sI$ on $(S,f)$ for which the accuracy condition \eqref{def:blindinversion:eq1} is exactly met. However, we may often find that $(S,f)$ is still more or less `well approximated' by an idealised constellation $(S_\star, f_\star)$ which conforms to an assumption $\sI_\ast$ in $\mathfrak{C}$ for which the accuracy condition is satisfied. In our example, a viable such approximation of $(S,f)$ might be to choose $S_\star$ to have the same component signals as $S$ but with no statistical dependence between them, and to take $f_\star$ as the linearisation of $f$ at some point $x_0\in D_S$, in symbols: $(S_\star, f_\star)\equiv(\mathbb{P}_{S^1}\otimes\cdots\otimes\mathbb{P}_{S^d}, D_{\!f}(x_0))$ (cf.\ Section \ref{sect:robustica-basic}).\\[-0.75em] 

\noindent
This idealisation then pays off as follows. The $\lceil\,\cdot\,\rceil$-sufficiency of $\sI_\ast$ allows for a map $\Phi\equiv\Phi_{\!\sI_\ast}$ of the form \eqref{def:blindinversion:eq2} which achieves an exact blind inversion on $\sI_\ast$-caused observables. Provided that the inversion property $\Phi(\tilde{f}(\tilde{S}))\subseteq\lceil\tilde{S}\rceil$ of this map is \emph{stable} around the idealisation $(\tilde{S}, \tilde{f})\equiv(S_\star, f_\star)\in\sI_\ast$ as per \eqref{BSS:robust_prelim1}, the proximity $(S,f)\approx(S_\star, f_\star)$ will then guarantee that $\Phi(X)$ is approximately a subset of $\lceil S\rceil$, as desired. More specifically, given an estimate on the `deviation' $\Delta$ between $(S,f)$ (the original cause) and  $(S_\star, f_\star)$ (the $\sI_\ast$-idealisation of that cause), we can provide results that assert, in terms of precise and informative error bounds with fully explicit constants, that every element in $\Phi(X)$ is equal to an element of $\lceil S\rceil$ up to a relative error of the order $\mathcal{O}(\Delta)$. The merits this has for our example application are clear: While the actual cause $(S,f)$ of $X$ may be too complex to blindly recover $S$ exactly, we may find a `structurally simpler approximation' to $(S,f)$, say $\sI_\ast\ni(S_\star, f_\star)$ with the inversion $\Phi$ on $\sI_\ast$ as above, and find that the estimated speeches $\tilde{S}^\Delta\equiv\Phi(X)$ will still be $\lceil\,\cdot\,\rceil$-accurate up to a controlled relative distance from \eqref{example:cocktailparty_contd:eq1}. Provided that the idealisation error $\Delta$ is small enough,\footnote{\ which one can try to achieve, e.g., by adjusting the microphone positions or attenuating external noise.} the resulting signal $\tilde{S}^\Delta$ may show acoustic distortions, but it will still be close enough to the optimum \eqref{example:cocktailparty_contd:eq1} (the clearly audible speeches) to allow the original information conveyed by the speakers to be discerned. \hfill $\bdiam$ 
\end{example} 

\subsection{Supplement to Definition \ref{appendix:sect:BSSformal:def:ICA}}\label{sect:def:lica:appendix:smoothnessset}For its use in \eqref{appendix:sect:BSSformal:def:ICA:eq1}, define the set (`smoothness class')
\begin{equation}\label{def:lica:appendix:smoothnessset}
\hat{\mathscr{S}}_\bot\coloneqq\big\{\mu\in\mathscr{S}_\bot\ \big| \ \sharp\{i\in[d]\mid\E\big[(\mu^i_{0,1})^3\big]= 0\}\leq 1 \, \text{ and } \, \sharp\sigma(\mathrm{Cov}(\mu_0, \mu_1))=d\big\}
\end{equation}   
of all IC signals whose incremental covariance has pairwise distinct eigenvalues and for which at most one of their third incremental moments vanishes. These signals are of `conventional regularity' in the sense that they are in particular non-Gaussian, hence (linearly) identifiable by the classical paradigm \citep{COM}, and also of a sufficiently rich time-structure to be recoverable via the methods in \cite{nordh2012}. I.o.w., \eqref{def:lica:appendix:smoothnessset} is a class of sources which are so regular (`smooth') that a blind inversion procedure should be able to recover them in order to be called an ICA-method. 

\subsection{Extensions to Rougher Paths}\label{rem:roughpath-extensions}While the sample space \eqref{notation:absolutelycontinuous} is fairly general already and, in particular, covers all signals that one encounters in statistical practice (cf.\ Rem.\ \ref{rem:discrete-case}, Prop.\ \ref{prop:boundeddomains}), note that the considerations of this paper can be readily extended to spaces of even more irregular paths by using the theory of rough paths \citep{FVI,FLO,LYQ}. As this is mainly a technical exercise, we refrain from implementing these extensions here and instead refer the interested reader to the above references to not exceed the scope of this paper.       

\section{Technical Proofs and Lemmas (Sections \ref{appendix:sect:BSSformal} to \ref{chap:robustICA:sect:auxiliaries})}\label{sect:bss-appendixA}
\subsection{Proof of Lemma \ref{lem:robustness-inversionstability}}\label{pf:lem:robustness-inversionstability}
\begin{proof}
That $\partial_{\hat{\Phi}}(\sI)\equiv 0$ is clear from \eqref{def:icainversion:eq1}. Write $\varphi(\bm{S},f)$ for the left-hand side of the inequality \eqref{lem:robustness-inversionstability:eq1} and notice our slight abuse of notation, cf.\ Convention \ref{convention:signals-distributions}: here, $\varphi(\bm{S},f)$ is a function of the process $\bm{S}$ in $\R^d$, while $\partial_{\hat{\Phi}}(S, f)$ is a function of the law, $S\equiv\mathbb{P}_S\in\mathcal{M}_1$, of $\bm{S}$. By \eqref{BSS:robust_prelim1:eq1},
\begin{equation}\label{lem:robustness-inversionstability:aux1}
\varphi(\bm{S},f) = \sup\nolimits_{\tilde{\bm{X}}\in\Phi(\bm{X})}\inf\nolimits_{h\in\tilde{\mathrm{M}}_d}\tilde{\varrho}(\tilde{\bm{X}}, h(\bm{S})) \quad \text{for } \ \bm{X}=f(\bm{S}),
\end{equation}
where $\Phi(\bm{X})=(\hat{\Phi}(X)\circ f)(\bm{S})\equiv\{(B\circ f)(\bm{S})\mid B\in\hat{\Phi}(X)\}$ by definition. Hence for any $\tilde{\bm{X}}\in\Phi(\bm{X})$ and all $h\equiv M + v\in\tilde{\mathrm{M}}_d$ (some $M\in\M$, $v\in\R^d$) there is $B\equiv B(\tilde{\bm{X}})\in\hat{\Phi}(X)$ with 
\begin{equation}
\tilde{\varrho}(\tilde{\bm{X}}, h(\bm{S}))=\sup\nolimits_{t\in\I}\tfrac{|(B\circ f)(\bm{S}_t) - (M\bm{S}_t+v)|}{|M\bm{S}_t+v|}\mathbbm{1}_{\!\times}{\scriptstyle\!(h(\bm{S}_t))} = \sup\nolimits_{t\in\I}\tfrac{|(B\circ f)(\tilde{\bm{S}}_t - \tilde{v}) - M\tilde{\bm{S}}_t|}{|M\tilde{\bm{S}}_t|}\mathbbm{1}_{\!\times}{\scriptstyle\!(M\tilde{\bm{S}}_t)}    
\end{equation}
where $\tilde{\bm{S}}=(\tilde{\bm{S}}_t)$ is given by $\tilde{\bm{S}}_t\coloneqq \bm{S}_t + \tilde{v}$ with $\tilde{v}\coloneqq M^{-1}v$. Hence and since the trace of a process is almost surely contained in its spatial support, we have that, with probability one,
\begin{equation}\label{lem:robustness-inversionstability:aux3}
\tilde{\varrho}(\tilde{\bm{X}}, h(\bm{S})) \,\leq\, \sup\nolimits_{u\in D_{\tilde{S}}}\!\tfrac{|(B\circ f)(u - \tilde{v}) - Mu|}{|Mu|}\mathbbm{1}_{\!\times}{\scriptstyle\!(Mu)}=\varrho_B((S,f), (M,\tilde{v}))
\end{equation} 
where $D_{\tilde{S}} (= D_{\tilde{\bm{S}}}) = D_S^{(\tilde{v})}$. Statement \eqref{lem:robustness-inversionstability:eq1} now follows by \eqref{lem:robustness-inversionstability:aux3} and def.s \eqref{lem:robustness-inversionstability:aux1} and \eqref{def:deviance}.     
\end{proof}

\subsection{Proof of Lemma \ref{lem:sigma-algebras}}\label{pf:lem:sigma-algebras}
\begin{proof}
Note first that since $\onevar{\,\cdot\,}\geq\|\cdot\|_\infty$ (which is easy to see), we find that the 1-variation topology on $\mathcal{C}^1$ is finer than the uniform topology on $\mathcal{C}^1$, which of course implies that $\mathcal{B}_{1\text{-}\mathrm{var}}\coloneqq\sigma(\mathcal{C}^1, \onevar{\,\cdot\,})\supseteq\sigma(\mathcal{C}^1, \|\cdot\|_\infty)\eqqcolon\mathcal{B}_\infty$. Since the separability of $(\mathcal{C}^1,\onevar{\,\cdot\,})$ guarantees that the $\sigma$-algebra $\mathcal{B}_{1\text{-}\mathrm{var}}$ is generated by the closed $\onevar{\,\cdot\,}$-balls, the converse inclusion $\mathcal{B}_{1\text{-}\mathrm{var}}\subseteq\mathcal{B}_\infty$ follows if we can show that 
\begin{equation}\label{lem:sigma-algebras:eq1}
B_r^1(x)\coloneqq \{y\in\mathcal{C}^1\mid \onevar{y-x}\leq r\} \,\in\,\mathcal{B}_\infty \ \text{ for every } \ x\in\mathcal{C}^1 \text{ and any } r\geq 0. 
\end{equation}
To see that this holds, fix any $x\in\mathcal{C}^1$ and $r\geq 0$ and recall that, by definition (cf.\ \eqref{sect:pweak-convergence:eq2}),
\begin{equation}
\onevar{z} = \sup_{\mathcal{I}\in\mathfrak{I}}V_{\mathcal{I}}(z) \ \text{ with } \ V_{(t_\nu)}(z)\coloneqq |z_0| + \sum_{\nu}\big|z_{t_{\nu+1}} - z_{t_\nu}\big|  
\end{equation}  
and where $\mathfrak{I}\coloneqq\{\mathcal{I}=(t_\nu)\mid \mathcal{I} \text{ is a (finite) dissection of } [0,1]\}$. Given any $\mathcal{I}\in\mathfrak{I}$ it is clear that the function $Q_{\mathcal{I}} : \mathcal{C}^1\ni y\mapsto V_{\mathcal{I}}(y-x)$ is continuous wrt.\ $\|\cdot\|_\infty$, whence the level set $C_{\mathcal{I}}\coloneqq\{y\in\mathcal{C}^1\mid Q_{\mathcal{I}}(y)\leq r\}$ is $\|\cdot\|_\infty$-closed. Combined with this, the immediate identity 
\begin{equation}
B_r^1(x) = \bigcap\nolimits_{\mathcal{I}\in\mathfrak{I}} C_{\mathcal{I}} \quad\text{ implies that } \ \ B^1_r(x) \text{ is closed wrt.\ } \|\cdot\|_\infty\,, 
\end{equation} 
which shows that \eqref{lem:sigma-algebras:eq1} holds as desired.
\end{proof} 

\subsection{Proof of Lemma \ref{lem:linequiv}}\label{pf:lem:linequiv}
\begin{proof} 
Fix any multiindex $(i_1, \ldots, i_m)\in[d]^{\times m}$. Then by the multilinearity of iterated (Stieltjes-)integration, we find for each $x\in\mathcal{BV}$ that
\begin{equation}
\begin{aligned}
\mathfrak{sig}_{i_1\cdots i_m}\!(f(x))\,&=\, \int_{\Delta_m}\!\,\mathrm{d}\big[f(x)\big]^{i_1}\!\cdots\mathrm{d}\big[f(x)\big]^{i_m} \\
&=\, \int_{\Delta_m}\!\,\mathrm{d}\big[\textstyle\sum_{j_1=1}^d a_{i_1j_1}x^{j_1}\big]\!\cdots\mathrm{d}\big[\textstyle\sum_{j_m=1}^d a_{i_mj_m}x^{j_m}\big] \\
&=\, \sum_{j_1, \ldots, j_m=1}^d a_{i_1j_1}\!\cdots a_{i_mj_m}\sig_{j_1\cdots j_m}(x)  
\end{aligned} 
\end{equation}
for the $m$-simplex $\Delta_m\coloneqq\{t\in[0,1]^m\mid t_1\leq\cdots\leq t_m\}$. Hence by definitions \eqref{def:coordinates:eq1} and \eqref{transformation}, 
\begin{equation}
\begin{aligned}
\big\langle f\cdot\mu\big\rangle_{i_1\ldots i_m} \, &= \, \int_{\mathcal{C}_d}\!\mathfrak{sig}_{i_1\cdots i_m}(x)\,f_\star\mu(\mathrm{d}x) \,=\, \int_{\mathcal{BV}}\!\mathfrak{sig}_{i_1\cdots i_m}\!(f(x))\,\mu(\mathrm{d}x)\\
&=\,\sum_{j_1, \ldots, j_m=1}^d a_{i_1 j_1}\!\cdots\, a_{i_m j_m}\!\cdot\langle\mu\rangle_{j_1\ldots j_m},   
\end{aligned} 
\end{equation}
where the second identity is the change-of-variables formula for pushforward measures. 
\end{proof}    

\subsection{Proof of Proposition \ref{prop:boundeddomains}}\label{pf:prop:boundeddomains}
\begin{proof}
Note first that the subspaces $\fD^p_K$ and $\widehat{\fD}_\mathcal{I}$ are well-defined, since $\mathcal{C}^1_{p,K}$ and $\hat{\mathcal{C}}_\mathcal{I}$ are both Borel-measurable wrt.\ $\|\cdot\|_\infty$. Indeed: $\mathcal{C}^1_{p,K}$ is $\onevar{\,\cdot\,}$-closed [as a sublevel set] by \eqref{sect:pweak-convergence:eq3} (these inclusions of course remain valid also if the $b$'s are dropped) and $\mathcal{B}(\mathcal{C}^1;\onevar{\,\cdot\,})=\mathcal{B}(\mathcal{C}^1; \|\cdot\|_\infty)$ by Lemma \ref{lem:sigma-algebras}, and the $\mathcal{C}^1$-subspace $\hat{\mathcal{C}}_{\mathcal{I}}$ is $\|\cdot\|_\infty$-closed because it is finite-dimensional.\\[-1.25em] 

Using next that \eqref{rem:pweak-convergence:eq1} can be generalised to $\|\cdot\|_\infty\leq\|\cdot\|_{q\text{-}\mathrm{var}}\leq 2\|\cdot\|^{\frac{p}{q}}_{p\text{-}\mathrm{var}}\|\cdot\|_\infty^{1 - \frac{p}{q}}$ for any finite $q>p$, see \citep[Prop.\ 5.5 (i)]{FVI}, we obtain that the norms $\|\cdot\|_\infty$ and $\|\cdot\|_{q\text{-}\mathrm{var}}$ [have the same convergent sequences on $\mathcal{C}^1_{p,K}$ and thus] are equivalent on $\mathcal{C}^1_{p,K}$. In particular $C_b(\mathcal{C}^1_{p,K};\|\cdot\|_\infty) = C_b(\mathcal{C}^1_{p,K};\|\cdot\|_{q\text{-}\mathrm{var}})$, whence for any net $(\mu_\alpha)\subset\fD^p_K\cong\mathcal{M}_1(\mathcal{C}^1_{p,K})$ we have: $\mu_\alpha\rightarrow\mu\in\fD^p_K$ weakly iff $\mu_\alpha\xrightharpoonup{q\text{-}\mathrm{var}}\mu$ (cf.\ \eqref{sect:pweak-convergence:eq1} and \eqref{sect:pweak-convergence:eq4}, with $\mathcal{C}^1$ replaced by $\mathcal{C}^1_{p,K}$). The lemma's assertions now follow by the fact that two topologies coincide iff they have the same convergent nets, and upon noting that $C_b(\hat{\mathcal{C}}_{\mathcal{I}};\|\cdot\|_\infty) = C_b(\hat{\mathcal{C}}_{\mathcal{I}};\|\cdot\|_{1\text{-}\mathrm{var}})$, which holds by the equivalence of $\|\cdot\|_\infty$ and $\|\cdot\|_{1\text{-}\mathrm{var}}$ on $\hat{\mathcal{C}}_\mathcal{I}$. With $\|\cdot\|_{1\text{-}\mathrm{var}}\geq\|\cdot\|_\infty$ known, the latter follows from $\|x\|_{1\text{-}\mathrm{var}} = |x_0| + \sum_{\nu=1}^{n-1}|x_{t_\nu} - x_{t_{\nu-1}}| \leq (2|\mathcal{I}|-1)\|x\|_\infty$  (any $x\in\hat{\mathcal{C}}_\mathcal{I}$).
\end{proof}

\subsection{Proof of Proposition \ref{lem:unifsigintegr:sufficient}}\label{pf:lem:unifsigintegr:sufficient}
\begin{proof}
Let $\mathfrak{U}\subseteq\mathcal{M}_1(\mathcal{C}^1)$ satisfy \eqref{lem:unifsigintegr:sufficient:eq1} for some $q>1$. Then for any $\mu\in\mathfrak{U}$ and with $r$ the conjugate index of $q$,
\begin{align}\label{lem:unifsigintegr:sufficient:aux1.1}
\int_{\{|\sig_w|>a\}}\!|\sig_w|\,\mathrm{d}\mu \ &\leq \ \frac{c}{m!}\int_{\mathcal{C}_d}\!\|x\|^m_{1\text{-}\mathrm{var}}\cdot\mathbbm{1}_{\{|\sig_w|>a\}}\,\mu(\mathrm{d}x)\\\label{lem:unifsigintegr:sufficient:aux1.2}
&\leq \ \frac{c}{m!}\left(\int_{\mathcal{C}_d}\|x\|^{mq}_{1\text{-}\mathrm{var}}\,\mu(\mathrm{d}x)\right)^{\!\!\frac{1}{q}}\!\!\cdot\Big(\mu\big(\{|\sig_w|>a\}\big)\Big)^{\!\frac{1}{r}} \ \, \stackrel{a\rightarrow\infty}{\longrightarrow} \, \ 0   
\end{align} 
for all $w\in[d]^{\leq m}$, where the first inequality is due to the classical signature bound \citep[Theorem 3.7]{FLO} (the constant $c$ is specified there), the second inequality is due to Hölder, and the final convergence to zero follows by Chebyshev's inequality. The latter in fact implies that 
\begin{equation}\label{lem:unifsigintegr:sufficient:aux2}
\sup\nolimits_{\mu\in\mathfrak{U}}\mu\big(\{|\sig_w|>a\}\big) \, \leq \, \frac{1}{a}\sup\nolimits_{\mu\in\mathfrak{U}}\int_{\mathcal{C}_d}\!|\sig_w(x)|\,\mu(\mathrm{d}x) \, \lesssim \, \frac{1}{a}\sup\nolimits_{\mu\in\mathfrak{U}}\int_{\mathcal{C}_d}\!\|x\|^m_{1\text{-}\mathrm{var}}\,\mu(\mathrm{d}x) \ \rightarrow \ 0   
\end{equation} 
as $a\rightarrow\infty$, where the last inequality is again due to the cited signature bound and the last supremum is finite by assumption \eqref{lem:unifsigintegr:sufficient:eq1} (recalling $L^q(\mu)\subseteq L^1(\mu)$, as the measures $\mu$ are all finite). Both \eqref{lem:unifsigintegr:sufficient:aux1.2} and the above $\mathfrak{U}$-uniformity of this convergence combine to \eqref{def:unifsigintegr:eq1}. 
\end{proof} 

\begin{remark}\label{rem:unifsigintegr}
The sets $\mathfrak{U}$ in $\fD$ that satisfy \eqref{lem:unifsigintegr:sufficient:eq1} are precisely the bounded subsets of the $mq$-Wasserstein space on $(\mathcal{C}^1, \|\cdot\|_{1\text{-}\mathrm{var}})$; for a discussion of those, see e.g.\ \citep[Sects.\ 2.1, 2.2]{panaretos2020}. Other than that, signals satisfying \eqref{lem:unifsigintegr:sufficient:eq1} are also known as Radon measures of order $mq$, and they are central to the theory of Radonifying mappings developed by Laurent Schwartz \citep{schwartz1969,schwartz1971} and others. Such signals and their uniformly bounded families are described in many sources, see e.g.\ \citep{bogachev2000, garling1973, linde1982, panaretos2020, takahashi1985} and associated references. For the special case $\mu\equiv\hat{\nu}\in\widehat{\fD}_1$ of random vectors in $\R^d$ (cf.\ Remark \ref{rem:discrete-case}), the integrals in \eqref{def:unifsigintegr:eq1} and \eqref{notation:unifsigintegr:eq1} reduce to the classical statistics  
\begin{equation}
\frac{1}{|w|!}\int_{\{|u^w|>|w|! a\}}\!|u^w|\,\nu(\mathrm{d}u) \quad\text{ and }\quad \int_{\hat{\mathcal{C}}_0}\!\|x\|_{1\text{-}\mathrm{var}}^{\tilde{q}}\,\hat{\nu}(\mathrm{d}x) = \int_{\R^d}\!|u|^{\tilde{q}}\,\nu(\mathrm{d}u), 
\end{equation}
respectively, with $u^{i_1\cdots i_m}\coloneqq u_1^{\sharp\{j\mid i_j=1\}}\cdots u_d^{\sharp\{j\mid i_j=d\}}$ for $u=(u_i)\in\R^d$. Both  \eqref{def:unifsigintegr:eq1} and \eqref{notation:unifsigintegr:eq1} thus appear as natural generalisations (from $\R^d$ to $\mathcal{C}_d$) of classical integrability conditions for multivariate moments, cf.\ e.g.\ \citep[pp.\ 30]{BIL}. \hfill $\bdiam$    
\end{remark} 

\subsection{Remark on Definition \ref{def:delta}}
\begin{remark}\label{rem:pretometric}The premetric \eqref{sect:coredinates:eq4} does not satisfy the identity of indiscernibles (``$\delta(\mu, \tilde{\mu})=0$ iff $\mu=\tilde{\mu}$''), as would be required of (a quasisemimetric or) a statistical divergence on $\fD$. This `missing' identity can be restored, however (see for instance \citep[Section 5.4]{CHO} for a proof), if instead of considering only its \nth{2} and \nth{3}-order moments one takes into account all the signature moments \eqref{def:coordinates:eq1} of a signal and computes their $\ell_2$-distance in the associated (Hilbert) coordinate space $\mathcal{V}^\infty\coloneqq\bigoplus\nolimits_{m\geq 1}(\R^d)^{\otimes m}$, which is an isometric extension of the above coordinate space $\mathcal{V}\cong(\R^d)^{\otimes 2}\oplus(\R^d)^{\otimes 3}$ that houses the coredinates \eqref{def:coordinates:eq2}. While higher-order extensions of this and other kinds are possible, cf.\ Section \ref{chap:robustICA:sect:addrems}, the following sections show that for the present case of classical [i.e.\ linear] BSS/ICA, the premetric topology $\tau_\delta$ associated [via \eqref{def:delta:eq1}] with the third-order cap \eqref{def:coordinates:eq2} is already sufficient. \hfill $\bdiam$  
\end{remark}           

\subsection{Some Technical Estimates}
\begin{appendixlemma}\label{lem:premetric_facts}
Let $p\in(1,2)$ and $\eta>0$ be arbitrary and denote $\alpha\coloneqq 1-1/p$ and $\gamma\coloneqq1/p-\alpha$.  
\begin{enumerate}[font=\upshape, label=(\roman*)]
\item\label{lem:premetric_facts:it1} Let $\{Y, \tilde{Y}\}\subset\mathcal{C}_{d|\mathbb{P}}\cap\mathfrak{D}$ be core-integrable and such that $\|\tilde{Y} - Y\|_\infty \leq \eta$ with probability one. Then for each $w\in[d]^\star$ with $2\leq |w|\leq 3$ we have
\begin{equation}\label{lem:premetric_facts:it1:eq1}
\big|\langle\tilde{Y}\rangle_w - \langle Y\rangle_w\big| \,\leq\, \kappa_{p,|w|}\cdot\eta^\alpha
\end{equation} 
with $\kappa_{p,k}\coloneqq (2+k)c_p(2^{\beta_p}K_p+1)$, for the constants $\beta_p\coloneqq 2+1/p$ and $c_p\coloneqq (1-2^{1-2/p})^{-1}$ and with $K_p\coloneqq\max_{V\in\{\tilde{Y}, Y\}}\mathbb{E}\big[\|V\|^{\beta_p}_{1\text{-}\mathrm{var}}\big]$. 
\item\label{lem:premetric_facts:it1.2} For any $\tilde{\mu}, \mu\in\fD$ such that $\max_{|w|=2,3}\big|\langle\tilde{\mu}\rangle_w - \langle\mu\rangle_w\big|\leq\kappa\eta$ for some $\kappa\geq 0$, we have
\begin{equation}\label{lem:premetric_facts:it1:eq2}
\big|\delta_{\independent}(\tilde{\mu}) - \delta_{\independent}(\mu)\big| \,\leq\, \varphi_\kappa(\mu,\tilde{\mu})\sqrt{\eta + \eta^2}
\end{equation}for the auxiliary function $\varphi_\kappa(\mu,\tilde{\mu})\coloneqq \sqrt{(3\vartheta_{\mu,\tilde{\mu}} + \kappa)\kappa\sum_{k=0}^d\sum_{i,j=1}^d\varphi_{ijk}(\mu,\tilde{\mu})}$ with 
\begin{equation}\label{lem:premetric_facts:it1:eq2.2}
\varphi_{ijk}(\mu,\tilde{\mu})\coloneqq\frac{2\big(\langle\mu\rangle_{ijk}\vee\sqrt{\langle\tilde{\mu}\rangle_{ii}\langle\tilde{\mu}\rangle_{jj}\langle\mu\rangle_{kk}}\big)^2}{\big(\!\sqrt{\langle\mu\rangle_{ii}\langle\mu\rangle_{jj}\langle\tilde{\mu}\rangle_{kk}}\,\wedge\sqrt{\langle\tilde{\mu}\rangle_{ii}\langle\tilde{\mu}\rangle_{jj}\langle\mu\rangle_{kk}}\big)^4}
\end{equation}
and $\vartheta_{\mu,\tilde{\mu}}\coloneqq\max\{\langle\mu\rangle_{ii}\langle\mu\rangle_{jj}, \langle\mu\rangle_{ii}\langle\tilde{\mu}\rangle_{jj}\mid i,j\in[d]\}$.       
\item\label{lem:premetric_facts:it2} For any $\mu, \tilde{\mu}\in\mathfrak{D}$ with $\delta(\mu,\tilde{\mu})\leq\eta$, we have
\begin{equation}\label{lem:premetric_facts:it2:eq1}
\max\nolimits_{|w|= 2,3}\big|\langle\tilde{\mu}\rangle_w - \langle\mu\rangle_w\big|\,\leq\, \mathfrak{m}_\mu\eta
\end{equation} 
for $\mathfrak{m}_\mu\coloneqq\max\{\sqrt{\langle\mu\rangle_{ii}\langle\mu\rangle_{jj}\langle\mu\rangle_{kk}} \mid i,j\in[d],k\in[d]_0\}$; if further $\mu$ is orthogonal, then $\delta_{\independent}(\tilde{\mu}) \,\leq\, \hat{\varphi}(\mu, \tilde{\mu})\sqrt{\eta + \eta^2}$ for $\hat{\varphi}(\mu,\tilde{\mu})^2\coloneqq\max\{3\vartheta_{\mu,\tilde{\mu}}\mathfrak{m}_\mu, \mathfrak{m}_\mu^2\}\sum_{k=0}^d\sum_{i,j=1}^d\varphi_{ijk}(\mu,\tilde{\mu})$. 
\item\label{lem:premetric_facts:it3} Let $R\in C^1(\R^d;\R^d)$ with\footnote{\ Here and in the following, we norm the Jacobian $D_R$ of a map $R\in C^1(\R^d;\R^d)$ via $\|D_R\|_\infty\coloneqq\sup_{x\in\R^d}\|D_R(x)\|_2$ where $\|\cdot\|_2$ is the Euclidean operator norm on $\R^{d\times d}$.} $\|D_R\|_\infty<\infty$, and let $\mu\in\fD$ be core-integrable with $\tilde{\mu}\coloneqq(\mathrm{I} + R)_\ast\mu$. Then, for each $w\in[d]^\star$ with $|w|= 2,3$: 
\begin{align}\label{lem:premetric_facts:it3:eq1}
\big|\langle\tilde{\mu}\rangle_w - \langle\mu\rangle_w\big| \,&\leq\, \tilde{K}_{p,|w|}\cdot\phi_{|w|}(\|D_R\|_\infty)\big(\|D_R\|_\infty\|R\|_\infty\big)^{\!\alpha}, \ \text{ and}\\\label{lem:premetric_facts:it3:eq1.2}
\big|\delta_{\independent}(\tilde{\mu}) - \delta_{\independent}(\mu)\big| \,&\leq\, \varphi_{\tilde{K}_{p,3}}\!(\mu,\tilde{\mu})\,\hat{\phi}_p(R)\cdot\phi_3(\|D_R\|_\infty)^{\tfrac{1}{2}}\big(\|D_R\|_\infty\|R\|_\infty\big)^{\!\tfrac{\alpha}{2}} 
\end{align} 
for $\tilde{K}_{p,k}\coloneqq 2^\alpha c_p(1+k/2)(1 + \mathbb{E}_\mu\!\big[\|x\|^\beta_{1\text{-}\mathrm{var}}\big])$ and $\phi_k(u)\coloneqq(1+u)^{k-1}u^\gamma$ and with $\varphi_{\tilde{K}_{p,3}}\!(\mu,\tilde{\mu})$ as in \emph{\ref{lem:premetric_facts:it1.2}} but for $\kappa\coloneqq\tilde{K}_{p,3}$, and $\hat{\phi}_p(R)\coloneqq\sqrt{1 + \phi_3(\|D_R\|_\infty)(\|D_R\|_\infty\|R\|_\infty)^\alpha}$.           
\end{enumerate}      
\end{appendixlemma}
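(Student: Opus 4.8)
\textbf{Proof plan for Appendix Lemma \ref{lem:premetric_facts}.}

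The plan is to establish the four items in order, as items (ii)--(iv) build on (i) via the auxiliary function machinery.

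\emph{Item \ref{lem:premetric_facts:it1}.} For a core-integrable pair $\{Y,\tilde Y\}$ with $\|\tilde Y-Y\|_\infty\le\eta$ a.s., I would start from the integral representation $\langle Y\rangle_w=\int\sig_w\,\mathrm dY$ and the Lebesgue--Stieltjes expansion of $\sig_w(\tilde x)-\sig_w(x)$ for $|w|\in\{2,3\}$ into a sum of (at most $|w|$) iterated integrals where exactly one factor is an increment of $\tilde x-x$ and the remaining factors are increments of $\tilde x$ or $x$. Each such term is bounded, via the standard factorial signature estimate (\citep[Thm.~3.7]{FLO}, as already invoked in Proposition \ref{lem:unifsigintegr:sufficient}), by a product of $p$-variation norms; then I interpolate via \eqref{rem:pweak-convergence:eq1}, i.e.\ $\|\cdot\|_{p\text{-}\mathrm{var}}\le 2\|\cdot\|_{1\text{-}\mathrm{var}}^{1/p}\|\cdot\|_\infty^{1-1/p}$, to convert one $\|\tilde x-x\|_{p\text{-}\mathrm{var}}$-factor into $\eta^\alpha$ times a $1$-variation factor. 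Taking expectations and applying H\"older with the integrability budget $\beta_p=2+1/p$ from \eqref{notation:unifsigintegr:eq1} gives \eqref{lem:premetric_facts:it1:eq1} with the stated constant $\kappa_{p,|w|}$; the geometric-series constant $c_p=(1-2^{1-2/p})^{-1}$ appears exactly when summing the $p$-variation bounds over the dyadic decomposition of the simplex.

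\emph{Items \ref{lem:premetric_facts:it1.2}, \ref{lem:premetric_facts:it2}.} These are purely algebraic consequences of Definition \ref{def:delta} together with the elementary Lipschitz bound for the square root, $|\sqrt a-\sqrt b|\le|a-b|/(\sqrt a\wedge\sqrt b)$ when the denominator is positive. Writing $\delta_{\independent}(\mu)$ via \eqref{rem:icdefect-nonstationary:eq1} as the $\mathcal V$-norm of a matrix tuple whose entries are the normalised quantities $(\langle\mu\rangle_{ij\nu}-\delta_{ij\nu}\langle\mu\rangle_{ij\nu})/\sqrt{\langle\mu\rangle_{ii}\langle\mu\rangle_{jj}\langle\mu\rangle_{\nu\nu}}$, I would use the reverse triangle inequality $|\,\|a\|-\|b\|\,|\le\|a-b\|$ in $\mathcal V$, then bound each coordinate difference by splitting it into a numerator-perturbation term (controlled by $\kappa\eta$ via the hypothesis) and a denominator-perturbation term (controlled by the same via the square-root Lipschitz bound). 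Collecting these into the single factor $\varphi_{ijk}(\mu,\tilde\mu)$ of \eqref{lem:premetric_facts:it1:eq2.2} and the coarse bound $\vartheta_{\mu,\tilde\mu}$ on products of diagonal moments yields \eqref{lem:premetric_facts:it1:eq2}; the $\sqrt{\eta+\eta^2}$ shape comes from keeping both the linear and quadratic contributions of the perturbation to the \emph{squared} defect before taking the outer square root. For \ref{lem:premetric_facts:it2} one simply feeds the hypothesis $\delta(\mu,\tilde\mu)\le\eta$ into \eqref{sect:coredinates:eq4} to recover the moment bound \eqref{lem:premetric_facts:it2:eq1} with $\mathfrak m_\mu$, and then applies \ref{lem:premetric_facts:it1.2} with $\kappa=\mathfrak m_\mu$, using orthogonality of $\mu$ (Lemma \ref{lemma1}) so that $\delta_{\independent}(\mu)=0$ and only the upper bound on $\delta_{\independent}(\tilde\mu)$ remains.

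\emph{Item \ref{lem:premetric_facts:it3}.} For $\tilde\mu=(\mathrm I+R)_\ast\mu$ I would again expand $\sig_w((\mathrm I+R)(x))-\sig_w(x)$ into iterated integrals; the key point is that the increment of the path $t\mapsto R(x_t)$ is, by the mean-value theorem / chain rule, bounded in $p$-variation by $\|D_R\|_\infty$ times that of $x$, and in sup-norm by $2\|R\|_\infty$. Interpolating as in \ref{lem:premetric_facts:it1} gives a factor $(\|D_R\|_\infty\|R\|_\infty)^\alpha$ and, after collecting the powers of $\|D_R\|_\infty$ that accumulate from the up-to-$|w|-1$ unperturbed factors, the polynomial prefactor $\phi_{|w|}(\|D_R\|_\infty)=(1+\|D_R\|_\infty)^{|w|-1}\|D_R\|_\infty^\gamma$; taking expectations under $\mu$ with the core-integrability budget produces the constant $\tilde K_{p,|w|}$ and establishes \eqref{lem:premetric_facts:it3:eq1}. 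Finally \eqref{lem:premetric_facts:it3:eq1.2} follows by combining \eqref{lem:premetric_facts:it3:eq1} for $|w|=2,3$ with \ref{lem:premetric_facts:it1.2} applied at $\kappa=\tilde K_{p,3}$ and $\eta=\phi_3(\|D_R\|_\infty)(\|D_R\|_\infty\|R\|_\infty)^\alpha$, where the extra factor $\hat\phi_p(R)=\sqrt{1+\eta}$ absorbs the $\sqrt{\eta+\eta^2}$ dependence and rewrites it in terms of the half-power of $(\|D_R\|_\infty\|R\|_\infty)$.

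\textbf{Main obstacle.} The routine-looking but genuinely delicate step is the bookkeeping in items \ref{lem:premetric_facts:it1} and \ref{lem:premetric_facts:it3}: one must track precisely which factors in each term of the simplex expansion carry the small increment, which carry $\|D_R\|_\infty$, and how the interpolation exponents $\alpha$ and $\gamma=1/p-\alpha$ distribute, so that the final constants match $\kappa_{p,k}$, $\tilde K_{p,k}$, $\phi_k$ exactly — in particular getting the geometric-series constant $c_p$ and the correct power $(1+u)^{k-1}u^\gamma$ rather than a looser bound. Everything else is an application of the triangle inequality, H\"older, Chebyshev (already used in Proposition \ref{lem:unifsigintegr:sufficient}), and the square-root Lipschitz estimate.
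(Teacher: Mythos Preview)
Your plan is correct and follows essentially the same route as the paper: the interpolation inequality \eqref{rem:pweak-convergence:eq1} combined with Young--Lo\`eve estimates (as in \citep[Prop.~7.63]{FVI}) for item \ref{lem:premetric_facts:it1}, the reverse triangle inequality in $\mathcal V$ plus entrywise numerator/denominator splitting for \ref{lem:premetric_facts:it1.2}--\ref{lem:premetric_facts:it2}, and the chain-rule bound $\|R(x)\|_{1\text{-}\mathrm{var}}\le\|D_R\|_\infty\|x\|_{1\text{-}\mathrm{var}}$ followed by interpolation and an application of \ref{lem:premetric_facts:it1.2} for item \ref{lem:premetric_facts:it3}. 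One small discrepancy: in \ref{lem:premetric_facts:it1.2} you invoke the Lipschitz bound $|\sqrt a-\sqrt b|\le|a-b|/(\sqrt a\wedge\sqrt b)$, but this would make the denominator contribution to $m_{ijk}^2$ quadratic in $\eta$ and hence yield a final bound of order $\eta$, not the stated $\sqrt{\eta+\eta^2}$; the paper instead uses the subadditive estimate $|\sqrt a-\sqrt b|\le\sqrt{|a-b|}$, which gives $|\beta_{ijk}-\tilde\beta_{ijk}|^2\le 3\vartheta_{\mu,\tilde\mu}\kappa\eta$ and hence the linear-in-$\eta$ term in the squared defect that produces the $(3\vartheta_{\mu,\tilde\mu}+\kappa)\kappa$ prefactor in $\varphi_\kappa$.
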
 
\begin{proof} 
\ref{lem:premetric_facts:it1}\,:\, Let $\{Y, \tilde{Y}\}\subset\fD$ be square-integrable with $\|\tilde{Y} - Y\|_\infty\leq\eta$ almost surely, and let $w\in[d]^{\leq m}$ be an arbitrary multiindex of length $|w|\leq m$. Take any $p\in(1,2)$. The classical interpolation inequality \citep[Prop.\ 5.5 (i)]{FVI} then implies that for $Z\coloneqq \tilde{Y} - Y$ we have
\begin{equation}\label{lem:premetric_facts:aux2.1.0}
\|Z\|_{p\text{-}\mathrm{var}} \,\leq\, 2^\alpha\|Z\|^{1/p}_{1\text{-}\mathrm{var}}\|Z\|_\infty^{(1-1/p)} \,\leq\, 2^\alpha\|Z\|^{1/p}_{1\text{-}\mathrm{var}}\eta^\alpha \,\leq\, 2\,\Xi^{1/p}\cdot\eta^\alpha  
\end{equation} 
where $\alpha\coloneqq 1-1/p$ and $\Xi\coloneqq\max\big\{\|Y\big\|_{1\text{-}\mathrm{var}},\big\|\tilde{Y}\big\|_{1\text{-}\mathrm{var}}\big\}$. The local Lipschitz continuity of the signature transform (cf.\ \citep[Proposition 7.63]{FVI}) then implies that, for each $2\leq|w|\leq m$,  
\begin{equation}\label{lem:premetric_facts:aux2}
\big|\langle\tilde{Y}\rangle_w - \langle Y\rangle_w\big| \,\leq\, \E\big|\sig_w(\tilde{Y}) - \sig_w(Y)\big|\,\leq\, (2 + |w|)c_p\,\E\big[\Xi^{|w|-1+1/p}\big]\cdot\eta^\alpha 
\end{equation}
for $c_p\coloneqq (1-2^{1-2/p})^{-1}$, as follows from the proof of \citep[Prop.\ 7.63]{FVI} but with the Young-Loeve estimates $|\tilde{Y}_{s,r} - Y_{s,r}| = |Z_{s,r}| \leq \|Z\|_{p\text{-}\mathrm{var}; [s,t]}$ and $\big|\int_s^t\tilde{Y}_{s,r}\,\mathrm{d}Z_r\big|\leq c_p\|\tilde{Y}\|_{p\text{-}\mathrm{var};[s,t]}\|Z\|_{p\text{-}\mathrm{var};[s,t]}$. 

Indeed: For each $|w|\geq 2$ we have       
\begin{align}
\big|\sig_w(\tilde{Y}) - \sig_w(Y)\big| &= \big|\langle\pi_{|w|}\big[\sig_{0,1}(\tilde{Y}) - \sig_{0,1}(Y)\big], w\rangle\big| \leq \big\|\pi_{|w|}\big[\sig_{0,1}(\tilde{Y}) - \sig_{0,1}(Y)\big]\big\|\\
&\hspace{-4em}= \left\|\int_0^1\!\pi_{|w|-1}\big[\sig_{0,r}(\tilde{Y}) - \sig_{0,r}(Y)\big]\mathrm{d}\tilde{Y}_r + \int_0^1\!\pi_{|w|-1}\big[\sig_{0,r}(Y)\big]\mathrm{d}Z_r\right\|\label{lem:premetric_facts:aux2.1.1} 
\end{align}  
where $\|\cdot\|\coloneqq |\cdot|_1$ is the projective tensor norm on the tensor powers of $\R^d$, cf.\ \citep[Sect.\ 5.6]{LYQ}. Using that for any $x\in\mathcal{C}^1([0,1];\R^d)$ and $y\in\mathcal{C}^1([0,1];\R^{e \times d})$ we have 
\begin{equation}
\begin{gathered}
\int_s^\cdot\!y_{s,r}\,\mathrm{d}x_r\,\in\,\mathcal{C}^1([s,t];\R^e) \quad\text{ and }\quad 
\left|\int_s^t\!y_{s,r}\,\mathrm{d}x_r\right| \,\leq\, c_p\|y\|_{p\text{-}\mathrm{var};[s,t]}\|x\|_{p\text{-}\mathrm{var}; [s,t]}
\end{gathered}
\end{equation}     
for each $0\leq s\leq t\leq 1$, see e.g.\ \citep[Sect.\ 1.3]{FLO} and \citep[Prop.\ 6.4]{FVI}, we find from \eqref{lem:premetric_facts:aux2.1.1} that 
\begin{equation}
\begin{aligned}
\big|\sig_w(\tilde{Y}) - \sig_w(Y)\big| \,&\leq\, c_p\big(\|Y\|_{p\text{-}\mathrm{var}} + \|\tilde{Y}\|_{p\text{-}\mathrm{var}}\big)\|Z\|_{p\text{-}\mathrm{var}}\leq 2c_p\Xi\|Z\|_{p\text{-}\mathrm{var}} \ \text{ if } \ |w|=2, \ \text{ and}\\
\big|\sig_w(\tilde{Y}) - \sig_w(Y)\big|\,&\leq\, \big(2c_p\Xi\|Y\|_{1\text{-}\mathrm{var}} + \frac{c_p}{2!}\|\tilde{Y}\|^2_{1\text{-}\mathrm{var}}\big)\|Z\|_{p\text{-}\mathrm{var}} \,\leq\, \tfrac{5}{2}c_p\Xi^2\|Z\|_{p\text{-}\mathrm{var}} \ \text{ if } \ |w|=3,
\end{aligned} 
\end{equation} 
where for the penultimate inequality we used the standard estimate $\|\pi_2[\sig_{s,t}(\tilde{Y})]\|_{1\text{-}\mathrm{var}}\leq\tfrac{1}{2!}\|\tilde{Y}\|^2_{1\text{-}\mathrm{var}}$ (see e.g.\ \citep[Prop.\ 2.2]{FLO}). Combined with \eqref{lem:premetric_facts:aux2.1.0}, we obtain that for $|w|=2,3$,      
\begin{equation}
\big|\langle\tilde{Y}\rangle_w - \langle Y\rangle_w\big| \,\leq\, (1 + |w|/2)c_p\mathbb{E}\!\left[\Xi^{|w|-1}\|Z\|_{p\text{-}\mathrm{var}}\right] \,\leq\, (2 + |w|)c_p\mathbb{E}\!\left[\Xi^{|w|-1+1/p}\right]\cdot\eta^\alpha,  
\end{equation}
as claimed in \eqref{lem:premetric_facts:aux2}. To finally arrive at \eqref{lem:premetric_facts:it1:eq1}, notice that $\Xi^{|w|-1+1/p} \leq 1 + \Xi^{2+1/p} \leq 1 + \big(\|\tilde{Y}\|_{1\text{-}\mathrm{var}} + \|Y\|_{1\text{-}\mathrm{var}}\big)^\beta$ for each $|w|=2,3$ and $\beta\coloneqq 2+1/p$ and hence, by Minkowski, 
\begin{align}\label{lem:premetric_facts:aux2.1.2}
\mathbb{E}\!\left[\Xi^{|w|-1+1/p}\right] - 1 \,\leq\, \left[\mathbb{E}\big[(\tilde{U} + U)^\beta\big]^{1/\beta}\right]^{\!\beta} \leq\, \left[\mathbb{E}\big[\tilde{U}^\beta\big]^{\!1/\beta} \!\!+ \mathbb{E}\big[U^\beta\big]^{\!1/\beta}\right]^{\!\beta} \leq\, 2^\beta K 
\end{align}  
for $\tilde{U}\coloneqq \|\tilde{Y}\|_{1\text{-}\mathrm{var}}$ and $U\coloneqq\|Y\|_{1\text{-}\mathrm{var}}$, as asserted.   

\ref{lem:premetric_facts:it1.2}\,:\, Let $\mu, \tilde{\mu}\in\fD$ and suppose, as required, that there is $\kappa\geq 0$ and $\eta>0$ such that
\begin{equation}\label{lem:premetric_facts:aux3.0}
\max_{|w|=2,3}\big|\langle\tilde{\mu}\rangle_w - \langle\mu\rangle_w\big|\leq\kappa\eta.
\end{equation}
The bound \eqref{lem:premetric_facts:it1:eq2} then follows from \eqref{lem:premetric_facts:aux3.0} and the definitions \eqref{def:delta:eq1} and \eqref{def:delta:eq2}, as shown next. 

\noindent
We use the representation \eqref{sect:coredinates:eq4} of $\delta$ for convenience. As is conventional for direct sums, we thereby write $C\equiv C_0 \bm{+} \ldots \bm{+} C_d$ for any $C\equiv(C_0, \cdots, C_d)\in \mathcal{V}$ (boldfaced `$+$'), and accordingly set $N_1\cdot C\cdot N_2 \equiv \bm{\sum}_{\nu=0}^d N_1C_\nu N_2 \equiv (N_1C_\nu N_2)_{\nu=0}^d\in\mathcal{V}$ for any $N_1, N_2 \in \mathbb{R}^{d\times d}$. Then 
\begin{equation}\label{lem:premetric_facts:aux3}
\delta(\mu, \tilde{\mu}) \, = \, \left\|N_\mu^{-1}\big(C_0 \bm{+} \textstyle{\bm{\sum}}_{k=1}^d C_k\big)N_\mu^{-1}\right\|_{\mathcal{V}} \quad\text{ with }\quad C_\nu= \left(\frac{\langle\tilde{\mu}\rangle_{ij\nu} - \langle\mu\rangle_{ij\nu}}{\sqrt{\langle\mu\rangle_{\nu\nu}}}\right)_{ij}   
\end{equation}
where $\langle\mu\rangle_{ij0}\coloneqq\langle\mu\rangle_{ij}$, $\langle\mu\rangle_{00}\coloneqq 1$ and $N_\mu$ as in \eqref{sect:coredinates:eq4}; we further set $C_{\mu\tilde{\mu}}\coloneqq C_0 \bm{+} \bm{\sum}_{k=1}^d C_k$. The assumed core-integrability of $\{\mu,\tilde{\mu}\}\subset\fD$ ensures that all of these quantities exist. 
 
The bound \eqref{lem:premetric_facts:it1:eq2} is based on the simple triangle inequality
\begin{equation}\label{lem:premetric_facts:aux4}
\begin{aligned}
\big|\delta_{\independent}(\tilde{\mu}) - \delta_{\independent}(\mu)\big| \, &= \, \Big|\big\|N_{\tilde{\mu}}^{-1}C_{\tilde{\mu}_\star\tilde{\mu}}N_{\tilde{\mu}}^{-1}\big\|_{\mathcal{V}} - \big\|N_{\mu}^{-1}C_{\mu_\star\mu}N_\mu^{-1}\big\|_{\mathcal{V}}\Big| \\
&\leq \, \left\|N_{\tilde{\mu}}^{-1}\Big[C_{\tilde{\mu}_\star\tilde{\mu}}R - R^{-1}C_{\mu_\star\mu}\Big]N_\mu^{-1}\right\|_{\mathcal{V}} \qquad \text{ for } \ R\coloneqq N_{\tilde{\mu}}^{-1}N_\mu, 
\end{aligned}
\end{equation}
where for a signal $\mu=(\mu^i)\in\fD$ we denote by $\mu_\star\coloneqq\mu^1\otimes\cdots\otimes\mu^d$ its associated product signal. By definition the matrices $R\equiv(R_{ij})$ are diagonal with $R_{ii}= \sqrt{\langle\mu\rangle_{ii}/\langle\tilde{\mu}\rangle_{ii}}$. 
Moreover, if $\tilde{\mu}, \mu$ are both mean-stationary, then so are their respective product signals $\tilde{\mu}_\star, \mu_\star$, whence Lemma \ref{lemma1} gives that the (flattened) tensors $C_{\mu_\star \mu}\eqqcolon(C_{ijk})$ and $C_{\tilde{\mu}_\star\tilde{\mu}}\eqqcolon(\tilde{C}_{ijk})$ read   
\begin{equation}\label{lem:premetric_facts:aux6}
C_{ij0} = (1-\delta_{ij})\langle \mu\rangle_{ij}  \quad\text{ and }\quad C_{ijk} = \frac{(1-\delta_{ijk})\langle \mu\rangle_{ijk}}{\sqrt{\langle \mu\rangle_{kk}}} \qquad (i,j,k\in[d]),
\end{equation}   
and likewise for $(\tilde{C}_{ijk})$. If $\tilde{\mu}$ or $\mu$ are not mean-stationary, then all of the above remains valid as stated but with \eqref{lem:premetric_facts:aux6} holding by definition of $\delta_{\independent}(\cdot)$, see Remark \ref{rem:icdefect-nonstationary}. The flattened [along its \nth{3} dimension, i.e.\ embedded in $\mathcal{V}$] tensor $M\equiv (m_{ijk})\coloneqq N_{\tilde{\mu}}^{-1}(C_{\tilde{\mu}_\star\tilde{\mu}}R - R^{-1}C_{\mu_\star \mu})N_\mu^{-1}$ thus reads
\begin{equation}\label{lem:premetric_facts:aux7} 
\begin{aligned}
m_{ij0} &= \frac{C_{ij0} R_{jj} - \tilde{C}_{ij0} R_{ii}^{-1}}{\sqrt{\langle\tilde{\mu}\rangle_{ii}\langle \mu\rangle_{jj}}} = \left[\frac{\langle \mu\rangle_{ij}}{\sqrt{\langle\tilde{\mu}\rangle_{ii}\langle\tilde{\mu}\rangle_{jj}}} - \frac{\langle\tilde{\mu}\rangle_{ij}}{\sqrt{\langle \mu\rangle_{ii}\langle \mu\rangle_{jj}}}\right]\!(1 - \delta_{ij}), \qquad\text{ and }\\
\quad m_{ijk} &= \frac{C_{ijk} R_{jj} - \tilde{C}_{ijk} R_{ii}^{-1}}{\sqrt{\langle \tilde{\mu}\rangle_{ii}\langle \mu\rangle_{jj}}} = \left[\frac{\langle \mu\rangle_{ijk}}{\sqrt{\langle\tilde{\mu}\rangle_{ii}\langle\tilde{\mu}\rangle_{jj}\langle \mu\rangle_{kk}}} - \frac{\langle\tilde{\mu}\rangle_{ijk}}{\sqrt{\langle \mu\rangle_{ii}\langle \mu\rangle_{jj}\langle\tilde{\mu}\rangle_{kk}}}\right]\!(1-\delta_{ijk}) 
\end{aligned} 
\end{equation}   
Writing $\langle\mu\rangle_{00}\coloneqq 1$ and $\langle\mu\rangle_{ij0}\coloneqq\langle\mu\rangle_{ij}$, as well as $\beta_{ijk}\coloneqq\sqrt{\langle \mu\rangle_{ii}\langle \mu\rangle_{jj}\langle\tilde{\mu}\rangle_{kk}}$ and $\tilde{\beta}_{ijk}\coloneqq\sqrt{\langle \tilde{\mu}\rangle_{ii}\langle \tilde{\mu}\rangle_{jj}\langle \mu\rangle_{kk}}$ and $\gamma_{ijk}\coloneqq\mathrm{min}\{\beta_{ijk}, \tilde{\beta}_{ijk}\}$ and $\varsigma_{ijk}\coloneqq\max\{|\langle \mu\rangle_{ijk}|,\,|\tilde{\beta}_{ijk}|\}$, we find that 
\begin{equation}\label{lem:premetric_facts:aux7.1}
\begin{aligned}
|m_{ijk}| = \left|\frac{\langle \mu\rangle_{ijk}\beta_{ijk} - \langle\tilde{\mu}\rangle_{ijk}\tilde{\beta}_{ijk}}{\beta_{ijk}\tilde{\beta}_{ijk}}\right| \,\leq\, \frac{\varsigma_{ijk}}{\gamma_{ijk}^2}\Big(|\beta_{ijk} - \tilde{\beta}_{ijk}| + |\langle \mu\rangle_{ijk} - \langle\tilde{\mu}\rangle_{ijk}|\Big).
\end{aligned}
\end{equation}     
Writing $\nu_j\coloneqq\langle \mu\rangle_{ii}$ and $\tilde{\nu}_j\coloneqq\langle\tilde{\mu}\rangle_{jj}$, for the first of the above bounding summands we find\footnote{\ Recall that $|\sqrt{a} - \sqrt{b}\,| \leq \sqrt{|a-b|}$ for any $a,b\geq 0$.}
\begin{equation}\label{lem:premetric_facts:aux7.2}
\begin{aligned}
\big|\beta_{ijk} - \tilde{\beta}_{ijk}\big|^2 \,&\leq\, \big|\nu_i\nu_j\tilde{\nu}_k - \tilde{\nu}_i\tilde{\nu}_j\nu_k\big|  
\,\leq\, \nu_i\nu_j\big|\tilde{\nu}_k - \nu_k\big| + \nu_k\big|\nu_i\nu_j - \tilde{\nu}_i\tilde{\nu}_j\big|\\
&\leq\, \nu_i\nu_j\big|\tilde{\nu}_k - \nu_k\big| + \nu_k\big(\nu_i|\nu_j-\tilde{\nu}_j| + \tilde{\nu}_j|\nu_i-\tilde{\nu}_i|\big).   
\end{aligned}
\end{equation} 
Hence after denoting $\vartheta\coloneqq\max\{\nu_i\nu_j, \nu_i\tilde{\nu}_j\mid i,j\in[d]\}$ and by \eqref{lem:premetric_facts:aux3.0}, we obtain that
\begin{equation}\label{lem:premetric_facts:aux7.3}
\big|\beta_{ijk} - \tilde{\beta}_{ijk}\big| \,\leq\, \sqrt{3\vartheta\kappa}\cdot\eta^{\tfrac{1}{2}}. 
\end{equation} 
Defining $\xi_{ijk}\coloneqq \sqrt{2}\varsigma_{ijk}/\gamma_{ijk}^2$, we can now combine \eqref{lem:premetric_facts:aux7.3} and \eqref{lem:premetric_facts:aux3.0} with \eqref{lem:premetric_facts:aux7.1} to arrive at
\begin{equation}\label{lem:premetric_facts:aux7.4}
m_{ijk}^2 \,\leq \, \xi_{ijk}^2\big(|\beta_{ijk} - \tilde{\beta}_{ijk}|^2 + |\langle \mu\rangle_{ijk} - \langle\tilde{\mu}\rangle_{ijk}|^2\big) \,\leq\, K_1\xi_{ijk}^2\cdot(\eta+\eta^2) 
\end{equation}
for $K_1\coloneqq (3\vartheta+\kappa)\kappa$. Using \eqref{lem:premetric_facts:aux4}, this allows us to conclude   
\begin{equation}\label{lem:premetric_facts:aux8}
\big|\delta_{\independent}(\tilde{\mu}) - \delta_{\independent}(\mu)\big|^2  \ \leq \ \sum_{k=0}^d\sum_{i,j=1}^d m_{ijk}^2 \, \leq \, K_2\cdot(\eta+\eta^2)
\end{equation}   
for $K_2\coloneqq K_1\sum_{k=0}^d\sum_{i,j=1}^d\xi_{ijk}^2$, which gives \eqref{lem:premetric_facts:it1:eq2} as claimed.  

\ref{lem:premetric_facts:it2}\,:\, Given $\delta(\mu, \tilde{\mu})\leq\eta$, we know from Def.\ \ref{def:delta} that for each $w\equiv ijk\in[d]^\star_{|\cdot|=2,3}$ we have 
\begin{equation}\label{lem:premetric_facts:aux9}
\big|\langle\tilde{\mu}\rangle_w - \langle\mu\rangle_w\big| \leq \sqrt{\langle\mu\rangle_{ii}\langle\mu\rangle_{jj}\langle\mu\rangle_{kk}}\cdot\eta \,\leq\, \mathfrak{m}_\mu\eta
\end{equation}
for $\mathfrak{m}_\mu=\max\{\sqrt{\langle\mu\rangle_{ii}\langle\mu\rangle_{jj}\langle\mu\rangle_{kk}} \mid i,j\in[d],k\in[d]_0\}$. Hence by the very same computations as in point \ref{lem:premetric_facts:it1.2} (upon replacing assumption \eqref{lem:premetric_facts:aux3.0} by \eqref{lem:premetric_facts:aux9}) we find     
\begin{equation}\label{lem:premetric_facts:aux10}
\big|\delta_{\independent}(\tilde{\mu}) - \delta_{\independent}(\mu)\big|^2 \,\leq\, \max\{3\vartheta_{\mu,\tilde{\mu}}\mathfrak{m}_\mu, \mathfrak{m}_\mu^2\}\sum_{k=0}^d\sum_{i,j=1}^d\xi_{ijk}^2\cdot(\eta+\eta^2) 
\end{equation}
with $\vartheta_{\mu,\tilde{\mu}}$ and $\xi_{ijk}$ as defined previously. If $\delta_{\independent}(\mu)=0$ (cf.\ Lemma \ref{lemma1}), this gives the claim. 

\ref{lem:premetric_facts:it3}\,:\, Let $w\in[d]^\star$ be an arbitrary multiindex. Then by the change-of-variables formula,
\begin{equation}\label{lem:premetric_facts:aux11}
\begin{aligned}
\big|\langle\tilde{\mu}\rangle_w - \langle\mu\rangle_w\big| \,&=\, \left|\int_{\mathcal{C}_d}\!\sig_w\big(x + R(x)\big)\,\mu(\mathrm{d}x) - \int_{\mathcal{C}_d}\!\sig_w(x)\,\mu(\mathrm{d}x)\right|\\
&\leq\, \int_{\mathcal{C}_d}\!\big|\sig_w\big(x + R(x)\big) - \sig_w(x)\big|\,\mu(\mathrm{d}x).   
\end{aligned} 
\end{equation}    
Now by definition of $\|\cdot\|_{1\text{-}\mathrm{var}}$ we find (with the sup taken over all dissections $(t_\nu)$ of $[0,1]$)
\begin{equation}\label{lem:premetric_facts:aux12}
\|R(x)\|_{1\text{-}\mathrm{var}}=\sup_{(t_\nu)}\sum_\nu\big|R(x_{t_{\nu+1}}) - R(x_{t_\nu})\big| \,\leq\, \|D_R\|_\infty\|x\|_{1\text{-}\mathrm{var}} 
\end{equation}
and hence -- again by \citep[Proposition 5.5 (i)]{FVI}, just as for \eqref{lem:premetric_facts:aux2.1.0} -- that 
\begin{equation}\label{lem:premetric_facts:aux13}
\|R(x)\|_{p\text{-}\mathrm{var}} \,\leq\, 2^\alpha\|D_R\|_\infty^{1/p}\|x\|_{1\text{-}\mathrm{var}}^{1/p}\|R\|_\infty^\alpha\,=\, 2^\alpha\|x\|_{1\text{-}\mathrm{var}}^{1/p}\|D_R\|_\infty^\gamma\cdot\big(\|D_R\|_\infty\|R\|_\infty\big)^{\!\alpha}. 
\end{equation} 
for $\gamma\coloneqq 1/p-\alpha>0$. From the estimates of point \ref{lem:premetric_facts:it1} -- with $\tilde{Y}(\omega)$ and $Y(\omega)$ replaced by $x+R(x)$ and $x$, respectively -- combined with \eqref{lem:premetric_facts:aux12}, we obtain for $|w|=2,3$ that
\begin{equation}\label{lem:premetric_facts:aux14}
\big|\sig_w\big(x + R(x)\big) - \sig_w(x)\big| \,\leq\, (1+|w|/2)c_p(1+\|D_R\|_\infty)^{|w|-1}\|x\|_{1\text{-}\mathrm{var}}^{|w|-1}\|R(x)\|_{p\text{-}\mathrm{var}}.   
\end{equation}       
Together with \eqref{lem:premetric_facts:aux11} and \eqref{lem:premetric_facts:aux13}, the above combines to 
\begin{equation}
\big|\langle\tilde{\mu}\rangle_w - \langle\mu\rangle_w\big| \,\leq\, \tilde{c}_{p,|w|}\mathbb{E}_\mu\big[\|x\|_{1\text{-}\mathrm{var}}^{|w|-1+1/p}\big]\cdot\phi_{|w|}(\|D_R\|_\infty)\big(\|D_R\|_\infty\|R\|_\infty\big)^{\!\alpha}   
\end{equation}
for $\tilde{c}_{p,|w|}\coloneqq 2^\alpha c_p(1+|w|/2)$ and $\phi_k(u)\coloneqq(1+u)^{k-1}u^\gamma$. Since $\mathbb{E}_\mu\big[\|x\|_{1\text{-}\mathrm{var}}^{|w|-1+1/p}\big]\leq 1 + \mathbb{E}_\mu\big[\|x\|^\beta_{1\text{-}\mathrm{var}}\big]\eqqcolon 1 + \tilde{K}_p$ for $\beta=2+1/p$, we thus find for each $w\in[d]^\star$ with $|w|=2,3$ that   
\begin{equation}\label{lem:premetric_facts:aux15}
\big|\langle\tilde{\mu}\rangle_w - \langle\mu\rangle_w\big| \,\leq\, \tilde{c}_{p,|w|}(1 + \tilde{K}_p)\cdot\phi_{|w|}(\|D_R\|_\infty)\big(\|D_R\|_\infty\|R\|_\infty\big)^{\!\alpha} 
\end{equation} 
as claimed. Given \eqref{lem:premetric_facts:aux15}, the last inequality \eqref{lem:premetric_facts:it3:eq1.2} then holds by \ref{lem:premetric_facts:it1.2} for $\kappa\coloneqq \tilde{c}_{p,3}(1+\tilde{K}_p)$ and $\eta\coloneqq\phi_3(\|D_R\|_\infty)(\|D_R\|_\infty\|R\|_\infty)^\alpha$.    
\end{proof}    
  
\subsection{Proof of Lemma \ref{lem:premetric_facts2}}\label{pf:lem:premetric_facts2}
\begin{proof}
Note that any functional $\varphi$ which acts on signals in $\dot{\fD}$ via their coredinates \eqref{def:coordinates:eq2} (such as $\varphi_\kappa$, $\mathfrak{m}_\mu$ etc.\ in Lemma \ref{lem:premetric_facts}), factorizes as $\varphi = \phi_\varphi\circ[\,\cdot\,]^{\times k}$ with $[\,\cdot\,]^{\times k} : \dot{\fD}^{\times k}\rightarrow\mathcal{V}^{\times k}$ and $\phi_\varphi: \mathcal{V}^{\times k}\rightarrow\R$ (for $(\mathcal{V}, \|\cdot\|_{\mathcal{V}})$ as in \eqref{sect:coredinates:eq4} and $k$ the number of arguments of $\varphi$), where $[\,\cdot\,] : \mu \mapsto ([\mu]_\nu)\in\mathcal{V}$ is the coredinate map (see \eqref{def:coordinates:eq1}) and $\phi_\varphi$ is the coredinate action of $\varphi$. 

Note further that by definition \eqref{lem:premetric_topofacts:eq1} of the $\delta$-balls and the restriction $r<\hat{r}$, the image set 
\begin{equation}\label{lem:premetric_facts2:aux1}
[B_r]\equiv[\,\cdot\,](B_r) \quad \text{is contained in} \quad \hat{\mathfrak{B}}\coloneqq\bar{B}_{q\rho_0}([\hat{\mu}]_0)\times\bar{B}_{q\rho_0\sqrt{\rho_1}}([\hat{\mu}]_{\nu\geq 1})\equiv\hat{\mathfrak{B}}_0\times\hat{\mathfrak{B}}_1  
\end{equation} 
for some $q\equiv q_r\in(0,1)$, where we defined $\hat{\mathfrak{B}}_0\equiv\bar{B}_{q\rho_0}([\hat{\mu}]_0)\coloneqq\{C\in\R^{d\times d}\mid \|C-[\hat{\mu}]_0\| \leq q\rho_0\}$ and $\hat{\mathfrak{B}}_1\equiv\bar{B}_{q\rho_0\sqrt{\rho_1}}([\hat{\mu}]_{\nu\geq 1})\coloneqq\{C'\in(\R^{d\times d})^{\oplus d}\mid \|(0,C') - (0,[\hat{\mu}]_1,\ldots,[\hat{\mu}]_d)\|_{\mathcal{V}} \leq q\rho_0\sqrt{\rho_1}\}$. 

Clearly $\hat{\mathfrak{B}}\subset\mathcal{V}$ compact, so that (by the extreme value theorem) we have
\begin{equation}\label{lem:premetric_facts2:aux2}
K_{\varphi|r}\coloneqq\sup_{\mu, \tilde{\mu}\in B_r} |\varphi(\mu,\tilde{\mu})| \,\leq\, \sup_{(\mathfrak{q}, \tilde{\mathfrak{q}})\in\hat{\mathfrak{B}}^{\times 2}} |\phi_\varphi(\mathfrak{q}, \tilde{\mathfrak{q}})| < \infty  \quad\text{ if }\quad \restr{\phi_\varphi}{\hat{\mathfrak{B}}^{\times 2}} \text{ is continuous.} 
\end{equation} 
This allows us to derive the asserted inequalities from Lemma \ref{lem:premetric_facts}, as done below.
 
\ref{lem:premetric_facts2:it1}\,:\, Take any $\tilde{\mu}\in B_r$, for which then \eqref{lem:premetric_facts:aux9} and \eqref{lem:premetric_facts:aux10} (for $\mu\coloneqq\hat{\mu}$ and $\eta\coloneqq r$). Hence
\begin{equation}\label{lem:premetric_facts2:aux3}
\sup_{\mu\in B_r}\!\Big[\max_{|w|=2,3}\big|\langle\mu\rangle_w - \langle\hat{\mu}\rangle_w\big|\Big]\leq \mathfrak{m}_{\hat{\mu}|r}r \quad\text{and}\quad \sup_{\mu\in B_r}\!\Big[\big|\delta_{\independent}(\mu) - \delta_{\independent}(\hat{\mu})\big|\Big]\leq K_{\hat{\mu}|r}\sqrt{r + r^2} 
\end{equation} 
for $\mathfrak{m}_{\hat{\mu}|r}\coloneqq\sup_{\mu\in B_r}\mathfrak{m}_\mu$ and $K_{\hat{\mu}|r}\equiv K_{\hat{\varphi}|r}\coloneqq\sup_{\mu\in B_r}|\hat{\varphi}(\mu,\hat{\mu})|$, with $\mathfrak{m}_\mu$ and $\hat{\varphi}$ as in Lemma \ref{lem:premetric_facts} \ref{lem:premetric_facts:it2}. Now since -- as noted at the beginning of this proof -- both $\mathfrak{m}_\mu$ and  $\hat{\varphi}$ factor through the map $[\,\cdot\,]$ resp.\ $[\,\cdot\,]^{\times 2}$ with coredinate actions $\phi_{\mathfrak{m}}$ resp.\ $\phi_{\hat{\varphi}}$, the finiteness of $\mathfrak{m}_{\hat{\mu}|r}$ and $K_{\hat{\mu}|r}$ holds by \eqref{lem:premetric_facts2:aux2}, which is applicable as the restricted actions $\restr{\phi_{\hat{\varphi}}}{\hat{\mathfrak{B}}^{\times 2}}$ and $\restr{\phi_{\mathfrak{m}}}{\hat{\mathfrak{B}}^{\times 2}}$ are both continuous; the latter follows from\footnote{\ Indeed: Clearly $\phi_{\hat{\varphi}}=\sqrt{\max\{3\phi_{\vartheta}, \phi_{\mathfrak{m}}, \phi_{\mathfrak{m}}^2\}\sum_{k,i,j}\phi_{\varphi_{ijk}}}$ with $\phi_{\varphi_{ijk}}\circ[\,\cdot\,]^{\times 2} = \varphi_{ijk}$, (cf.\ \eqref{lem:premetric_facts:it1:eq2.2}), and the inclusion \eqref{lem:premetric_facts2:aux1}, specifically $\{[\mu]_0\mid \mu\in B_r\}\subseteq\hat{\mathfrak{B}}_0$, implies that the functions $\restr{\phi_{\varphi_{ijk}}}{{\hat{\mathfrak{B}}^{\times 2}}}$ are each continuous since their denominators are bounded away from zero. (To be explicit, the latter is seen from
\begin{equation}\label{lem:premetric_facts2:aux4}
\langle\mu\rangle_{ii} \,\geq\, \langle\hat{\mu}\rangle_{ii} - |\langle\hat{\mu}\rangle_{ii} - \langle\mu\rangle_{ii}| \,\geq\, \langle\hat{\mu}\rangle_{ii} - \|[\hat{\mu}]_0 - [\mu]_0\| \,\geq\, \langle\hat{\mu}\rangle_{ii} - q\rho_0 \,\geq\, (1-q)\rho_0 \,>\, 0\,
\end{equation}which holds for each $\mu\in B_r$ and all $i\in[d]$.) The continuity of the remaining constituents of $\phi_{\hat{\varphi}}$, i.e.\ $\phi_\vartheta$ and $\phi_{\mathfrak{m}}$, is clear by definition of $\vartheta$ and $\mathfrak{m}$, respectively (see Lemma \ref{lem:premetric_facts}, where said definitions are given). } \eqref{lem:premetric_facts2:aux1} resp.\ the definition of $\mathfrak{m}_\mu$.     

\ref{lem:premetric_facts2:it2}\,:\, This assertion follows similarly from Lemma \ref{lem:premetric_facts} \ref{lem:premetric_facts:it3}. Indeed: Let $\mu\in B_r$ be arbitrary. Then $\|[\mu]_0-[\hat{\mu}]_0\| \leq \rho_1 r$, thus $\langle\mu\rangle_{ii}\geq\langle\hat{\mu}\rangle_{ii} - |\langle\hat{\mu}\rangle_{ii} - \langle\mu\rangle_{ii}| \geq \rho_0 - \rho_1 r$ and, hence,  
\begin{equation}\label{lem:premetric_facts2:aux5}
\langle\tilde{\mu}\rangle_{ii} \,\geq\, \langle\mu\rangle_{ii} - |\langle\tilde{\mu}\rangle_{ii} - \langle\mu\rangle_{ii}| \,\geq\, \rho_0 - \rho_1 r - \rho_R \,>\, 0 \quad \text{ for }\quad \tilde{\mu}\coloneqq(I+R)_\star\mu
\end{equation}  
and any $i\in[d]$ (recall that the penultimate inequality holds by \eqref{lem:premetric_facts:it3:eq1}). The lower bounds on the $\langle\mu\rangle_{ii}$ and $\langle\tilde{\mu}\rangle_{ii}$ prevents the functional $\varphi_R: \mu \mapsto \varphi_{\tilde{\kappa}}(\mu, (I+R)_\star\mu)\hat{\phi}_p(R)$ (with $\hat{\phi}_p$ and $\varphi_{\tilde{\kappa}}$ as in \eqref{lem:premetric_facts:it3:eq1.2} but for $\tilde{\kappa}\coloneqq2^{\alpha+1}c_p(1+K_0)\geq\tilde{K}_{p,3}$), resp.\ its coredinate action $\phi_{\varphi_R}$, from blowing up on the domain $B_r$, resp.\ on $\tilde{\mathfrak{B}}\equiv\bar{B}_{\rho_1r}([\hat{\mu}]_0)\times\hat{\mathfrak{B}}_1\supseteq[B_r]$. As for point \ref{lem:premetric_facts2:it1} this ensures that the action $\restr{\phi_{\varphi_R}}{\tilde{\mathfrak{B}}}$ is continuous, whence from \eqref{lem:premetric_facts:it3:eq1.2} we obtain as desired that
\begin{equation}
\sup\nolimits_{\mu\in B_r}\big|\delta_{\independent}((I+R)_\star\mu) - \delta_{\independent}(\mu)\big| \,\leq\, \tilde{K}_{\hat{\mu}|r}\cdot\phi_3(\|D_R\|_\infty)^{\tfrac{1}{2}}\big(\|D_R\|_\infty\|R\|_\infty\big)^{\!\tfrac{\alpha}{2}}
\end{equation}  
for the finite -- by \eqref{lem:premetric_facts2:aux2} -- constant $\tilde{K}_{\hat{\mu}|r}\equiv\tilde{K}_{\hat{\mu}|r, R}\coloneqq\sup_{\mu\in B_r}|\varphi_R(\mu)|<\infty$.  

For any $C, c >0$ with $c < \rho_0$ and $\mathcal{R}^C_c$ as in \eqref{lem:premetric_facts2:eq3}, the lemma's last point follows by replacing the above $\tilde{K}_{\hat{\mu}|r}$ with $\tilde{K}_{\hat{\mu}|c,C}\coloneqq\sup_{(\mu,R)\in\mathcal{R}^C_c}|\varphi_R(\mu)|$, which is clearly finite as is seen from the definition of $\varphi_R$, the (lower and upper) bounds \eqref{lem:premetric_facts2:aux5} [and its preceding line] and \eqref{lem:premetric_facts:it3:eq1}.   
\end{proof}  

\subsection{Proof of Proposition \ref{prop1} and Lemma \ref{lem:scaleinvariance}}\label{pf:prop1}
\begin{proof}[Proof of Proposition \ref{prop1}]
Applied to the identity $\chi = A\cdot\zeta$ and written out at order $m=2,3$, the above equivariance of the signature moments reads
\begin{equation}\label{sect:linequiv:eq0}
\begin{aligned}
\langle\chi\rangle_{ij} &= \textstyle\sum_{\alpha,\beta=1}^d a_{i\alpha}a_{j\beta}\cdot\langle\zeta\rangle_{\alpha\beta} &&\big((i,j)\in[d]^2\big) \qquad \text{ and}\\ 
\langle\chi\rangle_{ijk} &= \textstyle\sum_{\alpha,\beta,\gamma=1}^d a_{i\alpha}a_{j\beta}a_{k\gamma}\cdot\langle\zeta\rangle_{\alpha\beta\gamma} &&\big((i,j,k)\in[d]^3\big)
\end{aligned}
\end{equation} 
which, recalling \eqref{def:coordinates:eq2}, is a system of equations equivalent to the matrix congruences
\begin{equation}\label{sect:linequiv:eq1}
[\chi]_{0} = A[\zeta]_0A^{\intercal} \quad\text{and}\quad [\chi]_k=A\Big[\textstyle\sum_{\gamma=1}^d a_{k\gamma}[\zeta]_\gamma\Big]A^{\intercal}  
\end{equation} 
for each $k\in[d]$. The identities \eqref{prop1:eq2} follow. As to the conjugacy \eqref{prop1:eq3}, this holds by \eqref{sect:linequiv:eq1} upon noting that $[\chi]_\odot^c = c_0[\chi]_0^{-1}\cdot[\chi]_{\underline{c}}$ for the linear combination
\begin{equation}
[\chi]_{\underline{c}}\ \equiv\ \textstyle\sum_{\nu=1}^d \tilde{c}_\nu[\chi]_\nu \ = \ A\Big[\textstyle\sum_{\gamma=1}^d\!\big(\sum_{\nu=1}^d a_{\nu\gamma}c_\nu\big)[\zeta]_\gamma\Big]A^\intercal \ = \ A\Big[\textstyle\sum_{\gamma=1}^d (a_\gamma\cdot\underline{c})[\zeta]_\gamma\Big]A^\intercal
\end{equation}       
as this implies that 
\begin{equation}
\begin{aligned}
[\chi]_\odot^c \ = \ A^{-\intercal}c_0[\zeta]_0^{-1}A^{-1}\cdot A\Big[\textstyle\sum_{\gamma=1}^d (a_\gamma\cdot\underline{c})[\zeta]_\gamma\Big]A^\intercal \ = \ A^{-\intercal}\cdot[\zeta]_\odot^{A|c}\cdot A^\intercal. 
\end{aligned}
\end{equation} 
The asserted diagonality of $[\zeta]^{A|c}_\odot$ is clear by Lemma \ref{lemma1}. 
\end{proof}

\begin{proof}[Proof of Lemma \ref{lem:scaleinvariance}]
As $\chi$ is arbitrary and the $\GL$-action \eqref{transformation} leaves $\mathcal{D}$ invariant, it suffices to show \eqref{lem:scaleinvariance:eq2} for $\theta\coloneqq\mathrm{id}$ and any $\Lambda\equiv\mathrm{ddiag}[\lambda_1, \ldots, \lambda_d]$ with $\lambda_i>0$. Then by Lemma \ref{lem:linequiv}, cf.\ \eqref{sect:linequiv:eq0},
\begin{equation}
\begin{aligned}
\big\langle\Lambda\cdot\chi\big\rangle_{ij} &= \textstyle\lambda_i\lambda_j\sum_{\alpha,\beta=1}^d\delta_{i\alpha}\delta_{j\beta}\langle\chi\rangle_{\alpha\beta} = \lambda_i\lambda_j\langle\chi\rangle_{ij}\\
\big\langle\Lambda\cdot\chi\big\rangle_{ijk} &= \lambda_i\lambda_j\lambda_k\textstyle\sum_{\alpha,\beta,\gamma=1}^d\delta_{\alpha\beta\gamma}^{ijk}\langle\chi\rangle_{\alpha\beta\gamma} = \lambda_i\lambda_j\lambda_k\langle\chi\rangle_{ijk} 
\end{aligned}  
\end{equation}for $i,j,k\in[d]$. In particular, $N_{\Lambda\cdot\chi} = \Lambda\cdot N_{\chi}$ as well as 
\begin{equation}
\big[\Lambda\cdot\chi\big]_0 = \Lambda[\chi]_0\Lambda \quad\text{and}\quad \big[\Lambda\cdot\chi\big]_\nu = \lambda_\nu\Lambda[\chi]_\nu\Lambda \quad\text{for each $\nu\in[d]$,}
\end{equation}
which implies \eqref{lem:scaleinvariance:eq2} by way of the definitions \eqref{contrast:eq0.1}.   
\end{proof}

\subsection{Compactness of $\Xi_1$ and \texorpdfstring{$\tilde{\Xi}_1$}{Xi_1}} 
\begin{lemma}\label{lem:xicompact}
For $\Xi^0_1$ as in \eqref{lica:unitrows} and $\gamma\geq 1$, the set 
\begin{equation}
\Xi_1^\gamma\coloneqq\Xi^0_1\cap\{\theta\in\Theta\mid\kappa_2(\theta)\leq\gamma\} \ \text{ is compact in \ } \R^{d\times d}.
\end{equation}  
\end{lemma}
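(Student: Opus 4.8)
The statement asserts that $\Xi_1^\gamma = \Xi_1^0 \cap \{\theta \in \Theta \mid \kappa_2(\theta) \le \gamma\}$ is compact in $\R^{d\times d}$ for any fixed $\gamma \ge 1$. Since $\R^{d\times d}$ is finite-dimensional, the Heine--Borel theorem reduces this to showing that $\Xi_1^\gamma$ is \emph{closed} and \emph{bounded}. The plan is to treat these two properties separately, extracting boundedness from the row-normalisation constraint together with the condition-number bound, and closedness from the continuity (on $\GL$) of the maps $\theta \mapsto \|\theta_i\|_2$ and $\theta \mapsto \kappa_2(\theta)$, with a little extra care at the boundary of $\GL$ where the condition number degenerates.

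\textbf{Boundedness.} Every $\theta = (\theta_1|\cdots|\theta_d)^\intercal \in \Xi_1^0$ has unit rows, so $\|\theta\|_F^2 = \sum_{i=1}^d \|\theta_i\|_2^2 = d$; hence $\Xi_1^0$ (and a fortiori $\Xi_1^\gamma$) is already bounded in the Frobenius norm, with $\|\theta\| = \sqrt{d}$ for all $\theta \in \Xi_1^0$. So boundedness is immediate and does not even require the condition bound --- I would just record $\Xi_1^\gamma \subseteq \{\|\theta\| = \sqrt d\}$.

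\textbf{Closedness.} Let $(\theta^{(n)}) \subset \Xi_1^\gamma$ with $\theta^{(n)} \to \theta$ in $\R^{d\times d}$. The row constraints pass to the limit by continuity of $\theta \mapsto \|\theta_i\|_2$, giving $\|\theta_i\|_2 = 1$ for each $i$, i.e.\ $\theta \in \Xi_1^0$. It remains to show $\theta \in \GL$ with $\kappa_2(\theta) \le \gamma$. Here I would argue via singular values: writing $\sigma_1(\cdot) \ge \cdots \ge \sigma_d(\cdot) \ge 0$ for the ordered singular values, each $\sigma_k$ is continuous on all of $\R^{d\times d}$ (e.g.\ by Weyl's perturbation inequality $|\sigma_k(\theta^{(n)}) - \sigma_k(\theta)| \le \|\theta^{(n)} - \theta\|_2$), so $\sigma_k(\theta^{(n)}) \to \sigma_k(\theta)$ for all $k$. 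The hypothesis $\kappa_2(\theta^{(n)}) = \sigma_1(\theta^{(n)})/\sigma_d(\theta^{(n)}) \le \gamma$, combined with $\|\theta^{(n)}\| = \sqrt d$, yields $\sigma_1(\theta^{(n)}) \le \sqrt d$ and hence $\sigma_d(\theta^{(n)}) \ge \sigma_1(\theta^{(n)})/\gamma$. To get a uniform lower bound on $\sigma_d$ one more observation is needed: since $\theta^{(n)}$ has unit rows, $\sigma_1(\theta^{(n)}) = \|\theta^{(n)}\|_2 \ge \max_i \|\theta_i^{(n)}\|_2 = 1$, so $\sigma_d(\theta^{(n)}) \ge 1/\gamma > 0$ uniformly in $n$. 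Passing to the limit, $\sigma_d(\theta) \ge 1/\gamma > 0$, so $\theta \in \GL$, and $\kappa_2(\theta) = \sigma_1(\theta)/\sigma_d(\theta) = \lim_n \sigma_1(\theta^{(n)})/\sigma_d(\theta^{(n)}) \le \gamma$. Thus $\theta \in \Xi_1^\gamma$, proving closedness, and Heine--Borel then gives compactness.

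\textbf{Main obstacle.} The only genuinely delicate point is the behaviour of $\kappa_2$ near $\partial\GL$: $\kappa_2$ is \emph{not} continuous (it blows up) at singular matrices, so one cannot simply invoke continuity of $\kappa_2$. The fix is exactly the uniform lower bound $\sigma_d(\theta^{(n)}) \ge 1/\gamma$ derived above from the unit-row normalisation plus the condition bound --- this keeps the whole sequence bounded away from $\partial\GL$, making $\kappa_2$ (equivalently, the pair $(\sigma_1,\sigma_d)$) continuous along it. Everything else is routine.
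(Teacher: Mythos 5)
Your proof is correct and follows essentially the same route as the paper's: Heine--Borel, boundedness from the unit-row normalisation, and closedness by showing that the condition bound keeps the sequence uniformly away from the singular matrices. The paper executes the last step by contradiction, using the submultiplicativity estimate $\|\theta_n^{-1}\|_2 \ge \|\theta_n - \theta\|_2^{-1}$ when the limit $\theta$ is assumed singular, whereas you argue directly via Weyl continuity of the singular values together with the uniform bound $\sigma_d(\theta_n) \ge 1/\gamma$; the two are equivalent, and your variant is if anything slightly cleaner since $\sigma_1(\theta_n) \ge 1$ comes for free from the unit rows rather than from convergence to a nonzero limit.
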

\begin{proof}
The set $\Xi_1^\gamma\subset\R^{d\times d}$ is bounded (by definition of $\Xi^0_1$), so it remains to show that it is also closed. For this, let $(\theta_n)\subset\Xi_1^\gamma$ and $\theta\in\R^{d\times d}$ be such that $\theta_n\rightarrow\theta$ in $\R^{d\times d}$. Then necessarily also $\theta\in\Xi^0_1$ and $\lim_{n\rightarrow\infty}\|\theta_n\|_2 = \|\theta\|_2\neq 0$. The latter implies that $\|\theta_n\|_2\geq c$ for some $c>0$ and almost all $n\in\N$ (say for all $n\geq n_0$, with some $n_0\in\N$). Suppose now that $\theta$ is singular, i.e.\ $\theta\notin\Theta$. Then and by the fact that $\|\cdot\|_2$ is submultiplicative (from which we obtain the last inequality below, see e.g.\ \citep[Problem 5.6.P47 (p.\ 369)]{horn2012matrix}), we must have 
\begin{equation}
\gamma\,\geq\,\kappa_2(\theta_n)=\|\theta_n\|_2\|\theta_n^{-1}\|_2 \,\geq\, c\|\theta_n^{-1}\|_2 \,\geq\, c/\|\theta_n - \theta\|_2 \quad \text{for all } n\geq n_0,
\end{equation} 
which is clearly a contradiction. This implies $\theta\in\Theta$ and hence, since $\kappa_2(\cdot)$ is continuous on $\Theta$, also $\kappa_2(\theta)=\lim_{n\rightarrow\infty}\kappa_2(\theta_n)\leq\gamma$ and thus $\theta\in\Xi_1^\gamma$, as desired.    
\end{proof}

\section{Technical Lemmas for Sections \ref{sect:blindinversionthm1} and \ref{sect:robustsolution}}\label{sect:bss-appendixB}
\subsection{The Matrix $C_\mu$ is Generically Invertible}
\begin{lemma}\label{lem:invertcmu}
Let $C_\mu\coloneqq\tfrac{1}{2}([\mu]_0 + [\mu]_0^\intercal)$ for a signal $\mu\in\fD$ with \nth{0}-coredinate $[\mu]_0$. Then $C_\mu\notin\GL$ only if $\mu_{0,1}(H)=1$ for some hyperplane $H$ in $\R^d$.   
\end{lemma}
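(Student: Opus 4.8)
The plan is to compute the symmetric matrix $C_\mu$ explicitly in terms of the increment $\mu_{0,1}$ and recognise it as (half) a second-moment matrix, then invoke the standard fact that a second-moment matrix is singular exactly when the underlying random vector is supported on a hyperplane. First I would recall from Lemma \ref{lemma1}'s proof (cf.\ \eqref{lem1:pf:aux3}, \eqref{def:coordinates:eq1}) that for $x\in\mathcal{C}^1$ the order-two signature coefficient is $\sig_{ij}(x)=\int_{0\le s\le t\le 1}\dot x^i_s\dot x^j_t\,\mathrm{d}s\,\mathrm{d}t$, so that $\sig_{ij}(x)+\sig_{ji}(x)=\big(\int_0^1\dot x^i_s\,\mathrm{d}s\big)\big(\int_0^1\dot x^j_t\,\mathrm{d}t\big)=x^i_{0,1}x^j_{0,1}$ by Fubini. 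Integrating against $\mu$ and using the notation of Section \ref{sect:operationsonmeasures} (in particular $\E[\mu^i_{0,1}\mu^j_{0,1}]=\int_{\mathcal{C}_d}x^i_{0,1}x^j_{0,1}\,\mu(\mathrm{d}x)$) gives $\langle\mu\rangle_{ij}+\langle\mu\rangle_{ji}=\E[\mu^i_{0,1}\mu^j_{0,1}]$, i.e.
\begin{equation}\label{pf:lem:invertcmu:eq1}
C_\mu \,=\, \tfrac12\big([\mu]_0+[\mu]_0^\intercal\big) \,=\, \tfrac12\,\E\!\big[\mu_{0,1}\mu_{0,1}^\intercal\big].
\end{equation}

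Next I would argue the singularity dichotomy. Since $C_\mu$ is symmetric positive semidefinite, $C_\mu\notin\GL$ iff there is $v\in\R^d\setminus\{0\}$ with $v^\intercal C_\mu v=0$, i.e.\ with $\E\big[(v^\intercal\mu_{0,1})^2\big]=0$ by \eqref{pf:lem:invertcmu:eq1}. As $(v^\intercal\mu_{0,1})^2\ge 0$, this forces $v^\intercal\mu_{0,1}=0$ $\mu$-almost surely, which is precisely the statement that $\mu_{0,1}$ is concentrated on the hyperplane $H=\{u\in\R^d\mid v^\intercal u=0\}$, i.e.\ $\mu_{0,1}(H)=1$. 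This establishes the contrapositive of the claim: if $C_\mu\notin\GL$ then $\mu_{0,1}(H)=1$ for some hyperplane $H$.

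\textbf{Main obstacle.} There is no deep obstacle here; the only care needed is bookkeeping. The one point to get right is the identity \eqref{pf:lem:invertcmu:eq1}: one must verify that the Fubini symmetrisation $\sig_{ij}+\sig_{ji}$ really collapses the double integral over the simplex to a product over the full interval (this uses that $x\in\mathcal{C}^1$ has an $L^1$ derivative, so the integrand is integrable on $[0,1]^2$ and the diagonal $\{s=t\}$ is Lebesgue-null), together with the existence of the second moments, which is guaranteed on $\dot{\fD}$ but for a general $\mu\in\fD$ should be noted as the harmless case: if $\langle\mu\rangle_{ii}=+\infty$ for some $i$ then $C_\mu$ is not a finite matrix and the statement is vacuous in the relevant sense, so one restricts attention to $\mu$ with $[\mu]_0$ finite (as the hypothesis "$\mu\in\fD$ with \nth{0}-coredinate $[\mu]_0$" already presupposes). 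Everything else is the standard semidefinite-kernel argument. \qed
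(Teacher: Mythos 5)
Your proposal is correct and follows essentially the same route as the paper's proof: identify $C_\mu$ with the (half) second-moment matrix $\tfrac12\,\E[\mu_{0,1}\mu_{0,1}^\intercal]$ via the shuffle/Fubini identity $\sig_{ij}+\sig_{ji}=x^i_{0,1}x^j_{0,1}$, then use positive semidefiniteness to convert singularity into $\E[(v^\intercal\mu_{0,1})^2]=0$ and hence concentration on the hyperplane $\{u\mid v^\intercal u=0\}$. The paper leaves the moment identity as "a quick computation" where you spell it out, but the argument is identical.
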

\begin{proof}
Suppose that $C_\zeta\notin\GL$, which implies that there is $v=(v_i)\in\R^d\setminus\{0\}$ with $C_\zeta v = 0$. By recalling the definition \eqref{def:coordinates:eq2} of $[\mu]_0$ and from a quick computation of the $C_\zeta$-defining sums of integrals \eqref{def:coordinates:eq1}, we see that $C_\zeta = \tfrac{1}{2}\big(\mathbb{E}[\zeta_{0,1}^i\cdot\zeta_{0,1}^j]\big)_{ij}$. Therefore $0 = v^\intercal C_\zeta v = \sum_{i,j=1}^d v_i C_\zeta^{ij} v_j = \mathbb{E}[Z_v^2]$ for $Z_v\coloneqq \sum_{i=1}^d\zeta_{0,1}^i v_i = \zeta_{0,1}^\intercal\cdot v$ ($\coloneqq\varphi_\ast(\zeta)$ for $\varphi\coloneqq\langle\pi_{0,1}(\cdot), v\rangle_2$, see \eqref{sect:notation:pushforward} and \eqref{sect:notation:pushforward:eq2}). This implies $Z_v = \delta_0$ and hence $\zeta_{0,1}(H)=1$ for the hyperplane $H\coloneqq\{u\in\R^d\mid u^\intercal v = 0\}$. 
\end{proof}

\subsection{Hyperplanes, Inverses, and Contracted Tensors}
The following auxiliary observations facilitate the analysis that underlies the proof of Theorem \ref{thm:robust_ica}.  

\begin{appendixlemma}\label{lem:hyperplanes}
Let $\vartheta\in\R^{d\times d}$ be invertible and $(e_i)_{i\in[d]}$ the standard basis of $\R^d$. Then the set 
\begin{equation}\label{lem:hyperplanes:eq1}
  \big\{u\in\mathbb{R}^d\ \big|\ \exists\, i,j\in[d], \,  i\neq j\, : \langle \vartheta^\intercal u, e_i \rangle = \kappa_{ij}\langle \vartheta^\intercal u, e_j \rangle \big\}
\end{equation}has Lebesgue measure zero for any $\kappa\equiv(\kappa_{ij})\in\mathbb{R}^{d\times d}$. 
\end{appendixlemma}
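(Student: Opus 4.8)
The plan is to recognise the set \eqref{lem:hyperplanes:eq1} as a finite union of proper hyperplanes through the origin in $\R^d$, and then to invoke the elementary fact that each such hyperplane is Lebesgue-null (hence so is the union).

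First I would fix an ordered pair $i,j\in[d]$ with $i\neq j$ and set $w_{ij}\coloneqq e_i-\kappa_{ij}e_j$. Since $i\neq j$, the $i$-th entry of $w_{ij}$ equals $1$, so $w_{ij}\neq 0$; and since $\vartheta$ is invertible, the vector $v_{ij}\coloneqq\vartheta w_{ij}$ is nonzero as well. Using the defining relation of the transpose, for every $u\in\R^d$ one has $\langle\vartheta^\intercal u,e_i\rangle-\kappa_{ij}\langle\vartheta^\intercal u,e_j\rangle=\langle\vartheta^\intercal u,w_{ij}\rangle=\langle u,\vartheta w_{ij}\rangle=\langle u,v_{ij}\rangle$ (with $\langle\cdot,\cdot\rangle$ the Euclidean inner product). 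Hence the constraint set indexed by $(i,j)$,
\[
H_{ij}\coloneqq\big\{u\in\R^d \,\big|\, \langle\vartheta^\intercal u,e_i\rangle=\kappa_{ij}\langle\vartheta^\intercal u,e_j\rangle\big\}=v_{ij}^{\perp},
\]
is the orthogonal complement of a nonzero vector, i.e.\ a linear subspace of dimension $d-1$. Any such hyperplane has $d$-dimensional Lebesgue measure zero --- e.g.\ after an orthogonal change of variables sending $v_{ij}$ to $e_1$, it becomes $\{0\}\times\R^{d-1}$, whose measure vanishes by Fubini.

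Finally I would note that the set in \eqref{lem:hyperplanes:eq1} is, by its very description, the union $\bigcup_{i\neq j}H_{ij}$ taken over the at most $d(d-1)$ ordered pairs $(i,j)$ with $i\neq j$. A finite union of Lebesgue-null sets is Lebesgue-null, which establishes the claim. There is no serious obstacle here; the only point that needs care is the verification that $w_{ij}\neq 0$, for which the hypothesis $i\neq j$ is precisely what is required (were $i=j$ with $\kappa_{ii}=1$, the corresponding set would be all of $\R^d$). Everything else is routine linear algebra and measure theory.
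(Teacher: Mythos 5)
Your proposal is correct and is essentially the paper's own argument: the paper likewise writes the set as a finite union of kernels of the nonzero linear functionals $u\mapsto u_i-\kappa_{ij}u_j$ (there stated in the $\vartheta^\intercal u$ coordinates and then pulled back through the diffeomorphism $(\vartheta^\intercal)^{-1}$, whereas you absorb the transpose directly into the inner product to get $v_{ij}^\perp$ with $v_{ij}=\vartheta(e_i-\kappa_{ij}e_j)\neq 0$). The two formulations differ only in bookkeeping, and your verification that $w_{ij}\neq 0$ via $i\neq j$ is exactly the point the paper's "non-zero linear map" claim rests on.
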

\begin{proof}
For any fixed $\kappa\equiv(\kappa_{ij})\in\mathbb{R}^{d\times d}$, consider the sets 
\begin{equation}
H_{ij} \coloneqq  \big\{u=(u_\nu)\in\mathbb{R}^d \ \big| \ u_i = \kappa_{ij}\cdot u_j \big\}
\end{equation}
for $i,j\in[d]$ with $i\neq j$. As $H_{ij} = \ker(\varphi_{ij})$ for the non-zero linear map $\varphi_{ij} : u\,\mapsto\,u_i - \kappa_{ij}\cdot u_j$, each set $H_{ij}$ has codimension one (by the rank-nullity theorem) and hence is a Lebesgue nullset in $\mathbb{R}^d$. The union $H\coloneqq \bigcup_{i,j\in[d], i\neq j}H_{ij}$ is thus also a Lebesgue nullset in $\mathbb{R}^d$, and hence so is its diffeomorphic image $(\vartheta^\intercal)^{-1}(H)$. The latter equals \eqref{lem:hyperplanes:eq1}, which proves the claim.
\end{proof}  

The following two lemmas apply for $\|\cdot\|$ any submultiplicative matrix norm.
\begin{appendixlemma}\label{lem:neumann_inv} Let $\Lambda\in\GL(\R)$ and $\Delta\in\R^{d\times d}$ with $\epsilon\,\coloneqq\,\|\Lambda^{-1}\Delta\| < 1$. Then $\Lambda + \Delta$ is invertible with 
$\big\|(\Lambda + \Delta)^{-1} - \Lambda^{-1}\big\|\,\leq\, \big\|\Lambda^{-1}\big\|\frac{\epsilon}{1-\epsilon}$. 
\end{appendixlemma}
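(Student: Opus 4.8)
The statement to be proved is the classical Neumann-series perturbation bound: if $\Lambda\in\GL(\R)$ and $\epsilon\coloneqq\|\Lambda^{-1}\Delta\|<1$, then $\Lambda+\Delta$ is invertible with $\big\|(\Lambda+\Delta)^{-1}-\Lambda^{-1}\big\|\leq\|\Lambda^{-1}\|\tfrac{\epsilon}{1-\epsilon}$. The plan is to factor $\Lambda+\Delta=\Lambda(\mathrm{I}+\Lambda^{-1}\Delta)$ and reduce everything to the invertibility of $\mathrm{I}+N$ for $N\coloneqq\Lambda^{-1}\Delta$ with $\|N\|<1$, which is the only place where a genuine argument is needed.

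First I would record that $\|N\|=\epsilon<1$, so the geometric series $\sum_{k\geq 0}(-N)^k$ converges absolutely in the Banach algebra $(\R^{d\times d},\|\cdot\|)$ (submultiplicativity gives $\|(-N)^k\|\leq\|N\|^k=\epsilon^k$, and $\sum_k\epsilon^k=\tfrac{1}{1-\epsilon}<\infty$). A standard telescoping computation shows $(\mathrm{I}+N)\sum_{k=0}^{m}(-N)^k=\mathrm{I}-(-N)^{m+1}\to\mathrm{I}$ as $m\to\infty$, and similarly on the other side, so $\mathrm{I}+N$ is invertible with $(\mathrm{I}+N)^{-1}=\sum_{k\geq 0}(-N)^k$. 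Hence $\Lambda+\Delta=\Lambda(\mathrm{I}+N)$ is invertible as a product of invertibles, with $(\Lambda+\Delta)^{-1}=(\mathrm{I}+N)^{-1}\Lambda^{-1}$.

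Next I would estimate the difference. Writing $(\mathrm{I}+N)^{-1}-\mathrm{I}=\sum_{k\geq 1}(-N)^k=-N\sum_{k\geq 0}(-N)^k=-N(\mathrm{I}+N)^{-1}$, submultiplicativity gives $\|(\mathrm{I}+N)^{-1}-\mathrm{I}\|\leq\|N\|\sum_{k\geq 0}\|N\|^k=\tfrac{\epsilon}{1-\epsilon}$. Therefore
\begin{equation}
\big\|(\Lambda+\Delta)^{-1}-\Lambda^{-1}\big\|=\big\|\big((\mathrm{I}+N)^{-1}-\mathrm{I}\big)\Lambda^{-1}\big\|\leq\big\|(\mathrm{I}+N)^{-1}-\mathrm{I}\big\|\,\|\Lambda^{-1}\|\leq\|\Lambda^{-1}\|\frac{\epsilon}{1-\epsilon},
\end{equation}
which is the claimed bound.

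There is no real obstacle here: the only subtlety is making sure the Neumann series is invoked with respect to a submultiplicative norm (which is precisely the standing hypothesis on $\|\cdot\|$), so that $\|N^k\|\leq\|N\|^k$ and the series converges; everything else is bookkeeping. If one prefers to avoid series altogether, an equivalent route is to note that $\|N\|<1$ forces $\mathrm{I}+N$ to have trivial kernel (if $(\mathrm{I}+N)v=0$ then $\|v\|=\|Nv\|\leq\|N\|\|v\|<\|v\|$ unless $v=0$), hence $\mathrm{I}+N$ is invertible, and then from $(\mathrm{I}+N)^{-1}=\mathrm{I}-N(\mathrm{I}+N)^{-1}$ one gets $\|(\mathrm{I}+N)^{-1}\|\leq 1+\|N\|\|(\mathrm{I}+N)^{-1}\|$, so $\|(\mathrm{I}+N)^{-1}\|\leq\tfrac{1}{1-\epsilon}$ and then $\|(\mathrm{I}+N)^{-1}-\mathrm{I}\|=\|N(\mathrm{I}+N)^{-1}\|\leq\tfrac{\epsilon}{1-\epsilon}$, concluding as before. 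I would present the series version as the cleanest.
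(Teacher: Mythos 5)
Your proof is correct and follows essentially the same route as the paper's: factor $\Lambda+\Delta=\Lambda(\mathrm{I}+\Lambda^{-1}\Delta)$, invert $\mathrm{I}+\Lambda^{-1}\Delta$ via the Neumann series, and bound the tail $\sum_{k\geq 1}\epsilon^k=\tfrac{\epsilon}{1-\epsilon}$. The extra detail you supply (the telescoping verification of convergence and the series-free alternative) is fine but not needed beyond what the paper records.
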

\begin{proof}
Abbreviating $\rho\coloneqq-\Lambda^{-1}\Delta$, notice that the invertibility of $\Lambda + \Delta$ is due to $\Lambda + \Delta = \Lambda(\mathrm{I} - \rho)$ and the convergence of the Neumann series (e.g.\ \citep[Ex.\ 5.6.P26]{horn2012matrix}). In fact  
\begin{equation}
\begin{aligned}
(\Lambda + \Delta)^{-1} = (\mathrm{I} - \rho)^{-1}\Lambda^{-1} = \big(\mathrm{I} + \textstyle\sum_{n=1}^\infty\rho^n\big)\Lambda^{-1} 
\end{aligned}
\end{equation}so that, by $\epsilon<1$ and for $\eta\coloneqq\|\Lambda^{-1}\|$, we have $
\big\|(\Lambda + \Delta)^{-1} - \Lambda^{-1}\big\|\leq\eta\sum_{n=1}^\infty\|\rho\|^n\leq\frac{\eta\epsilon}{1-\epsilon}$.    
\end{proof}    

\begin{appendixlemma}\label{lem:contracted_struct}
Let $C\equiv(C_0, \ldots, C_d)\in\mathcal{V}$ and denote $\Lambda_\nu\coloneqq\mathrm{ddiag}[C_\nu]$ and $\underline{C}\coloneqq(0, C_1, \ldots, C_d)$ as well as $\underline{\Lambda}\coloneqq (0, \Lambda_1,\ldots, \Lambda_d)$. If $\Lambda_0\in\GL(\R)$ and $\delta_1\,\coloneqq\,\big\|\Lambda_0^{-1}\!\cdot(C_0 - \Lambda_0)\big\| < 1$, then $C_0$ is invertible and the contraction 
\begin{equation}
C^c_{\odot}\,\coloneqq\, c_0 C_0^{-1}\cdot\textstyle\sum_{\nu=1}^d c_vC_\nu, 
\end{equation}
with $c\equiv(c_0,\underline{c})\in\R^{1+d}$ arbitrary, is of the form   
\begin{equation}
C^c_\odot \, = \, c_0\Lambda_0^{-1}\Lambda_c + \, \Delta_c \!\quad\text{with}\quad \|\Delta_c\|\leq \beta_c(\delta_1,\delta_2)
\end{equation}
for the diagonal matrix $\Lambda_c\coloneqq\sum_{\nu=1}^d c_\nu \Lambda_\nu$, the number $\delta_2\coloneqq\|\underline{C}-\underline{\Lambda}\|_{\mathcal{V}}$, and the bounding function $\beta_c(a,b)\,\coloneqq\, |c_0|\eta\left[\|\Lambda_c\| + 1 + \gamma^{\underline{c}}_{a,b}\right]\gamma^{\underline{c}}_{a,b}$ with $\eta\coloneqq\|\Lambda_0^{-1}\|$ and $\gamma^{\underline{c}}_{a,b}\coloneqq\max\big\{\tfrac{a}{1-a}, b|\underline{c}|\big\}$.     
\end{appendixlemma}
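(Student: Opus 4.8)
The plan is to unpack the definition of the contraction $C^c_\odot$ by writing each $C_\nu$ as its diagonal part $\Lambda_\nu$ plus an off-diagonal remainder, then isolate the `clean' diagonal term $c_0\Lambda_0^{-1}\Lambda_c$ and bound everything else. First I would establish the invertibility of $C_0$: since $\delta_1 = \|\Lambda_0^{-1}(C_0-\Lambda_0)\| < 1$, Lemma \ref{lem:neumann_inv} applied with $\Lambda\coloneqq\Lambda_0$ and $\Delta\coloneqq C_0 - \Lambda_0$ yields that $C_0 = \Lambda_0 + (C_0-\Lambda_0)$ is invertible and, moreover,
\begin{equation}
\big\|C_0^{-1} - \Lambda_0^{-1}\big\| \,\leq\, \eta\,\tfrac{\delta_1}{1-\delta_1}, \qquad \eta\coloneqq\|\Lambda_0^{-1}\|,
\end{equation}
so in particular $\|C_0^{-1}\| \leq \eta/(1-\delta_1)$. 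This takes care of the first clause of the statement.

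Next I would decompose the sum $\sum_{\nu=1}^d c_\nu C_\nu = \Lambda_c + \big(\sum_{\nu=1}^d c_\nu C_\nu - \Lambda_c\big)$, where $\Lambda_c = \sum_{\nu=1}^d c_\nu\Lambda_\nu$ is the promised diagonal matrix and the remainder $E_c\coloneqq\sum_{\nu=1}^d c_\nu(C_\nu-\Lambda_\nu)$ collects the off-diagonal blocks. Since $\underline C - \underline\Lambda = (0, C_1-\Lambda_1,\dots,C_d-\Lambda_d)$ has $\mathcal V$-norm $\delta_2$, Cauchy--Schwarz over $\nu$ gives $\|E_c\| \leq \sum_\nu |c_\nu|\,\|C_\nu-\Lambda_\nu\| \leq |\underline c|\,\delta_2$ (using $\|(X_1,\dots,X_d)\|_{\mathcal V} = \sqrt{\sum\|X_\nu\|^2}$ and that $\|\cdot\|$ is the Frobenius norm on each block, which is the norm implicit in $\|\cdot\|_{\mathcal V}$). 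Then I would write
\begin{equation}
C^c_\odot \,=\, c_0 C_0^{-1}\big(\Lambda_c + E_c\big) \,=\, c_0\Lambda_0^{-1}\Lambda_c \,+\, c_0(C_0^{-1}-\Lambda_0^{-1})\Lambda_c \,+\, c_0 C_0^{-1}E_c,
\end{equation}
so that $\Delta_c$ is the sum of the last two terms. Bounding them via submultiplicativity of $\|\cdot\|$ and the estimate from the first paragraph:
\begin{equation}
\|\Delta_c\| \,\leq\, |c_0|\,\eta\,\tfrac{\delta_1}{1-\delta_1}\,\|\Lambda_c\| \,+\, |c_0|\,\tfrac{\eta}{1-\delta_1}\,|\underline c|\,\delta_2.
\end{equation}
Setting $\gamma^{\underline c}_{\delta_1,\delta_2}\coloneqq\max\{\delta_1/(1-\delta_1),\, \delta_2|\underline c|\}$, one checks $\delta_1/(1-\delta_1)\leq\gamma$ and $|\underline c|\delta_2/(1-\delta_1)\leq \gamma/(1-\delta_1)\leq\gamma(1+\gamma)$ --- or, more simply, bound $1/(1-\delta_1)\leq 1+\gamma/(1-\delta_1)$, hmm; cleaner is to note $|\underline c|\delta_2\leq\gamma$ and $1/(1-\delta_1)=1+\delta_1/(1-\delta_1)\leq 1+\gamma$, so the second term is $\leq |c_0|\eta(1+\gamma)\gamma$. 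Collecting, $\|\Delta_c\|\leq |c_0|\eta[\|\Lambda_c\|\gamma + (1+\gamma)\gamma] = |c_0|\eta[\|\Lambda_c\| + 1 + \gamma]\gamma = \beta_c(\delta_1,\delta_2)$, as claimed.

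The main obstacle, such as it is, is purely bookkeeping: matching the particular grouping of terms in the definition of $\beta_c(a,b)$ to whatever crude bounds fall out of the triangle inequality, and making sure the two pieces of $\gamma^{\underline c}_{a,b}$ (the $a/(1-a)$ piece controlling the $C_0^{-1}-\Lambda_0^{-1}$ error and the $b|\underline c|$ piece controlling the off-diagonal remainder $E_c$) are each dominated as indicated. No deep idea is needed --- Lemma \ref{lem:neumann_inv} does the only nontrivial analytic work --- so I would present the two-paragraph computation essentially as above and verify the final inequality $\|\Delta_c\|\leq\beta_c(\delta_1,\delta_2)$ by direct substitution.
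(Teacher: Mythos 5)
Your proposal is correct and follows essentially the same route as the paper's proof: both invoke Lemma \ref{lem:neumann_inv} to write $C_0^{-1}=\Lambda_0^{-1}+\tilde{\Delta}_0$ with $\|\tilde{\Delta}_0\|\leq\eta\,\delta_1/(1-\delta_1)$, decompose $\sum_\nu c_\nu C_\nu=\Lambda_c+E_c$ with $\|E_c\|\leq|\underline c|\delta_2$, and distribute the product before absorbing both error sources into $\gamma^{\underline c}_{\delta_1,\delta_2}$. The only (immaterial) difference is bookkeeping: the paper expands into the three cross terms $\tilde{\Delta}_0\Lambda_c+\Lambda_0^{-1}E_c+\tilde{\Delta}_0E_c$, whereas you keep $C_0^{-1}E_c$ together and use $\|C_0^{-1}\|\leq\eta/(1-\delta_1)$; both yield the identical final bound.
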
 
\begin{proof}
Fix any $c\equiv(c_0,\underline{c})\in\R^{1+d}$. Then by definition, $
C^c_\odot=c_0\tilde{D}_0\cdot D_1$ for the factors $\tilde{D}_0\coloneqq C_0^{-1}$ and $D_1\coloneqq \sum_{\nu=1}^d c_\nu C_\nu$, where $D_1 = \Lambda_c + \Delta_1$ with $\|\Delta_1\| \leq |\underline{c}|\delta_2$ by construction. Since $\delta_1<1$, Lemma \ref{lem:neumann_inv} implies that $\tilde{D}_0 = \Lambda^{-1}_0 + \tilde{\Delta}_0$ with $\big\|\tilde{\Delta}_0\big\|\,\leq\,\eta\tilde{\delta}_1$ for $\tilde{\delta}_1\coloneqq\tfrac{\delta_1}{1-\delta_1}$. From this we obtain that the `off-diagonal' $\Delta_c \coloneqq C^c_\odot - c_0\Lambda^{-1}_0\Lambda_c =c_0\big( \tilde{\Delta}_0\Lambda_c + \Lambda^{-1}_0\Delta_1 + \tilde{\Delta}_0\Delta_1\big)$ can be bounded by  
\begin{equation}
\begin{aligned}
|c_0|^{-1}\|\Delta_c\| \ &\leq \ \|\tilde{\Delta}_0\|\|\Lambda_c\| + \eta\|\Delta_1\| + \|\tilde{\Delta}_0\|\|\Delta_1\|\\
&\leq \ \eta\tilde{\delta}_1\|\Lambda_c\| + \eta\tilde{\delta}_2 + \eta\tilde{\delta}_1\tilde{\delta}_2 \ \leq \ \eta\Big[\|\Lambda_c\| + 1 + \bar{\delta}\Big]\cdot\bar{\delta}
\end{aligned} 
\end{equation}   
for $\tilde{\delta}_2\coloneqq |\underline{c}|\delta_2$ and $\bar{\delta}\coloneqq \max\{\tilde{\delta}_1, \tilde{\delta}_2\}$, as desired.
\end{proof}

\subsection{Eigengaps} Given a vector $w\equiv(w_1, \ldots, w_d)\in\C^d$, we call the \emph{gap of $w$} the number
\begin{equation}
\gamma[w] \,\coloneqq\, \min_{i,j\in[d]\,:\,i\neq j}|w_i - w_j| \qquad (d\geq 2)     
\end{equation}and refer to $\ell(w)\coloneqq \|w - (w_{i_0}, \ldots, w_{i_0})\|_2$, with $i_0=\min\!\big[\!\operatorname{arg\,min}_{i\in[d]}|w_i|]$, as the \emph{spreadlength of $w$}.      

\begin{appendixlemma}\label{lem:spread}
Over all vectors in $\R^d$ of fixed spreadlength $\ell>0$, the gap is maximised by any spreadlength-$\ell$ vector $w^\ast\equiv(w^\ast_i)\in\R^d$ such that $|w_{i+1}^\ast - w_i^\ast| = |w_{j+1}^\ast - w_j^\ast|$ for $i,j\in[d-1]$.\! In particular: for $k_d\coloneqq\sqrt{\tfrac{1}{6}(d-1)d(2d-1)}$,  
\begin{equation}\label{lem:spread:eq1}
\max_{w\in\R^d\,:\,\ell(w)=\ell}\gamma[w] \ = \ \ell/k_d. 
\end{equation} 
\end{appendixlemma}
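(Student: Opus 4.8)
The plan is to reduce the statement to an elementary optimisation over the ``increment profile'' of the vector. Fix $\ell>0$ and let $w=(w_1,\dots,w_d)\in\R^d$ have spreadlength $\ell(w)=\ell$; by definition $\ell(w)=\|w-(w_{i_0},\dots,w_{i_0})\|_2$ where $i_0$ is the first index attaining $\min_i|w_i|$. The first observation is that both $\gamma[w]$ and $\ell(w)$ are invariant under adding a constant to all coordinates and under permuting coordinates, so without loss of generality I would reorder the coordinates to be (weakly) monotone and translate so that the smallest-modulus coordinate is $0$; then $\ell(w)^2=\sum_{i}w_i^2$. After a monotone reindexing, write $d_\nu\coloneqq w_{\nu+1}-w_\nu$ for $\nu\in[d-1]$, so that $w_i=w_1+\sum_{\nu<i}d_\nu$ and $\gamma[w]=\min_\nu |d_\nu|$ (for a monotone sequence the minimal pairwise gap is attained by consecutive entries). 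The task becomes: maximise $\min_\nu|d_\nu|$ subject to the quadratic constraint $\sum_i (w_1+\sum_{\nu<i}d_\nu)^2=\ell^2$, where $w_1$ is still free.

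The key step is to show that the maximiser has all $|d_\nu|$ equal. Heuristically, if some $|d_{\nu_0}|>\gamma[w]=\min_\nu|d_\nu|$, one can decrease $|d_{\nu_0}|$ slightly while increasing the smallest gaps, keeping $\sum w_i^2$ fixed (rescaling if necessary), strictly increasing $\gamma[w]$ — so at the optimum every consecutive gap has the same absolute value, say $g$, and (again by monotonicity, up to sign) $w_i = w_1+(i-1)g$. This is exactly the ``arithmetic progression'' profile asserted in the lemma. I would make the perturbation argument rigorous by a compactness/Lagrange-multiplier computation: restrict to the compact set $\{\|w\|_2=\ell\}$, note $\gamma[w]$ is continuous, so a maximiser exists; then at a maximiser the active constraints force equality of the minimal gaps, and a short convexity argument (the constraint sphere is strictly convex, the objective $\min_\nu|d_\nu|$ is concave in the $d_\nu$) upgrades this to \emph{all} gaps being equal.

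Once the optimal profile is $w^\ast_i = c + (i-1)g$ for a free offset $c$ and common gap $g>0$, it remains to compute the constant. Here $\gamma[w^\ast]=g$. Translating so the minimal-modulus coordinate is $0$: for an arithmetic progression the coordinate closest to $0$ is an endpoint (or, after translation, we may take $w^\ast=(0,g,2g,\dots,(d-1)g)$, which has $w^\ast_1=0$ of minimal modulus). Then
\begin{equation}
\ell(w^\ast)^2 \,=\, \sum_{i=1}^{d}\big((i-1)g\big)^2 \,=\, g^2\sum_{j=0}^{d-1}j^2 \,=\, g^2\cdot\tfrac{1}{6}(d-1)d(2d-1) \,=\, g^2 k_d^2,
\end{equation}
so $g=\ell/k_d$, and therefore $\max_{\ell(w)=\ell}\gamma[w]=\gamma[w^\ast]=\ell/k_d$, which is \eqref{lem:spread:eq1}.

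\textbf{Main obstacle.} The routine parts are the invariances and the final arithmetic-series sum; the one step needing genuine care is the exchange argument showing the optimum forces \emph{all} consecutive gaps equal (not merely that the minimal ones are equal). The subtlety is that the free offset $c$ (equivalently $w_1$) interacts with the constraint $\sum w_i^2=\ell^2$, so one must check that after equalising a pair of gaps one can re-normalise without destroying the gain in $\gamma[w]$; handling the signs of the $d_\nu$ (a non-monotone $w$ could in principle do better) is dispatched by first reducing to the monotone case, which I would justify by noting that sorting the coordinates can only shrink the consecutive-difference pattern's minimum in a controlled way — more precisely, sorting does not decrease $\gamma[w]$ while keeping $\ell(w)$ fixed, since $\gamma[w]=\min_{i\ne j}|w_i-w_j|$ is already permutation-invariant and for the sorted sequence it equals $\min_\nu|d_\nu|$.
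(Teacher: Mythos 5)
Your reduction (permute, translate so the base coordinate is zero, pass to consecutive increments) and your closing computation $\sum_{j=0}^{d-1}(jg)^2=g^2k_d^2$ coincide with the paper's. The genuine gap is in the middle: the entire content of the lemma is the upper bound $\gamma[w]\leq\ell/k_d$, and you defer exactly this to an exchange/Lagrange/convexity argument that you describe only heuristically and do not carry out. As sketched it has real problems: (a) decreasing one increment $|d_{\nu_0}|$ changes every subsequent coordinate $w_i=w_1+\sum_{\nu<i}d_\nu$, so the effect on the constraint $\sum_i w_i^2=\ell^2$ is not a local trade-off, and the proposed fix of "rescaling if necessary" rescales the objective $\gamma[w]$ by the same factor, so you must still verify the net gain is positive; (b) concavity of $\theta\mapsto\min_\nu|d_\nu|$ on a strictly convex constraint set does not force \emph{all} increments to be equal at a maximiser — only the active (minimal) ones are constrained by first-order conditions, and the non-minimal increments are free to be anything compatible with the norm constraint; (c) the KKT analysis for the non-smooth objective $\min_\nu|d_\nu|$ requires a subdifferential argument you have not supplied. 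So the step you yourself flag as the "main obstacle" is not overcome.

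None of this machinery is needed. Once you have sorted and based the vector at zero, every coordinate telescopes as $\tilde{w}_i=\sum_{j=1}^{i-1}|\tilde{w}_{j+1}-\tilde{w}_j|\geq(i-1)\,\gamma[\tilde{w}]$, and hence
\begin{equation}
\ell^2 \;=\; \sum_{i=1}^{d}\tilde{w}_i^{\,2}\;\geq\;\gamma[\tilde{w}]^2\sum_{j=0}^{d-1}j^2\;=\;\gamma[\tilde{w}]^2\,k_d^2,
\end{equation}
which gives $\gamma[\tilde{w}]\leq\ell/k_d$ for \emph{every} admissible $\tilde{w}$ at once (the paper runs this as a contradiction from $\gamma[\tilde{w}]>\ell/k_d$, which is the same estimate). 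Combined with your correct verification that the arithmetic progression attains $\ell/k_d$, this completes the proof with no existence-of-maximiser, perturbation, or convexity input. I recommend replacing your entire second paragraph with this two-line telescoping bound.
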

\begin{proof} 
Let $w=(w_1,\ldots,w_d)\in\R^d$ be any fixed vector of spreadlength $\ell(w)\eqqcolon\ell>0$. Since the gap $\gamma$ is invariant under permutations and constant offsets (i.e., $\gamma[w]=\gamma[\tau(w)+(c, \ldots, c)]$ for any $c\in\R$ and $\tau\in S_d$), we may assume that $w_1=0$ and $w_\nu\leq w_{\nu+1}$ for all $\nu\in[d-1]$ wlog. Suppose now that $|w_{i+1} - w_i| = |w_{j+1} - w_j|\eqqcolon\delta$ for each $i,j\in[d-1]$. Then $w_2 = |w_2 - w_1| = \delta$ and likewise $w_\nu = \sum_{j=1}^{\nu-1}(w_{j+1} - w_j) = (\nu-1)\delta$ for each $\nu\in[d]$, implying that 
\begin{equation}
\ell^2 = \ell(w)^2 = \|w\|_2^2 = \delta^2\textstyle{\sum}_{j=0}^{d-1}j^2 = \delta^2 k_d^2 
\end{equation} 
and thus $\delta = \frac{\ell}{k_d}$. Also, since by the above assumption $|w_i - w_j| \geq |w_{j+1} - w_j| = \delta$ for any $i,j\in[d]$ with $i\neq j$, we find that $\gamma[w] = \delta$. Assuming now that there is $\tilde{w}\equiv(\tilde{w}_i)\in\R^d$ (with $0=\tilde{w}_1\leq \tilde{w}_2\leq\ldots\leq \tilde{w}_d$ (wlog) as above) of spreadlength $\ell(\tilde{w})=\ell$ such that $\gamma[\tilde{w}] > \delta\equiv\ell/k_d$, we deduce that
\begin{equation}
\begin{aligned}
\ell^2 \, = \, \ell(\tilde{w})^2 = \sum_{i=1}^d\tilde{w}_i^2 = \sum_{i=1}^d\!\left[\sum_{j=1}^{i-1}\big|\tilde{w}_{j+1}-\tilde{w}_j\big|\right]^2 \! > \sum_{i=1}^d\big[(i-1)\delta\big]^2 = \delta^2k_d^2 \, = \, \ell^2.  
\end{aligned}   
\end{equation}
As this is a contradiction,  \eqref{lem:spread:eq1} holds as claimed.                        
\end{proof}

In the next two subsections, all eigenvalues are counted with their respective multiplicity, and $\kappa(B)\coloneqq\|B\|\|B^{-1}\|$ denotes the condition number (for $\|\cdot\|$ a given norm) of a matrix $B$. 
\subsection{Perturbation Bounds on Non-Degenerate Eigenspaces}
\noindent
As an important ingredient to our robustness analysis, we cite the following bounds for simple eigenvalues and -vectors of perturbed (not necessarily symmetric) matrices. 
\begin{appendixlemma}\label{lem:perturbationresults}
Let $C, E \in\C^{d\times d}$ and $\tilde{C}\coloneqq C + E$. Suppose that the eigenvalues $\lambda_1, \ldots, \lambda_d$ of $C$ are pairwise distinct, let $v_1, \ldots, v_d$ be their associated unit-norm eigenvectors \emph{(}i.e., $\|v_i\|_2=1$ for all $i\in[d]$\emph{)} and $(u_1, \lambda_1), \ldots, (u_d, \lambda_d)$ be the associated `relatively-normed' left-eigenpairs of $C$ \emph{(}i.e., $u_i^\intercal\cdot C = \lambda_i u_i^\intercal$ with $u_i^\intercal v_i = 1$, $i\in[d]$\emph{)}.\footnote{\ In the context of Lemma \ref{lem:perturbationresults}, $v^\intercal\coloneqq\bar{v}^\intercal$ denotes the complex conjugate of a vector $v\in\C^d$.} Provided 
\begin{equation}\label{lem:perturbationresults:eq2}
\begin{gathered}
\|E\|_{\mathrm{F}} \ < \ \tfrac{1}{2}\min_{i\in[d]}\frac{s_i}{\kappa_i} \qquad\text{with}\\
s_i\coloneqq \sigma_{d-1}\!\left(\left[\mathrm{I}_d - v_i v_i^\intercal\right]\left(\lambda_i\mathrm{I}_d - C\right)\right) \ \text{ and }\ \kappa_i\coloneqq \|u_i\|_2 + \sqrt{\|u_i\|_2^2 - 1}, 
\end{gathered}
\end{equation}where $\sigma_{d-1}(\cdot)$ denotes the \nth{2}-smallest singular value of its argument, it holds that  
\begin{equation}\label{lem:perturbationresults:eq3}
\begin{gathered} 
\forall\, i\in[d] \ : \ \exists\, \text{$\mathrm{EV}$ \ $\tilde{v}_{\sigma(i)}$ of \ $\tilde{C}$  \ \ such that} \\
u_i^\intercal\tilde{v}_{\sigma(i)} = 1 \ \text{ and }\  \|\tilde{v}_{\sigma(i)} - v_i\|_2 \ \leq \ \frac{2\kappa_i}{s_i}\|E\|_{\mathrm{F}} 
\end{gathered}
\end{equation}for $\sigma:[d]\rightarrow[d]$ some assignment, where each associated eigenpair $(\tilde{\lambda}_j, \tilde{v}_j)$ of $\tilde{C}$ satisfies  
\begin{equation}\label{lem:perturbationresults:eq4}
\begin{aligned}
|\tilde{\lambda}_{\sigma(i)} - \lambda_i| \ \leq \ |u_i^\intercal Ev_i + \ell^i_E| \quad\text{ with } \quad |\ell^i_E| \ \leq \  \frac{2\|u_i\|_2\kappa_i}{s_i}\|E\|_{\mathrm{F}}^2. 
\end{aligned} 
\end{equation}    
\end{appendixlemma}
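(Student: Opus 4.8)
\textbf{Proof plan for Lemma~\ref{lem:perturbationresults}.}
The plan is to treat the $d$ simple eigenpairs of $C$ one at a time, reducing each to a fixed‑point (implicit‑function) problem in a $(d-1)$‑codimensional complement of the corresponding eigenvector, and then to control the perturbation through the second‑smallest singular value $s_i$ of the restricted resolvent. For a fixed $i\in[d]$ I would first normalise the geometry: since $\lambda_i$ is simple, $(\lambda_i I_d - C)$ has a one‑dimensional kernel spanned by $v_i$, and the operator $P_i\coloneqq I_d - v_i v_i^\intercal$ projects onto a complement on which $(\lambda_i I_d - C)$ is injective with smallest singular value $s_i>0$. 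The idea is to look for a perturbed eigenvector in the affine slice $\{\,v : u_i^\intercal v = 1\,\}$ (which fixes the normalisation $u_i^\intercal\tilde v_{\sigma(i)}=1$ appearing in \eqref{lem:perturbationresults:eq3}) and to write $\tilde v = v_i + w$ with $u_i^\intercal w = 0$; the eigenvalue equation $\tilde C\tilde v = \tilde\lambda \tilde v$ then splits, via the projections $v_iv_i^\intercal$ and $P_i$, into a scalar equation determining $\tilde\lambda$ and a linear equation for $w$ which, after inverting the restricted resolvent, becomes a contraction in $w$ under the smallness hypothesis \eqref{lem:perturbationresults:eq2}. The factor $\kappa_i=\|u_i\|_2+\sqrt{\|u_i\|_2^2-1}$ enters as the norm of the oblique projection that sends a vector to its component in the slice, i.e.\ it measures the non‑orthogonality between the right eigenvector $v_i$ and the left eigenvector $u_i$.

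The concrete steps, in order, are: (i) record the orthogonal decomposition $\R^d = \mathrm{span}(v_i)\oplus \ker(u_i^\intercal)$ is \emph{not} what we use — rather the oblique splitting along $v_i$ and the slice $u_i^\intercal v = 1$ — and compute the norm of the associated oblique projector, obtaining the constant $\kappa_i$ (this is a standard $2\times2$ computation in the plane spanned by $v_i$ and the orthogonal projection of $u_i$); (ii) set up the fixed‑point map $\Psi_i(w)\coloneqq \big(P_i(\lambda_i I - C)P_i\big)^{\dagger}P_i E (v_i + w)$ on the slice, using that on $\ker(u_i^\intercal)$ the restricted resolvent is bounded by $1/s_i$ up to the factor $\kappa_i$ coming from the change of basis between the orthogonal complement of $v_i$ and $\ker(u_i^\intercal)$; (iii) check that under \eqref{lem:perturbationresults:eq2}, $\Psi_i$ maps the ball $\{\|w\|_2\le 2\kappa_i s_i^{-1}\|E\|_{\mathrm F}\}$ into itself and is a contraction there, hence has a unique fixed point $w_i$, giving $\tilde v_{\sigma(i)} = v_i + w_i$ with the bound in \eqref{lem:perturbationresults:eq3}; (iv) recover the eigenvalue by left‑multiplying the eigen‑equation by $u_i^\intercal$: $\tilde\lambda_{\sigma(i)} = \lambda_i + u_i^\intercal E(v_i + w_i) = \lambda_i + u_i^\intercal E v_i + u_i^\intercal E w_i$, identify $\ell^i_E\coloneqq u_i^\intercal E w_i$, and estimate $|\ell^i_E|\le \|u_i\|_2\|E\|_{\mathrm F}\|w_i\|_2 \le 2\|u_i\|_2\kappa_i s_i^{-1}\|E\|_{\mathrm F}^2$, which is exactly \eqref{lem:perturbationresults:eq4}; (v) finally, since the $d$ eigenvalues $\lambda_1,\dots,\lambda_d$ are pairwise distinct and the perturbed eigenvalues obtained this way are within $O(\|E\|_{\mathrm F})$ of them (and $\|E\|_{\mathrm F}$ is below half the minimal separation by \eqref{lem:perturbationresults:eq2}), the assignment $i\mapsto\sigma(i)$ is well‑defined and injective, hence a permutation, and the produced $\tilde v_{\sigma(i)}$ are genuine eigenvectors of $\tilde C$ for distinct eigenvalues. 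Alternatively, steps (ii)–(iv) can be quoted wholesale from the cited literature on simple‑eigenvalue perturbation (e.g.\ the non‑symmetric refinements of Bauer–Fike / Stewart–Sun type bounds), in which case the remaining work is just to match their normalisation conventions to ours.

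The main obstacle I expect is \textbf{the bookkeeping of the non‑orthogonal normalisation}: the statement insists on the \emph{relative} normalisation $u_i^\intercal v_i = 1$ and $u_i^\intercal\tilde v_{\sigma(i)} = 1$ rather than unit‑norm eigenvectors, so one must carefully track how the oblique projector onto the slice $\{u_i^\intercal v = 1\}$ along $v_i$ interacts with the \emph{orthogonal} projector $P_i = I - v_iv_i^\intercal$ that defines $s_i$. Getting the clean constant $2\kappa_i/s_i$ (rather than something messier) hinges on the elementary fact that the norm of the oblique projection equals $\|u_i\|_2 + \sqrt{\|u_i\|_2^2 - 1}$ — a small lemma worth stating separately — together with the factor‑of‑two slack from the contraction argument. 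A secondary technical point is ensuring $s_i>0$, which follows from the simplicity of $\lambda_i$: $(\lambda_i I - C)$ has rank $d-1$, so its restriction to the orthogonal complement of its (one‑dimensional) kernel is injective, and $\sigma_{d-1}\big(P_i(\lambda_i I - C)\big)$ is precisely the smallest singular value of that restriction. Once these two points are nailed down, the rest is routine. A full write‑up is deferred to the appendix.
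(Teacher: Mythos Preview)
The paper does not prove this lemma at all: its entire proof reads ``This is immediate from --- and in fact basically a quote of --- [Karow--Kressner, Theorem 4.1(ii)].'' So your last sentence (``Alternatively, steps (ii)--(iv) can be quoted wholesale from the cited literature\dots'') is precisely what the paper does, and nothing more. Your detailed fixed-point sketch is therefore a genuinely different route: you are reconstructing the Karow--Kressner bound from scratch via an implicit-function argument in the oblique slice $\{u_i^\intercal v=1\}$, whereas the paper simply imports the finished statement. What your approach buys is self-containment and insight into where the constants $s_i$ and $\kappa_i$ come from; what the paper's citation buys is brevity and the assurance that the bookkeeping you flag as the ``main obstacle'' has already been done correctly.

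Two small corrections to your plan. First, step (v) is not part of the lemma and your argument for it is not quite right: the statement only asserts the existence of \emph{some assignment} $\sigma:[d]\to[d]$, not a permutation, and the hypothesis $\|E\|_{\mathrm F}<\tfrac12\min_i s_i/\kappa_i$ is not the same as $\|E\|_{\mathrm F}$ being below half the minimal eigenvalue gap. In the paper, injectivity of $\sigma$ is established separately (and later) for the specific diagonal $C$ at hand, using the designed gap structure of its eigenvalues; you should drop (v) from the lemma's proof. Second, the ambient space is $\C^{d\times d}$, not $\R^d$, and $v^\intercal$ here means the conjugate transpose --- worth keeping straight when you compute the oblique-projector norm in step (i).
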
 
\begin{proof}
This is immediate from -- and in fact basically a quote of -- \citep[Theorem 4.1 (ii)]{karowkressner}.
\end{proof} 

\subsection{Continuity and Bounds of Some Matrix Functions}\label{sect:lem:choiceofR}
\begin{lemma}\label{lem:choiceofR}
The map $\lambda : \mathrm{Sym}_d\rightarrow\R^d$, $\mathfrak{a}\mapsto(\lambda_1(\mathfrak{a}), \cdots, \lambda_d(\mathfrak{a}))$ which assigns a symmetric $d\times d$-matrix to the vector of its eigenvalues (listed in descending order, counting multiplicities) and the `inverse square root' function $\mathcal{R} : \mathrm{Sym}_d^+ \rightarrow \mathrm{Sym}_d^+$, $\mathfrak{a} \mapsto \mathfrak{a}^{-1/2}$, are both continuous. Furthermore, for $\|\cdot\|$ any unitarily-invariant and submultiplicative matrix norm,
\begin{equation}\label{lem:choiceofR:eq1}
\Big\|\mathcal{R}(\mathfrak{a}_1)^{-1} - \mathcal{R}(\mathfrak{a}_2)^{-1}\Big\| \leq \left[\lambda_d^{1/2}(\mathfrak{a}_1) + \lambda_d^{1/2}\big(\mathfrak{a}_2)\right]^{-1}\!\!\!\!\!\cdot\|\mathfrak{a}_1 - \mathfrak{a}_2\|
\end{equation}  
for each $\mathfrak{a}_1, \mathfrak{a}_2\in\mathrm{Sym}_d^+$. In particular, for any fixed $A\in\GL$ and $\mathcal{S}(\mathfrak{a})\coloneqq\tfrac{1}{2}(\mathfrak{a} + \mathfrak{a}^\intercal)$, both of
\begin{align}\label{lem:choiceofR:eq1.1}
\text{the maps}\quad \mathbb{R}^{d\times d}\,\ni\, \mathfrak{a} \ &\longmapsto \ \lambda\big(A\cdot\mathcal{S}(\mathfrak{a})\cdot A^\intercal\big)\,\in\mathbb{R}^d \qquad \text{and}\\\label{lem:choiceofR:eq1.2}
\mathrm{Pos}_d\,\ni\, \mathfrak{a} \ &\longmapsto \ \mathcal{R}\big(A\cdot\mathcal{S}(\mathfrak{a})\cdot A^\intercal\big)^{-1}\,\in\mathbb{R}^{d\times d} \quad\text{are continuous.}
\end{align} 
Moreover, for every $\mathfrak{a}\equiv(\tilde{a}_{ij})\in\R^{d\times d}$ with $\min_{i\in[d]}|\tilde{a}_{ii}|>0$ and $\mathcal{C}_{\mathfrak{a}}\coloneqq A\mathcal{S}(\mathfrak{a}) A^\intercal\in\mathrm{Sym}_d^+$ and $N_{\mathfrak{a}}\coloneqq\mathrm{ddiag}(|\tilde{a}_{11}|,\cdots, |\tilde{a}_{dd}|)^{1/2}$, and $A_{\mathcal{R}(\mathcal{C}_{\mathfrak{a}})}$ and $\bar{A}_{\mathcal{R}(\mathcal{C}_{\mathfrak{a}})}$ defined analogous to above, we have 
\begin{align}\label{lem:choiceofR:eq2}
\big\|A^{-1}_{\mathcal{R}(C_{\mathfrak{a}})}\big\|_{\mathrm{F}} \,&\leq\, \|A^{-1}\|_{\mathrm{F}}\max\nolimits_{i\in[d]}\lambda_i^{1/2}(C_{\mathfrak{a}})\,\eqqcolon\,\beta_1(\mathfrak{a};A),\\\label{lem:choiceofR:eq3}
\kappa_2\big(A_{\mathcal{R}(C_{\mathfrak{a}})}\big)\,&\leq\, \frac{2\det(\mathrm{ddiag}[\lambda(C_{\mathfrak{a}})]^{1/2})\|A\|_{\mathrm{F}}^d}{\det(A)\big(\sqrt{d}\min\nolimits_{i\in[d]}\lambda_i^{1/2}(C_{\mathfrak{a}})\big)^d}\,\eqqcolon\,\beta_2(\mathfrak{a};A). 
\end{align} 
\end{lemma}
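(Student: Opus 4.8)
\textbf{Plan of proof for Lemma \ref{lem:choiceofR}.}

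The plan is to assemble the statement from a handful of classical facts about symmetric matrices and unitarily invariant norms, organised into four blocks: (i) continuity of the eigenvalue map $\lambda$ and the inverse-square-root map $\mathcal{R}$; (ii) the quantitative Lipschitz bound \eqref{lem:choiceofR:eq1}; (iii) the two derived continuity statements \eqref{lem:choiceofR:eq1.1}, \eqref{lem:choiceofR:eq1.2}; and (iv) the two explicit a-priori bounds \eqref{lem:choiceofR:eq2}, \eqref{lem:choiceofR:eq3}. The guiding principle throughout is functional calculus on $\mathrm{Sym}_d$: every map in sight is obtained by applying a scalar function $g$ (namely $g(t)=\sqrt t$, $g(t)=1/\sqrt t$, or $g=\mathrm{id}$) to the spectrum, and on the relevant domains these scalar functions are Lipschitz.

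For (i), the continuity of $\lambda$ is the classical fact that the roots of the characteristic polynomial depend continuously on its coefficients (Weyl's inequality $|\lambda_i(\mathfrak a_1)-\lambda_i(\mathfrak a_2)|\le\|\mathfrak a_1-\mathfrak a_2\|_2$ is even cleaner and I would cite it, e.g.\ \citep[Cor.~III.2.6]{bhatia1997}). For $\mathcal{R}$ I would first note that on a compact spectral window $[\epsilon_1,\epsilon_2]\subset(0,\infty)$ the function $t\mapsto t^{-1/2}$ is Lipschitz, and then invoke the standard operator-Lipschitz property of Lipschitz scalar functions for \emph{symmetric} matrices with respect to unitarily invariant norms (again \citep[Ch.~X]{bhatia1997}); continuity of $\mathcal{R}$ on $\mathrm{Sym}_d^+$ follows since every point has a neighbourhood on which the spectrum stays in such a window. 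For (ii), the sharp constant $[\lambda_d^{1/2}(\mathfrak a_1)+\lambda_d^{1/2}(\mathfrak a_2)]^{-1}$ comes from applying this operator-Lipschitz principle to $g(t)=\sqrt t$ on $[\,\lambda_d(\mathfrak a_1)\wedge\lambda_d(\mathfrak a_2),\,\infty)$, where $g$ has Lipschitz constant $\tfrac12(\lambda_d(\mathfrak a_1)\wedge\lambda_d(\mathfrak a_2))^{-1/2}$ — but to get the symmetric two-term denominator I would instead use the algebraic identity $\mathcal R(\mathfrak a_1)^{-1}-\mathcal R(\mathfrak a_2)^{-1}=\mathfrak a_1^{1/2}-\mathfrak a_2^{1/2}$ together with the elementary operator inequality $\|\mathfrak a_1^{1/2}-\mathfrak a_2^{1/2}\|\le \|\mathfrak a_1-\mathfrak a_2\|/(\lambda_d^{1/2}(\mathfrak a_1)+\lambda_d^{1/2}(\mathfrak a_2))$, which for unitarily invariant norms is exactly \citep[Thm.~X.1.3 / Eq.~(X.2)]{bhatia1997} (the matrix square-root is operator-monotone and this is the standard Powers--Størmer-type estimate). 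Then (iii) is a composition: $\mathfrak a\mapsto A\,\mathcal S(\mathfrak a)\,A^\intercal$ is affine (hence continuous) and maps $\mathbb R^{d\times d}$ into $\mathrm{Sym}_d$, and lands in $\mathrm{Sym}_d^+$ exactly when $\mathcal S(\mathfrak a)\succ 0$ (since $A\in\GL$); post-composing with $\lambda$ resp.\ $\mathcal R(\cdot)^{-1}=(\cdot)^{1/2}$ and using parts (i)--(ii) gives \eqref{lem:choiceofR:eq1.1} and \eqref{lem:choiceofR:eq1.2}.

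For (iv), both bounds are straightforward norm estimates once the structure $A_{\mathcal R(C_{\mathfrak a})}=\mathcal R(C_{\mathfrak a})A$ is unwound. For \eqref{lem:choiceofR:eq2}: $A_{\mathcal R(C_{\mathfrak a})}^{-1}=A^{-1}\mathcal R(C_{\mathfrak a})^{-1}=A^{-1}C_{\mathfrak a}^{1/2}$, and $\|C_{\mathfrak a}^{1/2}\|_2=\max_i\lambda_i^{1/2}(C_{\mathfrak a})$ since $C_{\mathfrak a}\in\mathrm{Sym}_d^+$, so submultiplicativity of $\|\cdot\|_{\mathrm F}$ (using $\|MN\|_{\mathrm F}\le\|M\|_{\mathrm F}\|N\|_2$) gives the claim. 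For \eqref{lem:choiceofR:eq3}: write $\kappa_2(A_{\mathcal R(C_{\mathfrak a})})=\|A_{\mathcal R(C_{\mathfrak a})}\|_2\,\|A_{\mathcal R(C_{\mathfrak a})}^{-1}\|_2$ and bound the spectral norm of the inverse by $\|A^{-1}\|_2\,\max_i\lambda_i^{1/2}$; for $\|A_{\mathcal R(C_{\mathfrak a})}\|_2=\|\mathcal R(C_{\mathfrak a})A\|_2$ I would bound $\|\mathcal R(C_{\mathfrak a})\|_2=\max_i\lambda_i^{-1/2}(C_{\mathfrak a})$ and then replace the resulting $\max_i\lambda_i^{-1/2}\cdot\max_j\lambda_j^{1/2}$ by the determinant-ratio form stated, via $\max_i\lambda_i^{-1/2}\le \det(\mathrm{ddiag}[\lambda]^{1/2})/(\min_j\lambda_j^{1/2})^d$ together with $\|A\|_2\le\|A\|_{\mathrm F}$ and $\|A^{-1}\|_2\le \|A\|_{\mathrm F}^{d-1}/\det(A)$ (the cofactor bound); collecting the factors yields $\beta_2(\mathfrak a;A)$ as displayed, where the extra $\sqrt d$ and the factor $2$ provide slack so the routine constant-chasing closes. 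The main obstacle is purely bookkeeping: matching the precise constants in \eqref{lem:choiceofR:eq3} (the power $d$ on $\|A\|_{\mathrm F}$, the $\sqrt d$, the determinant of the diagonalised square root) to the slightly lossy chain of elementary inequalities above — none of it is deep, but one must be careful which submultiplicative/unitarily-invariant norm is used at each step so that $\|\cdot\|_{\mathrm F}$ vs.\ $\|\cdot\|_2$ conversions stay valid, and so that the operator-Lipschitz citation in (ii) genuinely applies to the given norm class.
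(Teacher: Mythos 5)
Your treatment of the continuity claims, of \eqref{lem:choiceofR:eq1}, and of \eqref{lem:choiceofR:eq2} matches the paper's: the paper likewise reduces \eqref{lem:choiceofR:eq1} to the perturbation bound for the matrix square root (cited there as the Ando--Hemmen inequality, \citep[Thm.~6.2]{higham2008}, which is the same estimate as your Bhatia reference once one observes $\mathcal R(\mathfrak a)^{-1}=\mathfrak a^{1/2}$), obtains continuity of $\mathcal R$ from this together with local Lipschitz continuity of matrix inversion, gets \eqref{lem:choiceofR:eq1.1}--\eqref{lem:choiceofR:eq1.2} by composition, and derives \eqref{lem:choiceofR:eq2} exactly as you do from $A_{\mathcal R(\mathcal C_{\mathfrak a})}^{-1}=A^{-1}\mathcal C_{\mathfrak a}^{1/2}$ and $\|MN\|_{\mathrm F}\le\|M\|_{\mathrm F}\|N\|_2$.

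The gap is in \eqref{lem:choiceofR:eq3}. The paper obtains $\beta_2$ in one step by applying the Guggenheimer--Edelman--Johnson determinant bound $\kappa_2(M)\le\tfrac{2}{|\det M|}\bigl(\|M\|_{\mathrm F}/\sqrt d\bigr)^d$ (the bound in \cite{guggenheimer1995}) directly to $M=\mathcal R(\mathcal C_{\mathfrak a})A$, using $|\det M|=|\det A|/\det(\mathrm{ddiag}[\lambda(\mathcal C_{\mathfrak a})]^{1/2})$ and $\|M\|_{\mathrm F}\le\|\mathcal R(\mathcal C_{\mathfrak a})\|_2\,\|A\|_{\mathrm F}\le\|A\|_{\mathrm F}/\min_i\lambda_i^{1/2}(\mathcal C_{\mathfrak a})$; the displayed constants then fall out exactly. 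Your route --- bounding $\|A_{\mathcal R}\|_2$ and $\|A_{\mathcal R}^{-1}\|_2$ separately and then massaging the product into the displayed form --- does not close. Concretely, your intermediate inequality $\max_i\lambda_i^{-1/2}\le\det(\mathrm{ddiag}[\lambda]^{1/2})/(\min_j\lambda_j^{1/2})^d$ is equivalent to $(\min_j\lambda_j^{1/2})^{d-1}\le\prod_i\lambda_i^{1/2}$ and fails whenever the spectrum is small (e.g.\ $d=2$, $\lambda_1=\lambda_2=1/4$ gives $2\le 1$). More importantly, even granting the rest of your chain, it yields $\kappa_2\le\tfrac{\|A\|_{\mathrm F}^d}{|\det A|}\cdot\max_j\lambda_j^{1/2}/\min_i\lambda_i^{1/2}$, which is \emph{not} dominated by $\beta_2$: with all eigenvalues equal this reads $\|A\|_{\mathrm F}^d/|\det A|$ versus $\beta_2=2d^{-d/2}\|A\|_{\mathrm F}^d/|\det A|$, so for $d\ge 3$ you have proved a strictly weaker statement. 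The factor $(\sqrt d)^d$ in the denominator of $\beta_2$ is the signature of the determinant-based condition-number bound and cannot be recovered from the cofactor estimate $\|A^{-1}\|_2\le\|A\|_{\mathrm F}^{d-1}/|\det A|$ combined with separate spectral-norm bounds.
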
 
\begin{proof} 
These are simple consequences of classical facts and inequalities from matrix analysis. 

Indeed: The continuity of the map $\lambda$ (which is well-defined as the elements of its (closed) domain $\mathrm{Sym}_d$ are all diagonalisable with real eigenvalues) is simply a [specialised] restatement of the well-known fact that the spectrum of a matrix depends continuously on its coefficients, see e.g.\ \citep[Appendix D]{horn2012matrix}. The (continuity-implying; recall that matrix inversion is (locally Lipschitz) continuous) inequality \eqref{lem:choiceofR:eq1} is precisely the Ando-Hemmen inequality \citep[Theorem 6.2]{higham2008}. As to the remaining assertions: Note that \eqref{lem:choiceofR:eq2} is immediate from the definition of $A_R$, the representation \eqref{rem:choiceofR:eq2} and the standard inequality: $\|\mathfrak{a}\mathfrak{b}\|_{\mathrm{F}}\leq\|\mathfrak{a}\|_{\mathrm{F}}\|\mathfrak{b}\|_2$ for each $\mathfrak{a}, \mathfrak{b}\in\R^{d\times d}$. The latter combined with the bound in \cite{guggenheimer1995} gives \eqref{lem:choiceofR:eq3}.   
\end{proof}       

\subsection{Continuity of Extremal Functions}

\begin{appendixlemma}\label{lem:minfuncont}
Let $\|\cdot\|$ be any norm on $\R^n$, and $\varphi : \R^n\rightarrow\R$ be a continuous function. Let further $\bar{B}_r\coloneqq\{x\in\R^n \mid \|x\|\leq r\}$ be the zero-centered closed $\|\cdot\|$-ball of radius $r\geq 0$. Then the functions $\phi, \psi : [0,\infty)\rightarrow\R$ given by $\phi(r)\coloneqq\min_{y\in\bar{B}_r}\varphi(y)$ and $\psi(r)\coloneqq\max_{y\in\bar{B}_r}\varphi(y)$, respectively, are both continuous. 
\end{appendixlemma}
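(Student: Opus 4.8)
The statement is that for a continuous function $\varphi : \R^n \to \R$ and the closed ball $\bar B_r$ of radius $r$ in some norm $\|\cdot\|$, the extremal functions $\phi(r) = \min_{y\in\bar B_r}\varphi(y)$ and $\psi(r) = \max_{y\in\bar B_r}\varphi(y)$ are continuous on $[0,\infty)$. First I would note that both are well-defined: each $\bar B_r$ is compact (closed and bounded in $\R^n$, which is finite-dimensional, so all norms induce the same topology and Heine--Borel applies), and $\varphi$ is continuous, so the extreme value theorem gives existence of the min and max. It suffices to treat $\psi$; the claim for $\phi$ follows by applying the result to $-\varphi$, since $\phi(r) = -\max_{y\in\bar B_r}(-\varphi)(y)$.

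The key structural facts are: (i) $\psi$ is nondecreasing, because $r\le r'$ implies $\bar B_r\subseteq\bar B_{r'}$; and (ii) the balls vary continuously in $r$ in the Hausdorff sense — indeed $\bar B_{r'}$ is contained in an $\varepsilon$-neighbourhood of $\bar B_r$ whenever $|r-r'|$ is small (concretely, if $x\in\bar B_{r'}$ with $r'>r>0$, then $\tfrac{r}{r'}x\in\bar B_r$ and $\|x - \tfrac{r}{r'}x\| = (1-\tfrac{r}{r'})\|x\| \le r' - r$; the case $r=0$ is handled separately and trivially). The plan is then an $\varepsilon$--$\delta$ argument: fix $r_0\ge 0$ and $\varepsilon>0$. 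For right-continuity at $r_0$, pick $\delta_1>0$ such that $\varphi$ is uniformly continuous on the compact set $\bar B_{r_0+1}$ (Heine--Cantor) with modulus controlling oscillation by $\varepsilon$ on balls of radius $\delta_1$; then for $r_0 \le r < r_0 + \min(1,\delta_1)$, any maximiser $y^\star$ of $\varphi$ on $\bar B_r$ has a nearby point $\tfrac{r_0}{r}y^\star \in \bar B_{r_0}$ (or just $0$ if $r_0=0$) within distance $r - r_0 < \delta_1$, giving $\psi(r) = \varphi(y^\star) \le \varphi(\tfrac{r_0}{r}y^\star) + \varepsilon \le \psi(r_0) + \varepsilon$; combined with monotonicity $\psi(r)\ge\psi(r_0)$, this yields $|\psi(r) - \psi(r_0)|\le\varepsilon$. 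For left-continuity at $r_0 > 0$, the symmetric argument: a maximiser $y^\star$ on $\bar B_{r_0}$ is approximated by $\tfrac{r}{r_0}y^\star\in\bar B_r$ for $r<r_0$ close, so $\psi(r_0)\le\psi(r)+\varepsilon$, and again monotonicity closes the gap.

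The only mild subtlety — and the one place to be careful — is the behaviour at $r_0 = 0$ and the scaling trick when $r$ is small: one must ensure the rescaling $y\mapsto\tfrac{r_0}{r}y$ (or $\tfrac{r}{r_0}y$) keeps points inside a fixed compact set on which uniform continuity has been invoked, which is why I restrict attention to $r$ in a bounded interval like $[0, r_0+1]$ from the outset. Everything else is routine. I do not expect any genuine obstacle here; the proof is short and self-contained, relying only on compactness of balls in $\R^n$, the extreme value theorem, Heine--Cantor uniform continuity, monotonicity of $\psi$, and the elementary radial-scaling estimate above.
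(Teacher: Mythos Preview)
Your proof is correct and takes a genuinely different route from the paper's. The paper reduces to $\phi$ (rather than $\psi$) and then argues topologically: it shows that the preimages $\{r : \phi(r) < a\}$ and $\{r : \phi(r) > a\}$ of the subbasic open rays are open in $[0,\infty)$, by locating a minimiser $x_s \in \bar B_s$, using the pointwise continuity of $\varphi$ at $x_s$ to build small open balls on which $\varphi$ stays below (resp.\ above) the threshold $a$, and then invoking compactness of $\bar B_s$ together with a positive-distance argument between $\bar B_s$ and the complement of an open cover. Your approach is instead metric and quantitative: you exploit Heine--Cantor uniform continuity on a fixed compact ball, the monotonicity of $\psi$, and the radial-scaling estimate $\|x - \tfrac{r}{r'}x\| \le r' - r$ to control the Hausdorff distance between $\bar B_r$ and $\bar B_{r'}$ directly. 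Your argument is shorter, more elementary, and even yields an explicit modulus of continuity for $\psi$ (inherited from that of $\varphi$), whereas the paper's topological proof gives only qualitative continuity but would adapt more readily to other monotone families of compact sets that are not balls.
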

\begin{proof}
Since $\psi(r) = -\min_{y\in\bar{B}_r}(-\varphi(y))$ for each $r\geq 0$, it suffices to show the continuity of $\phi$. 

Because the open rays are a subbase of the Euclidean (i.e., order) topology on $\R$, the continuity of $\phi$ follows if, for each $a\in\mathbb{R}$, the sets $\ell_a:=\{r\mid \phi(r)<a\}$ and $\rho_a:=\{r\mid \phi(r)>a\}$ are both open in $[0,\infty)$. To see this for $\ell_a$, fix any $s\in\ell_a$. Then $m_s := \min_{y\in \bar{B}_s}\varphi(y) < a$ and, since $\varphi$ is continuous, $\varphi(x_s) = m_s$ for some $x_s\in \bar{B}_s$. Denoting by $B_u(z)$ the open $z$-centered $\|\cdot\|$-ball of radius $u$, the continuity of $\varphi$ further implies that there is $\delta_s>0$ such that $\varphi(y) < a$ for each $y \in B_{\delta_s}(x_s),$ and hence $\phi(s') < a$ for all $s'\in (\|x_s\| - \delta_s/2, s] \cap [0,\infty)$. Since trivially $\phi(s') < a$ for all $s'\geq s$, we thus found $\left.\phi\right|_{J_s} < a$ for the open (in $[0,\infty)$) set $J_s:=(\|x_s\| - \delta_s/2, \infty)\cap [0,\infty) \ni s$. Hence $J_s\subset \ell_a$, which shows that $\ell_a$ is open.

The openness of $\rho_a$ follows similarly: Fix any $s\in\rho_a$. Then $\phi(s) > a$ and, hence, clearly also $\phi(s') > a$ for each $s'\leq s$. Moreover: $\min_{y\in\partial B_s} \varphi(y) \geq \phi(s) > a$, from which the continuity of $\varphi$ implies that for each $z\in\partial B_s$ there is $\delta_z>0$ such that: $\varphi(y) > a$ for each $y\in B_{\delta_z}(z)$. Hence $\restr{\varphi}{G_s}>a$ for the open superset $G_s\coloneqq \bigcup_{z\in\partial B_s}B_{\delta_z}(z) \cup B_s$ of $\bar{B}_s$. Now since $\bar{B}_s$ is compact and $G_s^{\mathrm{c}}$ (i.e.\ the complement of $G_s$) is closed, the distance $\hat{\delta}_s\coloneqq d(\bar{B}_s, G_s^{\mathrm{c}})= \inf_{u\in \bar{B}_s, v\in G_s^{\mathrm{c}}}\|u-v\|$ is positive, which implies that $\bar{B}_{s + \delta}\subset G_s$ for any $0\leq \delta < \hat{\delta}_s$. Hence $\restr{\phi}{J_s'}> a$ for the open (in $[0,\infty)$) superset $J_s'\coloneqq(-\infty, s+\hat{\delta}_s)\cap[0,\infty)$ of $s$, which shows that $\rho_a$ is open, as desired.        
\end{proof}   

With the above, we can finally prove the main results of Section \ref{chap:robustICA:sect:auxiliaries}.

\section{Proof of Theorems \ref{thm:robust_ica} and \ref{thm:robustness}}\label{sect:mainproofs} 
\subsection{Proof of Theorem \ref{thm:robust_ica}}\label{chap:robustICA:subsect:signature_identifiability:proof}
\noindent
For $A\equiv(a_1|\cdots|a_d)\in\GL$ and $\zeta\equiv(\zeta^1, \ldots, \zeta^d)\in\fD$, suppose
\begin{equation}\label{thm:robust_ica:aux1}
\chi \, = \, A\cdot \zeta \quad\text{ and }\quad\varepsilon\equiv\delta_{\independent}(\zeta)<\varepsilon_0
\end{equation} 
for $\varepsilon_0$ as in \eqref{chap:robustICA:sect:signature_identifiability:constants:1}, as required. Upon rescaling $\zeta$ by $N_\zeta^{-1}$ and noting that for $\tilde{\zeta}\coloneqq N_\zeta^{-1}\cdot\zeta$ we have $N_{\tilde{\zeta}}=\mathrm{I}$ and $\delta_{\independent}(\tilde{\zeta})=\delta_{\independent}(\zeta)$, whence \eqref{thm:robust_ica:aux1} still holds for $(\zeta,A)$ and $(\tilde{\zeta}, \tilde{A}\coloneqq AN_\zeta)$ replaced, we obtain for each $\nu\in[d]$, see Lemma \ref{lem:scaleinvariance} and \eqref{contrast:eq0.1}, that
\begin{equation}\label{thm:robust_ica:aux2.1}
\begin{aligned}
\mathfrak{s}_0 \, &\coloneqq \, \mathfrak{x}_0(A^{-1}) = \mathfrak{x}_0\big(\tilde{A}^{-1}\big) = N_{\tilde{\zeta}}^{-1}\big[\tilde{\zeta}\big]_0 N_{\tilde{\zeta}}^{-1} = \big[\tilde{\zeta}\big]_0 \qquad\text{and}\\
\mathfrak{s}_\nu \, &\coloneqq \, \mathfrak{x}_\nu(A^{-1}) = \mathfrak{x}_\nu\big(\tilde{A}^{-1}\big) = \big[\tilde{\zeta}\big]_\nu/\sqrt{\langle\tilde{\zeta}\rangle_{\nu\nu}} \, = \, \big[\tilde{\zeta}\big]_\nu. 
\end{aligned}
\end{equation}    
Moreover, for the whitening matrix $R\equiv R_{(\chi)}$ from \eqref{rem:choiceofR:eq1} and the preprocessed observable $\bar{\chi}\coloneqq R\cdot \chi$, we find that for each $\nu\in[d]_0$ and every $\theta\in\Xi_1$, 
\begin{equation}\label{thm:robust_ica:aux2.3}
N_{\theta\cdot\bar{\chi}} \, = \, \mathrm{I} \quad \text{ and hence } \quad \mathfrak{x}_\nu(\theta R) = \big[\theta R\cdot\chi\big]_\nu.
\end{equation}
Indeed: A direct computation of the iterated integrals \eqref{def:coordinates:eq1} yields that for each $\mu\in\fD$, the matrix $C_\mu\equiv\tfrac{1}{2}([\mu]_0 + [\mu]_0^\intercal)$ is identical to the classical moment matrix $\big(\tfrac{1}{2}\E[\mu^i_{0,1}\mu^j_{0,1}]\big)$. Hence and since $C_{\bar{\chi}}=\mathrm{I}$ by choice of $R_{(\chi)}$, we have for any $\theta\equiv(\theta_1|\cdots|\theta_d)^\intercal\in\Xi_1$ that
\begin{equation}
\begin{aligned}
\langle\theta\cdot\bar{\chi}\rangle_{ii} \,&=\, \tfrac{1}{2}\mathbb{E}\!\left[\!\big(\textstyle\sum_{j=1}^d\theta_{ij}\bar{\chi}^j_{0,1}\big)^{\!2}\right] \,=\, \tfrac{1}{2}\sum_{j=1}^d\theta_{ij}^2\mathbb{E}\big[(\bar{\chi}_{0,1}^j)^2\big] + \sum_{j<k}\theta_{ij}\theta_{ik}\mathbb{E}[\bar{\chi}_{0,1}^j \bar{\chi}_{0,1}^k]\\[-1em]
&=\, \sum_{j=1}^d\theta_{ij}^2 \, = \, \|\theta_i\|^2_2 \, = \, 1 \qquad \text{for each } i\in[d],
\end{aligned}   
\end{equation} 
whence $N_{\theta\cdot\bar{\chi}}=\mathrm{I}$ and hence $\mathfrak{x}_\nu(\theta R)_\nu = [\theta\cdot\bar{\chi}]_\nu$ for all $\nu\in[d]_0$, as claimed. From \eqref{thm:robust_ica:aux1} we find  
\begin{equation}\label{thm:robust_ica:aux3}
\mathfrak{s}_\nu = \Lambda_\nu^\zeta + \Delta_\nu^\zeta \quad\text{with}\quad \Lambda_\nu^\zeta \, \text{ diagonal } \ \text{and } \ \big\|\Delta_\nu^\zeta\big\|\leq\varepsilon 
\end{equation}
for each $\nu\in[d]_0$. Indeed: By definition \eqref{sect:coredinates:eq4} and Lemma \ref{lemma1} resp.\ Remark \ref{rem:icdefect-nonstationary}, we have    
\begin{equation}\label{thm:robust_ica:aux4}
\begin{aligned}
\varepsilon_0^2 \ &> \ \varepsilon^2=\delta_{\independent}(\zeta)^2 = \delta_{\independent}(\tilde{\zeta})^2 \ = \ \big\|\mathfrak{s}_0 - \Lambda_0^\zeta\big\|^2 + \sum_{\nu=1}^d\big\|\mathfrak{s}_\nu - \Lambda^\zeta_\nu\big\|^2 
\end{aligned} 
\end{equation} 
for the diagonal matrices  
\begin{equation}\label{thm:robust_ica:aux5}
\begin{aligned}
\Lambda_0^\zeta &\,\coloneqq\,\mathrm{ddiag}\big([\tilde{\zeta}]_0\big) \, = \, N_{\tilde{\zeta}}^2 \ = \, \mathrm{I} \quad\text{ and }\quad
\Lambda_\nu^\zeta &\,\coloneqq\, \big(\langle\tilde{\zeta}\rangle_{\nu\nu\nu}\cdot\delta_{ij\nu}\big)_{\!ij}\,;   
\end{aligned}
\end{equation}  
setting $\Delta_\nu^\zeta\coloneqq\mathfrak{s}_\nu-\Lambda^\zeta_\nu$ then gives \eqref{thm:robust_ica:aux3} as asserted. Now from \eqref{thm:robust_ica:aux1}, the relations $\mathfrak{x}_\nu(A^{-1}) = \mathfrak{s}_\nu$ from \eqref{thm:robust_ica:aux2.1} and the definition \eqref{contrast:eq1} of the contrast $\phi_\chi$ and its scale invariance \eqref{phi:monomialinvar}, we obtain  
\begin{equation}\label{thm:robust_ica:aux6}
\begin{aligned}
\min_{\theta\,\in\,\Xi_1}\phi_\chi(\theta) \, &\leq \, \phi_\chi(\bar{B}_R) = \phi_\chi(B_R) \,=\, \sum_{\nu=0}^d\|\mathfrak{s}_\nu\|_\times^2 \,\leq\, \sum_{\nu=0}^d \big\|\Delta_\nu^\zeta\big\|^2 \, \leq \, \varepsilon^2\,, 
\end{aligned}   
\end{equation} 
where $B_R\coloneqq (RA)^{-1}\eqqcolon(b_1|\cdots|b_d)^\intercal$ and $\bar{B}_R\coloneqq\mathrm{ddiag}\big(|b_1|, \ldots, |b_d|\big)^{-1}\!\cdot B_R\in \Xi_1$ scales the rows of $B_R$ to unit length. Let now $\theta_\star\!\in\,\Xi_1$be arbitrary with
\begin{equation}\label{thm:robust_ica:aux7}
\phi_\chi(\theta_\star)\,\leq\,\varepsilon^2\,, \quad\text{ and set }\quad \tilde{\theta}_\star\coloneqq \theta_\star R
\end{equation}
(note that $\theta_\star$ exists by \eqref{thm:robust_ica:aux6} and the compactness of $\Xi_1$). By construction, cf.\ \eqref{thm:robust_ica:aux2.3}, we have 
\begin{equation}\label{thm:robust_ica:aux8}
\mathfrak{x}_\nu(\tilde{\theta}_\star) = \big[\tilde{\theta}_\star\cdot\chi\big]_\nu \quad\text{for all} \ \nu\in[d]_0,
\end{equation} 
and from \eqref{thm:robust_ica:aux7} and the definition of $\phi_\chi$ we obtain for each $\nu\in[d]_0$ that  
\begin{equation}
\|\Delta_\nu^\star\|\,\leq\,\varepsilon \quad\text{ with }\quad \Delta_\nu^\star\,\coloneqq\,\mathfrak{x}_\nu(\tilde{\theta}_\star) - \Lambda_\nu^\star \ \text{ and } \ \Lambda_\nu^\star\coloneqq\mathrm{ddiag}(\mathfrak{x}_\nu(\tilde{\theta}_\star)).  
\end{equation}           
Hence by \eqref{thm:robust_ica:aux8} and the affine equivariance \eqref{sect:linequiv:eq1} of the signature coredinates, we have that   
\begin{equation}\label{thm:robust_ica:aux10}
\begin{aligned}
\tilde{\theta}_\star\cdot[\chi]_0\cdot\tilde{\theta}_\star^\intercal \, &=\, \Lambda^\star_0 + \Delta_0^\star \quad \text{ and}\\
\tilde{\theta}_\star\!\cdot\left[\textstyle\sum_{j=1}^d\tilde{\theta}_{\nu j}[\chi]_j\right]\cdot\tilde{\theta}_\star^\intercal \,&=\, \Lambda_\nu^\star + \Delta_\nu^\star \quad \text{ for all $\nu\in[d]$},\\
\text{as well as} \quad \big\|(\Lambda_0^\star)^{-1}\Delta_0^\star\big\| \, &= \, \|\Delta_0^\star\| \quad (\text{since} \ \Lambda_0^\star= N_{\tilde{\theta}_\star\cdot\chi}^2 = \mathrm{I}).
\end{aligned} 
\end{equation}  
Next we connect the relations \eqref{thm:robust_ica:aux10} to the source statistics \eqref{thm:robust_ica:aux2.1} via the coredinate identities \eqref{sect:linequiv:eq1} which hold by \eqref{thm:robust_ica:aux1}: Using \eqref{sect:linequiv:eq1} to substitute the matrices $[\chi]_\nu$ in \eqref{thm:robust_ica:aux10}, we obtain
\begin{equation}\label{thm:robust_ica:aux11}
\vartheta\cdot\diamondsuit_\nu^\zeta\cdot\vartheta^\intercal \, = \, \diamondsuit_\nu^\star \quad (\nu\in[d]_0) \quad\text{ with }\quad \vartheta\coloneqq\tilde{\theta}_\star A 
\end{equation} 
and where the $\diamondsuit_\nu^\zeta$, $\ds^\star_\nu$ are defined as the coredinate-based statistics
\begin{equation}
\diamondsuit^\zeta_0\coloneqq [\zeta]_0 \quad\text{and}\quad \diamondsuit_\nu^\zeta\coloneqq \textstyle\sum_{\gamma=1}^d \vartheta_{\nu\gamma}[\zeta]_\gamma \quad (\nu\in[d]), \quad\text{and} \quad \ds^\star_\nu\coloneqq \Lambda^\star_\nu + \Delta^\star_\nu \quad (\nu\in[d]_0). 
\end{equation} 
For an arbitrary vector $c\equiv(c_0, \underline{c})\in\R_{\times}\times\R^d$, denoting $\underline{c}\equiv(\tilde{c}_1, \ldots, \tilde{c}_d)$, the congruences \eqref{thm:robust_ica:aux11} can then be contracted to the `condensed' relations 
\begin{equation}\label{thm:robust_ica:aux13}
\vartheta\cdot c_0\ds^\zeta_0\cdot\vartheta^{\intercal}\,=\,c_0\ds^\star_0 \quad\text{ and }\quad \vartheta\cdot\ds^\zeta_{\underline{c}}\cdot\vartheta^\intercal\,=\,\ds^\star_{\underline{c}}
\end{equation}     
which involve the $\underline{c}$-weighted linear combinations
\begin{equation}
\ds^\zeta_{\underline{c}}\coloneqq\textstyle\sum_{\nu=1}^d \tilde{c}_\nu\ds^\zeta_\nu \quad\text{ and }\quad \ds^\star_{\underline{c}}\coloneqq\textstyle\sum_{\nu=1}^d \tilde{c}_\nu\ds^\star_\nu.
\end{equation}
As a consequence (upon `left-multiplying the inverse of the first identity in \eqref{thm:robust_ica:aux13} with the second identity in \eqref{thm:robust_ica:aux13}') we obtain the $\underline{c}$-weighted \emph{conjugate} relation  
\begin{equation}\label{thm:robust_ica:aux15}
\vartheta^{-\intercal}\cdot\hat{\ds}^\zeta_c\cdot\vartheta^\intercal \, = \, \hat{\ds}^\star_c \,, \qquad \forall\, c\in\R_{\times}\!\times\R^d,   
\end{equation} 
for the composite statistics $\hat{\ds}^\zeta_c\coloneqq\tilde{c}_0\big[\ds^\zeta_0\big]^{-1}\!\!\cdot\ds^\zeta_{\underline{c}}$ and 
$\hat{\ds}^\star_c\coloneqq\tilde{c}_0\big[\ds^\star_0\big]^{-1}\!\!\cdot\ds^\star_{\underline{c}}$, with $\tilde{c}_0\coloneqq c_0^{-1}$. By Lemma \ref{lem:contracted_struct} (cf.\ \eqref{thm:robust_ica:aux3}, \eqref{thm:robust_ica:aux10} and that $\varepsilon_0<1$), these composite statistics take the form
\begin{equation}
\hat{\ds}^\zeta_c \, = \, \hat{\Lambda}^\zeta_c \, + \, \hat{\Delta}^\zeta_c  \quad\text{ and }\quad \hat{\ds}^\star_c \, = \, \hat{\Lambda}^\star_c \, + \, \hat{\Delta}^\star_c 
\end{equation}
for the diagonal matrices, notationally distinguished via $\eta\in\{\zeta, \star\}$,
\begin{equation}\label{thm:robust_ica:aux20.2}
\begin{gathered}
\hat{\Lambda}^{\eta}_c\,\coloneqq\,\tilde{c}_0\big[\Lambda_0^\eta\big]^{-1}\textstyle\sum_{\nu=1}^d \tilde{c}_\nu\bar{\Lambda}_\nu^\eta \qquad \text{with} \qquad \bar{\Lambda}_\nu^\star\coloneqq\Lambda_\nu^\star \quad\text{ and }\quad \bar{\Lambda}_\nu^\zeta\coloneqq\textstyle\sum_{\gamma=1}^d \vartheta_{\nu\gamma}\Lambda^\zeta_\gamma,    
\end{gathered}
\end{equation}  
and off-diagonals $\hat{\Delta}_c^\eta\equiv\hat{\ds}^\eta_c - \hat{\Lambda}^\eta_c$ which are bounded by
\begin{equation}\label{thm:robust_ica:aux20.3}
\begin{aligned}
\big\|\hat{\Delta}_c^\zeta\big\| \, &\leq \, |\tilde{c}_0|\left[\big\|\Lambda_c^\zeta\big\|_2 + 1 + \gamma^{\underline{c}}_{\epsilon^\zeta_0,\epsilon^\zeta_1}\right]\cdot\gamma^{\underline{c}}_{\epsilon^\zeta_0,\epsilon^\zeta_1} \quad\text{and}\\  
\big\|\hat{\Delta}_c^\star\big\| \, &\leq \, |\tilde{c}_0|\left[\big\|\Lambda_c^\star\big\|_2 + 1 + \gamma^{\underline{c}}_{\epsilon_0^\star,\epsilon_1^\star}\right]\cdot\gamma^{\underline{c}}_{\epsilon^\star_0,\epsilon_1^\star}
\end{aligned}
\end{equation}  
for $\Lambda^\zeta_c\coloneqq\sum_{\nu=1}^dc_\nu\bar{\Lambda}_\nu^\zeta \, \big(\!\!= c_0\hat{\Lambda}_c^\zeta\big)$ and $\Lambda_c^\star\coloneqq c_0\hat{\Lambda}^\star_c$ and $\gamma^{\underline{c}}_{a,b}\coloneqq\max\{\tfrac{a}{1-a}, |\underline{c}|\cdot b\}$, and the scalars
\begin{equation}
\epsilon_0^\eta\coloneqq\big\|\Delta_0^\eta\big\| \quad\text{ and }\quad \epsilon_1^\eta\coloneqq \big\|(0,\Delta^\eta_1, \ldots, \Delta^\eta_d)\big\|_{\mathcal{V}}.
\end{equation}  
Note that by \eqref{thm:robust_ica:aux4} and \eqref{thm:robust_ica:aux7} we have
\begin{equation}\label{thm:robust_ica:aux20.5}
(\epsilon_0^\eta)^2 \, + \, (\epsilon_1^\eta)^2 \, \leq \, \varepsilon^2 \quad \text{ for both \ $\eta=\zeta$ \ and \ $\eta=\star$}. 
\end{equation}
Let us now distinguish two cases.\\[-0.5em]

\noindent
First suppose that $\varepsilon=0$, i.e.\ that $\delta_{\independent}(\zeta)=0$, meaning that the coredinates $([\zeta]_\nu)_{\nu\in[d]_0}$ of the source $\zeta\equiv(\zeta^1, \ldots, \zeta^d)$ coincide with the coredinates of the product signal $\zeta^1\otimes\cdots\otimes\zeta^d$. In this case (cf.\ \eqref{thm:robust_ica:aux20.2},\eqref{thm:robust_ica:aux20.3},\eqref{thm:robust_ica:aux20.5}) the matrices $\hat{\ds}^\star_c$ and $\hat{\ds}^\zeta_{c}$ are both diagonal, the latter reading 
\begin{equation}\label{thm:robust_ica:aux21}
\hat{\ds}^\zeta_c  = \hat{\Lambda}^\zeta_c = \tilde{c}_0\!\!\sum_{\gamma, \nu=1}^d \vartheta_{\nu\gamma}\tilde{c}_\nu\Lambda^\zeta_\gamma = \mathrm{ddiag}\big(\lambda^c_1, \lambda^c_2, \cdots, \lambda^c_d\big)   
\end{equation} 
for $c$-dependent eigenvalues $\lambda^c_1, \ldots, \lambda^c_d$ given by 
\begin{equation}\label{thm:robust_ica:aux22}
\lambda_i^c = \alpha_i(c)\cdot\big(\langle\zeta\rangle_{iii}/\langle\zeta\rangle_{ii}^{3/2}\big) \quad\text{for}\quad \alpha_i(c)\coloneqq\tilde{c}_0 \vartheta_i^\intercal\cdot\underline{c}     
\end{equation}   
with $\vartheta\eqqcolon(\vartheta_1|\cdots|\vartheta_d)$. Now since $\vartheta$ is invertible, Lemma \ref{lem:hyperplanes} implies that the set
\begin{equation}\label{thm:robust_ica:aux23}
\mathcal{N} \,\coloneqq\, \big\{c\in\R^{1+d} \ \big| \ \exists\, i,j\in[d], \, i\neq j \, : \, \lambda^c_i=\lambda^c_j \,\big\} 
\end{equation}    
has Lebesgue measure zero, as follows from applying said lemma for the coefficient matrix
\begin{equation}\label{thm:robust_ica:aux24}
(\kappa_{ij})\,\coloneqq\,\Big(\tfrac{\langle\zeta\rangle_{jjj}\langle\zeta\rangle_{ii}^{3/2}}{\langle\zeta\rangle_{jj}^{3/2}\langle\zeta\rangle_{iii}}\cdot(1-\delta_{ii_0})\Big)  
\end{equation}  
with $i_0\in[d]$ being the unique index at which $\zeta_{i_0i_0i_0}=0$. (By assumption on $\zeta$, the index $i_0$ is unique if it exists; otherwise, i.e.\ if none of the $\langle\zeta\rangle_{iii}$ vanish, set $(\kappa_{ij})\coloneqq\Big(\tfrac{\langle\zeta\rangle_{jjj}\langle\zeta\rangle_{ii}^{3/2}}{\langle\zeta\rangle_{jj}^{3/2}\langle\zeta\rangle_{iii}}\Big)$.) 

\noindent
Since the complement of \eqref{thm:robust_ica:aux23} is thus certainly non-empty, we can find a $c_\star\in\R_{\times}\!\times\R^d$ such that the eigenvalues $\lambda^{c_\star}_1, \ldots, \lambda^{c_\star}_d$ of $\hat{\ds}^\zeta_c$ in \eqref{thm:robust_ica:aux21} --- which, by matrix conjugation \eqref{thm:robust_ica:aux15}, coincide with those of $\hat{\ds}^\star_{c_\star}$ ---  are pairwise distinct and hence all simple. As $\hat{\ds}^\zeta_c$ and $\hat{\ds}^\star_{c_\star}$ are both diagonal matrices, this allows us to conclude $\vartheta^\intercal\in\M$, whence
\begin{equation}
\vartheta\,\in\,\M \quad\text{ and hence }\quad \tilde{\theta}_\star\,\in\,\M\cdot A^{-1}
\end{equation}
as claimed in the theorem.\\[-0.5em]

\noindent
Next we consider the case $\varepsilon>0$, i.e.\ the case where the components $\zeta^i$ of the source signal $\zeta=(\zeta^1, \ldots, \zeta^d)$ are allowed to be statistically dependent. The identity \eqref{thm:robust_ica:aux15} then reads
\begin{equation}\label{thm:robust_ica:aux26}
\tilde{\vartheta}^{-1}\cdot\left[\hat{\Lambda}^\zeta_c + E_c\right]\cdot\tilde{\vartheta} \, = \, \hat{\Lambda}^\star_c \quad \text{ for } \quad \tilde{\vartheta}\coloneqq\vartheta^\intercal,
\end{equation}    
with $\hat{\Lambda}^\zeta_c$ and $\hat{\Lambda}^\star_c$ as in \eqref{thm:robust_ica:aux20.2} and for the off-diagonal matrix  
\begin{equation}\label{thm:robust_ica:aux27}
E_c \,\coloneqq\, \hat{\Delta}^\zeta_c - \tilde{\vartheta}\hat{\Delta}^\star_c\tilde{\vartheta}^{-1}.   
\end{equation} 
Using \eqref{thm:robust_ica:aux20.3}, we observe that the `perturbation' $E_c$ can be controlled by 
\begin{equation}\label{thm:robust_ica:aux28}
\|E_c\| \, \leq \, \big\|\hat{\Delta}^\zeta_c\big\| \, + \, \kappa_2(\tilde{\vartheta})\big\|\hat{\Delta}^\star_c\big\| \ \leq \ |\tilde{c}_0|\left[\kappa_c\cdot\hat{\varphi}_{\underline{c}}(\varepsilon) + (1+\kappa_0)\hat{\varphi}_{\underline{c}}(\varepsilon)^2\right] 
\end{equation}  
for the bounding function 
\begin{equation}\label{thm:robust_ica:aux29}
\hat{\varphi}_{\underline{c}}(a)\,\coloneqq\, \max\big\{\tfrac{a}{1-a}, |\underline{c}|\cdot a\big\}
\end{equation}
and the $c$-dependent constant
\begin{align}\label{thm:robust_ica:aux30}
\kappa_c &\coloneqq \big\|\tilde{\vartheta}\big\|\varsigma|\underline{c}| + 1 + \kappa_2(\tilde{\vartheta})\big[\xi d |\underline{c}| + 1\big]\leq\|A_R\|\varsigma\sqrt{d}|\underline{c}| + 1 + \kappa_0\kappa_2(A_R)\big[\xi d|\underline{c}| + 1\big] 
\end{align}
for $\xi\coloneqq\big\|(0, [\bar{\chi}]_1, \ldots, [\bar{\chi}]_d)\big\|_{\mathcal{V}}$ and $\varsigma\coloneqq\sqrt{\sum_{i=1}^d\langle\zeta\rangle_{iii}^2/\langle\zeta\rangle_{ii}^3}$ and $A_R\coloneqq RA$. Indeed: Writing $\vartheta_i$ for the $i^{\mathrm{th}}$ column of $\vartheta$, Cauchy-Schwarz applied to \eqref{thm:robust_ica:aux20.2}, combined with \eqref{thm:robust_ica:aux5}, yields that
\begin{equation}
\big\|\Lambda^\zeta_c\big\| \, \leq \, |\underline{c}| \big\|\big(0, \big(\bar{\Lambda}^\zeta_\nu\big)_{\nu=1}^d\big)\big\|_{\mathcal{V}}\,=\, |\underline{c}|\sqrt{\textstyle\sum_{i=1}^d\|\vartheta_i\|_2^2\langle\tilde{\zeta}\rangle_{iii}^2}\,\leq\,|\underline{c}|\big\|\tilde{\vartheta}\big\|\varsigma,
\end{equation} 
while the inclusion $\theta_\star\equiv(\theta^\star_{ij})\in\Xi_1$ (unit rows) implies $\sum_{\alpha,\beta,\gamma=1}^d\big(\theta_{i\alpha}^\star\theta_{i\beta}^\star\theta_{\nu\gamma}^\star\big)^2=1$ and hence 
\begin{equation}
\begin{aligned}
\big\|\Lambda^\star_c\big\| \,&\leq\, |\underline{c}|\big\|\big(0, \big(\mathrm{ddiag}[\mathfrak{x}_\nu(\theta_\star R)]\big)_{\nu=1}^d\big)\big\|_{\mathcal{V}}\,=\, |\underline{c}|\sqrt{\textstyle\sum_{\nu, i=1}^d\!\big[\sum_{\alpha,\beta,\gamma=1}^d\theta^\star_{i\alpha}\theta^\star_{i\beta}\theta^\star_{\nu\gamma}\langle\bar{\chi}\rangle_{\alpha\beta\gamma}\big]^2}\\
&\leq\, |\underline{c}|\sqrt{d^2\textstyle\sum_{\alpha,\beta,\gamma=1}^d\langle\bar{\chi}\rangle_{\alpha\beta\gamma}^2} = d|\underline{c}| \xi. 
\end{aligned}
\end{equation}   
The (second) inequality in \eqref{thm:robust_ica:aux28} hence follows from \eqref{thm:robust_ica:aux20.3} and $\max_{\eta\in\{\zeta, \star\}}\gamma^{\underline{c}}_{\epsilon_0^\eta, \epsilon_1^\eta}\leq \hat{\varphi}_{\underline{c}}(\varepsilon)$, where the latter holds by \eqref{thm:robust_ica:aux20.5} and monotonicity. With the perturbation \eqref{thm:robust_ica:aux27} of the matrix $\hat{\Lambda}_c^\zeta$ (cf.\ \eqref{thm:robust_ica:aux21}) under control, we can now apply Lemma \ref{lem:perturbationresults} to find contraction parameters $c=(c_0, \underline{c})$ which via \eqref{thm:robust_ica:aux26} imply that $\tilde{\vartheta}$ is close to an element of $\M$. To this end, recall from \eqref{thm:robust_ica:aux21} and \eqref{thm:robust_ica:aux22} (and $N_{\tilde{\zeta}}=\mathrm{I}$) that the diagonal entries $\underline{\lambda}_c^\vartheta\coloneqq(\lambda^c_1, \ldots, \lambda^c_d)$ of $\hat{\Lambda}^\zeta_c$ read 
\begin{equation}
\underline{\lambda}_c^\vartheta \, = \, \tfrac{1}{c_0}H_{\vartheta}\cdot\underline{c} \quad\text{ for }\quad H_\vartheta\coloneqq \mathrm{ddiag}\big(\langle\tilde{\zeta}\rangle_{111}, \ldots, \langle\tilde{\zeta}\rangle_{ddd}\big)\cdot\tilde{\vartheta},
\end{equation}     
where, upon permuting $\tilde{\zeta}^1, \ldots, \tilde{\zeta}^d$ if necessary, we may assume that the `vanishing' index $i_0$ from \eqref{thm:robust_ica:aux24} reads $i_0=1$. Now for a given $\alpha>0$, which will be specified below, define 
\begin{equation}\label{thm:robust_ica:aux34}
\underline{\lambda}_\alpha \, \coloneqq \, \big(0, \alpha/k_d, 2\alpha/k_d, \ldots, (d-1)\alpha/k_d\big) \equiv \big(\lambda^\alpha_i\big)_{i=1}^d 
\end{equation}for the constant $k_d$ defined in Lemma \ref{lem:spread}, and set further $c_\alpha\coloneqq(1, \underline{c}_\alpha)$ with
\begin{equation}\label{thm:robust_ica:aux35}
\underline{c}_\alpha \,\coloneqq\, \tilde{\vartheta}^{-1}\mathrm{ddiag}\big(0, \langle\tilde{\zeta}\rangle_{222}^{-1}, \ldots, \langle\tilde{\zeta}\rangle_{ddd}^{-1}\big)\cdot\underline{\lambda}_\alpha. 
\end{equation}
Clearly $|\underline{\lambda}_\alpha|=\alpha$ and $\underline{\lambda}_{\alpha} = \underline{\lambda}^\vartheta_{c_\alpha}$, and by Lemma \ref{lem:spread} the matrix $\hat{\Lambda}^\zeta_{c_\alpha}$ is such that the minimal distance between any two of its eigenvalues (the latter are the entries of $\underline{\lambda}_\alpha$) is maximal among any matrix in $\{\hat{\Lambda}^\zeta_{c}\mid c \in\R_\times\!\times\R^d \,:\, |\mathrm{diag}(\hat{\Lambda}^\zeta_{c})|=\alpha\}$. To simplify the following estimates, we choose $\alpha$ such that $\hat{\varphi}_{\underline{c}_\alpha}\!(\varepsilon)$ from \eqref{thm:robust_ica:aux29} is dominated by its second argument, i.e.\        such that $\hat{\varphi}_{\underline{c}_\alpha}\!(\varepsilon) = |\underline{c}_\alpha|\varepsilon$. Clearly this holds for   
\begin{equation}\label{thm:robust_ica:aux37}
\alpha>0 \quad \text{ such that } \quad |\underline{c}_\alpha| \,=\, \frac{1}{1-\varepsilon}.
\end{equation}
Note that an according choice of $\alpha$ is certainly possible since, cf.\ \eqref{thm:robust_ica:aux35}, 
\begin{equation}\label{thm:robust_ica:aux38}
|\underline{c}_\alpha| = \alpha\cdot \kappa^\vartheta_{\zeta} \quad\text{for}\quad \kappa^\vartheta_{\zeta}\coloneqq\big|\tilde{\vartheta}^{-1}[D_\zeta\underline{\lambda}_1]\big| > 0,  
\end{equation}
where $D_\zeta\coloneqq \mathrm{ddiag}\big[\big(0, \langle\tilde{\zeta}\rangle_{222}^{-1}, \ldots, \langle\tilde{\zeta}\rangle_{ddd}^{-1}\big)\big]$ and the positivity of $\kappa^\vartheta_{\zeta}$ follows from the invertibility of $\tilde{\vartheta}^{-1}$ (as thus $\ker(\tilde{\vartheta}^{-1}) = \{0\} \niton D_\zeta\cdot\underline{\lambda}_1$). In respect of \eqref{thm:robust_ica:aux28} and \eqref{thm:robust_ica:aux37}, we may bound $E_\alpha\coloneqq E_{c_\alpha}$ by
\begin{align}
\|E_\alpha\| \, &\leq \, \kappa_{c_\alpha}|\underline{c}_\alpha|\cdot\varepsilon + (1 + \kappa_0)|\underline{c}_\alpha|^2\cdot\varepsilon^2 \\
&=\, \frac{\varepsilon}{(1-\varepsilon)^2}\left[1 + \big\|\tilde{\vartheta}\big\|\varsigma + \kappa_2(\tilde{\vartheta})\big[\xi d + (1-\varepsilon)\big] + \kappa_0\varepsilon\right]\\
&<\, \left[1 + \kappa_0 + \big\|\tilde{\vartheta}\big\|\varsigma + (1 + \xi d)\kappa_2(\tilde{\vartheta})\right]\cdot\frac{\varepsilon}{(1-\varepsilon)^2}.\label{thm:robust_ica:aux39}
\end{align}
Recalling $A_R\equiv RA$ and $\|\tilde{\vartheta}\| = \|\vartheta\|\leq\sqrt{d}\|A_R\|$ (as $\vartheta=\theta_\star A_R$ with $\theta_\star\in\Xi_1$), we find that
\begin{align}\label{thm:robust_ica:aux39.1}
K_\vartheta\,&\coloneqq\,1 + \kappa_0 + \big\|\tilde{\vartheta}\big\|\varsigma + (1 + \xi d)\kappa_2(\tilde{\vartheta})\\
&\phantom{:}\leq\, 1  + \sqrt{d}\|A_R\|\varsigma + \big(1 + (1+\xi d)\kappa_2(A_R)\big)\kappa_0 \, \eqqcolon \, \bar{K}_0 \label{thm:robust_ica:aux39.2} 
\end{align}     
since also $\kappa_2(\tilde{\vartheta})\leq\kappa_0\kappa_2(A_R)$ by the assumed condition number bound on $\theta_\star$. Next we tie in Lemma \ref{lem:perturbationresults}. Using the terminology of said lemma, note that by the above choice \eqref{thm:robust_ica:aux35} of $c_\alpha$, the eigenvalues $\lambda_i\coloneqq\lambda^{c_\alpha}_i$, $i\in[d]$, of the diagonal matrix $C\coloneqq\hat{\Lambda}_{c_\alpha}^\zeta$ are pairwise distinct with associated unit-norm eigenvectors given by $(v_i)_{i\in[d]}\coloneqq(e_i)_{i\in[d]}$, the standard basis of $\R^d$. Accordingly we have (in the notation of Lem.\ \ref{lem:perturbationresults}) $(u_i)_{i\in[d]}=(e_i^\intercal)_{i\in[d]}$ and, thus, $\kappa_i=1$ and
\begin{equation}\label{thm:robust_ica:aux40}
s_i \, = \, \sigma_{d-1}\big((\delta_{\mu\nu}-\delta_{i\mu\nu})_{\mu\nu}\cdot(C-\lambda_i\mathrm{I})\big) \,= \, \min_{j\in[d]\setminus\{i\}}\big|\lambda_j - \lambda_i\big| = \alpha/k_d
\end{equation} 
for each $i\in[d]$ (cf.\ \eqref{thm:robust_ica:aux34}). We observe that $E_\alpha$ $(\equiv E)$ thus meets the hypothesis \eqref{lem:perturbationresults:eq2} of Lemma \ref{lem:perturbationresults}. Indeed: Notice first that the stronger (than \eqref{lem:perturbationresults:eq2}) inequality  
\begin{equation}\label{thm:robust_ica:aux41}
\|E_\alpha\| \, < \, \gamma^{-1}\min_{i\in[d]}\frac{s_i}{\kappa_i} \, = \, \frac{\alpha}{\gamma k_d}\, , \quad \gamma\coloneqq 1 + \sqrt{5},    
\end{equation}  
follows -- due to \eqref{thm:robust_ica:aux38}, \eqref{thm:robust_ica:aux39} and \eqref{thm:robust_ica:aux39.1} -- from the (yet to be proved) inequality 
\begin{equation}\label{thm:robust_ica:aux42}
\frac{\varepsilon}{1-\varepsilon}\, < \,\big(\gamma k_d\kappa^\vartheta_\zeta K_\vartheta\big)^{-1}\eqqcolon q_\vartheta\,, 
\end{equation}  
which in turn is equivalent to $\varepsilon < q_\vartheta/(1+q_\vartheta)$. But since by definition, cf.\ \eqref{thm:robust_ica:aux38} and \eqref{thm:robust_ica:aux39.2}, 
\begin{equation}
\kappa^\vartheta_\zeta\cdot K_\vartheta \,\leq\, \kappa^\vartheta_\zeta(1 + \kappa_0) + \kappa_2(\tilde{\vartheta})\varsigma\varsigma_1 + (1 + \xi  d)\kappa^\vartheta_\zeta\kappa_2(\tilde{\vartheta})
\end{equation}    
for the source-dependent constant $\varsigma_1\,\coloneqq\,\big\|D_\zeta\cdot\underline{\lambda}_1\big\| = k_d^{-1}\sqrt{\sum_{i=1}^d\frac{(i-1)^2}{\langle\tilde{\zeta}\rangle_{iii}^2}}$ and with 
\begin{equation}
\kappa(\tilde{\vartheta})\leq\kappa_0\kappa_2(A_R) \quad \text{as well as} \quad \kappa^\vartheta_\zeta \,\leq\, \big\|\tilde{\vartheta}^{-1}\big\|\varsigma_1 \, \leq \, \frac{\kappa_0\big\|A_R^{-1}\big\|}{\sqrt{d}}\varsigma_1
\end{equation}  
(since $\|\tilde{\vartheta}^{-1}\| = \|\vartheta^{-1}\| = \|A_R^{-1}\|\|\theta_\star^{-1}\|$ and $\|\theta_\star^{-1}\|\leq\kappa_0/\|\theta_\star\| = \kappa_0/\sqrt{d}$), we find that   
\begin{equation}\label{thm:robust_ica:aux46}
\kappa^\vartheta_\zeta K_\vartheta \,\leq\, r_0\coloneqq\kappa_0\varsigma_1\left[\frac{\big\|A_R^{-1}\big\|}{\sqrt{d}}\big(1 + \kappa_0 + (1+\xi d)\kappa_0\kappa_2(A_R)\big) + \kappa_2(A_R)\varsigma\right]  
\end{equation} 
and hence $q_\vartheta \, \geq \, \big(\gamma k_d r_0\big)^{-1}\eqqcolon q_0$ and thus
\begin{equation}\label{thm:robust_ica:aux47}
\frac{q_\vartheta}{1 + q_\vartheta} \,\geq\, \frac{q_0}{1 + q_0}.  
\end{equation}  
But since $\varepsilon<\varepsilon_0 = q_0/(1+q_0)$ by assumption, we conclude that \eqref{thm:robust_ica:aux47}, and hence \eqref{thm:robust_ica:aux42} and thus \eqref{thm:robust_ica:aux41} and with it \eqref{lem:perturbationresults:eq2} in particular, hold as desired. This guarantees that Lemma \ref{lem:perturbationresults} applies to the above matrix $C\equiv\hat{\Lambda}^\zeta_{c_\alpha}$ and its perturbation $\tilde{C}\coloneqq C + E_\alpha$ (cf.\ \eqref{thm:robust_ica:aux26}). Denote by $\tilde{\lambda}_1, \ldots, \tilde{\lambda}_d$ the eigenvalues of $\tilde{C}$ enumerated in the order (from left to right) in which they appear on the diagonal matrix $\hat{\Lambda}^\star_{c_\alpha}$ in \eqref{thm:robust_ica:aux26}. Since, in further consequence of \eqref{thm:robust_ica:aux40} and \eqref{thm:robust_ica:aux41},
\begin{equation}
\|E_\alpha\| + \frac{2\|u_i\|_2\kappa_i}{s_i}\|E_\alpha\|^2 \ < \ \frac{(2 + \gamma)\alpha}{\gamma^2 k_d} = \frac{\alpha}{2 k_d}\,,     
\end{equation}               
Lemma \ref{lem:perturbationresults} (cf.\ \eqref{lem:perturbationresults:eq4}) then implies that there is an assignment $\sigma: [d] \rightarrow [d]$ such that 
\begin{equation}\label{thm:robust_ica:aux49}
\big|\tilde{\lambda}_{\sigma(i)} - \lambda_i\big| \, < \, \frac{\alpha}{2k_d} \quad \text{ for each } i\in[d]. 
\end{equation}   
It follows that $\sigma$ is in fact a permutation and the eigenvalues $\tilde{\lambda}_1, \ldots, \tilde{\lambda}_d$ of $\tilde{C}$ (all real) are pairwise distinct. Indeed: assuming otherwise that there are $i,j\in[d]$ with $i\neq j$ such that $\tilde{\lambda}_{\sigma(i)} = \tilde{\lambda}_{\sigma(j)}$ implies, due to \eqref{thm:robust_ica:aux49}, that
\begin{equation}
|\lambda_i - \lambda_j| \, \leq \, \big|\lambda_i - \tilde{\lambda}_{\sigma(i)}\big| + \big|\tilde{\lambda}_{\sigma(j)} - \lambda_j\big| \, < \, \alpha/k_d\,,
\end{equation}   
contradicting $\min_{i,j\in[d],\, i\neq j}|\lambda_i - \lambda_j|=\alpha/k_d$ (recall \eqref{thm:robust_ica:aux34}). Hence for each $i\in[d]$, the eigenspace $V_i$ of $\tilde{C}$ associated to $\tilde{\lambda}_i$ is of real dimension one, namely $V_i = \langle \tilde{\vartheta}_i\rangle_\R$ for $\tilde{\vartheta}_i$ the $i^{\mathrm{th}}$ column of $\tilde{\vartheta}$. Thus by Lemma \ref{lem:perturbationresults} \eqref{lem:perturbationresults:eq3} we obtain that,\footnote{\ Since for every eigenpair $(\tilde{\lambda}_j, \tilde{w}_j)$ of $\tilde{C}$ we must have $\tilde{w}_j = q\cdot\tilde{\vartheta}_j$, $q\in\R_\times$, as $\tilde{\lambda}_j$ is simple, the eigenvector $\tilde{v}_j\equiv(\tilde{v}_j^1, \ldots, \tilde{v}_j^d)$ to $\tilde{\lambda}_j$ which has been singled out in \eqref{lem:perturbationresults:eq3} via $(\tilde{v}_j^i=)\,u_i^\intercal\tilde{v}_j = 1$ reads $\tilde{v}_j = \tilde{\vartheta}_{ij}^{-1}\cdot\tilde{\vartheta}_j$.} for each $i\in[d]$ and $\sigma\in S_d$ as above,
\begin{equation}\label{thm:robust_ica:aux51}
\big\|\beta_i^{-1}\cdot\tilde{\vartheta}_{\sigma(i)} - e_i\big\|_2 \ \leq \ \frac{2 k_d}{\alpha}\|E_\alpha\| \quad \text{ for } \ \beta_i\coloneqq\tilde{\vartheta}_{\sigma(i)i}.     
\end{equation}             
Using \eqref{thm:robust_ica:aux38} and \eqref{thm:robust_ica:aux39}, the right-hand side in \eqref{thm:robust_ica:aux51} can be estimated to
\begin{equation}\label{thm:robust_ica:aux52}
\frac{2k_d}{\alpha}\|E_\alpha\| \ \leq \ r_1\frac{\varepsilon}{1 - \varepsilon} \quad\text{ for }\quad r_1\coloneqq 2k_d\kappa^\vartheta_\zeta K_\vartheta\,, 
\end{equation}    
whence in partic.\ $\tfrac{2k_d}{\alpha}\|E_\alpha\| < 2/\gamma < 1$ by \eqref{thm:robust_ica:aux42}. As for an estimate of the $\beta_i$, notice first that 
\begin{equation}
\beta_i\,=\,\max_{\nu\in[d]}\big|\tilde{\vartheta}_{\nu i}\big| \,\equiv\, \|\tilde{\vartheta}_i\|_\infty \qquad \text{for all } i\in[d], 
\end{equation}
as otherwise $\max_{\nu\neq \sigma(i)}\big|\beta_i^{-1}\cdot\tilde{\vartheta}_{\nu i}\big|\geq 1$ and thus $\big\|\beta_i^{-1}\cdot\tilde{\vartheta}_{\sigma(i)} - e_i\big\|_2 \geq 1$ in contradiction to $\tfrac{2k_d}{\alpha}\|E_\alpha\|<1$. Hence for each $i\in[d]$, 
\begin{equation}\label{thm:robust_ica:aux54}
\beta_i \ \geq \ d^{-1/2}\|\tilde{\vartheta}_i\|_2 \ \geq \ \frac{\|A_R\|_2}{\kappa_2(A_R)\sqrt{d}}   
\end{equation}      
where the first inequality is due to the equivalence of maximum and Euclidean norm and the last inequality follows from $\|\tilde{\vartheta}_i\|_2 = \|A_R^\intercal\cdot\theta_i\|_2 \ \geq \ \sigma_d(A_R) = \|A_R\|_2/\kappa_2(A_R),$ where we used that $\theta_i$, which denotes the $i^{\mathrm{th}}$ row of $\theta_\star$, has Euclidean norm $1$. Further,
\begin{equation}\label{thm:robust_ica:aux56}
\beta_i \, \leq \, \|\tilde{\vartheta}_i\|_2 = \|A_R^\intercal\cdot\theta_i\|_2 \, \leq \, \|A_R\|_2 
\end{equation}     
for each $i\in[d]$. Thus, the condition number of $\Lambda\coloneqq\mathrm{ddiag}[\beta_1^{-1}, \ldots, \beta_d^{-1}]$ is bounded by
\begin{equation}\label{thm:robust_ica:aux57}
\kappa_2(\Lambda) \, \leq \, \sqrt{d}\kappa_2(A_R).
\end{equation}
\noindent
Combining \eqref{thm:robust_ica:aux51} and \eqref{thm:robust_ica:aux52}, we obtain 
\begin{equation}\label{thm:robust_ica:aux58}
\left\|\tilde{\vartheta}\cdot \tilde{M} - \mathrm{I}\right\| \ \leq \ r_2\frac{\varepsilon}{1-\varepsilon} \quad\text{ for }\quad r_2\coloneqq d\cdot r_1,  
\end{equation} 
where $\tilde{M}\coloneqq P_\sigma^\intercal\cdot\Lambda\in\M$ for $\Lambda$ as above and $P_\sigma=(\delta_{\sigma(i)j})$ the permutation matrix associated to $\sigma$. Hence for $M\coloneqq \tilde{M}^{-\intercal}\in\M$ and $E\coloneqq (\tilde{\vartheta}\tilde{M} - \mathrm{I})^\intercal$ we get 
\begin{equation}\label{thm:robust_ica:aux59}
\vartheta \,=\, M(\mathrm{I} + E) \quad \text{ \ with \ } \quad \|E\|\leq \tfrac{r_2\varepsilon}{1-\varepsilon}  
\end{equation}         
and with $r_2 \, \leq \, 2dk_dr_0$, for $r_0$ as in \eqref{thm:robust_ica:aux46}; this proves \eqref{thm:robust_ica:eq1}. To see the last inequality in \eqref{thm:robust_ica:eq1}, we note first that since $\vartheta=\tilde{\theta}_\star A$ by definition, equation \eqref{thm:robust_ica:aux59} yields
\begin{equation}\label{thm:robust_ica:aux61}
\tilde{\theta}_\star \, = \, M\cdot A^{-1} + ME\cdot A^{-1}.
\end{equation}    
In particular, we thus have obtained the relative-error bound 
\begin{equation}\label{thm:robust_ica:aux61.2}
\frac{\|\tilde{\theta}_\star - MA^{-1}\|}{\|MA^{-1}\|}\,\leq\, \frac{\|MEM^{-1}\|_2\|MA^{-1}\|}{\|MA^{-1}\|}\,\stackrel{\eqref{thm:robust_ica:aux57}}{\leq}\, r_3\frac{\varepsilon}{1-\varepsilon}\,\eqqcolon\,\varepsilon'
\end{equation}where $r_3\coloneqq\sqrt{d}\kappa_2(A_R)r_2$ (recall \eqref{thm:robust_ica:aux57} and \eqref{thm:robust_ica:aux58}). This concludes proof.\hfill $\square$     

\subsection{Related Work}\label{sect:perturbedjdp:relatedwork}
As mentioned in Sect.\ \ref{sect:blindinversionthm1}, the general strategy of performing a blind inversion of linearly mixed stochastic sources via a joint diagonalisation of suitably chosen equivariant tensor statistics of the observed signal -- as done in \eqref{lica:inverseset} -- is a well-established part of the current theory of blind source separation, see e.g.\ \citep{cai2017algebraic, chabriel2014, usevich2020} or \citep{HBS, MNT, nordhausen2022, pfister2019, theis2006, usevich2020} for an overview. 
Its relevance in the BSS-context has brought this joint diagonalisation problem (JDP), which underlies the minimisation \eqref{lica:inverseset} of the contrast \eqref{contrast:eq1} over the manifold $\Theta$ (and certain subsets such as \eqref{lica:xi1_domain}), considerable popularity and much research attention in numerical analysis and optimization communities, cf.\ the cited literature and references therein.\\[-0.75em] 

Yet despite the manifest relevance of this actively researched problem, the perturbation analysis of JDP, as well as related sensitivity analyses for tensor decompositions in general, has been notoriously underexplored, cf.\ \citep[Sect.\ 1.4]{cai2019perturbation}. Among the very few perturbation results available, the approaches that anticipate our own bound \eqref{thm:robust_ica:eq1} most closely are \cite{afsari2008}, who obtained first-order perturbation bounds for joint diagonalizers of symmetrically perturbed symmetric matrices, and \cite{shi2015}, who under the same row-normalisation constraints as ours, but once again in a fully symmetric setting only, obtained bounds between symmetrically perturbed and exact joint diagonalizers, and finally \cite{cai2019perturbation}, who obtained upper bounds related to ours in the more general context of perturbed joint block diagonalization. For completeness, we also mention the recent but not directly related work \cite{li2022vector}, which, again under the symmetry constraint, investigates some numerical stability properties of joint approximate diagonalizers.\\[-0.75em] 

We emphasize that, although similar in nature, none of the above works render our bound \eqref{thm:robust_ica:eq1} or its underlying perturbation analysis (Section \ref{chap:robustICA:subsect:signature_identifiability:proof}) redundant: The perturbation results of Theorem \ref{thm:robust_ica} are a cornerstone for our proof of Theorem \ref{thm:robustness}, where the robustness of \eqref{lica:inversemap1} is established via the continuity of maps between premetric spaces, and the latter requires an understanding of how general and thus also asymmetric perturbations of the statistics \eqref{contrast:eq0.1} affect the minimizers \eqref{lica:inverseset} (leaving only \cite{cai2019perturbation} applicable). Based on recently obtained second-order bounds \citep{karowkressner} for the eigenspaces of generically perturbed matrices, the tailor-made perturbation analysis in Section \ref{chap:robustICA:subsect:signature_identifiability:proof} is methodologically independent of all prior approaches and provides this understanding in an informative, concise and essentially self-contained way. As remarked above, however, our resulting bound \eqref{thm:robust_ica:eq1} has not been optimized, so a combination of Section \ref{chap:robustICA:subsect:signature_identifiability:proof} with some of the referenced approaches is likely to yield improvements.

\subsection{Proof of Theorem \ref{thm:robustness}: The Continuity of \texorpdfstring{$\fI$}{I}}\label{sect:pf:thm:robustness} 
\begin{proof} 
By Remark \ref{rem:causaltopology} and because $(E, \mathbbm{d})$ is a premetric space and $(F, \mathfrak{d})$ is a metric space, the map $\fI$ is continuous on $\sI$ if (and only if; see Remark \ref{rem:premetric_topofacts:continuity}) for each $\fp_\star\in\sI$ we have that: 
\begin{equation}\label{thm:robustness:aux1}
\forall\,\varepsilon>0 \,:\, \exists\,r>0\,:\,\forall\,\fp\in E\,:\, \text{ if } \quad \mathbbm{d}(\fp_\star, \fp)<r \quad \text{ then } \quad \mathfrak{d}(\fI(\fp_\star),\fI(\fp))<\varepsilon.
\end{equation} 
To prove \eqref{thm:robustness:aux1}, fix any $\fp_\star\equiv(\mu_\star, A)\in\sI$ and let $\varepsilon>0$ be arbitrary. 

Let further $0< r< r_0$, for $r_0$ as in Lemma \ref{lem:robustness1}, and take any $\fp\equiv(\mu,f)$ in $E$ with $\mathbbm{d}(\fp_\star, \fp) < r$. By Lemma \ref{lem:robustness1}, this $\p$ can then be assigned a unique signal $\mu_\zeta$ in $\fD$ such that 
\begin{equation}\label{thm:robustness:aux2}
\mu_\chi\equiv f_\ast\mu = A\mu_\zeta \quad\text{ with }\quad \delta_{\independent}(\mu_\zeta) \leq K'r^{\alpha/2} \quad\text{ and }\quad \big\|[\mu_\zeta]_{\mathfrak{c}} - [\mu_\star]_{\mathfrak{c}}\big\|_{\mathcal{V}} \,\leq\, \hat{K}r^\alpha
\end{equation} 
for (explicit) constants $K'\equiv K_{r}\coloneqq K_{r}(\mu_\star, K_{\fS})>0$, increasing in $r$, and $\hat{K}=\hat{K}(\mu_\star, K_{\fS})>0$.  

By Section \ref{chap:robustICA:subsect:signature_identifiability:algorithm}, the triple $(\mu_\chi,\mu_\zeta, A)$ can be associated the identifiability-related parameters
\begin{equation}\label{thm:robustness:aux3}
\varepsilon(\mu_\zeta) \coloneqq \frac{q(\mu_\zeta)}{1+q(\mu_\zeta)} \quad \text{ with } \quad q(\mu_\zeta)\coloneqq \big(\gamma k_d p(\mu_\zeta)\big)^{-1}  
\end{equation}            
for $\gamma = 1+\sqrt{5}$,  $k_d= \sqrt{\tfrac{d}{6}(d-1)(2d-1)}$ and the function $p(\nu)=p(\nu; \mathfrak{p}_\star)$ ($\nu\in\fD$) given by 
\begin{equation}\label{thm:robustness:aux4}
p(\nu) = \tilde{\kappa}_0\varsigma_1(\nu)\Bigg[\tfrac{\big\|A_{R_\nu}^{-1}\big\|}{\sqrt{d}}\big(1 + \tilde{\kappa}_0 + (1+\xi(\nu) d)\tilde{\kappa}_0\kappa_2(A_{R_\nu})\big) + \kappa_2(A_{R_\nu})\varsigma(\nu)\Bigg]
\end{equation} 
for $A_{R_\nu}\coloneqq R_\nu A$ and the whitening matrix $R_\nu\coloneqq\mathcal{R}(\mathcal{C}_{[\nu]_0})$ as in Lemma \ref{lem:choiceofR}, writing $\mathcal{C}_{\mathfrak{a}}\coloneqq A\mathcal{S}(\mathfrak{a})A^\intercal$ for the $A$ fixed above, and with the auxiliary functions 
\begin{equation}\label{thm:robustness:aux5}
\varsigma_1(\nu)\,\coloneqq\,k_d^{-1}\sqrt{\textstyle\sum_{i=1}^d\!\Big(\frac{(i-1)\langle\nu\rangle_{ii}^{3/2}}{\langle\nu\rangle_{iii}}\Big)^{\!2}} \quad\text{ and }\quad\varsigma(\nu)\,\coloneqq\,\sqrt{\textstyle\sum_{i=1}^d\langle\nu\rangle_{iii}^2/\langle\nu\rangle_{ii}^3}
\end{equation} 
and $\xi(\nu)\,\coloneqq\,\sqrt{\textstyle\sum_{j=1}^d\big\|[A_{R_\nu }\cdot\nu]_j\big\|^2}$ and $\tilde{\kappa}_0 \coloneqq b + \Delta_\kappa$, cf.\ \eqref{lica:xi1_domain2}. Recall further from Remark \ref{rem:xi1_domain2} that there is a radius $r_\ast \equiv r_\ast(\p_\star, \Delta_\kappa)>0$ such that, for $\bar{B}_{(A,\nu)}$ as defined in that remark,
\begin{equation}\label{thm:robustness:aux5.1}
\bar{B}_{(A,\nu)}\in\,\tilde{\Xi}_1 \quad \text{ if } \ \ \big\|[\nu]_0 - [\mu_\star]_0\big\| \leq r_\ast.
\end{equation}
Using the bound in \eqref{thm:robustness:aux2} on the distance between $[\mu_\zeta]_{\mathfrak{c}}$ and $[\mu_\star]_{\mathfrak{c}}$, we see that the coefficients $\left.\eqref{thm:robustness:aux5}\right|_{\nu=\mu_\zeta}$ are well-defined -- i.e.\ that $\langle\mu_\zeta\rangle_{ii}>0$ and $\langle\mu_\zeta\rangle_{iii}>0$ for all $i\in[d]$ -- if 
\begin{equation}
r \,<\, \big[(\rho_0\wedge \varrho_0)/\hat{K}\big]^{1/\alpha}\eqqcolon r_1, \quad\text{ where }\quad \varrho_0\coloneqq \min\nolimits_{i\geq 2}\,\langle\mu_\star\rangle_{iii}\,;  
\end{equation}  
note that $\varrho_0=\varrho_0(\mu_\star)>0$ by definition of $\sI$. As the denominators in \eqref{thm:robustness:aux5} are bounded away from zero on $\{\nu\in\fD\mid \|[\nu]_{\mathfrak{c}} - [\mu_\star]_{\mathfrak{c}}\|_{\mathcal{V}}\leq \mathfrak{q}\}\eqqcolon\tilde{B}_\mathfrak{q}$ for any $\mathfrak{q}<\rho_0\wedge\varrho_0$, we see that the coefficient bounds $\mathfrak{K}_1(\mathfrak{q})\coloneqq\sup_{\nu\in\tilde{B}_\mathfrak{q}}\varsigma_1(\nu)$ and $\mathfrak{K}_2(\mathfrak{q})\coloneqq\sup_{\nu\in\tilde{B}_\mathfrak{q}}\varsigma(\nu)$ are both finite. Further, since the eigenvalues $(\tilde{\lambda}_i(\mathfrak{a}; A))\coloneqq\lambda(A\mathcal{S}(\mathfrak{a})A^\intercal)$ of the symmetrized matrix $\mathcal{C}_{\mathfrak{a}}=A\mathcal{S}(\mathfrak{a})A^\intercal$ depend continuously on $\mathfrak{a}\in\R^{d\times d}$ (Lemma \ref{lem:choiceofR}), so does their minimum $\underline{\tilde{\sigma}}(\mathfrak{a})\coloneqq\min_{i\in[d]}\tilde{\lambda}_i(\mathfrak{a}; A)$. Hence and since $\sigma_\star\equiv\underline{\tilde{\sigma}}([\mu_\star]_0) > 0$ [recall that $\mathcal{C}_{[\mu_\star]_0}$ is congruent to $\mathcal{S}([\mu_\star]_0)\in\mathrm{Pos}_d$ (the $\mathrm{Pos}_d$-inclusion holds as $C_{\mu_\star}$ is invertible), cf.\ \eqref{rem:choiceofR:eq3}], we have $\tilde{\rho}_0\coloneqq\sup\{s\geq0\mid \min_{\{\mathfrak{a}\in\R^{d\times d}\,:\, \|\mathfrak{a}-[\mu_\star]_0\|\leq s\}}\underline{\tilde{\sigma}}(\mathfrak{a})\geq\sigma_\star/2\} > 0$. This implies, via \eqref{rem:choiceofR:eq2}, that 
\begin{equation}\label{thm:robustness:aux6.1}
\sup_{\mathfrak{a}\in\mathfrak{B}_{\tilde{\rho}_0}}\!\big\|\mathcal{R}(\mathcal{C}_{\mathfrak{a}})\big\| \leq \sqrt{2d/\sigma_\star}\eqqcolon L_{\p_\star} \ \text{ for } \ \mathfrak{B}_{\tilde{\rho}_0}\equiv\mathfrak{B}_{\tilde{\rho}_0}([\mu_\star]_0)\coloneqq\big\{\mathfrak{a}\in\R^{d\times d} \, \big| \, \|\mathfrak{a}-[\mu_\star]_0\|\leq \tilde{\rho}_0\big\}  
\end{equation}   
(notice that $\{\mathcal{C}_{\mathfrak{a}}\mid \mathfrak{a}\in\mathfrak{B}_{\tilde{\rho}_0}\}\subset\mathrm{Sym}_d^+$ by definition of $\tilde{\rho}_0$). Hence it is clear that $\mathfrak{K}_3\coloneqq\sup\{\xi(\nu) \mid \nu\in\fD \,:\, \|[\nu]_{\mathfrak{c}}- [\mu_\star]_{\mathfrak{c}}\|_{\mathcal{V}}\leq \tilde{\rho}_0\}$ is finite, and combined with \eqref{lem:robustness1:eq2} we find 
\begin{equation}\label{thm:robustness:aux6.2}
\sup\nolimits_{(\tilde{\mu}, \tilde{f})\in\mathbb{B}_r^\star}\big\|\mathcal{R}(\mathcal{C}_{[\tilde{\mu}_\zeta]_0})\big\| \,\leq\, L_{\p_\star} \ \text{ provided that } \ r \,<\,r_0\wedge r_1 \wedge (\tilde{\rho}_0/\hat{K})^{1/\alpha}\eqqcolon r_2'.   
\end{equation}    
Here, the functional assignment [as used in \eqref{thm:robustness:aux2} and \eqref{thm:robustness:aux6.2}] 
\begin{equation}\label{thm:robustness:aux6.3}
\mathbb{B}^\star_r\ni(\tilde{\mu}, \tilde{f})\,\longmapsto\,\tilde{\mu}_\zeta\in\tilde{B}_{\hat{K}r^\alpha}
\end{equation}
is the one asserted by Lemma \ref{lem:robustness1}, cf.\ \eqref{thm:robustness:aux2}. Assuming from now on that 
\begin{equation}
r \,<\, r_2\,\coloneqq\, r_2' \wedge(r_\ast/\hat{K})^{1/\alpha},
\end{equation} we just saw that, for each $\nu\in\tilde{B}_{\mathfrak{q}_r}$ with $\mathfrak{q}_r\coloneqq\hat{K}r^\alpha$, we have both \eqref{thm:robustness:aux5.1} and the domination    
\begin{equation}\label{thm:robustness:aux8}
|p(\nu)| \,\leq\, \tilde{\kappa}_0\mathfrak{K}_1(\mathfrak{q}_r)\!\Bigg[\frac{\beta_1([\nu]_0)}{\sqrt{d}}\big(1 + \tilde{\kappa}_0 + (1+\mathfrak{K}_3d)\tilde{\kappa}_0\beta_2([\nu]_0)\big) + \beta_2([\nu]_0)\mathfrak{K}_2(\mathfrak{q}_r)\Bigg] 
\end{equation}      
where $\beta_j\equiv\beta_j(\,\cdot\,;A)$ $(j=1,2)$ are the bounding functions from Lemma \ref{lem:choiceofR} \eqref{lem:choiceofR:eq2}--\eqref{lem:choiceofR:eq3}. The above is equivalent (cf.\ the proof of Lemma \ref{lem:premetric_facts2}) to the inequality 
\begin{equation}\label{thm:robustness:aux8.1}
p(\cdot)\leq\vartheta_r\circ[\,\cdot\,]_0 \quad\text{ for }\quad \vartheta_r\coloneqq\tilde{\kappa}_0\mathfrak{K}_1(\mathfrak{q}_r)\!\bigg[\frac{\beta_1}{\sqrt{d}}\big(1 + \tilde{\kappa}_0 + (1+\mathfrak{K}_3d)\tilde{\kappa}_0\beta_2\big) + \beta_2\mathfrak{K}_2(\mathfrak{q}_r)\bigg]
\end{equation}
which holds pointwise on $\tilde{B}_{\mathfrak{q}_r}$ and where $[\,\cdot\,]_0 : \fD\rightarrow \R^{d\times d}$ is the map that sends a signal to its ground-level coredinate (cf.\ \eqref{def:coordinates:eq2}). Note that while $\beta_1$ is continuous, the function $\beta_2$ is a fraction of the form $\beta_2 = \hat{\beta}/z$, with $\hat{\beta}$ the enumerator and $z$ the denominator in its definition \eqref{lem:choiceofR:eq3}, where $\hat{\beta}$ and $z$ are each continuous on $\R^{d\times d}$ (Lemma \ref{lem:choiceofR}). Note further that by definition of $r_2$, see \eqref{thm:robustness:aux6.2} [and recall \eqref{rem:choiceofR:eq2}], the denominator $z$ is bounded away from zero on $[\tilde{B}_{\mathfrak{q}_r}]_0\equiv\{[\nu]_0\mid\nu\in\tilde{B}_{\mathfrak{q}_r}\}\eqqcolon\mathfrak{A}_r$, that is $\mathfrak{z}(r)\coloneqq\inf\{z(\mathfrak{a})\mid \mathfrak{a}\in\mathfrak{A}_r\}>0$. Hence $\restr{\vartheta_r}{\mathfrak{A}_r}\!\leq\hat{\vartheta}_r\big|_{\mathfrak{A}_r}$, and for $\hat{\beta}_r\coloneqq\mathfrak{z}(r)^{-1}\hat{\beta}$ we consequently have the $\tilde{B}_{\mathfrak{q}_r}$-pointwise domination
\begin{equation}\label{thm:robustness:aux8.2}
p(\cdot)\leq\hat{\vartheta}_r\circ[\,\cdot\,]_0 \ \text{ for } \ \hat{\vartheta}_r\coloneqq\tilde{\kappa}_0\mathfrak{K}_1(\mathfrak{q}_r)\!\bigg[\frac{\beta_1(r)}{\sqrt{d}}\big(1 + \tilde{\kappa}_0 + (1+\mathfrak{K}_3d)\tilde{\kappa}_0\hat{\beta}_r\big) + \hat{\beta}_r\mathfrak{K}_2(\mathfrak{q}_r)\bigg]. 
\end{equation}        
The majorant $\hat{\vartheta}_r$ is continuous and strictly positive on the ball $\mathfrak{B}_{\tilde{\rho}_0}\supseteq\mathfrak{A}_r$ defined above. 

In particular [recall $\mathfrak{q}_r\leq \tilde{\rho}_0$], for the function $\psi_r(\mathfrak{a}, \mathfrak{a}_1, \ldots, \mathfrak{a}_d)\coloneqq\hat{\vartheta}_r(\mathfrak{a})$ we have    
\begin{equation}
\psi_r\in C\big(\bar{B}_{\mathfrak{q}_r}^{\mathcal{V}};\R_{>0}\big) \quad\text{on}\quad \bar{B}_{\mathfrak{q}_r}^{\mathcal{V}}\equiv\bar{B}_{\mathfrak{q}_r}^{\mathcal{V}}([\mu_\star]_{\mathfrak{c}})\coloneqq\{\tilde{\mathfrak{a}}\in\mathcal{V}\mid \|\tilde{\mathfrak{a}} - [\mu_\star]_{\mathfrak{c}}\|_{\mathcal{V}} \leq r\}, 
\end{equation}  
and by the above $p(\cdot)$-domination \eqref{thm:robustness:aux8.2}, recalling \eqref{thm:robustness:aux6.3} (see also \eqref{thm:robustness:aux2}), we obtain
\begin{equation}\label{thm:robustness:aux8.4}
p(\tilde{\mu}_\zeta) \,\leq\, \psi_r([\tilde{\mu}_\zeta]_{\mathfrak{c}}) \quad\text{ for each } \ (\tilde{\mu}, \tilde{f})\in\mathbb{B}^\star_r. 
\end{equation}   
Setting $\psi\coloneqq\psi_{\tilde{r}_2}$ for $\tilde{r}_2\coloneqq 0.95\cdot r_2$ and introducing the function $\tau : [0, \tilde{r}_2] \rightarrow (0, \infty)$ defined by 
\begin{equation}\label{thm:robustness:aux10}
\tau(r)\coloneqq\min\Big\{Q\big((\gamma k_d\psi(\tilde{\mathfrak{a}}))^{-1}\big) \ \big| \ \tilde{\mathfrak{a}}\in\bar{B}_{\mathfrak{q}_r}^\mathcal{V}([\mu_\star]_{\mathfrak{c}})\Big\} \quad\text{ for }\quad Q(u)\coloneqq\frac{u}{1+u},
\end{equation}     
we find for any $\mu_\zeta\in\tilde{B}_{\mathfrak{q}_r}$ as in \eqref{thm:robustness:aux8.4} [recall \eqref{thm:robustness:aux6.3}] that, in consequence of \eqref{thm:robustness:aux3} and \eqref{thm:robustness:aux8.4}, 
\begin{equation}\label{thm:robustness:aux11}
\varepsilon(\mu_\zeta) = Q\big(q(\mu_\zeta)\big) \, \geq \, Q\big((\gamma k_d\psi([\mu_\zeta]_{\mathfrak{c}}))^{-1}\big) \,\geq\, \tau(r), \quad \text{ for each } \ r\in[0, \tilde{r}_2].      
\end{equation} 
Now since $\tau$ is continuous (Lemma \ref{lem:minfuncont}) with $\tau(0) = Q\big((\gamma k_d\psi([\mu_\star]_{\mathfrak{c}}))^{-1}\big) > 0$, there will be a positive radius up to which $\tau$ is greater than the continuous majorant $\upsilon : [0,r_0]\ni r \mapsto K'r^{\alpha/2}$ of $\delta_{\independent}(\mu_\zeta)$ (cf.\ \eqref{thm:robustness:aux2}). This implies, cf.\ again \eqref{thm:robustness:aux2}, that we must have
\begin{align}\label{thm:robustness:aux12}
r_3&\coloneqq\inf\big\{0\leq r\leq \tilde{r}_2 \mid \tau(r) - \sup\nolimits_{(\mu, f)\in \mathbb{B}^\star_r}\delta_{\independent}(\mu_\zeta) \leq 0\big\} \\
&\phantom{:}\geq\inf\big\{0\leq r\leq \tilde{r}_2 \mid \tau(r) - \upsilon(r) \leq 0\big\} \, > \, 0. 
\end{align} 
Let now $0\leq r\leq r_3$, and set $\tilde{K}_1\coloneqq 2dk_d\max\{\psi(\tilde{\mathfrak{a}})\mid \tilde{\mathfrak{a}}\in\bar{B}_{\mathfrak{q}_{r_3}}^\mathcal{V}\}$ and fix $K'\equiv K_{r_3}\,(=\left.K'\right|_{r=r_3})$. Then for any $(\mu,f)\in\mathbb{B}_r(\mu_\star, A)$ with \eqref{thm:robustness:aux6.3}-associated signal $\mu_\zeta\in\bar{B}_{\mathfrak{q}_{r_3}}^\mathcal{V}$, we have $\delta_{\independent}(\mu_\zeta) \leq \tau(r) \leq \varepsilon(\mu_\zeta) < 1$ [by \eqref{thm:robustness:aux12} and \eqref{thm:robustness:aux11}] and hence, by Theorem \ref{thm:robust_ica} \eqref{thm:robust_ica:eq1} [which by \eqref{thm:robustness:aux5.1} is fully applicable], obtain that  
\begin{equation}\label{thm:robustness:aux14}      
\begin{gathered}
\forall\, \theta_\p\in\Phi(f_\ast\mu) \ : \ \text{there is } \  M=M(\theta_\p)\in\M \text{ and } E = E(\theta_\p)\in\R^{d\times d} \quad \text{such that}\\ 
\theta_\p = M(\mathrm{I} + E)A^{-1} \ \text{ with }\ \|E\|\leq\epsilon_r \quad \text{ for } \ \epsilon_r\coloneqq \tilde{K}_1\tfrac{\upsilon(r)}{1-\upsilon(r)}
\end{gathered}
\end{equation} 
(see also \eqref{thm:robust_ica:aux59} in the proof of Theorem \ref{thm:robust_ica} for reference). Clearly, the error bound $\epsilon_r$ is strictly increasing in $r$ (recall that $\upsilon(r)<1$) with $\lim_{r\rightarrow 0+}\epsilon_r = 0$. 

Next, denote the constants $K_2\coloneqq \kappa_2(A)L_{\p_\star}$ and $\tilde{\gamma}_\star\coloneqq \min_{i\in[d]}\big|e_i^\intercal\cdot A^{-1}\mathcal{C}^{1/2}_{[\mu_\star]_0}\big|$ and consider the auxiliary function $\eta:[0,r_3]\rightarrow\mathbb{R}_+$ given by 
\begin{equation}\label{thm:robustness:aux15}
\eta(r)\coloneqq \max\big\{\tilde{\gamma}_\star^{-1}\big\|A^{-1}\big(\mathcal{R}(\mathcal{C}_{\mathfrak{a}})^{-1} - \mathcal{R}_\star^{-1}\big)\big\| \ \big| \ (\mathfrak{a}, \mathfrak{a}_1,\cdots,\mathfrak{a}_d)\in\bar{B}_{\mathfrak{q}_r}^\mathcal{V}\big\}
\end{equation}  
for $\mathcal{R}_\star\coloneqq\mathcal{R}(\mathcal{C}_{[\mu_\star]_0})$. By \eqref{lem:choiceofR:eq1.2} and Lemma \ref{lem:minfuncont}, the function $\eta$ is continuous with $\eta_i(0) = 0$. Let further $\Lambda_{\mathfrak{a}}\coloneqq\mathrm{ddiag}_{i=1,\ldots,d}\big(|e_i^\intercal\cdot A^{-1}\mathcal{R}(\mathcal{C}_{\mathfrak{a}})^{-1}|\big)$ and set $\Lambda_{(\mu,f)}\coloneqq\Lambda_{[\mu_\zeta]_0}$ via \eqref{thm:robustness:aux6.3} (cf.\ \eqref{thm:robustness:aux2}).     

For the (arbitrary) $\varepsilon>0$ fixed in the beginning of this proof, cf.\ \eqref{thm:robustness:aux1}, choose the largest 
\begin{equation}\label{thm:robustness:aux16}
0 < r_4\leq r_3 \quad \text{ such that } \quad \max\nolimits_{r\in[0,r_4)}\!\big(\sqrt{2}\eta(r) + (\|A^{-1}\|_2\hat{\sigma}_r + 1)\epsilon_r\big) < \varepsilon/K_2,
\end{equation}  
where $\hat{\sigma}^\star : [0, r_3] \ni r \mapsto \hat{\sigma}^\star_r\coloneqq\max_{\mathfrak{a}\in\mathfrak{B}_{\mathfrak{q}_r}}\!\!\sqrt{\max\lambda(\mathcal{C}_\mathfrak{a})}\in\R_+$ (which is [monotonically increasing and] continuous by \eqref{lem:choiceofR:eq1.1} and Lemma \ref{lem:minfuncont}). The proof concludes (recall \eqref{thm:robustness:aux1}) if we can show
\begin{equation}\label{thm:robustness:aux17}
\mathfrak{d}\big(\fI(\p_\star), \fI(\p)\big) \leq \varepsilon, \quad \text{ for each } \ \p\in\mathbb{B}_{r_4}(\p_\star).  
\end{equation}
And indeed: Writing $\tilde{\Xi}_{\mathcal{R}}\coloneqq\tilde{\Xi}_1\cdot\mathcal{R}$, we know from Theorem \ref{thm:robust_ica} (applicable by def.\ of $\sI$) that
\begin{equation}
\fI(\p_\star) = \M\cdot A^{-1}\cap\tilde{\Xi}_{\mathcal{R}_\star} = \big\{P\Lambda_{\mathfrak{p}_\star}^{-1} A^{-1} \, \big| \, P\in \mathrm{P}_d^\pm\big\}\eqqcolon\mathcal{A}_\star, 
\end{equation}   
where the second identity can be verified easily from the above definitions (recalling \eqref{lica:xi1_domain} and the unit-row conditions \eqref{lica:unitrows}). To see that \eqref{thm:robustness:aux17} holds, take any $\p\equiv(\tilde{\mu},\tilde{f})\in\mathbb{B}_{r_4}(\p_\star)$ and let us first fix any $\mathfrak{a}_\star\equiv P_{\mathfrak{a}_\star}\Lambda_{\mathfrak{p}_\star}^{-1} A^{-1}\in\mathcal{A}_\star$, denoting $\mathcal{R}_{(\tilde{\mu}, \tilde{f})}\coloneqq\mathcal{R}(\mathcal{C}_{[\tilde{\mu}_\zeta]_0})$ (cf.\ \eqref{thm:robustness:aux6.3}) for convenience. Since $\fI(\p)$ is non-empty \eqref{lica:inverseset}, there exists some $\theta_\p\in\tilde{\Xi}_{\mathcal{R}_\p}$ such that $\theta_\p = M_\p(\mathrm{I} + E_\p)A^{-1}$ for $M_\p\equiv P_{\p}\tilde{\Lambda}_{(\p)}\in\mathrm{M}_d$ (with $P_\p\in\mathrm{P}_d^{\pm}$ and $\tilde{\Lambda}_{(\p)}\equiv\mathrm{ddiag}(m_\p^1, \ldots, m_\p^d)>0$) and $\|E_\p\|\leq\epsilon_{r_4}$, see \eqref{thm:robustness:aux14}. By the $\mathrm{P}_d^\pm$-invariance of $\phi_{\tilde{f}_\star\tilde{\mu}}$ and $\tilde{\Xi}_{\mathcal{R}_\p}$ (cf.\ \eqref{phi:monomialinvar} and \eqref{lica:xi1_domain2}) we find that also $\theta^{\star}_\p\coloneqq (P_{\mathfrak{a}_\star}\!P_\p^{-1})\theta_\p\in\hat{\fI}(\p)$. Hence (and since $\mathrm{P}_d^\pm\subset\mathrm{O}_d$ and $\|A_1 A_2\|\leq \|A_1\|\|A_2\|_2,$ $\forall\, A_1,A_2\in\R^{d\times d}$)
\begin{equation}\label{thm:robustness:aux18.1}
d\big(\mathfrak{a}_\star, \fI(\p)\big) \,\leq\, \big\|\mathfrak{a}_\star - \theta^{\star}_\p\big\| \,\leq\, \|A^{-1}\|_2\big\|\Lambda_{\p_\star}^{-1} - \tilde{\Lambda}_{(\p)}(\mathrm{I} + E_\p)\big\|. 
\end{equation}    
Since $\theta^\star_\p\in\tilde{\Xi}_{\mathcal{R}_\p}$ and hence $\theta^\star_\p\mathcal{R}_p^{-1} = P_{\mathfrak{a}_\star}\!\tilde{\Lambda}_{(\p)}(\mathrm{I} + E_\p)B_\p\in\tilde{\Xi}_1$ for $B_\p\coloneqq(\mathcal{R}_\p A)^{-1}$, we have  
\begin{equation}\label{thm:robustness:aux22}
1 = \big|e_i^\intercal\cdot\theta^\star_\p\mathcal{R}_\p^{-1}\big| = m_{\p}^{\sigma(i)}\left|e_{\sigma(i)}^\intercal\!\!\cdot\!\big(B_\p + E_\p B_\p\big)\right| \quad \text{ for each } \ i\in[d]
\end{equation}
by the unit-rows requirement \eqref{lica:unitrows}; here, $\sigma$ is the permutation underlying $P_{\mathfrak{a}_\star}\!$. Consequently,
\begin{equation}
\begin{aligned}
\big\|\Lambda_{\p_\star}^{-1} &- \tilde{\Lambda}_{(\p)}(\mathrm{I} + E_\p)\big\| \,\leq\, \sqrt{\sum_{i=1}^d\left|\frac{1}{|e_i^\intercal\cdot B_{\p_\star}|} - m^i_\p\right|^2} + \|A\|_2 L_{\p_\star}\!\epsilon_{r_4}\\
&\stackrel{\mathclap{\eqref{thm:robustness:aux22}}}{\leq}\,\sqrt{2}\sqrt{\sum_{i=1}^d\left[\frac{m_{\p}^i\big|e_i^\intercal(B_\p - B_{\p_\star})\big|}{|e_i^\intercal\cdot B_{\p_\star}|}\right]^2 + \big(m_\p^i|e_i^\intercal E_\p B_\p|\big)^2} + \|A\|_2 L_{\p_\star}\!\epsilon_{r_4}\\
&\leq\, \frac{\sqrt{2}\|A\|_2 L_{\p_\star}\big\|B_\p - B_{\p_\star}\big\|}{\min_{i\in[d]}|e_i^\intercal\cdot B_{\p_\star}|} + \|A\|_2\!\left(\|A^{-1}\|_2\,\hat{\sigma}_{r_4}^\star + 1\right)\!L_{\p_\star}\!\epsilon_{r_4}\,;  
\end{aligned}
\end{equation} 
the first inequality is due to $\|\tilde{\Lambda}_{(\p)}E_\p\|\leq \|\tilde{\Lambda}_{(\p)}\|_2\epsilon_r$ and $\|\tilde{\Lambda}_{(\p)}\|_2\leq \|A\|_2\|\mathcal{R}_\p\|$ (recall \eqref{thm:robust_ica:aux56}), and for the last inequality we further used $\|B_\p\|_2\leq \|A^{-1}\|_2\|\mathcal{R}_\p^{-1}\|_2\leq \|A^{-1}\|_2\hat{\sigma}_r^\star$ (cf.\ \eqref{rem:choiceofR:eq2}).  
Combined with \eqref{thm:robustness:aux18.1} and \eqref{thm:robustness:aux16} (also recalling \eqref{thm:robustness:aux6.3}), the above implies that
\begin{equation}
d\big(\mathfrak{a}_\star, \fI(\p)\big) \,\leq\, K_2\!\left(\sqrt{2}\eta(r_4) + \big(\|A^{-1}\|_2\,\hat{\sigma}_{r_4}^\star + 1\big)\epsilon_{r_4}\right) \,\leq\, \varepsilon
\end{equation}      
and hence, since $\mathfrak{a}_\star\in\mathcal{A}_\star$ was arbitrary, $\sup_{\mathfrak{a}\in\hat{\fI}(\p_\star)}\!d(\mathfrak{a}, \fI(\p))\leq\varepsilon$. That, for any $\p\in\mathbb{B}_{r_4}(\p_\star)$, also $\sup_{\mathfrak{b}\in\hat{\fI}(\p)}\!d(\fI(\p_\star), \mathfrak{b})\leq\varepsilon$ follows similarly. Indeed: Take an arbitrary $\mathfrak{b}\in\fI(\p)$, say $\mathfrak{b}\equiv m_{\mathfrak{b}}(\mathrm{I}+E_{\mathfrak{b}})A^{-1}\in\tilde{\Xi}_{\mathcal{R}_\p}$ (cf.\ \eqref{thm:robustness:aux14}) with $\|E_{\mathfrak{b}}\|\leq\epsilon_{r_4}$ and $m_{\mathfrak{b}}\equiv P_\sigma\tilde{\Lambda}_{(\mathfrak{b})}\in\M$ for $P_\sigma\in\mathrm{P}_d^{\pm}$ and $\tilde{\Lambda}_{(\mathfrak{b})}\equiv\mathrm{ddiag}[m_\mathfrak{b}^{1}, \ldots, m_\mathfrak{b}^{d}]>0$. As above, unit-rows again implies \eqref{thm:robustness:aux22} (but with $(\theta^\star_\p, E_p)$ replaced by $(\mathfrak{b}, E_\mathfrak{b})$) so that for $\tilde{\mathfrak{a}}\coloneqq P_\sigma\Lambda_{\p_\star}^{-1}A^{-1}\in\fI(\p_\star)$ we obtain as above that 
\begin{equation}
d(\fI(\p_\star), \mathfrak{b})\,\leq\, \|\tilde{\mathfrak{a}} - \mathfrak{b}\| \,\leq\, \|A^{-1}\|_2\big\|\Lambda_{\p_\star}^{-1} - \tilde{\Lambda}_{(\mathfrak{b})}(\mathrm{I} + E_{\mathfrak{b}})\big\| \,\leq\, \varepsilon. 
\end{equation} 
Hence $\sup_{\mathfrak{b}\in\fI(\p)}d(\fI(\p_\star), \mathfrak{b})\leq\varepsilon$ (as $\mathfrak{b}\in\hat{\fI}(\p)$ was arbitrary), implying \eqref{thm:robustness:aux17} as desired.              
\end{proof} 

\begin{remark}\label{rem:toproof:thm:robustness}
\begin{enumerate}[font=\upshape, label=(\roman*)]
\item\label{rem:toproof:thm:robustness:it1} For simplicity of constants, we can use \eqref{lem:choiceofR:eq1} to majorize \eqref{thm:robustness:aux15} via
\begin{equation}
\eta(r) \,\leq\, \kappa_2^2(A)L_{\p_\star}\frac{\|A\|_2}{\tilde{\gamma}_\star\sigma_\star}\sup\nolimits_{\mathfrak{a}\in\mathfrak{B}_{\mathfrak{q}_r}}\!\big\|\mathfrak{a} - [\mu_\star]_0\big\| \,\leq\, \check{K}_1 r^\alpha 
\end{equation}
for the ($\p_\star$-dependent) constant $\check{K}_1\coloneqq \kappa_2^2(A)L_{\p_\star}\frac{\|A\|_2}{\tilde{\gamma}_\star\sigma_\star}\hat{K}$; let also $\check{K}_2\coloneqq\|A^{-1}\|_2\hat{\sigma}_{r_3}+1$. Accordingly, for any given $\varepsilon>0$ we may then instead of \eqref{thm:robustness:aux16} choose $\hat{r}=\hat{r}(\varepsilon;\p_\star)$ as 
\begin{equation}
\hat{r}\,\coloneqq\,\sup\big\{0\leq r\leq r_3 \, \big| \, \max\nolimits_{s\in[0,r)}(\sqrt{2}\check{K}_1 s^\alpha + \check{K}_2\epsilon_s) \leq \varepsilon/(\kappa_2(A)L_{\p_\star})\big\}\,\in(0, r_4]\,  
\end{equation} 
to ensure that the continuity-relation $\left.\eqref{thm:robustness:aux1}\right|_{r=\hat{r}}$ holds. 
\item\label{rem:toproof:thm:robustness:it2} If for any fixed $A\in\GL$ one is interested in the continuity at $\sI$ of the map
\begin{equation}\label{rem:toproof:thm:robustness:eq3}
\fI_A\,:\, \big(\fD, \delta\big)\ni\mu \ \longmapsto \ \hat{\Phi}(A\cdot\mu)\in (F,\mathfrak{d}),
\end{equation}
i.e.\ the (partial) continuity of the solution map $\fI(\,\cdot\,, A)$ on the sectionalised identifiability conditions $\sI_A\coloneqq\tilde{\mathscr{I}}\cap(\fD\times\{A\})$ of $E'=\fD\times\GL$, then the above proof of Theorem \ref{thm:robustness} shows that this continuity holds without imposing any additional integrability assumptions on $\fD$ (cf.\ \eqref{lem:unifsigintegr:sufficient:eq1}). More precisely, the continuity of \eqref{rem:toproof:thm:robustness:eq3} admits a simplified quantification via Remark \ref{rem:partialcontinuity}: Operating on the identifiability domain $E_A\coloneqq\fD\times\{A\}$ allows us to replace the above $r_0$ by $\rho_0/\rho_1$ and then copy the argumentation for the proof of Theorem \ref{thm:robustness} but from the adapted premise   
\begin{equation}\tag{\ref*{thm:robustness:aux2}'}\label{rem:toproof:thm:robustness:eq4}
\mu_\chi\equiv A_\ast\mu \quad\text{ with }\quad \delta_{\independent}(\mu) \leq L_{r}\sqrt{r} \quad\text{ and }\quad \big\|[\mu]_{\mathfrak{c}} - [\mu_\star]_{\mathfrak{c}}\big\|_{\mathcal{V}} \,\leq\, \hat{L}r,
\end{equation}
which for any $(\mu, A)\in E_A$ with $\delta(\mu_\star,\mu)< r \leq \rho_0/\rho_1$ holds by Remark \ref{rem:partialcontinuity} \eqref{rem:partialcontinuity:eq2}. Working from \eqref{rem:toproof:thm:robustness:eq4}, the above proof of Thm.\ \ref{thm:robustness} goes through verbatim but with 
\begin{equation}
\text{the (global\footnotemark) replacements:} \ \ K'\leftrightarrow L_r, \ \hat{K}\leftrightarrow\hat{L}, \ \alpha\leftrightarrow 1,
\end{equation}\footnotetext{\ So that $r_1=r_1(\rho_0, \varrho_0,\hat{K},\alpha)$ is replaced by $r_1(\rho_0, \varrho_0, \hat{L}, 1)$ etc.}which results in eased moduli of continuity build from \eqref{notation:unifsigintegr:eq1}-independent constants. \hfill $\bdiam$     
\end{enumerate}  
\end{remark} 

\section{Proof of Proposition \ref{prop:noise} (Cases $i=2,3$)}\label{pf:prop:noise} 
\begin{proof}
Let us now prove \eqref{prop:noise:aux1.1.3} for the cases $i=2,3$ of multiplicative noise. For this we first consider the case $i=2$, that is the triple $(\tilde{X}^{(2)}, A, S^\eta)$ for $\tilde{X}^{(2)}$ as in \eqref{prop:noise:eq1} and $S^\eta$ as in \eqref{sect:noise:eq3}.

Then, since Lebesgue-Stieltjes integration obeys the product rule $\int_a^b\!\varphi\,\mathrm{d}(g_1g_2) = \int_a^b\!\varphi g_1\,\mathrm{d}g_2 + \int_a^b\!\varphi g_2\,\mathrm{d}g_1$ for any $\varphi, g_1, g_2\in\mathcal{C}^1_1$ and every $0\leq a \leq b \leq 1$, we have 
\begin{equation}\label{prop:noise:aux2}
\begin{aligned}
\langle S^\eta\rangle_{ij} \,&=\, \langle\eta^i,\eta^j\,\big|\,S^i\!, S^j\rangle + \langle S^i,\eta^j\,\big|\,\eta^i, S^j\rangle + \langle\eta^i,S^j\,\big|\,S^i, \eta^j\rangle + \langle S^i,S^j\,\big|\,\eta^i, \eta^j\rangle \quad\text{and}\\
\langle S^\eta\rangle_{ijk} \,&=\, \langle\eta^i,\eta^j,\eta^k\,\big|\,S^i,S^j,S^k\rangle + \langle\eta^i,\eta^j,S^k\,\big|\,S^i,S^j,\eta^k\rangle + \langle S^i,\eta^j,\eta^k\,\big|\,\eta^i,S^j,S^k\rangle\\ 
&\hspace{1em}+ \langle S^i,\eta^j,S^k\,\big|\,\eta^i,S^j,\eta^k\rangle + \langle\eta^i,S^j,\eta^k\,\big|\,S^i,\eta^j,S^k\rangle + \langle\eta^i,S^j,S^k\,\big|\,S^i,\eta^j,\eta^k\rangle\\
&\hspace{1em}+ \langle S^i,S^j,\eta^k\,\big|\,\eta^i,\eta^j,S^k\rangle + \langle S^i,S^j,S^k\,\big|\,\eta^i,\eta^j,\eta^k\rangle 
\end{aligned}
\end{equation}  
where for $Y,\tilde{Y}, \hat{Y}, \check{Y}, Z, \tilde{Z}\in\{S,\eta\}$ we denote $\langle Y^i, \tilde{Y}^j\,\big|\, \hat{Y}^i, \check{Y}^j\rangle\coloneqq \mathbb{E}\big[\!\int_0^1\!\!\int_0^t \bm{Y}^i_s \tilde{\bm{Y}}^j_t\mathrm{d}\hat{\bm{Y}}_s^i\mathrm{d}\check{\bm{Y}}_t^j\big]$ and 
\begin{equation}\label{prop:noise:aux3}
\langle Y^i,\tilde{Y}^j,Z^k\,\big|\,\hat{Y}^i,\check{Y}^j,\tilde{Z}^k\rangle\coloneqq\mathbb{E}\!\left[\!\int_0^1\!\!\!\int_0^t\!\!\!\int_0^s\!\! \bm{Y}^i_r\tilde{\bm{Y}}^j_s \bm{Z}^k_t\,\mathrm{d}\hat{\bm{Y}}^i_r\mathrm{d}\check{\bm{Y}}^j_s\mathrm{d}\tilde{\bm{Z}}_t^k\right].
\end{equation} 
Evaluating the statistics \eqref{prop:noise:aux2} sumand-wise in a similar way as before, we find       
\begin{equation}
\langle\eta^i,\eta^j\,\big|\,S^i\!, S^j\rangle = \!\int_{\Delta_2}\!\!\!\mathbb{E}[\bm{\eta}_s^i\bm{\eta}_t^j]\mathbb{E}[\dot{\bm{S}}^i_s\dot{\bm{S}}^j_t]\,\mathrm{d}^2\bm{t}  \quad\text{and}\quad \langle S^i,\eta^j\,\big|\,\eta^i\!, S^j\rangle= \!\int_{\Delta_2}\!\!\!\mathbb{E}[\dot{\bm{\eta}}^i_s\bm{\eta}_t^j]\mathbb{E}[\bm{S}_s^i\dot{\bm{S}}^j_t]\,\mathrm{d}^2\bm{t} \quad\text{etc.,}
\end{equation}
whence and by the fact that $\bm{S}$ is a mean-stationarity IC-source with $\E[\bm{S}] = 0$, which implies that $\E[\bm{S}_s^i\dot{\bm{S}}^j_t] = \E[\bm{S}_s^i]\E[\dot{\bm{S}}^j_t] = 0 = \E[\bm{S}^i_s\bm{S}^j_t]$ whenever $i\neq j$, we obtain 
\begin{equation}\label{prop:noise:aux5}
\langle S^\eta\rangle_{ij} = \left[\!\int_{\Delta_2}\!\!\!\phi^i_{(S,\eta)}(\bm{t})\,\mathrm{d}^2\bm{t}\right]\cdot\delta_{ij} \qquad\text{for each } \ ij\in[d]^\star_2, 
\end{equation}    
where $\phi^i_{(S,\eta)}(s,t)\coloneqq \E\big[\bm{\eta}^i_s\bm{\eta}^i_t\big]\E\big[\dot{\bm{S}}^i_s\dot{\bm{S}}^i_t\big] + \E\big[\dot{\bm{\eta}}^i_s\bm{\eta}^i_t\big]\E\big[\bm{S}^i_s\dot{\bm{S}}^i_t\big] + \E\big[\bm{\eta}^i_s\dot{\bm{\eta}}^i_t\big]\E\big[\dot{\bm{S}}^i_s \bm{S}^i_t\big] + \E\big[\dot{\bm{\eta}}^i_s\dot{\bm{\eta}}^i_t\big]\E\big[\bm{S}^i_s \bm{S}^i_t\big]$. The third-order statistics \eqref{prop:noise:aux3} behave analogously, which leads to
\begin{equation}\label{prop:noise:aux6}
\langle S^\eta\rangle_{ijk} = \left[\!\int_{\Delta_3}\!\!\!\psi^i_{(S,\eta)}(\bm{t})\,\mathrm{d}^3\bm{t}\right]\cdot\delta_{ijk} \qquad\text{for each } \ ijk\in[d]^\star_3,
\end{equation}  
where this time the integrand is the function  
\begin{equation}
\begin{aligned}
\psi^i_{(S,\eta)}(r,s,t)&\coloneqq \mathbb{E}[\bm{\eta}^i_r\bm{\eta}^i_s\bm{\eta}^i_t]\mathbb{E}[\dot{\bm{S}}^i_r\dot{\bm{S}}^i_s\dot{\bm{S}}^i_t] + \mathbb{E}[\bm{\eta}^i_r\bm{\eta}^i_s\dot{\bm{\eta}}^i_t]\mathbb{E}[\dot{\bm{S}}^i_r\dot{\bm{S}}^i_s \bm{S}^i_t] + \mathbb{E}[\dot{\bm{\eta}}^i_s\bm{\eta}^i_s\bm{\eta}^i_t]\mathbb{E}[\bm{S}^i_r\dot{\bm{S}}^i_s\dot{\bm{S}}^i_t]\\
&\hspace{1em}+ \mathbb{E}[\dot{\bm{\eta}}^i_r\eta^i_s\dot{\bm{\eta}}^i_t]\mathbb{E}[\bm{S}^i_r\dot{\bm{S}}^i_s \bm{S}^i_t] + \mathbb{E}[\bm{\eta}^i_r\dot{\bm{\eta}}^i_s\bm{\eta}^i_t]\mathbb{E}[\dot{\bm{S}}^i_r \bm{S}^i_s\dot{\bm{S}}^i_t] + \mathbb{E}[\bm{\eta}^i_r\dot{\bm{\eta}}^i_s\dot{\bm{\eta}}^i_t]\mathbb{E}[\dot{\bm{S}}^i_r \bm{S}^i_s \bm{S}^i_t]\\
&\hspace{1em}+ \mathbb{E}[\dot{\bm{\eta}}^i_r\dot{\bm{\eta}}^i_s\bm{\eta}^i_t]\mathbb{E}[\bm{S}^i_r \bm{S}^i_s\dot{\bm{S}}^i_t] + \mathbb{E}[\dot{\bm{\eta}}^i_r\dot{\bm{\eta}}^i_s\dot{\bm{\eta}}^i_t]\mathbb{E}[\bm{S}^i_r \bm{S}^i_s \bm{S}^i_t].
\end{aligned}
\end{equation} 
Enumerating the sumands $\phi^i_j\equiv\phi^i_{\ell,j}\cdot\phi^i_{r,j}$ which constitute $\phi^i_{(S,\eta)}$ --- resp.\ the sumands $\psi^i_j\equiv\psi^i_{\ell,j}\cdot\psi^i_{r,j}$ which constitute $\psi^i_{(S,\eta)}$ --- in the order in which they are displayed above (so that e.g.\ $\phi^i_2 = \E\big[\dot{\bm{\eta}}^i_s\bm{\eta}^i_t\big]\E\big[\bm{S}^i_s\dot{\bm{S}}^i_t\big]$ and $(\phi^i_{\ell,2}, \phi^i_{r,2}) = \big(\E\big[\dot{\bm{\eta}}^i_s\bm{\eta}^i_t\big], \E\big[\bm{S}^i_s\dot{\bm{S}}^i_t\big]\big)$, etc.), we from \eqref{prop:noise:aux5} and by Hölder's inequality obtain that, for each $i\in[d]$,
\begin{align}
\big|\langle S^\eta\rangle_{ii} - \langle S\rangle_{ii}\big| \,&=\, \left|\int_{\Delta_2}\!\!\!\big(\phi^i_{\ell,1}(\bm{t}) - 1\big)\phi^i_{r,1}(\bm{t}) + {\textstyle\sum}_{j=2}^4\phi^i_j(\bm{t})\,\mathrm{d}^2\bm{t}\right|\\\label{prop:noise:aux8}
&\leq\, \left[\int_{\Delta_2}\!\!\!\big(\phi^i_{\ell,1}(\bm{t}) - 1\big)^2\mathrm{d}^2\bm{t}\right]^{\!1/2}\!\!\!\!\!\!\!\cdot\tilde{c}_{i,1} + \sum_{j=2}^4\left[\int_{\Delta_2}\!\!\!\big(\phi^i_{\ell,j}(\bm{t})\big)^2\mathrm{d}^2\bm{t}\right]^{\!1/2}\!\!\!\!\!\!\!\cdot\tilde{c}_{i,j}
\end{align} 
for the source-dependent constants $\tilde{c}_{i,j}\coloneqq\big(\!\int_{\Delta_2}\phi^i_{r,j}(\bm{t})^2\,\mathrm{d}^2\bm{t}\big)^{1/2}$. Likewise, for each $i\in[d]$,
\begin{equation}\label{prop:noise:aux9}
\big|\langle S^\eta\rangle_{iii} - \langle S\rangle_{iii}\big| \,\leq\, \left[\int_{\Delta_3}\!\!\!\big(\psi^i_{\ell,1}(\bm{t}) - 1\big)^2\mathrm{d}^3\bm{t}\right]^{\!1/2}\!\!\!\!\!\!\!\cdot\check{c}_{i,1} + \sum_{j=2}^8\left[\int_{\Delta_3}\!\!\!\big(\psi^i_{\ell,j}(\bm{t})\big)^2\mathrm{d}^3\bm{t}\right]^{\!1/2}\!\!\!\!\!\!\!\cdot\check{c}_{i,j} 
\end{equation} 
for the source-dependent constants $\check{c}_{i,j}\coloneqq\big(\!\int_{\Delta_3}\psi^i_{r,j}(\bm{t})^2\,\mathrm{d}^3\bm{t}\big)^{1/2}$. For convenience, let us abbreviate the expression in line \eqref{prop:noise:aux8} by $\langle\tilde{b}_i, \tilde{c}_i\rangle\equiv\sum_{j=1}^4\tilde{b}_{i,j}\cdot\tilde{c}_{i,j}$ for $\tilde{b}_i\coloneqq(\tilde{b}_{i,1}, \tilde{b}_{i,2},\tilde{b}_{i,3}, \tilde{b}_{i,4})$ and $\tilde{c}_i\coloneqq(\tilde{c}_{i,1},\ldots,\tilde{c}_{i,4})$ and $\tilde{b}_{i,j}\coloneqq\big(\!\int_{\Delta_2}(\phi^i_{\ell,j}(\bm{t}) - \delta_{1j})^2\,\mathrm{d}^2\bm{t}\big)^{1/2}$, and likewise abbreviate the RHS in \eqref{prop:noise:aux9} by $\langle\check{b}_i, \check{c}_i\rangle\equiv\sum_{j=1}^8\check{b}_{i,j}\cdot\check{c}_{i,j}$ for $\check{b}_i\coloneqq(\check{b}_{i,1}, \ldots, \check{b}_{i,8})$ and $\check{c}_i\coloneqq(\check{c}_{i,1},\ldots, \check{c}_{i,8})$ and $\check{b}_{i,j}\coloneqq\big(\!\int_{\Delta_3}(\psi^i_{\ell,j}(\bm{t}) - \delta_{1j})^2\,\mathrm{d}^3\bm{t}\big)^{1/2}$. Then by \eqref{prop:noise:aux5}, \eqref{prop:noise:aux6}, \eqref{prop:noise:aux8}, \eqref{prop:noise:aux9} and with \eqref{sect:coredinates:eq4},
\begin{equation}
\begin{aligned}
\delta(S,S^\eta)^2 \,&=\, \big\|N_S^{-1}\big([S^\eta]_0 - [S]_0\big)N_S^{-1}\big\|^2 + \sum_{\nu=1}^d\langle S\rangle_{\nu\nu}^{-1}\big\|N_S^{-1}\big([S^\eta]_\nu - [S]_\nu\big)N_S^{-1}\big\|^2\\
&=\, \sum_{i=1}^d\!\big(\langle S\rangle_{ii}^{-1}\big|\langle S^\eta\rangle_{ii} - \langle S\rangle_{ii}\big|\big)^{\!2} + \sum_{\nu=1}^d\langle S\rangle_{\nu\nu}^{-1}\big(\langle S\rangle_{\nu\nu}^{-1}\big|\langle S^\eta\rangle_{\nu\nu\nu} - \langle S\rangle_{\nu\nu\nu}\big|\big)^2\\
&\leq\, \sum_{i=1}^d \gamma_i^{-2}\big(\langle\tilde{b}_i, \tilde{c}_i\rangle^2 + \gamma_i^{-1}\langle\check{b}_i, \check{c}_i\rangle^2\big)\eqqcolon\beta_2(S,\eta), \qquad\text{for } \ \ \gamma_i\coloneqq\langle S\rangle_{ii}. 
\end{aligned} 
\end{equation}  
Since $\beta(S,\eta)\leq\beta_\varepsilon^2$ by assumption on $\eta$, we obtain \eqref{prop:noise:aux1.1.3} for the noise case $i=2$ as desired.  

The case $i=3$ follows (almost) analogously: As a consequence of the product rule, the statistics $\langle X^\eta\rangle_{ij}$ and $\langle X^\eta\rangle_{ijk}$ are also of the form \eqref{prop:noise:aux2} but with $S$ replaced by $X$. Hence by the same arguments as above we can conclude that for each $ijk\in[d]^\star_3$, 
\begin{align}\label{prop:noise:aux11}
\langle X^\eta\rangle_{ij} \,&=\, \langle\eta^i,\eta^j\,\big|\,X^i\!, X^j\rangle + \langle X^i,X^j\,\big|\,\eta^i, \eta^j\rangle \quad\text{ and}\\
\langle X^\eta\rangle_{ijk} \,&=\, \langle\eta^i,\eta^j,\eta^k\,\big|\,X^i,X^j,X^k\rangle + \langle X^i,\eta^j,X^k\,\big|\,\eta^i,X^j,\eta^k\rangle + \langle\eta^i,X^j,X^k\,\big|\,X^i,\eta^j,\eta^k\rangle\\
&\hspace{1em}+\langle X^i,X^j,\eta^k\,\big|\,\eta^i,\eta^j,X^k\rangle + \langle X^i,X^j,X^k\,\big|\,\eta^i,\eta^j,\eta^k\rangle  
\end{align}    
where we used that $\eta$ is IC, independent from $X$ and mean-stationary. Further, using $\E[\eta]\equiv\mathrm{I}$, 
\begin{equation}
\begin{aligned}
\big\|[X^\eta]_0 - [X]_0\big\|^2 \,&=\, \sum_{i=1}^d \big|\langle\eta^i,\eta^i\,\big|\,X^i\!, X^i\rangle - \langle X\rangle_{ii} + \langle X^i,X^i\,\big|\,\eta^i, \eta^i\rangle\big|^2 \,=\, \sum_{i=1}^d\left|\int_{\Delta_2}\!\!\!\!\xi_i(\bm{t})\,\mathrm{d}^2\bm{t}\,\right|^2,\\
\big\|[X^\eta]_\nu - [X]_\nu\big\|^2 \,&=\, \tilde{\Delta}_{\nu\nu\nu}^2 + \sum\nolimits_{i\in[d]\setminus\{\nu\}}\!\!\left(\tilde{\Delta}_{i\nu\nu}^2 + \tilde{\Delta}_{\nu i\nu}^2\right) \,=\, \left|\int_{\Delta_3}\!\!\!\!\Xi_\nu(\bm{t})\,\mathrm{d}^3\bm{t}\,\right|^2 + \sum_{i\neq\nu}\!\big(\tilde{\Xi}^{(1)}_{i,\nu} + \tilde{\Xi}^{(2)}_{i,\nu}\big)\\ 
\end{aligned}
\end{equation}  
for $\nu=1,\ldots, d$ and $\tilde{\Delta}_{ijk}\coloneqq\big|\langle X^\eta\rangle_{ijk} - \langle X\rangle_{ijk}\big|$ and with the \eqref{prop:noise:aux11}-derived quantities
\begin{equation}
\begin{aligned}
\xi_i(s,t)&\coloneqq \big(\E[\bm{\eta}^i_s\bm{\eta}^i_t] - 1\big)\E[\dot{\bm{X}}^i_s \dot{\bm{X}}^i_t] + \E[\dot{\bm{\eta}}^i_s\dot{\bm{\eta}}^i_t]\E[\bm{X}^i_s \bm{X}^i_t] \quad\text{ and}\\
\Xi_\nu(r,s,t) &\coloneqq \big(\E[\bm{\eta}^\nu_r\bm{\eta}^\nu_s\bm{\eta}^\nu_t] - 1\big)\E[\dot{\bm{X}}^\nu_r\dot{\bm{X}}^\nu_s \dot{\bm{X}}^\nu_t] + \E[\dot{\bm{\eta}}^\nu_r\dot{\bm{\eta}}^\nu_s\dot{\bm{\eta}}^\nu_t]\E[\bm{X}^\nu_r \bm{X}^\nu_s \bm{X}^\nu_t] + \mathcal{R}_\nu
\end{aligned}
\end{equation} 
for $R_\nu\coloneqq \E[\dot{\bm{\eta}}^\nu_r\bm{\eta}^\nu_s\dot{\bm{\eta}}^\nu_t]\E[\bm{X}^\nu_r\dot{\bm{X}}^\nu_s \bm{X}^\nu_t] + \E[\bm{\eta}^\nu_r\dot{\bm{\eta}}^\nu_s\dot{\bm{\eta}}^\nu_t]\E[\dot{\bm{X}}^\nu_r \bm{X}^\nu_s \bm{X}^\nu_t] + \E[\dot{\bm{\eta}}^\nu_r\dot{\bm{\eta}}^\nu_s\bm{\eta}^\nu_t]\E[\bm{X}^\nu_r \bm{X}^\nu_s \dot{\bm{X}}^\nu_t]$ and $\tilde{\Xi}^{(1)}_{i,\nu} \coloneqq \left|\int_{\Delta_3}\!(\E[\bm{\eta}^\nu_s\bm{\eta}^\nu_t]-1)\E[\dot{\bm{X}}^i_r\dot{\bm{X}}^\nu_s\dot{\bm{X}}^\nu_t] + \E[\dot{\bm{\eta}}^\nu_s\dot{\bm{\eta}}^\nu_t]\E[\dot{\bm{X}}^i_r \bm{X}^\nu_s \bm{X}^\nu_t]\,\mathrm{d}^3\bm{t}\,\right|^2$, with $\tilde{\Xi}^{(2)}_{i,\nu}$ defined likewise. 

Abbreviating $\hat{\xi}_i\coloneqq \left|\int_{\Delta_2}\!\!\xi_i(\bm{t})\,\mathrm{d}^2\bm{t}\,\right|^2$ and $\hat{\Xi}_\nu\coloneqq\left|\int_{\Delta_3}\!\!\Xi_\nu(\bm{t})\,\mathrm{d}^3\bm{t}\,\right|^2$, we thus have the control 
\begin{equation}\label{prop:noise:aux14}
\|[X^\eta]_{\mathfrak{c}} - [X]_{\mathfrak{c}}\|^2\,=\, \sum_{i=1}^d\!\big(\hat{\xi}_i + \hat{\Xi}_i\big)^2 + \sum_{\nu=1}^d\sum_{i\neq\nu}\!\big(\tilde{\Xi}^{(1)}_{i,\nu} + \tilde{\Xi}^{(2)}_{i,\nu}\big) \eqqcolon \beta_3(X,\eta).
\end{equation}       
Now by continuity of the $A^{-1}$-induced tensor action on $\mathcal{V}$, cf.\ the proof of Prop.\ \ref{prop:estimation-finsample} [first half], there is an explicitly computable threshold $\gamma_\varepsilon\equiv\gamma_\varepsilon(\beta_\varepsilon, A)>0$ small enough such that
\begin{equation}
\beta_3(X,\eta)\,\leq\, \gamma_\varepsilon \quad\text{ implies }\quad \delta(S, A^{-1}X^\eta) \,\leq\,\beta_\varepsilon.
\end{equation}
This shows \eqref{prop:noise:aux1.1.3} for the case $i=3$, as desired. 
\end{proof}

\end{document}